\documentclass[9pt]{amsart}
\usepackage{amsmath, amsthm, amscd, amsfonts,graphicx}
\usepackage{amsmath,amsthm,amsthm, amscd, amsfonts, stackrel, latexsym, dsfont,amssymb,mathrsfs,textcomp,wasysym,hyperref}
\usepackage{tikz}
\usetikzlibrary{matrix,arrows}
\usepackage[all]{xy}

\usepackage{color} % Required for custom colors
\definecolor{slateblue}{rgb}{0,0.0,0.8}

\usepackage{xcolor}
\usepackage{graphicx}
\hypersetup{%
	colorlinks=true,% hyperlinks will be coloured
	linkcolor=slateblue,% hyperlink text will be green
	linkbordercolor=gray,% hyperlink border will be red
	citecolor=slateblue
}

\theoremstyle{definition}
\newtheorem{definition}{Definition}[section]
\newtheorem{example}[definition]{Example}
\theoremstyle{remark}
\newtheorem{remark}[definition]{Remark}
 
\theoremstyle{plain}
\newtheorem{theorem}[definition]{Theorem}
\newtheorem{lemma}[definition]{Lemma}
\newtheorem{proposition}[definition]{Proposition}
\newtheorem{corollary}[definition]{Corollary}
\newtheorem{notation}[definition]{Notation}
\usepackage{mathptmx}

\makeatletter
\@namedef{subjclassname@2020}{\textup{2020} Mathematics Subject Classification}
\makeatother

\usepackage{amsthm}

\providecommand{\customgenericname}{}
\newcommand{\newcustomtheorem}[2]{%
	\newenvironment{#1}[1]
	{%
		\renewcommand\customgenericname{#2}%
		\renewcommand\theinnercustomgeneric{##1}%
		\innercustomgeneric
	}
	{\endinnercustomgeneric}
}

\newcustomtheorem{customthm}{Proposition}
\newcustomtheorem{customlemma}{Lemma}

\begin{document}

	\title[Submersions, immersions, and \'{e}tale maps in diffeology]{Submersions, immersions, and \'{e}tale maps in diffeology}
	
	\author[A. Ahmadi]{Alireza Ahmadi}
	    \address{Alireza Ahmadi, Department of Mathematical Sciences, Yazd University, 89195--741, Yazd, Iran }
	\email{ahmadi@stu.yazd.ac.ir, alirezaahmadi13@yahoo.com}

	%\date{\today}
	\subjclass[2020]{Primary 57P99; Secondary 57R55, 57R35}
	%	You can download the complete list at:
	%http://www.ams.org/msc/pdfs/classifications2010.pdf}
	
	\keywords{diffeological spaces, submersions, immersions, embeddings, \'{e}tale maps and spaces.}

	\maketitle

	\begin{abstract}
	Although structural maps such as subductions and inductions appear naturally in diffeology, 		
	one of the challenges is providing suitable analogous for submersions, immersions, and \'{e}tale maps (i.e., local diffeomorphisms) consistent with the classical versions of these maps between manifolds. In this paper, we consider diffeological submersions, immersions,  and \'{e}tale maps as an adaptation of these maps to diffeology by a nonlinear approach. In the case of manifolds, there is no difference between the classical and diffeological versions of these maps. %Moreover, a diffeological \'{e}tale space on a diffeological orbifold is again a  diffeological orbifold.
	Moreover, we study their diffeological properties from different aspects in a systematic fashion with respect to the germs of plots. 
	We also discuss notions of embeddings of diffeological spaces and regard diffeological embeddings similar to those of manifolds. In particular, we show that diffeological embeddings are inductions. 
	
	In order to characterize the considered maps from their linear behaviors, we introduce a class of diffeological spaces, so-called diffeological \'etale manifolds, which not only contains the usual manifolds but also includes irrational tori. We state and prove versions of the rank and implicit function theorems, as well as the fundamental theorem on flows in this class. As an application, we use the results of this work to facilitate the computations of the internal tangent spaces and diffeological dimensions in a few interesting cases.
	\end{abstract}
	
	\tableofcontents
	\section{Introduction}\label{S1} 
	
	Manifolds %\footnote{Throughout this paper, by a manifold we mean a finite-dimensional smooth manifold.} 
	are nice regular spaces that crop up throughout mathematics and science, however, the category of these objects does not behave well under subspaces, quotients, and advanced constructions related to function spaces. This context is also inadequate to afford spaces with singularities such as orbifolds and quasifolds (see, e.g., \cite{IKZ,IP}).  A treatment for these difficulties is extending the category of manifolds to that of diffeological spaces.
	Formally, a diffeological space is defined by a set $ X $  endowed with a structure of \textit{diffeology} on it, which is a collection of  parametrizations into $ X $ defined on  
	open subsets of Euclidean
	spaces, called \textit{plots}, regarded as smooth parametrizations by enjoying
	the axioms of \textit{covering}, \textit{smooth compatibility} and \textit{locality}.  
	Diffeology enables us to think of smooth maps between diffeological spaces as maps preserving plots under composition---a property compatible with smooth maps between manifolds.
	These spaces and smooth maps between them constitute a rich category---a complete, cocomplete, and Cartesian closed quasitopos---and provide a convenient framework to do differential geometry (see \cite{BH,PIZ}). 
	Diffeology was established by J.-M. Souriau \cite{JMS} in the early 1980s and has been developed by P. Iglesias-Zemmour and others, in various directions of the theory and applications.
	The main reference for this theory is the book \cite{PIZ}.

	An inspiring and favorite example of diffeological spaces is the irrational torus 
	$ \mathbb{T}_{\alpha}=\mathbb{R}/(\mathbb{Z}+\alpha\mathbb{Z}) $, $ \alpha\notin\mathbb{Q}  $, which is neither a manifold nor an orbifold.
	A full classification and some features of 
	irrational tori have been studied in \cite{DI, PIZ85}.
	In this paper, we view the quotient map $ \mathbb{R}\rightarrow\mathbb{T}_{\alpha} $ from another perspective, which motivates us to introduce diffeological \'{e}tale maps. Since the D-topology of $ \mathbb{T}_{\alpha} $ is trivial  (see \cite[p. 56]{PIZ}), this map cannot be \'{e}tale\footnote{By an \'{e}tale map, we mean a local diffeomorphism with respect to the D-topology (see  \cite[\S 2.5]{PIZ}), and an \'{e}tale space is the source of an \'{e}tale map. This terminology is better suited to the generalization we consider for it.}. 
	However, its pullbacks along the plots in  $ \mathbb{T}_{\alpha} $ are all \'{e}tale.
	Abstracting this property, we define diffeological \'{e}tale spaces and maps, formulated 
	with respect to plots instead of D-open\footnote{Throughout this paper, the prefix D of a topological property indicates the same property in terms of the D-topology.} subsets (Definition \ref{def-deta}). 
	More generally, we can say that a covering map of diffeological spaces is nothing but a map which is both a diffeological \'{e}tale map and a diffeological fiber bundle, just like the classical situation. Furthermore, it is shown that 
	on manifolds, there is no difference between \'{e}tale spaces and diffeological \'{e}tale spaces.
	And any diffeological \'{e}tale space on a diffeological orbifold is again a  diffeological orbifold (Theorem \ref{the-orb}).
		
	The same approach can be applied to
	address the problem of finding suitable analogous for submersions and immersions in diffeology such that respect the desirable properties of the classical version of these maps % in differential geometry of manifolds
	and include diffeological \'{e}tale maps entirely in their intersection.
	Although there are distinguished maps such as subductions and inductions in diffeology, which are actually strong epimorphisms and monomorphisms (see \cite[Propositions 34 and 37]{BH}),
	when we restrict ourselves to manifolds, or even open subsets of Euclidean
	spaces, they do not play the role of submersions and immersions (see Example \ref{exa-sub-sum}, also \cite{KMW} for a detailed discussion on inductions between manifolds).

	For this purpose, we first define submersions and immersions of diffeological spaces according to the nonlinear characterizations of these maps between manifolds. Then, we propose diffeological submersions and immersions by smooth maps whose pullbacks along plots are submersions and immersions, respectively.
	Of course, a diffeological submersion is nothing more than a \textit{strict} local subduction in \cite{PIZ}, that is, a local subduction onto its image. 
	As expected, diffeological submersions and immersions on manifolds are the same as the usual submersions and immersions (Corollaries \ref{cor-dsub-mfd} and \ref{cor-dimm-mfd}).
	
	We study the key properties of diffeological submersions, immersions, and \'etale maps from different aspects.
	As mentioned above, a smooth map takes plots to plots and gives rise to a natural equation of plots.
	Hence the following descriptions for these maps are obtained:
\begin{customthm}{A}\label{pi1}
		Let $ f:X\rightarrow Y $ be a smooth map between  diffeological spaces and consider the equation  $ f\circ L=P $ on the 
	\textit{germs} of plots, for given $ x\in X $, any plot $ P:U\rightarrow Y $, and each $ r\in U $ with $ P(r)=f(x)  $. 
		\begin{enumerate}
		\item[$ \textbf{S.}\quad $] 
		$ f $ is a diffeological submersion if and only if for any given data as above, there exists \textit{at least} one local lift plot $ L:V\rightarrow X $ defined on an open
		neighborhood $ V\subseteq U $ of $ r $ such that $ f\circ L=P|_V $ and $ L(r)=x $.	
		
		\item[$ \textbf{I.}\quad $] 
		If $ f $ is a diffeological immersion, then for any given data as above, there exists \textit{at most} one local lift plot $ L:V\rightarrow X $ defined on an open
		neighborhood $ V\subseteq U $ of $ r $ such that $ f\circ L=P|_V $ and $ L(r)=x $.
		
		\item[$ \textbf{E.}\quad $] 
		$ f $ is a diffeological \'etale map if and only if for any given data as above, there exists a \textit{unique}\footnote{Unique up to germs of plots}  local lift plot $ L:V\rightarrow X $ defined on an open neighborhood $ V\subseteq U $ of $ r $ such that $ f\circ L=P|_V $ and $ L(r)=x $.
	\end{enumerate}
\end{customthm}
	While $ \textbf{S} $ and $ \textbf{E} $ are equivalences, notice that $ \textbf{I} $ is only an implication. For the sake of completeness, we add diffeological injections to the discussion as those maps that turn $ \textbf{I} $ into an equivalence, which work as an auxiliary tool for us.
	
Another  diffeological property possessed by these kinds of maps is that the diffeological dimensions (see \cite{PIZ2007}) of the source and target spaces of diffeological submersions, immersions,  and \'{e}tale maps are comparable. 
%	Comparing the diffeological dimension of source and target spaces of these kinds of maps, 
\begin{customthm}{B}
	Let $ f:X\rightarrow Y $ be a smooth map between  diffeological spaces.
		\begin{enumerate}
		\item[$ \textbf{S}'\textbf{.}\quad $] 
		If $ f:X\rightarrow Y $ is a diffeological submersion, then
		$ \mathrm{dim}(X)\geq\mathrm{dim}(f(X)) $.
		
		\item[$ \textbf{I}'\textbf{.}\quad $] 
		If $ f:X\rightarrow Y $ is a diffeological immersion, then
		$ \mathrm{dim}(X)\leq\mathrm{dim}(Y) $.
		
		\item[$ \textbf{E}'\textbf{.}\quad $] 
		If	$ f:X\rightarrow Y $ is a diffeological \'etale map, then
		$ \mathrm{dim}(X)=\mathrm{dim}(f(X))\leq\mathrm{dim}(Y)  $.
	\end{enumerate}
	(Here $ \mathrm{dim} $ indicates the diffeological dimension).
\end{customthm}

This result  extends the corresponding result in the classical setting of manifolds, which is satisfactory.
In particular, $ \textbf{E}' $ is the best that can be proved, even for \'etale maps (see Remark \ref{rem-etal-dim}).

	To study diffeological spaces from a linear-algebraic viewpoint, the internal tangent spaces and bundles are defined and investigated thoroughly in \cite{CW,Hec,HM-V} besides the external types in the literature.
	In this regard, the following results are achieved, which may be useful  to compute the internal tangent spaces (see Example \ref{exa-irt}):
%	Notice that the diffeological dimension might be different from the the linear one (see \cite{} as a ), although ....
%	Compare $ \textbf{I}' $ with the case of inductions for which there is no specified relation between the dimension of source and target spaces.
%	Hence the linear behavior of these maps is another aspect that must be considered. In fact, the following results are achieved, which are useful  to compute the internal tangent spaces (see Example \ref{exa-irt}):

\begin{customthm}{C} \label{pi2}
	Let $ f:X\rightarrow Y $ be a smooth map between  diffeological spaces.
	\begin{enumerate}
		\item[$ \textbf{S}''\textbf{.}\quad  $] 
		If $ f:X\rightarrow Y $ is a diffeological submersion, then the internal tangent map $ df_x:T_xX\rightarrow T_{f(x)}Y $ is an epimorphism, at each point of $ x\in X $.
		
		\item[$ \textbf{I}''\textbf{.} \quad $] 
		If $ f:X\rightarrow Y $ is a diffeological immersion, then the internal tangent map $ df_x:T_xX\rightarrow T_{f(x)}Y $ is a monomorphism, at each point of $ x\in X $.
		
		\item[$ \textbf{E}''\textbf{.}\quad  $] 
		If $ f:X\rightarrow Y $ is a diffeological \'etale map, then the internal tangent map $ df_x:T_xX\rightarrow T_{f(x)}Y $ is an isomorphism, at each point of $ x\in X $.
	\end{enumerate}  
	Recall that the internal tangent map of a smooth map is the linear map between the internal tangent spaces induced by universal property.
\end{customthm}
	However, %\cite[Propositions 3.4]{CW} indicates that 
	the converse to Proposition \ref{pi2} does not hold (see Remark \ref{rem-tng}).
	In fact, on more complicated spaces than manifolds, one might lose the linear characterizations of the classical version of the  maps mentioned above, such as the inverse functions theorem.
	In light of \cite[Example 3.23 and Propositions 3.13]{CW}, we conclude nothing for the external tangent maps.
    This justifies why we have employed a nonlinear approach for our purposes in the first place. 
		
	To avoid pathological cases such as these in studying the linear-algebraic properties, we restrict ourselves to a somewhat more regular class of diffeological spaces called \textit{diffeological \'etale manifolds}, which are generated by diffeological \'etale maps defined on open subsets of a fixed Euclidean space as charts.
	We observe that for smooth maps between diffeological \'etale manifolds, the converse to $ \textbf{S}'',\textbf{I}'' $ and $ \textbf{E}'' $ is true (Theorem \ref{the-IFT}).
	This class of diffeological spaces contains not only the usual manifolds but also irrational tori. 
	Even without having locally injective charts, diffeological \'etale manifolds behave still similar to the usual ones in many aspects.
	We can particularly take advantage of the rank and implicit function theorems (Theorems \ref{the-Rank} and \ref{the-ImFT}).
%	We are particularly benefited from the advantages of the rank and implicit function theorems (Theorems \ref{the-Rank} and \ref{the-ImFT}).
	Using these tools, some results of the classical geometry of manifolds can be extended to diffeological \'etale manifolds. %(see also \cite{ARA}).
	Furthermore, we discuss the interplay between vector fields and flows in diffeology, and prove the fundamental theorem of flows for diffeological \'etale manifolds (Theorem \ref{the-flow}). 
	As an application, we generalize a result in \cite{CW,Hec,HM-V} and compute the internal tangent spaces of the diffeomorphism group of a D-compact diffeological \'etale manifold, especially an irrational torus, which are isomorphic to the space of its vector fields (Corollary \ref{coro-diff-comp}).
	
	Apart from the diffeological structure itself, 
	every diffeological space has other information layers, including the internal tangent spaces and the D-topology, which are essentially by-products of the diffeological structure.
	Accordingly, one has various choices to think of a notion of embeddings for diffeological spaces.
	Embeddings in the sense of Iglesias-Zemmour are those maps that are both an induction and a D-topological embedding.
	Following \cite{KMW}, one can take  weak embeddings as those maps that are both an induction and a diffeological immersion.
	Analogous to embeddings of manifolds, but
	we think of diffeological embedding or \textit{strong} embeddings as those maps that are both a diffeological immersion and a D-topological embedding. 
	It is proved that diffeological embedding are exactly those maps that are both an embedding and a weak embedding (Corollary \ref{coro-embd-st}).
	In other words, a map is a diffeological embedding if and only if it is an induction, a diffeological immersion, and a D-embedding.
	Hence we can say a diffeological embedding embeds all of these information layers.
	Examples of diffeological embeddings are abundant.
	Besides embeddings of manifolds,
	the natural embeddings of a diffeological space into its path space and into its power set\footnote{Equipped with the power set diffeology in \cite{DP} and some other diffeologies studied in \cite{AM}.}, and
	those arise from smooth retractions of diffeological spaces provide interesting examples of diffeological embeddings.
 The latter may  be helpful to determine the diffeological dimension in some cases  (see Examples \ref{exa-dim-R} and \ref{exa-dim}).
%For instance, we compute that the diffeological dimension of $ \mathrm{C}^{\infty}(U,\mathbb{R}^m)$ is infinite, for any open subsets $ U\subseteq\mathbb{R}^n $, via diffeological embeddings of the spaces of polynomials into $ \mathrm{C}^{\infty}(U,\mathbb{R}^m)$.

	 \paragraph{\textbf{Organization.}} The organization of the paper is as follows. 
	Section \ref{S2} is devoted to an overview of the necessary material from diffeological spaces, especially the pullbacks of smooth maps and the internal tangent spaces and tangent bundles.
	In Section \ref{S3}, after a mention of some concepts close to subductions such as strong, global, and weak subductions, we introduce submersions and diffeological submersions, explore their main features and connections with submersions of manifolds.
	In Section \ref{S4},  we first investigate  diffeological injections, next proceed with the study of immersions and diffeological immersions, compare them with immersions of manifolds.
	We also consider diffeological embeddings and give several examples.
	In Section \ref{S5},
	% is devoted to diffeological \'{e}tale spaces and maps. 
	we deal with diffeological \'{e}tale spaces and maps, describe their characterizations, and examine diffeological \'{e}tale spaces over diffeological orbifolds.
	We conclude the paper in Section \ref{S6} with a detailed study of diffeological \'{e}tale manifolds. We state and prove versions of the rank and implicit function theorems, as well as the fundamental theorem on flows for them.

%	 Besides the facts mentioned above, we prove that if the composition of a map followed by a diffeological immersion is smooth, then that map is smooth.
%	While both inductions and diffeological immersions are diffeological injections, there are examples that indicate these notions are not the same.
	%	Section \ref{S3} is devoted to diffeological submersions. After a mention of some concepts close to subductions such as strong, global, and weak subductions, we study their key features and connections with submersions of manifolds.
%	In Section \ref{},	we discuss and compare some notions for embeddings of diffeological spaces.		
%	In Section \ref{S5}, we introduce 

%	\vspace	
%\noindent {\textbf{Acknowledgements.}} The author would like to thank \textit{Jean-Pierre Magnot} for his comments on the first version of the paper.

	\section{Background on diffeologies}\label{S2}
	We  recall some fundamental definitions and constructions of diffeological spaces from \cite{PIZ}, which will be encountered in the rest of this paper.
	\subsection{Basic definitions}
	\begin{definition}
		An $n$-\textbf{domain}, for a nonnegative integer $n$, is an open subset of  Euclidean space $\mathbb{R}^n$ endowed with the standard topology. 
		Any map from a domain to a set $X$ is said to be a \textbf{parametrization} in $X$.  
		If the domain of definition of a parametrization $ P $, denoted by 
		$ \mathrm{dom}(P) $, is an $n$-domain, then $ P $ is called an $n$-parametrization. 
		The only $0$-parametrization with the value $x\in X$ is denoted by the bold letter $ \mathbf{x}$.
		A parametrization $ P $ in $ X $ with $ 0\in\mathrm{dom}(P) $ and $ P(0)=x $ is called a parametrization \textbf{centered} at $ x $.
		A family $\lbrace P_i: U_i\rightarrow X\rbrace_{i\in J}$ of $n$-parametrizations is \textbf{compatible} if
		$P_i|_{U_i\cap U_j}=P_j|_{U_i\cap U_j}$, for all $i, j\in J$. 
		For such a family, the parametrization 
		$P:\bigcup_{i\in J} U_i\rightarrow X$ given by $P(r)=P_i(r)$ for $r\in U_i$, 
		is said to be the \textbf{supremum} of the family. 
		By convention, the supremum of the empty family is the empty parametrization $ \varnothing\rightarrow X $.
	\end{definition} 

	\begin{definition} 
		A \textbf{diffeology} $ \mathcal{D} $ on a set $ X $ is a collection of parametrizations in $ X $ that fulfills the following axioms:
		\begin{enumerate}
			\item[\textbf{D1.}] 
			The union of the images of the elements of $ \mathcal{D} $ covers $ X $.
			
			\item[\textbf{D2.}] 
			For every element $P:U\rightarrow X$ of $\mathcal{D}$ and every smooth map $F:V\rightarrow U$ between domains, the parametrization $P\circ F$ belongs to $\mathcal{D}$.
			\item[\textbf{D3.}]
			The supremum of any compatible family of elements of $\mathcal{D}$ belongs to $\mathcal{D}$.
		\end{enumerate}
		A \textbf{diffeological space} $ (X,\mathcal{D}) $ is an underlying set $X$ together with a
		diffeology $\mathcal{D}$, whose elements are called the \textbf{plots} in $ X $.
		A diffeological space is just denoted by the underlying set, when the diffeology is understood.
	\end{definition} 
	
	By the axioms of diffeology, in every diffeological space, the locally constant
	parametrizations are plots.
	In particular, the empty parametrization is a plot.
	We also take into account the empty set $ \varnothing $ as a diffeological space with the diffeology $ \lbrace\varnothing\rightarrow\varnothing\rbrace $.
	\begin{example}
		Let $ X $ be any set.
		The set of the locally constant parametrizations in  $ X $ is a diffeology on $ X $ called the \textbf{discrete
			diffeology}.
		The set of all parametrizations in $ X $ is a diffeology on $ X $ called the \textbf{indiscrete} or \textbf{coarse
			diffeology}.
	\end{example} 
	
	As the following examples show, the axioms of diffeology are independent.

	\begin{example}
		\begin{enumerate}
			\item[(i)] 
			Consider the set $ \mathbb{R} $, and the collection of bounded smooth parametrizations $ P:U\rightarrow\mathbb{R} $.
			This collection satisfies  D1  and  D2, but not D3.
			
			\item[(ii)] 
			The set of smooth $ 1 $-parametrizations
			$ 	P:U\rightarrow\mathbb{R}^n $ in a Euclidean space $ \mathbb{R}^n  $ obeys D1 and D3, but not D2.
			\item[(iii)] 
			Let $ X=\{0,1\} $.  The set of all constant parametrizations in $ X $ with the value $ 0 $ meets D2 and D3, but not D1.
		\end{enumerate}
		
	\end{example}
	
	\begin{definition} (\cite[Definition 2.6]{DA}).
		A collection $\mathcal{D}$ of parametrizations in a set $X$ satisfying D1 and D2 is said to be a  \textbf{prediffeology} on $X$.
		If $\mathcal{D}$ fulfills D1, it is a \textbf{parametrized cover} of $X$.
	\end{definition}  
 
	\begin{definition}
		Let  $X$ be a set and let $\mathcal{C}$ be a parametrized cover of $X$.
		The \textbf{prediffeology generated} by $\mathcal{C}$, which we denote by $\lfloor\mathcal{C}\rfloor$, consists of the composite parametrizations $ P\circ F $ in which $ P $ is an element of $\mathcal{C}$ and $ F $ is a smooth map between domains.
			The \textbf{diffeology generated} by $ \mathcal{C} $, which we denote by $\langle\mathcal{C}\rangle$, is the set of  parametrizations $ P $ 
		that are the supremum of a compatible family
		$ \lbrace P_i\rbrace_{i\in J} $ of parametrizations in $ X $ with $ P_i\in\lfloor\mathcal{C}\rfloor $.
		For a diffeological space $ (X,\mathcal{D}) $,
		a \textbf{covering generating family} is a parametrized cover $\mathcal{C}$ of $ X $  generating the diffeology of the space, i.e., $\langle\mathcal{C}\rangle=\mathcal{D}$.
		Denote by $ \mathsf{CGF}(X) $ the collection of all covering generating families of the space $ X $.% Clearly, the diffeology of a diffeological space is itself a covering generating family.
	\end{definition}
	
	\begin{example}
		For any diffeological space $ X $, each of the following collections is a covering generating family:
		\begin{enumerate}
			\item[$ {\tiny\blacktriangleright}  $] 
			The diffeology of the space,
			\item[$ {\tiny\blacktriangleright}  $] 
			The collection of round plots, i.e., plots  whose domains are open balls,
			\item[$ {\tiny\blacktriangleright}  $] 
			The collection of global plots $ \mathbb{R}^n\rightarrow X $ ($n$ ranges over
			nonnegative integers),
			\item[$ {\tiny\blacktriangleright}  $] 
			The collection of centered plots, i.e., plots $ U\rightarrow X $ with $ 0\in U $.
		\end{enumerate} 
	\end{example} 
	
	\begin{definition}
		The \textbf{dimension}  of a  diffeological space $ X $, denoted by $ \mathrm{dim}(X) $, is a nonnegative integer that is defined by
		\begin{center}
			$ \mathrm{dim}(X)= ~~~\inf_{\mathcal{G} \in\mathsf{CGF}(X)} ~~\mathrm{dim}(\mathcal{G}), $
		\end{center}
		where
		\begin{center}
			$ \mathrm{dim}(\mathcal{G})=~~~\sup_{P\in\mathcal{G}} ~~\mathrm{dim}\big{(}\mathrm{dom}(P)\big{)} $
		\end{center}
		for each $ \mathcal{G} \in\mathsf{CGF}(X) $.
		We write $ \mathrm{dim}(X)=\infty $ if $ X $ has no covering generating family of finite dimension. 
	\end{definition} 
	
	\begin{remark}
		In the above definition, notice that the infimum is taken over all covering generating families, not  generating families.
		But this is equivalent to \cite[\S 1.78]{PIZ}, because one can always replace any generating family by a covering generating family with the same dimension (see \cite[\S 1.72]{PIZ}).
	\end{remark}

	\begin{definition}
		Let $X$ and $Y$ be two diffeological spaces. A map $f:X\rightarrow Y$ is \textbf{smooth} if for every plot $P$ in $X$, the composition $f\circ P$ is a plot in the space $Y$.
			The set of all smooth maps from $X$ to $Y$  is denoted by $\mathrm{C}^{\infty}(X,Y)$.
			Denote by $ \mathsf{Diff} $ the category of diffeological spaces and smooth maps.
			The isomorphisms in the category $ \mathsf{Diff} $ are called \textbf{diffeomorphisms}.
	\end{definition} 
	
	Plots in a diffeological space are exactly the smooth parametrizations in the space.

	\begin{example}
		Any orbifold, especially a manifold, has a standard diffeology generated by its atlas.
		Endowing manifolds with the standard diffeology, a map between manifolds is smooth in the usual sense if and only if it is smooth in the diffeological sense (see \cite[\S 4.3]{PIZ}). 
	\end{example} 
	
	\begin{example}\label{exa-lasa}
		Denote by $ \mathbb{R}_k^n $ the diffeological space with the underlying set $ \mathbb{R}^n $ and the diffeology generated by the usual smooth $ k $-parametrizations in $ \mathbb{R}^n $.
		Then $ \mathrm{id}_{\mathbb{R}^n}:\mathbb{R}_k^n\rightarrow\mathbb{R}^n $ is a bijective smooth map. If $ k< n $, it is not a diffeomorphism.
	\end{example}

	\begin{definition}
		Every diffeological space $ X $ has a natural topology called the D-\textbf{topology} in which a subset of $X$ is D-\textbf{open} if its preimage by any plot is open. 
		We denote a diffeological space $ X $ regarded as the topological space endowed with the D-topology by $ D(X) $.
	\end{definition}
	
	Any smooth map is D-continuous, that is, continuous with respect to the D-topology \cite[\S 2.9]{PIZ}.

	\begin{proposition}
		The D-topology of a usual orbifold  $ \mathcal{O} $, especially a manifold, coincides with the underlying topology.
	\end{proposition}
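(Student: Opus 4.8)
The plan is to reduce everything to the atlas that defines the orbifold. Recall that the standard diffeology of $\mathcal{O}$ is generated by its chart maps $\phi_i\colon\tilde U_i\to\mathcal{O}$, where each $\tilde U_i\subseteq\mathbb{R}^n$ is an open domain on which a finite group $\Gamma_i$ acts, and $\phi_i$ factors as $\tilde U_i\twoheadrightarrow\tilde U_i/\Gamma_i\xrightarrow{\ \cong\ }U_i\hookrightarrow\mathcal{O}$ with $U_i$ open in the underlying topology and $\{U_i\}$ an open cover of $\mathcal{O}$; for a manifold every $\Gamma_i$ is trivial and $\phi_i$ is a homeomorphism onto $U_i$. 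I would isolate two facts: (a) each $\phi_i$ is itself a plot of $\mathcal{O}$, and (b) each $\phi_i$ is a topological quotient map onto $U_i$. Fact (b) follows because the orbit map $\pi\colon\tilde U_i\to\tilde U_i/\Gamma_i$ is an open continuous surjection: for open $W\subseteq\tilde U_i$ one has $\pi^{-1}(\pi(W))=\bigcup_{\gamma\in\Gamma_i}\gamma W$, a finite union of open sets, so $\pi$ is open and hence a quotient map, and it is post-composed with the homeomorphism $\tilde U_i/\Gamma_i\cong U_i$.

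With these in hand I would verify the two inclusions. For \emph{open $\Rightarrow$ D-open}: by the construction of the orbifold diffeology every plot $P$ is locally of the form $\phi_i\circ\tilde P$ for some smooth, hence continuous, $\tilde P$ into $\tilde U_i$; as $\phi_i$ is continuous for the underlying topology, every plot is continuous, so $P^{-1}(A)$ is open whenever $A\subseteq\mathcal{O}$ is open, which is exactly the assertion that $A$ is D-open. For \emph{D-open $\Rightarrow$ open}: let $A$ be D-open. By (a) each $\phi_i$ is a plot, so $\phi_i^{-1}(A)$ is open in $\tilde U_i$. Since $\mathrm{im}\,\phi_i=U_i$ we have $\phi_i^{-1}(A\cap U_i)=\phi_i^{-1}(A)$, and (b) says $\phi_i$ is a quotient map onto $U_i$; therefore $A\cap U_i$ is open in $U_i$, and hence in $\mathcal{O}$ because $U_i$ is open. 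As $\{U_i\}$ covers $\mathcal{O}$, we conclude $A=\bigcup_i(A\cap U_i)$ is open.

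The only nontrivial ingredients are the two structural facts about the orbifold diffeology, and I expect fact (b) together with the local lifting of plots to be where all the content sits. In the manifold case both are transparent—chart maps are homeomorphisms onto open sets and smooth maps into a manifold are continuous—so the statement is essentially immediate; the genuine work is purely in the orbifold setting, where one must invoke that a finite group acting by homeomorphisms yields an open quotient and that plots of $\mathcal{O}$ lift locally to the charts (as in the construction of \cite{IKZ}). No analytic difficulty arises: the proof is a matter of assembling the correct topological facts about the generating charts rather than any estimate or limiting argument.
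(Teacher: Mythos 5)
Your proof is correct and follows essentially the same route as the paper's: one inclusion from the continuity of plots (which factor locally through the charts), the other from the fact that orbifold charts are plots that are open maps onto open subsets of $\mathcal{O}$ — your quotient-map formulation and the paper's direct use of openness (showing $\varphi\circ\varphi^{-1}(O)$ is an open neighborhood of a given point inside the D-open set $O$) are the same fact used in trivially different packaging. The only difference is that you justify the openness of the orbit map via $\pi^{-1}(\pi(W))=\bigcup_{\gamma}\gamma W$, which the paper asserts without proof.
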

	\begin{proof}
		The underlying topology is contained in the D-topology, because orbifold charts are  continuous with respect the underlying topology. 
		On the other hand, let $ O $ be a D-open subset of $ \mathcal{O} $ and $ x\in O $. 
		Take an orbifold chart $ \varphi:U\rightarrow\mathcal{O} $ with $ x\in \varphi(U) $.
		Since orbifold charts are open maps, $ \varphi\circ\varphi^{-1}(O)\subseteq O $ is an open subset of $ \mathcal{O} $ containing $ x $.
		This follows that the D-open subset $ O $ itself is an open subset of $ \mathcal{O} $ with respect to the underlying topology.
	\end{proof}
	
	\subsection{Constructions associated with diffeological spaces}
	
	\begin{definition}
		A map $ f:X\rightarrow Y $ between diffeological spaces is an \textbf{induction} if it is injective and the pullback diffeology by $ f $, i.e.,
		$ \{ P\in\mathrm{Param}(X)\mid f\circ P \mbox{ is a plot in } Y \} $
		is the same as the diffeology of $ X $.
		In particular, inductions are smooth.
	\end{definition}

	\begin{definition} 
		A smooth map $ f:X\rightarrow Y $ between diffeological spaces is a \textbf{local induction} if for every $ x $ in $ X $, there are D-open neighborhoods $ O\subseteq X $ of $ x $ such that $ f|_O:O\rightarrow Y $ is an injection and for every plot $ P:U\rightarrow f(O) $
		with $ P(0)=f(x) $,  $ f^{-1}\circ P $ is a plot in $ X $.
	\end{definition}

	\begin{definition}
		Let $ X $ be a diffeological space. A \textbf{diffeological subspace} of $ X $ is a subset $ X'\subseteq X $ equipped with the \textbf{subspace diffeology}, which is the set of all plots in $ X $ with values in $ X' $.
		In this situation, the inclusion map $ X'\hookrightarrow X $ is an induction.
	\end{definition}
	
	Every subspace $ X' $ of a diffeological space $ X $ has two topologies: 
	the D-topology given 
	by the subspace diffeology of $ X $ on $ X' $, 
	and the subspace topology inherited from the D-topology 
	of $ X $ on $ X' $. 
	Since the inclusion map is D-continuous, 
	any open subset of $ X' $ for the subspace topology is a D-open subset.
	\begin{definition}	
		A subspace $ X' $ of a diffeological space $ X $ is said \textbf{embedded} if its D-topology coincides with the subspace topology.
	\end{definition}
	
	\begin{definition}
		A map $f:X\rightarrow Y$ between diffeological spaces is a \textbf{subduction} if % $ \pi $ is surjective and 
		the collection
\begin{center}
			$ \lbrace f\circ P\mid P $ is a plot in $ X\rbrace $
\end{center}
		is a covering generating family for $ Y $. Obviously, every subduction is a surjective smooth map.
	\end{definition}

	\begin{proposition}
				(\cite[\S 1.48]{PIZ}).
	A surjective smooth map $ f:X\rightarrow Y $ between diffeological spaces is a  subduction if and only if for any plot $ P:U\rightarrow Y $  and $ r_0\in U $, there exists at least one local lift plot $ L:V\rightarrow X $ defined on an open
	neighborhood $ V\subseteq U $ of $ r_0 $ such that $ f\circ L=P|_V $.
\end{proposition}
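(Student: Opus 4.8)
The plan is to unwind the definition of subduction—namely that the collection $\mathcal{C} := \{f\circ P \mid P \text{ a plot in } X\}$ is a covering generating family for $Y$, meaning $\mathcal{C}$ is a parametrized cover and $\langle\mathcal{C}\rangle = \mathcal{D}_Y$ (writing $\mathcal{D}_Y$ for the diffeology of $Y$)—and to match it against the local-lift condition through the two generating operations $\lfloor\,\cdot\,\rfloor$ and $\langle\,\cdot\,\rangle$. Both implications reduce to tracking how an arbitrary plot $P$ of $Y$ factors through $\mathcal{C}$ locally, so the proof is essentially bookkeeping with these operations.

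For the forward implication, I would assume $f$ is a subduction and fix a plot $P:U\rightarrow Y$ and $r_0\in U$. Since $\langle\mathcal{C}\rangle = \mathcal{D}_Y$, the plot $P$ is the supremum of a compatible family $\{P_i:U_i\rightarrow Y\}$ with each $P_i\in\lfloor\mathcal{C}\rfloor$; as the $U_i$ cover $U$, choose $i$ with $r_0\in U_i$, and note that $P|_{U_i}=P_i$ by the definition of supremum. By the definition of $\lfloor\mathcal{C}\rfloor$, I may write $P_i = (f\circ Q)\circ F = f\circ(Q\circ F)$ for some plot $Q$ in $X$ and smooth map $F$ between domains. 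Setting $L := Q\circ F$—a plot in $X$ by axiom \textbf{D2}—and $V := U_i$ then yields the desired local lift with $f\circ L = P|_V$.

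For the converse, I would assume the local-lift property and first check that $\mathcal{C}$ is a parametrized cover: surjectivity of $f$ together with the fact that constant parametrizations are plots gives $f\circ\mathbf{x} = \mathbf{f(x)}$, so the images of elements of $\mathcal{C}$ cover $Y$. The inclusion $\langle\mathcal{C}\rangle\subseteq\mathcal{D}_Y$ is automatic, since $f$ is smooth (each $f\circ P$ is a plot) and $\mathcal{D}_Y$ is stable under the operations defining $\lfloor\,\cdot\,\rfloor$ and $\langle\,\cdot\,\rangle$ (axioms \textbf{D2} and \textbf{D3}). For the reverse inclusion I would take any plot $P:U\rightarrow Y$; for each $r\in U$ the hypothesis furnishes a lift $L_r:V_r\rightarrow X$ with $f\circ L_r = P|_{V_r}$, whence $P|_{V_r} = f\circ L_r\in\mathcal{C}\subseteq\lfloor\mathcal{C}\rfloor$. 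The family $\{P|_{V_r}\}_{r\in U}$ is compatible—all its members are restrictions of the single map $P$—and covers $U$, so its supremum $P$ lies in $\langle\mathcal{C}\rangle$. Hence $\mathcal{D}_Y\subseteq\langle\mathcal{C}\rangle$, and $f$ is a subduction.

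No single step is a serious obstacle, but the one place demanding care is the forward direction: membership of $P$ in the \emph{generated} diffeology $\langle\mathcal{C}\rangle$ only guarantees a factorization through $\lfloor\mathcal{C}\rfloor$ \emph{after} passing to a suitable member $U_i$ of the compatible family, and this is precisely what produces the local neighborhood $V$ (rather than a global lift) in the conclusion. Keeping the two levels distinct—$\lfloor\,\cdot\,\rfloor$ for pointwise reparametrization and $\langle\,\cdot\,\rangle$ for gluing compatible pieces—is the key to a clean argument.
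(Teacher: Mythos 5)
Your proof is correct: both directions faithfully unwind the definitions of $\lfloor\,\cdot\,\rfloor$ and $\langle\,\cdot\,\rangle$, and the key observations (restriction to a member $U_i$ of the compatible family in the forward direction, and the compatibility of the restrictions $P|_{V_r}$ in the converse) are exactly what is needed. The paper itself gives no proof, citing \cite[\S 1.48]{PIZ} instead, and your argument is essentially the standard one found there.
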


\begin{definition} 
	A smooth map $ f:X\rightarrow Y $ between  diffeological spaces is a \textbf{local subduction} if for every $ x_0\in X $ and for any plot $ P:U\rightarrow Y $ centered at $ f(x_0) $, there exists an open
	neighborhood $ V\subseteq U $ of $ 0 $ and a plot $ L:V\rightarrow X $ centered at $ x_0 $  such that $ f\circ L=P|_V $.
		Clearly, any local subduction is a  subduction.
\end{definition}

	\begin{proposition}\label{pro-dim-subd}
	(\cite[\S 1.82]{PIZ}).
	If	$f:X\rightarrow Y$ is a subduction, then $ \mathrm{dim}(X)\geq\mathrm{dim}(Y) $.
\end{proposition}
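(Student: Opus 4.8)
The plan is to show that the dimension can only decrease under a subduction by transporting covering generating families from $X$ to $Y$ along $f$. Concretely, given any $\mathcal{G}\in\mathsf{CGF}(X)$, I would form the pushed-forward family $f_*\mathcal{G}:=\lbrace f\circ P\mid P\in\mathcal{G}\rbrace$ of parametrizations in $Y$ and argue that it belongs to $\mathsf{CGF}(Y)$ with $\mathrm{dim}(f_*\mathcal{G})\leq\mathrm{dim}(\mathcal{G})$. Granting this, the definition of the diffeological dimension gives $\mathrm{dim}(Y)\leq\mathrm{dim}(f_*\mathcal{G})\leq\mathrm{dim}(\mathcal{G})$ for every $\mathcal{G}\in\mathsf{CGF}(X)$, and taking the infimum over $\mathcal{G}$ yields $\mathrm{dim}(Y)\leq\mathrm{dim}(X)$.

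The dimension estimate is immediate: composing with $f$ does not alter domains, so $\mathrm{dom}(f\circ P)=\mathrm{dom}(P)$ for every $P\in\mathcal{G}$, whence in fact $\mathrm{dim}(f_*\mathcal{G})=\mathrm{dim}(\mathcal{G})$. Likewise, $f_*\mathcal{G}$ is a parametrized cover of $Y$: since $\mathcal{G}$ covers $X$ and $f$ is surjective, the images of the $f\circ P$ cover $f(X)=Y$. The real content is verifying that $f_*\mathcal{G}$ generates the diffeology of $Y$, that is, $\langle f_*\mathcal{G}\rangle=\mathcal{D}_Y$.

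The inclusion $\langle f_*\mathcal{G}\rangle\subseteq\mathcal{D}_Y$ is clear, since each $f\circ P$ is a plot (as $f$ is smooth and $P$ a plot) and $\mathcal{D}_Y$ is closed under the generating operations. For the reverse inclusion I would combine the subduction lifting property with the fact that $\mathcal{G}$ generates $\mathcal{D}_X$. Let $P\colon U\to Y$ be an arbitrary plot and $r_0\in U$. By the lifting characterization of subductions recalled above, there is a local lift $L\colon V\to X$ on a neighborhood $V\subseteq U$ of $r_0$ with $f\circ L=P|_V$. Now $L$ is a plot in $X=\langle\mathcal{G}\rangle$, so after shrinking $V$ we may write $L|_V=Q\circ F$ for some $Q\in\mathcal{G}$ and some smooth map $F$ between domains. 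Consequently $P|_V=f\circ L|_V=(f\circ Q)\circ F\in\lfloor f_*\mathcal{G}\rfloor$. Since $r_0\in U$ was arbitrary, $P$ is locally an element of $\lfloor f_*\mathcal{G}\rfloor$, hence $P\in\langle f_*\mathcal{G}\rangle$ by the locality axiom. This proves $\mathcal{D}_Y\subseteq\langle f_*\mathcal{G}\rangle$, completing the equality.

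I expect the step requiring the most care to be this last factorization argument, where one must chain two local descriptions at the same point: first the local lift $L$ supplied by the subduction hypothesis, and then the local factorization $L=Q\circ F$ through an element of $\mathcal{G}$ coming from $L\in\langle\mathcal{G}\rangle$. Both shrinkings of the neighborhood of $r_0$ must be performed compatibly so that the resulting $(f\circ Q)\circ F$ agrees with $P$ on a common open set; everything else is a routine application of the definitions and the infimum bookkeeping.
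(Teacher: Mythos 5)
Your proof is correct: pushing a covering generating family $\mathcal{G}$ of $X$ forward to $f_*\mathcal{G}=\{f\circ P\mid P\in\mathcal{G}\}$, checking it generates $\mathcal{D}_Y$ via the local-lift characterization of subductions combined with the local factorization $L=Q\circ F$ through $\mathcal{G}$, and then taking infima is exactly the standard argument. The paper itself gives no proof here --- it simply cites \cite[\S 1.82]{PIZ} --- and your argument is essentially the one found in that reference, so there is nothing further to compare.
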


	\begin{proposition}\label{pro-subd-smd}
		(\cite[\S 1.51]{PIZ}).
		Let $\pi:X'\rightarrow X$ be a subduction. A map $ f:X\rightarrow Y $ is smooth if and only
		if $ f\circ\pi $ is smooth. Moreover, the map $ f $ is a subduction if and only if $ f\circ\pi $ is a subduction.
	\end{proposition}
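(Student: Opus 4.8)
The plan is to reduce everything to the local-lift criterion for subductions recalled just above, together with the locality axiom \textbf{D3}. Throughout I use that a subduction is in particular smooth, so $\pi$ carries plots of $X'$ to plots of $X$.

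For the smoothness equivalence, the forward direction is immediate: if $f$ is smooth then $f\circ\pi$ is a composite of smooth maps, hence smooth. For the converse, suppose $f\circ\pi$ is smooth and let $P:U\rightarrow X$ be an arbitrary plot in $X$; the goal is to show $f\circ P$ is a plot in $Y$. Since being a plot is a local condition by \textbf{D3}, it suffices to verify this on a neighborhood of each $r_0\in U$. Here the fact that $\pi$ is a subduction supplies a local lift $L:V\rightarrow X'$ with $\pi\circ L=P|_V$, and then
\[
(f\circ P)|_V=f\circ(\pi\circ L)=(f\circ\pi)\circ L
\]
is a plot in $Y$, because $f\circ\pi$ is smooth and $L$ is a plot in $X'$. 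Gluing these local plots over $U$ via \textbf{D3} shows $f\circ P$ is a plot, so $f$ is smooth.

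For the subduction equivalence, I would first dispose of surjectivity and smoothness and then check the lift criterion. If $f$ is a subduction, then $f\circ\pi$ is smooth (by the first part) and surjective as a composite of surjections; given a plot $Q:U\rightarrow Y$ and $r_0\in U$, I would lift $Q$ through $f$ to a plot $L_1$ in $X$ on some $V_1\ni r_0$ with $f\circ L_1=Q|_{V_1}$, and then lift $L_1$ through $\pi$ to a plot $L_2$ in $X'$ on a smaller $V_2\subseteq V_1$ with $\pi\circ L_2=L_1|_{V_2}$, so that $(f\circ\pi)\circ L_2=Q|_{V_2}$; hence $f\circ\pi$ is a subduction. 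Conversely, suppose $f\circ\pi$ is a subduction. Then $f\circ\pi$ is smooth, so $f$ is smooth by the first part, and $f(X)\supseteq f(\pi(X'))=(f\circ\pi)(X')=Y$ forces $f$ to be surjective. For the lift, given a plot $P:U\rightarrow Y$ and $r_0\in U$, the subduction $f\circ\pi$ yields a local lift $L:V\rightarrow X'$ with $(f\circ\pi)\circ L=P|_V$; then $\pi\circ L:V\rightarrow X$ is a plot in $X$ satisfying $f\circ(\pi\circ L)=P|_V$, which is precisely the local lift needed to conclude that $f$ is a subduction.

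The argument is routine once the local-lift criterion is available, and I do not anticipate a genuine obstacle; the only point demanding a little care is the order of the two lifts in the forward direction of the subduction equivalence, where one must lift through $f$ first and then through $\pi$, shrinking the domain so that both lifts are defined on a common neighborhood of $r_0$.
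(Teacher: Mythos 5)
Your proof is correct: the paper itself gives no argument for this proposition (it simply cites \cite[\S 1.51]{PIZ}), and your route via the local-lift criterion together with axiom \textbf{D3} is exactly the standard argument used in that reference. The two-stage lifting (first through $f$, then through $\pi$, shrinking the neighborhood) in the forward direction of the subduction equivalence is handled in the right order, and the surjectivity and gluing steps are all in place.
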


	\begin{definition}
		Let $ X $ be a diffeological space.
		If $ R $ is an equivalence relation on $ X $ and $ q:X\rightarrow X/R $ is the quotient map, the diffeology generated by the family 
		$ \lbrace q\circ P\mid P $ is a plot in $ X\rbrace $ 
		is called the \textbf{quotient diffeology} on $ X/R $, and $ X/R $ is the \textbf{quotient space}.
		In this situation, the quotient map $ q:X\rightarrow X/R $ is a subduction.
	\end{definition}
 
	\begin{definition}
		Let $ \lbrace X_i\rbrace_{i\in J} $ be a family of diffeological spaces. 
		The \textbf{product diffeology} on
		$ X=\prod_{i\in J} X_i$ is given by the parametrizations $ P $ in $ X $ for which
		$ \pi_i\circ P$ is a plot in $X_i$ for all $ i\in J $, where
		$ \pi_i:X\rightarrow X_i$ is the natural projection.
		If $ J=\lbrace1,\ldots,n\rbrace $, then the plots in the product $X=X_1\times\dots\times X_n $ are
		$ n $-tuples $ (P_1,\ldots,P_n) $ where each $ P_i $ is a plot in $ X_i $.
	\end{definition}
	
	\begin{proposition}\label{lem-prd-top}
		(\cite[Lemma 4.1]{CSW2014}).
		Let $ X $ and $ Y $ be two diffeological spaces and $ D(X) $ is locally compact
		Hausdorff. Then $ D(X \times Y) $ and $ D(X) \times D(Y) $ are homeomorphic.
	\end{proposition}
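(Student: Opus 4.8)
The plan is to show that the canonical identity-on-points map is a homeomorphism by proving the two topology inclusions separately. Write $\tau_D$ for the topology of $D(X\times Y)$ and $\tau_\times$ for that of $D(X)\times D(Y)$, both living on the set $X\times Y$. The inclusion $\tau_\times\subseteq\tau_D$ is the free half and needs no hypothesis: the projections $\mathrm{pr}_X\colon X\times Y\to X$ and $\mathrm{pr}_Y\colon X\times Y\to Y$ are smooth, hence D-continuous, so they descend to continuous maps $D(X\times Y)\to D(X)$ and $D(X\times Y)\to D(Y)$; by the universal property of the product topology the identity $D(X\times Y)\to D(X)\times D(Y)$ is continuous, which is exactly $\tau_\times\subseteq\tau_D$.

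The substantive content is the reverse inclusion $\tau_D\subseteq\tau_\times$, equivalently the continuity of the identity $j\colon D(X)\times D(Y)\to D(X\times Y)$. Here I would exploit that a D-topology is a \emph{final} topology. Assembling all plots of $X$ into the map $\pi\colon E:=\coprod_{P}\mathrm{dom}(P)\to D(X)$ and all plots of $Y$ into $\rho\colon F:=\coprod_{Q}\mathrm{dom}(Q)\to D(Y)$, axiom \textbf{D1} makes both surjective and the definition of the D-topology makes the target carry the final topology, so $\pi$ and $\rho$ are quotient maps. Now I invoke twice the classical fact (due to Whitehead) that the product of a quotient map with the identity of a locally compact Hausdorff space is again a quotient map. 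Since $D(X)$ is locally compact Hausdorff, $\mathrm{id}_{D(X)}\times\rho\colon D(X)\times F\to D(X)\times D(Y)$ is a quotient map, so $j$ is continuous iff $j\circ(\mathrm{id}_{D(X)}\times\rho)$ is. As $D(X)\times F=\coprod_{Q} D(X)\times\mathrm{dom}(Q)$, continuity of a map out of this coproduct is tested summand by summand, reducing the problem to the continuity of each $g_Q\colon D(X)\times\mathrm{dom}(Q)\to D(X\times Y)$, $(x,r)\mapsto (x,Q(r))$. Since $\mathrm{dom}(Q)$ is a domain, hence locally compact Hausdorff, Whitehead's theorem applies again to $\pi\times\mathrm{id}_{\mathrm{dom}(Q)}\colon E\times\mathrm{dom}(Q)\to D(X)\times\mathrm{dom}(Q)$, so $g_Q$ is continuous iff $g_Q\circ(\pi\times\mathrm{id}_{\mathrm{dom}(Q)})$ is.

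This last map is defined on $E\times\mathrm{dom}(Q)=\coprod_{P}\mathrm{dom}(P)\times\mathrm{dom}(Q)$, and on the $P$-summand it is $(s,r)\mapsto(P(s),Q(r))$. This is precisely a plot of $X\times Y$ for the product diffeology, its two components $P\circ\mathrm{pr}_1$ and $Q\circ\mathrm{pr}_2$ being plots of $X$ and $Y$ by axiom \textbf{D2}; and every plot is continuous into the D-topology of its target. Thus the map is continuous on each summand, hence on the coproduct, and unwinding the two quotient reductions yields that $j$ is continuous, giving $\tau_D\subseteq\tau_\times$ and therefore $\tau_D=\tau_\times$. I expect the main obstacle to be the hard inclusion: a priori a D-open subset of $X\times Y$ need not be open in the product topology, and what resolves this is precisely local compactness, entering through Whitehead's theorem to guarantee that the relevant products of quotient maps stay quotient maps; this is the only place the hypothesis on $D(X)$ is used, and it is what prevents the D-topology of the product from being strictly finer.
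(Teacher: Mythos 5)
Your proof is correct. The paper itself gives no argument for this proposition---it simply cites \cite[Lemma 4.1]{CSW2014}---and your argument (expressing each D-topology as the quotient topology induced by the disjoint union of all plot domains, then applying Whitehead's theorem that crossing a quotient map with a locally compact Hausdorff space yields a quotient map, once with $D(X)$ and once with $\mathrm{dom}(Q)$, to reduce continuity of the identity $D(X)\times D(Y)\to D(X\times Y)$ to the D-continuity of the plots $(s,r)\mapsto(P(s),Q(r))$) is essentially the same proof as in that cited reference, which phrases the identical reduction in the language of colimits preserved by the exponentiable functor $D(X)\times(-)$.
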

	In particular, for a domain $ U $, $ D(U \times X) $ and $ U \times D(X) $ are homeomorphic.
	\begin{definition}
		Let $ \lbrace X_i\rbrace_{i\in J} $ be a family of diffeological spaces.
		For each $ i\in J $, there is a canonical
		injection $ X_i\rightarrow\bigsqcup_{i\in J} X_i $, which identifies $ X_i $ with its image in $ \bigsqcup_{i\in J} X_i $.
		The \textbf{disjoint union} or \textbf{sum diffeology} on $ X=\bigsqcup_{i\in J} X_i $ is given by the following
		property: A parametrization $ P:U\rightarrow X $ is a plot if there exists a partition $ \lbrace U_i\rbrace_{i\in J} $ of $ U $ such that $ P_i=P|_{U_i} $ is a plot in $ X_i $.
		In this situation, for each $ i\in J $, the canonical
		injection $ X_i\rightarrow X $ is an induction.
	\end{definition}
	\begin{proposition}
		Let $ \lbrace X_i\rbrace_{i\in J} $ be a family of diffeological spaces.  Then
		$ D(\bigsqcup_{i\in J} X_i) $ is homeomorphic to $ \bigsqcup_{i\in J} D(X_i) $.
	\end{proposition}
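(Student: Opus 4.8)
The plan is to observe that $\bigsqcup_{i\in J} X_i$ and $\bigsqcup_{i\in J} D(X_i)$ have the very same underlying set, namely the disjoint union of the underlying sets of the $X_i$; so it suffices to show that the identity map between them is a homeomorphism. Concretely, I would prove that the two topologies coincide, i.e. that a subset $O$ of $X:=\bigsqcup_{i\in J} X_i$ is D-open for the sum diffeology if and only if $O\cap X_i$ is D-open in $X_i$ for every $i\in J$. The latter condition is precisely the defining property of open sets in the topological sum $\bigsqcup_{i\in J} D(X_i)$, so this equivalence is exactly what is needed.

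For the forward implication I would use the fact, recorded in the definition of the sum diffeology, that each canonical injection $\iota_i\colon X_i\hookrightarrow X$ is an induction, hence smooth, and therefore D-continuous by \cite[\S 2.9]{PIZ}. Consequently, if $O$ is D-open in $X$, then $O\cap X_i=\iota_i^{-1}(O)$ is D-open in $X_i$ for each $i$.

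For the reverse implication, suppose $O\cap X_i$ is D-open in $X_i$ for all $i$, and let $P\colon U\to X$ be an arbitrary plot. By the definition of the sum diffeology there is a partition $\{U_i\}_{i\in J}$ of $U$ into open sets with $U_i=P^{-1}(X_i)$ and with $P|_{U_i}$ a plot in $X_i$. Then
\[
P^{-1}(O)=\bigcup_{i\in J}\big(P|_{U_i}\big)^{-1}(O\cap X_i),
\]
and each set $\big(P|_{U_i}\big)^{-1}(O\cap X_i)$ is open in $U_i$ — because $O\cap X_i$ is D-open in $X_i$ and $P|_{U_i}$ is a plot there — hence open in $U$. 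Since a union of open sets is open, $P^{-1}(O)$ is open for every plot $P$, so $O$ is D-open in $X$. Combining the two implications shows that the identity map is a homeomorphism.

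The only point that requires genuine care is the reverse implication, where one must use that a plot in the sum diffeology really does decompose over an \emph{open} partition of its domain, so that the preimages $P^{-1}(X_i)$ are open and the restrictions are themselves plots in the respective summands. Granting this structural fact, which is built into the definition of the sum diffeology, the remainder is a routine gluing of open sets and presents no real obstacle.
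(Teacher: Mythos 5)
Your proof is correct and follows essentially the same route as the paper: the paper's one-line argument rests on the identity $P^{-1}(A)=\bigsqcup_{i\in J}P^{-1}(A\cap X_i)$ for plots $P$ in the sum space, which is precisely the decomposition you use in your reverse implication (and, via the smooth injections, in the forward one). Your write-up simply makes both directions of that argument explicit, including the point that the partition $\{U_i\}$ consists of open sets because plots are defined on domains.
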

	\begin{proof}
		This result follows immediately from the fact that for any subset $ A\subseteq \bigsqcup_{i\in J} X_i $ and
		for every plot $ P:U\rightarrow \bigsqcup_{i\in J} X_i $, we have
		$ P^{-1}(A)=\bigsqcup_{i\in J}P^{-1}(A\cap X_i)  $.
	\end{proof}
	
	\begin{definition}
		Let $ X $ be a diffeological space and let $ \mathcal{C} $ be a parametrized cover of $ X $.
		The sum space $ \mathrm{Nebula}(\mathcal{C})=\bigsqcup_{P\in \mathcal{C}} \mathrm{dom}(P) $ is called the \textbf{nebula} of the parametrized cover $ \mathcal{C} $.
	\end{definition}
	\begin{proposition}
		(\cite[\S 1.76]{PIZ}). 
		Let $ X $ be a diffeological space and let $ \mathcal{C} $ be a parametrized cover of $ X $.
		Then $ \mathcal{C} $ is a covering generating family if and only if the evaluation map
	\begin{center}
			$ ev:\mathrm{Nebula}(\mathcal{C})\rightarrow X, ~~~(P,r)\mapsto P(r), $ 
	\end{center}
		is a subduction.
	\end{proposition}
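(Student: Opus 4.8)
The plan is to prove both implications by exploiting the single observation that, for each $P\in\mathcal{C}$, the restriction of $ev$ to the summand $\mathrm{dom}(P)$ is exactly $P$; that is, $ev\circ j_P=P$, where $j_P:\mathrm{dom}(P)\hookrightarrow\mathrm{Nebula}(\mathcal{C})$ is the canonical injection. Since $j_P$ is an induction and $\mathrm{id}_{\mathrm{dom}(P)}$ is a plot, $j_P$ itself is a plot in the nebula, so this identity lets me pass freely between elements of $\mathcal{C}$ and plots of the nebula. Throughout I would use the local-lift characterization of subductions recalled above, together with the description of $\langle\mathcal{C}\rangle$ as suprema of compatible families drawn from $\lfloor\mathcal{C}\rfloor$.

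First I would record smoothness of $ev$ whenever $\mathcal{C}\subseteq\mathcal{D}$: an arbitrary plot $Q:W\to\mathrm{Nebula}(\mathcal{C})$ comes, by the sum diffeology, with a partition $\{W_P\}$ of $W$ such that each $Q|_{W_P}:W_P\to\mathrm{dom}(P)$ is smooth, and then $ev\circ Q$ restricts on $W_P$ to $P\circ(Q|_{W_P})$, which is a plot by D2, so D3 glues these into a plot. For the forward direction (assuming $\langle\mathcal{C}\rangle=\mathcal{D}$), each $P\in\mathcal{C}$ is a plot of $X$, whence $ev$ is smooth, and it is surjective because D1 says the images of the $P$ cover $X$. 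To verify the lift condition, I take a plot $P':U\to X$ and $r_0\in U$; since $P'\in\langle\mathcal{C}\rangle$ it agrees near $r_0$ with some $P\circ F\in\lfloor\mathcal{C}\rfloor$, and $L:=j_P\circ F$ is the desired lift because $ev\circ L=P\circ F=P'$ on that neighborhood.

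For the converse I would assume $ev$ is a subduction and establish the two inclusions $\langle\mathcal{C}\rangle\subseteq\mathcal{D}$ and $\mathcal{D}\subseteq\langle\mathcal{C}\rangle$. The first is immediate: $ev$ smooth forces each $P=ev\circ j_P$ to lie in $\mathcal{D}$, after which D2 and D3 place every supremum of composites $P\circ F$ in $\mathcal{D}$ as well. For the second, I take any plot $P'\in\mathcal{D}$ and apply the lift characterization at each $r_0\in U$, obtaining $L:V\to\mathrm{Nebula}(\mathcal{C})$ with $ev\circ L=P'|_V$; the key move is to shrink $V$ to the block $V_{P_0}$ of the nebula partition of $L$ containing $r_0$, on which $L$ factors as $j_{P_0}\circ(L|_{V_{P_0}})$, whence $P'|_{V_{P_0}}=P_0\circ(L|_{V_{P_0}})\in\lfloor\mathcal{C}\rfloor$. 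Covering $U$ by such neighborhoods exhibits $P'$ as the supremum of a compatible family from $\lfloor\mathcal{C}\rfloor$, i.e.\ $P'\in\langle\mathcal{C}\rangle$.

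The main obstacle, and the only place requiring genuine care, is this last localization step: a nebula-plot need not globally land in a single domain, so one must invoke the sum-diffeology partition to factor the lift through one summand $\mathrm{dom}(P_0)$ near each parameter before recognizing the restricted plot as an element of $\lfloor\mathcal{C}\rfloor$. Everything else is bookkeeping with the axioms D1--D3 and the definitions of $\lfloor\mathcal{C}\rfloor$, $\langle\mathcal{C}\rangle$, and the sum diffeology.
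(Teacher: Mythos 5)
Your proof is correct. Note that the paper itself gives no proof of this proposition---it is quoted from \cite[\S 1.76]{PIZ}---and your argument is the standard definitional unfolding one would expect there: the forward direction reads local lifts off the description of $\langle\mathcal{C}\rangle$ in terms of $\lfloor\mathcal{C}\rfloor$, composing with the canonical injections $j_P$, while the converse correctly handles the one delicate point, namely that a lift into $\mathrm{Nebula}(\mathcal{C})$ only \emph{locally} factors through a single summand $\mathrm{dom}(P_0)$ (via the sum-diffeology partition, whose blocks are open), which is exactly what exhibits an arbitrary plot as a supremum of a compatible family from $\lfloor\mathcal{C}\rfloor$.
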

	\subsubsection{The functional diffeology}
	\begin{definition}
		Let $X$ and $Y$ be diffeological spaces. 
		A parametrization $ Q:V\rightarrow \mathrm{C}^{\infty}(X,Y)$ is a plot for the \textbf{functional diffeology} on $ \mathrm{C}^{\infty}(X,Y) $ if for every plot $P:U\rightarrow X$ in $X$, the parametrization
		$ Q\circledcirc P:V\times U\rightarrow Y$
		given by
		$ (Q\circledcirc P)(s,r)= Q(s)\big{(}P(r)\big{)} $
		is a plot in $ Y $. 
	\end{definition} 
	For the functional diffeology, the natural map
	\begin{center}
		$ \mathrm{C}^{\infty}(X,\mathrm{C}^{\infty}(Y,Z)) \longrightarrow\mathrm{C}^{\infty}(X\times Y,Z) $
	\end{center}
	taking $ f \mapsto \tilde{f}$ with $\tilde{f}(x,y)=f(x)(y) $, is a diffeomorphism (see \cite[\S 1.60]{PIZ}). 
	Actually, the category of $ \mathsf{Diff} $ is Cartesian closed. 
	\begin{definition}
		%	(\cite[art. 1.63]{PIZ}).
		Let  $ (X, \mathcal{D}) $ be a diffeological space. 
		A parametrization $ \rho:U\rightarrow\mathcal{D} $ is a plot in $ \mathcal{D} $ or a \textbf{smooth family of plots} in $ X $ if
		for every $ r_0\in U $ and $ s_0\in\mathrm{dom}(\rho(r_0)) $, there exist an open neighborhood $  V\subseteq U $ of $ r_0 $
		and an open neighborhood $  W $ of $ s_0 $ such that $ W \subseteq\mathrm{dom}(\rho(r)) $ for all $ r\in V $, and $ (r,s)\mapsto \rho(r)(s)$ defined on $ V\times W $ is a plot in $ X $.
	\end{definition}

\begin{notation}
			Let  $ (X, \mathcal{D}) $ be a diffeological space. 
	We denote the space  of centered 1-plots, i.e., defined on an open neighborhood of $ 0 $, by $ 1$-$\mathrm{Plots}_0(X)  $  together with the subspace diffeology inherited from $ \mathcal{D} $. 
\end{notation}

%		If  $ f:(X, \mathcal{D}_X)\rightarrow (Y, \mathcal{D}_Y) $ is smooth, then the induced map $ f\circ:\mathcal{D}_X\rightarrow \mathcal{D}_Y $ taking $ P\mapsto f\circ P $ is smooth. 
 
	\begin{definition}
		Any global $ 1 $-plot $ \mathbb{R}\rightarrow X $  is called a \textbf{path}, \textbf{curve},  or \textbf{homotopy} in $ X $, depending
		on context. 		
		The space of all paths in $ X $  equipped with the functional diffeology is denoted by 
		$ {\textsf{Paths}}(X) $.
		A path $ \gamma\in {\textsf{Paths}}(X) $ is called \textbf{stationary} if it is constant on some open neighborhood of
		$ (-\infty,0] $ and open neighborhood of $ [1,\infty) $.
		The subspace of the stationary paths is denoted by $ {\textsf{Paths}}_{st}(X) $.
	\end{definition}
	
	\begin{definition}
		A \textbf{diffeological group} is a group equipped with a \textbf{group diffeology} such that the multiplication and the inversion are
		smooth.
	\end{definition}
	
	\begin{example}\label{exa-diff}
		Let $ X $ be a diffeological space. Let $ \mathrm{Diff}(X) $ denote the group of diffeomorphisms on $ X $ with the composition operation.
		A parametrization $ P : U \rightarrow\mathrm{Diff}(X) $ is a plot for the \textbf{standard
			diffeology} of the group of diffeomorphisms if $ P $ itself and the parametrization $ r\mapsto P(r)^{-1} $ are both plots for functional
		diffeology (see \cite[\S 1.61]{PIZ}). Then  $ \mathrm{Diff}(X) $  is a diffeological group thanks to the smoothness of the composition operation \cite[\S 1.59]{PIZ}.
	\end{example}
	
	\begin{definition}\label{def-g-st}
		A \textbf{smooth action} of a diffeological group $ G $ on a diffeological space $ X $ is a smooth
		homomorphism $ G \rightarrow\mathrm{Diff}(X) $.
		A subspace  $ S\subseteq X $ is said to be \textbf{$ G $-stable} if it is connected and for every $ g\in G $, either $ g(S)=S $ or $ g(S)\cap S=\varnothing $.
	\end{definition}

	\subsubsection{Pullbacks of smooth maps}
	Pullbacks in the category $ \mathsf{Diff} $ of diffeological spaces exist.
	This construction can be concretely made as follows.
	Let $ f:X\rightarrow Y $ and $ g:Z\rightarrow Y $ be two smooth maps.
	The \textbf{pullback} of $ f $ by $ g $ denoted by $ g^*f:g^*X\rightarrow Z $ is a smooth map defined on the subspace
	\begin{center}
		$ g^*X=\lbrace(z,x)\in Z\times X\mid g(z)=f(x)\rbrace\subseteq Z\times X, $
	\end{center}
	taking $ (z,x) $ to $ z $. In fact, $ g^*f $ is the restriction of the projection $ \Pr_1:Z\times X\rightarrow Z $ to $ g^*X $.
	This 
	gives rise to a natural morphism $ (g_{\#},g) $
	\begin{displaymath}
		\xymatrix{
			g^*X \ar[r]^{g_{\#}}\ar[d]_{g^*f} & X\ar[d]^{f} \\
			Z \ar[r]^{g} & Y  }
	\end{displaymath}
	from $ g^*f $ to $ f $ in the category $ \mathrm{Mor}(\mathsf{Diff}) $ of morphisms of $ \mathsf{Diff} $ (see, e.g., \cite{M}), where $ g_{\#}=\Pr_2|_{g^*X} $ and $ \Pr_2:Z\times X\rightarrow X $ is the projection on the second factor.
	It is clear that for each $ z\in Z $, the fiber of $ g^*f $ over $ z $ is diffeomorphic to the fiber of $ f $ over $ g(z) $.
	Symmetrically, $ g_{\#} $ plays the role of the pullback of $ g $ by $ f $.
	
	One important property of pullbacks is transitivity.
	If $ h:W\rightarrow Z $ is another smooth map, then the pullback of $ g^*f $ by $ h $, i.e., $ h^*g^*f $, is equivalent to the pullback of $ f $ by $ g\circ h $, 
	i.e., $  (g\circ h)^*f $ in $ \mathrm{Mor}(\mathsf{Diff}) $, and that $ (g\circ h)_{\#}=g_{\#}\circ h_{\#} $. %Moreover, the following  commutative diagram is useful in the sequel;
	%\begin{displaymath}
	%	\xymatrix{
	%		(g\circ h)^*X \ar[r]^{\widehat{h}}\ar[d]_{(g\circ h)^*f} & g^*X \ar[r]^{g_{\#}}\ar[d]_{g^*f} & X\ar[d]^{f} \\
	%		W \ar[r]^{h} & Z \ar[r]^{g} & Y  }
	%\end{displaymath}
	%in which the smooth map $ \widehat{h} $ takes $ (w,x)\in (g\circ h)^*X  $ to $ \big(h(w),x\big)\in g^*X  $ and $ (g\circ h)_{\#}=g_{\#}\circ \widehat{h} $.
	In addition, the pullback of $ f:X\rightarrow Y $ by $ \mathrm{id}_Y:Y\rightarrow Y $ is equivalent to $ f $ itself in $ \mathrm{Mor}(\mathsf{Diff}) $, where $ \mathrm{id}_Y^*~X $ is nothing but the graph of $ f $.

	\begin{definition} 
		(\cite[\S 8.9]{PIZ}).
		A \textbf{diffeological fiber bundle} of fiber type $ T $ is a smooth surjective  map $ \pi:E\rightarrow X $ locally trivial along the plots in $ X $, that is,  the pullback of $ \pi $ by every plot in $ X $ is 
		locally trivial with fiber $ T $.  
	\end{definition}
	\begin{definition} 
				(\cite[\S 8.22]{PIZ}).
		A \textbf{diffeological covering map} is a diffeological fiber bundle with discrete fibers. 
		A \textbf{diffeological covering space} is the total space of a diffeological covering map. 
	\end{definition}
	\subsection{Internal tangent spaces  and tangent bundles}
	We here recall the internal tangent spaces  and tangent bundle of a diffeological space $ X $ (see \cite{CW,Hec,HM-V} for more details).
	
	Fix any $ x\in X$. The category of germs of plots centered at $ x $, denoted by $ \mathcal{G}\mathsf{Plots}_x(X) $,
	has plots centered at $ x $ for objects and 
	a morphism $ Q\stackrel{\mathcal{G}_x(F)}{\longrightarrow} P $ 
	between two plots $ P:U\rightarrow X $ and $ Q:V\rightarrow X $ centered at  $ x $ is
	the germ class of a smooth map $ F:W \rightarrow U $, defined on an open neighborhood $ W\subseteq V $ of $ 0 $, taking $ 0 $ to $ 0 $ such that $ Q|_W=P\circ F $. Notice that
	two such maps have the same germ if they agree on an open neighborhood of $ 0 $ in $ V $.
	The \textbf{internal tangent space} $ T_xX $ of $ X $ at $ x $ is the colimit of the functor
	$ T_0:\mathcal{G}\mathsf{Plots}_x(X)\rightarrow \textsf{Vect} $
	from the category $ \mathcal{G}\mathsf{Plots}_x(X) $ to that of vector spaces and linear maps,
	given by
	\begin{center}
		$ Q\stackrel{\mathcal{G}_x(F)}{\longrightarrow} P\qquad\longmapsto\qquad 
		dF_0:T_0V\longrightarrow T_0U $,
	\end{center}
	where $ dF_0 $ is the usual differential of $ F $ at $ 0 $.
	Thus, we have a cocone as the following:
	$$
	\xymatrix{
		& T_x X  & \\
		T_0 V  \ar[ur]^{dQ_0} \ar[rr]_{dF_0} & & T_0U  \ar[ul]_{dP_0}
	}
	$$
	in which  $ dP_0 $ denotes the linear map given by the colimit for a plot $ P $, so that
	$ dP_0\circ dF_0=dQ_0 $, or equivalently 
	$ dP_0\circ dF_0=d(P\circ F)_0 $.
	If  two $ n $-plots $ P $ and $ Q $ centered at $ x $
	have the same germ at $ 0 $, then $ dP_0=dQ_0 $.

\begin{definition}
	Let $ f:X\rightarrow Y  $ be a smooth map and $ x\in X$. The universal property of colimits induces a unique linear map
	$ df_x:T_xX\rightarrow T_{f(x)}Y  $, called the \textbf{internal tangent map} of $ f $ at $ x $, with the property that
	$ df_x\circ  dP_0= d(f\circ P)_0 $,
    for all plot $ P $ in $ X $  centered at $ x\in X $.
\end{definition}
	As mentioned in \cite[p. 11]{CW}, the above construction  leads to a functor from the category of pointed diffeological spaces to that of vector spaces.
Therefore, the following properties hold.
\begin{proposition}
Let $ f:X\rightarrow Y  $ and $ g:Y\rightarrow Z $ be smooth maps, and  $ x\in X $. 
 	\begin{enumerate}
 	\item[ (1) ] 	
 	 $ d(\mathrm{id}_X)_x=\mathrm{id}_{T_xX} $.
 	\item[ (2) ] 
 	 $ d(g\circ f)_x=dg_{f(x)}\circ df_x $.
	\item[ (3) ] 
      If $ f $ is a diffeomorphism, then $ df_x $ is an isomorphism, and $ (df_x)^{-1}=d(f^{-1})_{f(x)} $.
 \end{enumerate} 
\end{proposition}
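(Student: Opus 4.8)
The plan is to derive all three parts purely formally from the defining property of the internal tangent map recalled just above the statement: $df_x$ is the \emph{unique} linear map satisfying $df_x\circ dP_0=d(f\circ P)_0$ for every plot $P$ in $X$ centered at $x$. The engine of the whole argument is the uniqueness clause of the universal property of the colimit defining $T_xX$: since $T_xX$ is the colimit of $T_0$ over $\mathcal{G}\mathsf{Plots}_x(X)$ with structure maps $dP_0$, any two linear maps out of $T_xX$ that agree after precomposition with every $dP_0$ must coincide. So in each part it suffices to exhibit a candidate map and check it satisfies the relevant cocone condition.

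For (1), I would note that $\mathrm{id}_{T_xX}\circ dP_0=dP_0=d(\mathrm{id}_X\circ P)_0$ for every plot $P$ centered at $x$, using $\mathrm{id}_X\circ P=P$. This is exactly the condition characterizing $d(\mathrm{id}_X)_x$, so uniqueness gives $d(\mathrm{id}_X)_x=\mathrm{id}_{T_xX}$. For (2), the key computation is to verify that $dg_{f(x)}\circ df_x$ satisfies the defining condition for $d(g\circ f)_x$. Given a plot $P$ in $X$ centered at $x$, the composite $f\circ P$ is a plot in $Y$ centered at $f(x)$, and applying the defining property of $df_x$ and then that of $dg_{f(x)}$ yields
\[
(dg_{f(x)}\circ df_x)\circ dP_0=dg_{f(x)}\circ d(f\circ P)_0=d\big(g\circ(f\circ P)\big)_0=d\big((g\circ f)\circ P\big)_0 .
\]
Since $d(g\circ f)_x$ is the unique linear map with this property, I conclude $d(g\circ f)_x=dg_{f(x)}\circ df_x$.

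For (3), I would apply (1) and (2) to the relations $f^{-1}\circ f=\mathrm{id}_X$ and $f\circ f^{-1}=\mathrm{id}_Y$, keeping track of base points. Evaluating the first at $x$ gives $d(f^{-1})_{f(x)}\circ df_x=d(\mathrm{id}_X)_x=\mathrm{id}_{T_xX}$, while evaluating the second at $f(x)$, and using $f^{-1}(f(x))=x$, gives $df_x\circ d(f^{-1})_{f(x)}=d(\mathrm{id}_Y)_{f(x)}=\mathrm{id}_{T_{f(x)}Y}$. Hence $df_x$ is invertible with two-sided inverse $d(f^{-1})_{f(x)}$.

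The argument is entirely categorical, so there is no analytic obstacle to overcome; the differential $dF_0$ of a smooth map between domains already carries all the needed content, and everything else is the chain rule repackaged through the colimit. The only point requiring care is the bookkeeping of base points in part (3)---one must remember that $d(f^{-1})$ is taken at $f(x)$ rather than at $x$, and that composing on the two sides produces identities of $T_xX$ and $T_{f(x)}Y$ respectively---but this is routine once the functorial identity (2) is established.
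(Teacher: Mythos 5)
Your proof is correct and takes essentially the same route as the paper: the paper disposes of this proposition by invoking functoriality of the pointed internal-tangent construction (citing Christensen--Wu), and your universal-property/uniqueness computation is exactly the standard argument behind that functoriality; in particular your verification of (2) coincides with the computation the paper itself sketches, and (1) and (3) follow formally just as you describe.
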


\begin{proposition}\label{pro-cons}
	If $ f:X\rightarrow Y  $ is a locally constant map and $ x\in X $, then 
	$ df_x=0 $.
\end{proposition}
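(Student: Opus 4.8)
The plan is to reduce everything to the defining universal property of the internal tangent map together with the fact that a one-point domain has trivial tangent space. Recall that $df_x$ is characterized as the \emph{unique} linear map satisfying $df_x\circ dP_0=d(f\circ P)_0$ for every plot $P$ centered at $x$. Consequently, if I can show that $d(f\circ P)_0=0$ for every such $P$, then the zero map $T_xX\to T_{f(x)}Y$ also satisfies all of these cocone relations, and uniqueness immediately forces $df_x=0$. Thus the whole argument collapses to a single claim handled one plot at a time.

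Fix a plot $P:U\to X$ centered at $x$ and set $y=f(x)$. First I would observe that $f\circ P$ is \emph{constant} on a neighborhood of $0$. Indeed, every plot is D-continuous --- this is nothing but the definition of the D-topology, since $P^{-1}(O)$ is open for each D-open $O$ --- and $f$ is locally constant, so if $O$ is a D-open neighborhood of $x$ on which $f$ takes the constant value $y$ and we put $W=P^{-1}(O)$, then the restriction $(f\circ P)|_W$ is the locally constant parametrization with value $y$. (Note in passing that this also shows $f$ is smooth, so that $f\circ P$ is genuinely a plot in $Y$ centered at $y$.)

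It then remains to check that a plot which is constant near $0$ has vanishing differential at $0$, and here I would factor $(f\circ P)|_W=\mathbf{y}\circ k$, where $k:W\to\mathbb{R}^0$ is the unique map to the one-point domain and $\mathbf{y}:\mathbb{R}^0\to Y$ is the $0$-plot with value $y$. This factorization is precisely a morphism $(f\circ P)|_W\to\mathbf{y}$ in the category $\mathcal{G}\mathsf{Plots}_{y}(Y)$, so the cocone identity yields $d(f\circ P)_0=d\mathbf{y}_0\circ dk_0$. Since the target of $dk_0$ is $T_0\mathbb{R}^0=0$, the map $dk_0$ is zero, whence $d(f\circ P)_0=0$, as required; combined with the first paragraph this gives $df_x=0$.

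The argument is essentially routine, and no serious obstacle arises: the colimit description of $T_xX$ lets the universal property do all the work. The only points needing care are the two reductions --- recognizing that local constancy of $f$ paired with D-continuity of $P$ makes $f\circ P$ \emph{genuinely} constant near $0$ (not merely slowly varying), and the bookkeeping that the one-point domain $\mathbb{R}^0$ has zero tangent space, which is what actually annihilates the differential.
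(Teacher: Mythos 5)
Your proof is correct and follows essentially the same route as the paper: reduce to showing $d(f\circ P)_0=0$ for every plot $P$ centered at $x$, note that local constancy of $f$ together with D-continuity of $P$ makes $f\circ P$ genuinely constant near $0$, kill the differential of a constant plot via a factorization trick, and conclude by uniqueness of the map induced by the colimit. The only cosmetic difference is in the factorization: you send the constant plot through the one-point domain $\mathbb{R}^0$ (so the differential factors through $T_0\mathbb{R}^0=0$), whereas the paper precomposes with the constant map $c_x:U\rightarrow U$ with value $0$ (whose usual differential vanishes); both devices give the vanishing immediately from the cocone identity.
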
	
\begin{proof}
	First notice that if a plot $ P:U \rightarrow X $ centered at $ x\in X $ is constant,
	% on an open neighborhood of $ 0 $, 
	then $ P\circ c_x=P $, where $ c_x:U\rightarrow U $ is the constant map with the value $ 0 $.
	Therefore, $ dP_0=d(P\circ c_x)_0=dP_0\circ  dc_x=0 $.
	Now for an arbitrary plot $ P $  in $ X $ centered at $ x\in X $, the composition $ f\circ P $ and a constant plot with the value $ f(x) $	have the same germ at $ 0 $.
	Hence
	$ df_x\circ  dP_0= d(f\circ P)_0=0 $ and the result follows by the uniqueness of internal tangent map with this property.
\end{proof}

\begin{definition}
	The \textbf{internal tangent bundle} $ TX $ of a diffeological space $ X $, 
	as a set, is the disjoint union $ \bigsqcup_{x\in X}T_xX $.
	There are two diffeologies on $ TX $:
	\begin{enumerate}
		\item[$   $] 
		\textbf{Hector's diffeology:} This diffeology is generated by the maps
		$ dP:TU\rightarrow TX $, corresponding to all plots $ P:U\rightarrow X $ in $ X $,
		where $ TU $ has the standard diffeology (as a domain) and
		$ dP(r,u)=(P(r),dP_r(u)) $,
		for each $ (r,u)\in TU $, where $ dP_r:T_rU\rightarrow T_{P(r)}X  $ is the  internal tangent map of $ P $ at $ r\in U $.
		The internal tangent bundle of $ X $ endowed with Hector's diffeology is
		denoted by $ T^HX $.\\
		\item[$   $] 
		\textbf{dvs diffeology:}
		This is the smallest diffeology  on $ TX $,  containing
		Hector's  diffeology, that turns $ TX $ into a diffeological vector space over $ X $.
		The internal tangent bundle of $ X $ endowed with the dvs diffeology is
		denoted by $ T^{dvs}X $.
	\end{enumerate} 
\end{definition}
%The canonical projection $ \pi_X:T X \rightarrow X $ is a local subduction.
As mentioned in \cite[Example 3.6]{AD}, the internal tangent bundle of a diffeological space $ X $ constitutes a cosheaf on the site of plots.
 \begin{definition}
 	A \textbf{vector field} on a diffeological space $ X $ is any smooth map $ \Lambda:X\rightarrow T^HX $ with $ \pi_X\circ\Lambda=\mathrm{id}_X $, where $ \pi_X:T^HX\rightarrow X $ is the canonical projection.
 	Denote the space of vector fields on $ X $ by  
 	$ \mathfrak{X}(X) $, equipped with the subspace diffeology inherited from $ \mathrm{C}^{\infty}(X,T^HX) $.
 \end{definition}

\begin{definition}
	If $ f:X\rightarrow Y  $ is a smooth map, the map
	$ df:T X \rightarrow T Y $ defined by
	$ df(x,u)=(f(x),df_x(u)) $,
	is called the \textbf{global internal tangent map} of $ f $ at $ x $.
\end{definition}
The following properties for global internal tangent maps are immediate:
\begin{proposition}\label{pro-gtm-sm}
	Let $ f:X\rightarrow Y  $ and $ g:Y\rightarrow Z $ be smooth maps. 
	\begin{enumerate}
		\item[ (a) ] 	
		$ d(\mathrm{id}_X)=\mathrm{id}_{TX} $.
		\item[ (b) ] 
		$ d(g\circ f)=dg\circ df $.
		\item[ (c) ] 
		If $ f $ is a diffeomorphism, then $ df $ is a diffeomorphism, and $ (df)^{-1}=d(f^{-1}) $.
		\item[ (d) ] 
		$ df:T^HX \rightarrow T^HY $ is a morphism of smooth projections, i.e., a smooth map making the following diagram commute:
		\begin{displaymath}
			\xymatrix{
				T^HX\ar[d]_{\pi_X}\ar[r]^{df}	&  T^HY\ar[d]^{\pi_{Y}}   \\
				X \ar[r]_{f} & Y  \\
			}
		\end{displaymath}
	\end{enumerate} 
\end{proposition}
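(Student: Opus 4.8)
The plan is to reduce parts (a), (b), and (c) to the already-established functoriality of the \emph{pointwise} internal tangent maps (items (1)--(3) of the preceding proposition) and to isolate the only genuinely new content in part (d), namely the smoothness of $df$ with respect to Hector's diffeology.

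First I would dispatch (a) and (b) by evaluating both sides on an arbitrary element $(x,u)\in TX$. For (a), the definition of the global tangent map gives $d(\mathrm{id}_X)(x,u)=(x,d(\mathrm{id}_X)_x(u))$, which equals $(x,u)$ by the pointwise identity $d(\mathrm{id}_X)_x=\mathrm{id}_{T_xX}$. For (b), unwinding the definitions on each side yields
\[
d(g\circ f)(x,u)=\big((g\circ f)(x),\,d(g\circ f)_x(u)\big),\qquad (dg\circ df)(x,u)=\big(g(f(x)),\,dg_{f(x)}(df_x(u))\big),
\]
and these coincide by the pointwise chain rule $d(g\circ f)_x=dg_{f(x)}\circ df_x$. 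Both verifications are routine.

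The heart of the argument is part (d). The commutativity $\pi_Y\circ df=f\circ\pi_X$ is immediate, since $df(x,u)=(f(x),df_x(u))$ has first coordinate $f(x)=f(\pi_X(x,u))$. For smoothness, I would use that a map out of $T^HX$ is smooth as soon as its composite with each generator $dP$ (for $P$ a plot in $X$) of Hector's diffeology is a plot; this follows from the axioms D2 and D3. The key computation is then, for a plot $P:U\rightarrow X$,
\[
(df\circ dP)(r,u)=\big(f(P(r)),\,df_{P(r)}(dP_r(u))\big)=\big((f\circ P)(r),\,d(f\circ P)_r(u)\big)=d(f\circ P)(r,u),
\]
where the middle equality is precisely the pointwise chain rule $d(f\circ P)_r=df_{P(r)}\circ dP_r$. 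Since $f$ is smooth, $f\circ P$ is a plot in $Y$, so $df\circ dP=d(f\circ P)$ is a generating plot of $T^HY$, hence a plot. Therefore $df$ is smooth.

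Finally, for (c), suppose $f$ is a diffeomorphism with inverse $f^{-1}$. Applying (b) and (a) gives $d(f^{-1})\circ df=d(f^{-1}\circ f)=d(\mathrm{id}_X)=\mathrm{id}_{TX}$ and, symmetrically, $df\circ d(f^{-1})=\mathrm{id}_{TY}$, so $df$ is a bijection with set-theoretic inverse $d(f^{-1})$. By part (d), both $df$ and $d(f^{-1})$ are smooth, whence $df$ is a diffeomorphism. The only step requiring genuine care is the smoothness assertion in (d); everything else is a formal consequence of the pointwise functoriality, so I expect the reduction to Hector's generating family together with the identity $df\circ dP=d(f\circ P)$ to be the main (though still routine) obstacle.
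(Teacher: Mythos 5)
Your proposal is correct and is exactly the argument the paper has in mind: the paper states these properties as ``immediate'' and offers no written proof, and the intended justification is precisely your identity $df\circ dP=d(f\circ P)$ (from the pointwise chain rule) combined with the fact that smoothness out of $T^HX$ need only be checked on the generating plots $dP$ of Hector's diffeology. Your handling of (a), (b), (c) as formal consequences of pointwise functoriality, with (d) supplying the smoothness needed in (c), matches the paper's implicit reasoning, so there is nothing to correct.
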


	\section{Submersions in diffeology}\label{S3}
	
In this section, we introduce and investigate diffeological submersions, compare them with submersions of manifolds.
Let us begin with a few related concepts close to them.
	\begin{definition}
		A smooth map $ f:X\rightarrow Y $ between diffeological spaces is a \textbf{strong subduction} if there is a smooth map
		$ \sigma:Y\rightarrow X $, called a smooth \textbf{section} of $ f $, with the property that  $ f\circ\sigma=\mathrm{id}_Y $.
	\end{definition}
	\begin{example} 
		Let $ \lbrace X_i\rbrace_{i\in J} $ be a family of nonempty diffeological spaces. 
		The canonical projection $ \prod_{i\in J} X_i\rightarrow X_i $  is a strong subduction, for each $ i\in J $.	
	\end{example}
	\begin{example}
	Let $ X $ be a diffeological space. 
	The canonical projection $ \pi_X:TX \rightarrow X $ is a strong subduction, when $ TX $ is equipped with either Hector's diffeology or the dvs diffeology.
\end{example}

\begin{example}\label{exa-sub-sum}
	The surjective smooth map $ f:\mathbb{R}\setminus\{0\}\rightarrow (0,+\infty) $
	given by
\begin{align*}
		f(x)=\left \{
\begin{array}{lr}
			x,\quad &  x>0\\
			1, \quad &  x<0\\
\end{array} \right.
\end{align*}
	is a strong subduction but not a submersion. 
\end{example}

\begin{definition}
	A smooth map $ f:X\rightarrow Y $ between diffeological spaces is a \textbf{global subduction} if for any plot $ P:U\rightarrow Y $, there exists at least one global lift plot $ L:U\rightarrow X $ such that $ f\circ L=P $.
\end{definition}
\begin{example}
	Any linear subduction between diffeological vector spaces  is a global subduction by \cite[Corollary 6.5]{WU}.
\end{example}
	\begin{definition}
	A (not necessarily surjective) smooth map $ f:X\rightarrow Y $ between diffeological spaces is a \textbf{weak subduction} if for any plot $ P:U\rightarrow Y $  and $ r_0\in U $ with $ P(r_0)\in f(X) $, there exists at least one local lift plot $ L:V\rightarrow X $ defined on an open
	neighborhood $ V\subseteq U $ of $ r_0 $ so that $ f\circ L=P|_V $.
	\end{definition}
		
	\begin{proposition} 
		If $ f:X\rightarrow Y $ is a weak subduction, then
		$ f(X) $ is a D-open subset of $ Y $ and
		$ \mathrm{dim}(X)\geq\mathrm{dim}(f(X)) $, where $ f(X)\subseteq Y $ is equipped with the subspace diffeology.
	\end{proposition}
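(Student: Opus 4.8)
The plan is to treat the two conclusions separately, reducing the dimension inequality to the already-established behaviour of subductions.

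For the D-openness of $f(X)$, I would fix an arbitrary plot $P:U\rightarrow Y$ and show that $P^{-1}(f(X))$ is open in $U$. Given $r_0\in P^{-1}(f(X))$, the point $P(r_0)$ lies in $f(X)$, so the weak subduction hypothesis furnishes a local lift $L:V\rightarrow X$ on an open neighbourhood $V\subseteq U$ of $r_0$ with $f\circ L=P|_V$. Then for every $r\in V$ one has $P(r)=f(L(r))\in f(X)$, whence $V\subseteq P^{-1}(f(X))$; since $r_0$ was arbitrary, $P^{-1}(f(X))$ is open. As this holds for every plot $P$, the set $f(X)$ is D-open by the definition of the D-topology.

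For the dimension inequality, I would corestrict $f$ to a map $\bar{f}:X\rightarrow f(X)$, where $f(X)$ carries the subspace diffeology, and argue that $\bar{f}$ is a subduction. It is surjective by construction and smooth because $f$ is smooth and its plots take values in $f(X)$. To verify the lifting criterion of \cite[\S 1.48]{PIZ}, note that a plot $P:U\rightarrow f(X)$ in the subspace diffeology is exactly a plot in $Y$ taking values in $f(X)$; in particular $P(r_0)\in f(X)$ for every $r_0\in U$, so the weak subduction hypothesis provides a local lift through $\bar{f}$ at each $r_0$. Hence $\bar{f}$ is a subduction, and Proposition \ref{pro-dim-subd} yields $\mathrm{dim}(X)\geq\mathrm{dim}(f(X))$.

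The only real subtlety lies in this second part: one must observe that the weak subduction condition, although stated for plots into $Y$ whose value at a single point is required to lie in $f(X)$, applies at \emph{every} point of a plot that takes all its values in $f(X)$, so that it upgrades $\bar{f}$ to a genuine subduction onto the image. Once this is noticed, both conclusions follow with no computation.
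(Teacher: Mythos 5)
Your proof is correct and follows essentially the same route as the paper: the paper likewise dispatches D-openness as an immediate consequence of the local-lift condition and then obtains the dimension inequality by viewing $f$ as a subduction onto its image (with the subspace diffeology) and citing Proposition \ref{pro-dim-subd}. Your write-up simply makes explicit the two verifications the paper leaves implicit, namely that plot-preimages of $f(X)$ are open and that the corestriction $\bar f$ satisfies the lifting criterion of \cite[\S 1.48]{PIZ}.
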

	\begin{proof}
		It is easy to see that $ f(X) $ is a D-open subset of $ Y $.
		Because $ f $ is a subduction onto its image as a subspace of $ Y $, the result is obtained by Proposition \ref{pro-dim-subd}.
	\end{proof}
Clearly, we have the following inclusions:
\begin{center}
		$\{$ Strong subductions $\}\subseteq\{$ Global subductions $\}\subseteq\{$ Subductions $\}\subseteq\{$ Weak subductions $\}$ 
\end{center}

	\begin{definition}\label{def-sub}
		We call a smooth map $ f:X\rightarrow Y $ between diffeological spaces a \textbf{submersion} if for each $ x_0 $ in $ X $, there exists a smooth \textbf{local section} $ \sigma:O\rightarrow X $ of $ f $
		passing through $ x_0 $ defined on a D-open subset $ O\subseteq Y $ such that $ f\circ\sigma(y)=y $ for all $ y\in O $, that is, $ x_0\in \sigma(O) $ and
		the following diagram commutes;
		\begin{displaymath}
			\xymatrix{
				& X \ar[d]^{f} \\
				O\ar[r]_{\iota} \ar@{-->}[ur]^{\sigma} 	& Y }
		\end{displaymath}
		where $ \iota:O\hookrightarrow Y $ is the canonical inclusion.
		In this situation, $ \sigma(O)\subseteq f^{-1}(O) $, $ f(x_0)\in O $ and $ \sigma $ takes $ f(x_0) $ to $ x_0 $.
	\end{definition}
	\begin{remark}\label{rem-sub}
		As the pullback $ \iota^*f:f^{-1}(O)\rightarrow O $ is a strong subduction in Definition \ref{def-sub}, we can say that if $ f:X\rightarrow Y $ is a submersion, then there exists a D-open cover $ \{O_i\}_{i\in J} $ of $ f(X) $ in $ Y $ such that 
		$ \iota_i^*f:f^{-1}(O_i)\rightarrow O_i $ is a strong subduction for all $ i\in J $.
		The empty map $ \varnothing\rightarrow X $ is a submersion by definition.
	\end{remark}  
	
	\begin{proposition}\label{pro-comp-pb-sub}
		The composition of two submersions is again a submersion. The pullback of a submersion by any smooth map remains a submersion.
	\end{proposition}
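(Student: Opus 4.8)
The plan is to prove the two assertions of Proposition \ref{pro-comp-pb-sub} separately, in each case reducing to the defining local-section property of submersions (Definition \ref{def-sub}). For the composition, I want to produce, at every point of the source, a local section of the composite map; for the pullback, I want to transport a local section across the pullback square using the natural morphism $(g_\#, g)$ and the universal property.

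\emph{Composition.} Let $f : X \to Y$ and $g : Y \to Z$ be submersions, and fix $x_0 \in X$. First I would apply the submersion property of $f$ at $x_0$ to obtain a smooth local section $\sigma : O \to X$ of $f$, defined on a D-open neighborhood $O \subseteq Y$ of $f(x_0)$, with $x_0 \in \sigma(O)$ and $f \circ \sigma = \iota_O$. Next I would apply the submersion property of $g$ at the point $f(x_0) \in Y$ to obtain a smooth local section $\tau : O' \to Y$ of $g$, defined on a D-open neighborhood $O' \subseteq Z$ of $g(f(x_0))$, with $f(x_0) \in \tau(O')$ and $g \circ \tau = \iota_{O'}$. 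The natural candidate for a local section of $g \circ f$ is $\sigma \circ \tau$, but this requires $\tau$ to land inside $O$; so I would shrink $O'$ to the D-open set $O'' = (\tau)^{-1}(O)$, which contains $g(f(x_0))$ since $\tau(g(f(x_0))) = f(x_0) \in O$, and $\tau$ is D-continuous because it is smooth. On $O''$ the map $\sigma \circ \tau : O'' \to X$ is smooth, passes through $x_0$ (as $\tau(g(f(x_0))) = f(x_0)$ and $\sigma(f(x_0)) = x_0$ provided $\sigma$ was chosen with $\sigma(f(x_0)) = x_0$, which Definition \ref{def-sub} guarantees), and satisfies $(g \circ f) \circ (\sigma \circ \tau) = g \circ (f \circ \sigma) \circ \tau = g \circ \tau = \iota_{O''}$. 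This exhibits the required local section of $g \circ f$ at $x_0$.

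\emph{Pullback.} Let $f : X \to Y$ be a submersion and $g : Z \to Y$ any smooth map, with pullback $g^*f : g^*X \to Z$ and the morphism $(g_\#, g)$ from $g^*f$ to $f$ as in the excerpt. Fix $(z_0, x_0) \in g^*X$, so $g(z_0) = f(x_0)$. I would take a smooth local section $\sigma : O \to X$ of $f$ on a D-open $O \subseteq Y$ with $f(x_0) \in O$ and $\sigma(f(x_0)) = x_0$. Set $O_Z = g^{-1}(O) \subseteq Z$, which is D-open (as $g$ is D-continuous) and contains $z_0$. The candidate section of $g^*f$ is then $\tilde\sigma : O_Z \to g^*X$ given by $\tilde\sigma(z) = (z, \sigma(g(z)))$; this is well defined because $g(z) = f(\sigma(g(z)))$ holds by $f \circ \sigma = \iota_O$, so the pair lies in $g^*X$, and it is smooth as a map into $Z \times X$ with both components smooth, hence smooth into the subspace $g^*X$. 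Finally $g^*f \circ \tilde\sigma(z) = z$ since $g^*f$ is the projection to the first factor, and $\tilde\sigma(z_0) = (z_0, \sigma(g(z_0))) = (z_0, \sigma(f(x_0))) = (z_0, x_0)$, so $\tilde\sigma$ passes through the chosen point. This is exactly a local section of $g^*f$ at $(z_0, x_0)$.

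\emph{Expected obstacle.} Neither step is deep, but the one point that needs care is the \emph{bookkeeping of basepoints and D-open shrinkings}: in the composition argument I must verify that after restricting $\tau$ to $O'' = \tau^{-1}(O)$ the composite still passes through $x_0$, which relies on choosing $\sigma$ so that $\sigma(f(x_0)) = x_0$ exactly (not merely $x_0 \in \sigma(O)$) — Definition \ref{def-sub} provides this, and similarly for $\tau$. In the pullback argument the only subtlety is checking that $\tilde\sigma$ is smooth \emph{into the subspace} $g^*X$ rather than merely into $Z \times X$, which follows immediately from the definition of the subspace diffeology since a parametrization into $g^*X$ is a plot precisely when it is a plot into $Z \times X$ with image in $g^*X$. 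Once these checks are in place the proposition follows directly from Definition \ref{def-sub}.
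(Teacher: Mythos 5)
Your proof is correct and is exactly the straightforward direct verification that the paper leaves to the reader (its proof is the single line ``The proof is straightforward.''): composing local sections after the shrinking $O''=\tau^{-1}(O)$, and transporting a local section of $f$ to the section $z\mapsto(z,\sigma(g(z)))$ of $g^*f$. Your attention to the basepoint condition $\sigma(f(x_0))=x_0$ and to smoothness into the subspace $g^*X$ fills in precisely the routine checks the paper omits.
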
 
	\begin{proof}
		The proof is straightforward.
	\end{proof}
	
	\begin{proposition} 
		A map between manifolds is a submersion of diffeological spaces if and only if it is a submersion of manifolds.
	\end{proposition}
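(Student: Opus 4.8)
The plan is to prove both implications by reducing everything to the classical local normal form of a submersion of manifolds, using the two facts already recorded in the background section: for a manifold the D-topology coincides with the underlying topology, and a map between manifolds is smooth in the diffeological sense exactly when it is smooth in the classical sense. Write $f:M\rightarrow N$ for a smooth map of manifolds.

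For the direction ``submersion of manifolds $\Rightarrow$ diffeological submersion,'' I would fix a point $x_0\in M$ and exploit the surjectivity of the differential $df_{x_0}$. By the classical rank theorem the map has constant rank equal to $\dim N$ near $x_0$, so there exist charts about $x_0$ and about $f(x_0)$ in which $f$ becomes the canonical projection $(u,v)\mapsto u$ of $\mathbb{R}^{\dim M}=\mathbb{R}^{\dim N}\times\mathbb{R}^{\dim M-\dim N}$ onto $\mathbb{R}^{\dim N}$. In these coordinates the inclusion $u\mapsto(u,0)$ is a smooth right inverse; transporting it back through the charts produces a smooth local section $\sigma:O\rightarrow M$ defined on an open---hence D-open---neighborhood $O$ of $f(x_0)$, with $\sigma(f(x_0))=x_0$ and $f\circ\sigma=\mathrm{id}_O$. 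This is precisely the data demanded by Definition \ref{def-sub}, so $f$ is a diffeological submersion.

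For the converse, I would assume $f$ is a diffeological submersion and again fix $x_0\in M$. The definition supplies a smooth local section $\sigma:O\rightarrow M$ on a D-open (hence open) set $O\ni f(x_0)$, with $f\circ\sigma=\mathrm{id}_O$ and $\sigma(f(x_0))=x_0$. Since $\sigma$ is smooth as a map of manifolds, differentiating the identity $f\circ\sigma=\mathrm{id}_O$ at $y_0=f(x_0)$ gives, by the chain rule, $df_{x_0}\circ d\sigma_{y_0}=\mathrm{id}_{T_{y_0}N}$. Thus $df_{x_0}$ admits a linear right inverse and is therefore surjective; as $x_0$ was arbitrary, $f$ is a submersion of manifolds.

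The routine bookkeeping is to confirm that the two settings really refer to the same objects: a D-open subset of $N$ is an ordinary open subset and conversely (by the Proposition identifying the D-topology of a manifold with its underlying topology), and $\sigma$ is smooth diffeologically if and only if it is smooth classically (by the Example on the standard diffeology of manifolds). The only genuinely nontrivial ingredient is the existence of the local section in the first implication, which is exactly the content of the rank/implicit function theorem; the converse is a one-line chain-rule computation. Hence I expect the sole obstacle to be invoking the local normal form correctly, after which both directions follow immediately.
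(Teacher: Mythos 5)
Your proposal is correct and takes essentially the same route as the paper: the paper's entire proof is a citation of Lee's local section theorem, and what you have written is precisely that theorem together with its standard proof (the rank-theorem normal form giving existence of sections through every point in one direction, and the chain rule applied to $f\circ\sigma=\mathrm{id}_O$ in the other). The bookkeeping you note---that D-open sets of a manifold are the ordinary open sets and that diffeological smoothness agrees with classical smoothness---is exactly what makes the paper's one-line citation legitimate, so nothing is missing on either side.
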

	\begin{proof}
		It is sufficient to apply the local section theorem \cite[Theorem 4.26]{Lee}.
	\end{proof} 

	\begin{definition}
		A smooth map $ f:X\rightarrow Y $  is said to be a \textbf{diffeological submersion} if
		the pullback  of $ f $ by every plot in $ Y $ is a submersion of diffeological spaces.
		
	\end{definition}

 \begin{proposition}
 	A diffeological submersion is a D-open map.
 \end{proposition}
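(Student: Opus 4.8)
The plan is to show that for a diffeological submersion $f:X\rightarrow Y$ and any D-open subset $A\subseteq X$, the image $f(A)$ is D-open in $Y$. By the definition of the D-topology, it suffices to prove that $P^{-1}(f(A))$ is open in $U$ for every plot $P:U\rightarrow Y$. First I would fix such a plot $P$ and an arbitrary point $r_0\in P^{-1}(f(A))$; the goal is to produce an open neighborhood of $r_0$ in $U$ contained in $P^{-1}(f(A))$.

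The key step is to exploit the lifting characterization of diffeological submersions. Since $r_0\in P^{-1}(f(A))$, we have $P(r_0)=f(x_0)$ for some $x_0\in A$. Because $f$ is a diffeological submersion, the pullback $P^*f:P^*X\rightarrow U$ is a submersion of diffeological spaces; equivalently, by the lifting statement \textbf{S} of Proposition \ref{pi1}, there exists a local lift $L:V\rightarrow X$ on an open neighborhood $V\subseteq U$ of $r_0$ with $f\circ L=P|_V$ and $L(r_0)=x_0$. Concretely, the submersion $P^*f$ admits a smooth local section through the point $(r_0,x_0)\in P^*X$ defined on a D-open neighborhood of $r_0$ in $U$; composing that section with the projection $P_\#=\Pr_2|_{P^*X}$ yields the lift $L$. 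Since $U$ is a domain, its D-topology is the ordinary topology, so $V$ is genuinely an open neighborhood of $r_0$.

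Now I would use D-continuity of $L$ together with the hypothesis that $A$ is D-open. The lift $L:V\rightarrow X$ is smooth, hence D-continuous, so $L^{-1}(A)$ is open in $V$ (and therefore in $U$), and it contains $r_0$ because $L(r_0)=x_0\in A$. For every $r\in L^{-1}(A)$ we have $P(r)=f(L(r))\in f(A)$, which shows $L^{-1}(A)\subseteq P^{-1}(f(A))$. Thus $L^{-1}(A)$ is an open neighborhood of $r_0$ inside $P^{-1}(f(A))$. As $r_0$ was arbitrary, $P^{-1}(f(A))$ is open in $U$ for every plot $P$, so $f(A)$ is D-open in $Y$, and $f$ is a D-open map.

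The main obstacle is making the passage from the abstract definition of diffeological submersion (pullbacks along plots are submersions of diffeological spaces) to the pointwise existence of a local section of $P^*f$ through the prescribed point, and checking that its domain is genuinely open in the domain $U$. This is handled by Definition \ref{def-sub} applied to the submersion $P^*f$: a submersion admits, through each point of its source, a local section defined on a D-open subset of the target, and since the target here is the domain $U$ whose D-topology coincides with the ordinary topology, that D-open set is an ordinary open neighborhood of $r_0$. Everything else is routine use of D-continuity and the defining relation $f\circ L=P|_V$.
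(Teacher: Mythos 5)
Your proof is correct and follows essentially the same approach as the paper, which gives no argument of its own but simply cites the analogous proof in Iglesias-Zemmour's book (\S 2.18): pull back along an arbitrary plot $P$, produce a local lift through a preimage point of $f(A)$, and use D-continuity of the lift to exhibit an open neighborhood of $r_0$ inside $P^{-1}(f(A))$. Your added care in extracting the lift from a local section of the pullback $P^*f$ (so no later characterization is needed, avoiding any circularity) and in noting that the D-topology of a domain is the usual topology supplies exactly the details the paper delegates to the citation.
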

	\begin{proof}
	The proof is entirely analogous to that of \cite[\S 2.18]{PIZ}.
	\end{proof}
	A smooth map $ f:X\rightarrow Y $ is a diffeological submersion if and only if there exists a covering generating family  $ \mathcal{C} $ of  $ Y $ such that
	for any plot $ P $ which belongs to $ \mathcal{C} $, 
	the pullback $ P^{*}f $ is a submersion.

%	The next result may be taken as an analogous of the local submersion theorem of manifolds.
	\begin{proposition}\label{pro-cov-dsub} 
		If $ f:X\rightarrow Y $ is a  diffeological submersion,
		then there exists a covering generating family  $ \mathcal{C} $ of the subspace $ f(X)\subseteq Y $ such that
		for any plot $ Q $ which belongs to $ \mathcal{C} $, 
		the pullback $ Q^{*}f $ is a strong subduction.
	\end{proposition}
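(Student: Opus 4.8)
The plan is to define $\mathcal{C}$ to be the collection of all plots $Q$ in the subspace $f(X)$ whose pullback $Q^{*}f$ is a strong subduction, and then to verify that $\mathcal{C}\in\mathsf{CGF}(f(X))$. Two things must be checked: that every plot in $f(X)$ restricts, near each point of its domain, to a member of $\mathcal{C}$ (this is the substantive content), and that this local-restriction property forces $\langle\mathcal{C}\rangle$ to equal the subspace diffeology of $f(X)$ (this is routine diffeology bookkeeping).

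For the substantive step, fix a plot $Q:U\rightarrow f(X)$, regarded simultaneously as a plot in $Y$ via the induction $f(X)\hookrightarrow Y$; the fiber product defining $Q^{*}f$ is insensitive to this choice, since $f$ takes its values in $f(X)$. Because $f$ is a diffeological submersion, the pullback $Q^{*}f:Q^{*}X\rightarrow U$ is a submersion of diffeological spaces. As $Q$ has values in $f(X)$, the image of $Q^{*}f$ is all of $U$, so Remark \ref{rem-sub} supplies a D-open cover $\{O_i\}_{i\in J}$ of $U$ such that $\iota_i^{*}(Q^{*}f):(Q^{*}f)^{-1}(O_i)\rightarrow O_i$ is a strong subduction for each $i$. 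Since $U$ is a domain, each $O_i$ is an ordinary open subset of $U$; and by transitivity of pullbacks $\iota_i^{*}(Q^{*}f)$ is equivalent in $\mathrm{Mor}(\mathsf{Diff})$ to $(Q|_{O_i})^{*}f$. Hence each restriction $Q|_{O_i}$ lies in $\mathcal{C}$, and the open sets $O_i$ cover $U$.

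It then remains to assemble this into the covering-generating statement. The members of $\mathcal{C}$ cover $f(X)$: every $y\in f(X)$ lies in the image of some plot in $f(X)$ (for instance a constant one, whose pullback is a trivial projection $O\times f^{-1}(y)\rightarrow O$ and hence visibly a strong subduction, so constant plots already belong to $\mathcal{C}$), and by the previous paragraph this plot restricts to members of $\mathcal{C}$ still hitting $y$. Next, $\langle\mathcal{C}\rangle$ is contained in the subspace diffeology of $f(X)$ because each element of $\mathcal{C}$ is a plot there and the diffeology is stable under the generating operations. Conversely, any plot $Q$ in $f(X)$ is the supremum of the compatible family $\{Q|_{O_i}\}_{i\in J}$ of the restrictions produced above, each of which lies in $\mathcal{C}\subseteq\lfloor\mathcal{C}\rfloor$; therefore $Q\in\langle\mathcal{C}\rangle$. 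Thus $\langle\mathcal{C}\rangle$ equals the subspace diffeology and $\mathcal{C}$ is a covering generating family of $f(X)$, as required.

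The only delicate point is the reinterpretation of a local section of the submersion $Q^{*}f$ as a genuine global section of the restricted pullback over an open piece of $U$. Once one notes that D-open subsets of a Euclidean domain are ordinary open subsets and invokes the transitivity of pullbacks to identify $\iota_i^{*}(Q^{*}f)$ with $(Q|_{O_i})^{*}f$, this identification is immediate; everything else is formal manipulation with generating families.
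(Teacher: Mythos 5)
Your proposal is correct and follows essentially the same route as the paper: both take $\mathcal{C}$ to be the plots whose pullback of $f$ is a strong subduction, use constant plots (with nonempty fibers over $f(X)$) to get the covering property, and invoke Remark \ref{rem-sub} together with transitivity of pullbacks to write an arbitrary plot in $f(X)$ as the supremum of restrictions lying in $\mathcal{C}$. The extra details you supply (surjectivity of $Q^{*}f$, identification of D-open with open subsets of a domain, and the $\langle\mathcal{C}\rangle$ bookkeeping) are points the paper leaves implicit, but the argument is the same.
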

	\begin{proof}
		Take  the collection $ \mathcal{C} $ of all of the plots $ Q $ in $ Y $ for which the pullback  of $ f $ by  $ Q $ is a strong subduction. All constant plots with values in $ f(X) $ belong to this collection, so $ \mathcal{C} $ is a parametrized cover of $ f(X) $.
		Let $ P:U\rightarrow f(X) $ be an arbitrary plot.
		The pullback $ P^{*}f:P^{*}X\rightarrow U $ is a surjective submersion 
		and by Remark \ref{rem-sub}, there exists an open cover $ \{U_i\}_{i\in J} $ of $ U $ such that 
		$ (P|_{U_i})^*f $ is a strong subduction for all $ i\in J $.
		Therefore, $ P $ is as the supremum of a family of plots belonging to $ \mathcal{C} $, and hence $ \mathcal{C} $ is a covering generating family for $ f(X) $.
	\end{proof} 
	
	\begin{proposition} 
		A smooth map $ f:X\rightarrow Y $ between  diffeological spaces is a diffeological submersion if and only if for given $ x_0\in X $, for any plot $ P:U\rightarrow Y $, and for $ r_0\in U $ with $ P(r_0)=f(x_0)  $, there exists at least one local lift plot $ L:V\rightarrow X $ defined on an open
		neighborhood $ V\subseteq U $ of $ r_0 $ such that $ f\circ L=P|_V $ and $ L(r_0)=x_0 $.
	\end{proposition}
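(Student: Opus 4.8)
The plan is to unravel both sides of the claimed equivalence and observe that they encode the same data. First I would recall the concrete model of the pullback $P^{*}f\colon P^{*}X\rightarrow U$ of $f$ along a plot $P\colon U\rightarrow Y$, namely $P^{*}X=\{(r,x)\in U\times X\mid P(r)=f(x)\}$ with $P^{*}f=\Pr_1|_{P^{*}X}$ and $P_{\#}=\Pr_2|_{P^{*}X}$. By definition, $f$ is a diffeological submersion precisely when $P^{*}f$ is a submersion for \emph{every} plot $P$ in $Y$, and by Definition \ref{def-sub} this means that for every point $(r_0,x_0)\in P^{*}X$ there is a D-open subset $O\subseteq U$ containing $r_0$ and a smooth local section $\sigma\colon O\rightarrow P^{*}X$ of $P^{*}f$ passing through $(r_0,x_0)$.

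The key step is to translate such a section into a local lift plot. Since $U$ is a domain, its D-topology is the standard topology, so the D-open neighborhoods $O\subseteq U$ of $r_0$ are exactly the ordinary open neighborhoods $V$. A map $\sigma\colon V\rightarrow P^{*}X\subseteq U\times X$ is smooth if and only if both of its components are smooth, and the section condition $P^{*}f\circ\sigma=\mathrm{id}_V$ forces the first component to be the inclusion $V\hookrightarrow U$. Hence $\sigma$ is necessarily of the form $\sigma(r)=(r,L(r))$ for a plot $L=P_{\#}\circ\sigma\colon V\rightarrow X$, the requirement that $\sigma$ take values in $P^{*}X$ becomes the identity $f\circ L=P|_V$, and passing through $(r_0,x_0)$ is equivalent to $L(r_0)=x_0$. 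Conversely, any plot $L\colon V\rightarrow X$ with $f\circ L=P|_V$ and $L(r_0)=x_0$ defines such a section. Thus smooth local sections of $P^{*}f$ through $(r_0,x_0)$ correspond bijectively to local lift plots $L$ as in the statement.

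Finally I would assemble the quantifiers. The points $(r_0,x_0)$ of $P^{*}X$ are exactly the pairs with $P(r_0)=f(x_0)$, so quantifying over all plots $P$ in $Y$ and all $(r_0,x_0)\in P^{*}X$ is the same as quantifying over $x_0\in X$, plots $P\colon U\rightarrow Y$, and $r_0\in U$ with $P(r_0)=f(x_0)$. Under the correspondence established above, the existence of a local section for each such datum is exactly the asserted lifting property, so both implications follow simultaneously.

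I do not expect a genuine obstacle here, since the statement is essentially a reformulation of the two definitions. The only points requiring care are that smoothness of $\sigma$ into the subspace $P^{*}X$ is equivalent to $L=P_{\#}\circ\sigma$ being a plot in $X$ (by functoriality of the subspace diffeology on $P^{*}X\subseteq U\times X$), and that the D-topology on the domain $U$ is the standard one, so that no extra openness hypothesis slips in when passing between ``D-open neighborhood of $r_0$ in $U$'' and ``open neighborhood $V\subseteq U$ of $r_0$''.
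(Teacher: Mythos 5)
Your proposal is correct and takes essentially the same approach as the paper: the paper's proof also passes through the correspondence between smooth local sections $\sigma$ of $P^{*}f$ and local lift plots, setting $L=P_{\#}\circ\sigma$ in one direction and $\sigma(r)=(r,L(r))$ in the other. Your presentation merely packages the two directions as a single bijective correspondence (and makes explicit the routine points about the D-topology of domains and the subspace diffeology on $P^{*}X$ that the paper leaves implicit).
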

	\begin{proof}
		Assume that $ f:X\rightarrow Y $ is a diffeological submersion and let $ x_0\in X $, $ P:U\rightarrow Y $ be a plot, and $ r_0\in U $ with $ P(r_0)=f(x_0)  $. Because the pullback
		$ P^*f:P^*X \rightarrow U $
		is a submersion and $ (r_0,x_0)\in P^*X $, there is 
		a smooth local section $ \sigma:V\rightarrow P^*X $ of $ P^*f $
		defined on an open neighborhood $ V \subseteq U $ of $ r_0 $  with $ \sigma(r_0)=(r_0,x_0) $.
		Then the composition $ L=P_{\#}\circ \sigma $ is a plot in $ X $ with $ f\circ L=P|_V $ and $ L(r_0)=x_0 $.
		\begin{displaymath}
			\xymatrix{
				&	P^*X  \ar[r]^{P_{\#}}\ar[d]_{P^*f } & X \ar[d]^{f} \\
				V\ar@{-->}[ru]^{ \sigma} \ar[r]^{\iota}	&	U \ar[r]^{P} & Y  }
		\end{displaymath} 
		
		Conversely, we show that for any plot $ P:U\rightarrow Y $ the pullback $ P^*f $ is a submersion. Let $ (r_0,x_0)\in P^{*}X $ or $ P(r_0)=f(x_0)  $. By hypothesis, there exist an open neighborhood $ V $ of $ r_0 $ and a local lift plot $ L:V\rightarrow X $ such that $ f\circ L=P|_V $ and $ L(r_0)=x_0 $.
		Therefore, one can construct a smooth local section $ \sigma:V\rightarrow P^*X $ of  $ P^*f  $ by $ \sigma(r)=(r,L(r)) $
		satisfying $ \sigma(r_0)=(r_0,x_0) $.
	\end{proof}

	\begin{corollary}\label{coro-dim-dsub} 
		A diffeological submersion is a weak subduction. Thus,	for a diffeological submersion $ f:X\rightarrow Y $, we get		$ \mathrm{dim}(X)\geq\mathrm{dim}(f(X)) $, where $ f(X)\subseteq Y $ is equipped with the subspace diffeology.
	\end{corollary}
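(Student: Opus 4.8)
The plan is to deduce both claims directly from results already established, so the argument is essentially a matter of chaining two earlier propositions together; there is no new construction to perform.

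First I would verify that a diffeological submersion $ f:X\rightarrow Y $ is a weak subduction. Fix a plot $ P:U\rightarrow Y $ and a point $ r_0\in U $ with $ P(r_0)\in f(X) $. The key observation is that, precisely because $ P(r_0) $ lies in the image, I may choose some $ x_0\in X $ with $ f(x_0)=P(r_0) $; this is exactly the data required to invoke the lifting characterization of diffeological submersions established in the preceding Proposition. That characterization then supplies a local lift plot $ L:V\rightarrow X $ on an open neighborhood $ V\subseteq U $ of $ r_0 $ with $ f\circ L=P|_V $ (in fact also $ L(r_0)=x_0 $, which is more than needed here). Since this is exactly the defining condition of a weak subduction, the first assertion follows.

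For the dimension inequality, I would simply feed this into the earlier Proposition on weak subductions, which guarantees that any weak subduction satisfies $ \mathrm{dim}(X)\geq\mathrm{dim}(f(X)) $ when $ f(X)\subseteq Y $ carries the subspace diffeology. (Internally that Proposition observes that $ f(X) $ is D-open and that $ f $ is a subduction onto its image, then applies Proposition \ref{pro-dim-subd}, but for the corollary I only need its conclusion as a black box.) Combining this with the first paragraph yields the stated bound at once.

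I do not expect any genuine obstacle: the corollary is a formal consequence of the lifting characterization of diffeological submersions together with the dimension behavior of weak subductions. The only step requiring a moment's care is the selection of a preimage point $ x_0 $ over $ P(r_0) $, which is what lets the \emph{pointed} lifting statement be applied in the \emph{unpointed} setting built into the definition of a weak subduction; once that choice is made, both assertions are immediate.
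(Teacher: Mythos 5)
Your proposal is correct and follows exactly the route the paper intends: the paper states this as an immediate corollary of the lifting characterization of diffeological submersions (choose a preimage $x_0$ of $P(r_0)$ to convert the unpointed weak-subduction condition into the pointed lifting condition) combined with the earlier proposition that weak subductions satisfy $\mathrm{dim}(X)\geq\mathrm{dim}(f(X))$. No gaps; the observation about selecting the preimage point is precisely the one-line argument the paper leaves implicit.
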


	\begin{corollary}
	Surjective diffeological submersions are the same as local subductions.
\end{corollary}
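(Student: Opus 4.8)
The plan is to read off both implications directly from the lift-characterization of diffeological submersions established above (the proposition asserting that $f$ is a diffeological submersion iff, for each $x_0\in X$, each plot $P:U\to Y$, and each $r_0\in U$ with $P(r_0)=f(x_0)$, there is a local lift $L$ through $x_0$). The only genuine work is a reparametrization that moves between plots centered at $f(x_0)$ (as in the definition of a local subduction) and plots taking an arbitrary parameter $r_0$ to $f(x_0)$; everything else is bookkeeping of surjectivity.

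First I would treat the forward direction. Suppose $f:X\to Y$ is a surjective diffeological submersion and fix $x_0\in X$ together with a plot $P:U\to Y$ centered at $f(x_0)$, so $0\in U$ and $P(0)=f(x_0)$. Applying the lift-characterization with $r_0=0$ yields a local lift $L:V\to X$ on a neighborhood $V\subseteq U$ of $0$ with $f\circ L=P|_V$ and $L(0)=x_0$; such an $L$ is a plot centered at $x_0$, which is exactly the defining condition of a local subduction. Combined with the assumed surjectivity (matching the surjectivity that every local subduction enjoys, since a local subduction is a subduction), this shows $f$ is a local subduction. For the converse, let $f$ be a local subduction; being a subduction it is surjective, so it only remains to verify the lift-characterization for arbitrary $r_0$. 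Given $x_0\in X$, a plot $P:U\to Y$, and $r_0\in U$ with $P(r_0)=f(x_0)$, I would recenter by setting $\widetilde P(s)=P(s+r_0)$ on the translated domain $\widetilde U=U-r_0$, which is a plot centered at $f(x_0)$ by axiom D2 (composition with the smooth translation). The local subduction property supplies a local lift $\widetilde L:\widetilde V\to X$ centered at $x_0$ with $f\circ\widetilde L=\widetilde P|_{\widetilde V}$. Translating back, $L(s)=\widetilde L(s-r_0)$ is a plot on the neighborhood $V=\widetilde V+r_0$ of $r_0$ satisfying $f\circ L=P|_V$ and $L(r_0)=x_0$, so $f$ meets the lift-characterization and is therefore a surjective diffeological submersion.

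I do not expect any serious obstacle: once the lift-characterization is in hand the equivalence is almost immediate, and the only point requiring care is the recentering step, where the sole subtlety is checking that translating by $r_0$ keeps us inside the class of plots, which is guaranteed by D2. The one thing worth tracking carefully is surjectivity, which must be carried on both sides so that the two classes genuinely coincide rather than one merely implying the lifting property of the other.
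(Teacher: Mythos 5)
Your proof is correct and takes essentially the same route the paper intends: the corollary is stated there without proof as an immediate consequence of the preceding lift-characterization proposition, and your argument simply makes explicit the two directions, including the recentering-by-translation step (justified by axiom D2) that bridges plots centered at $f(x_0)$ and plots with an arbitrary base point $r_0$. Nothing further is needed.
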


It is clear that every diffeomorphism is a surjective diffeological submersion or local subduction,
and every surjective diffeological submersion is a hyper-diffeomorphism (see \cite[Definition 4.8]{ADD}).
Also, an injective diffeological submersion is an induction and so a diffeomorphism onto its image.

	\begin{proposition}
		If $ f:X\rightarrow Y $ is a diffeological submersion, then the internal tangent map $ df_x:T_xX\rightarrow T_{f(x)}Y $ is an epimorphism, at each point of $ x\in X $.
	\end{proposition}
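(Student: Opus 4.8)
The plan is to combine the colimit description of the internal tangent space with the lift characterization of diffeological submersions established in the preceding proposition. Fix $x\in X$ and set $y=f(x)$. Since $T_yY$ is the colimit over $\mathcal{G}\mathsf{Plots}_y(Y)$ of the functor $T_0$, and colimits in $\mathsf{Vect}$ are computed as quotients of direct sums, every vector $w\in T_yY$ admits a finite expression $w=\sum_{j=1}^{n} dQ_{j,0}(v_j)$, where each $Q_j$ is a plot in $Y$ centered at $y$, each $v_j\in T_0\,\mathrm{dom}(Q_j)$, and $dQ_{j,0}$ is the colimit structure map attached to $Q_j$. Surjectivity of $df_x$ will follow once each summand is lifted through $f$ and the results are added.

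First I would lift each plot $Q_j$. Applying the lift characterization of diffeological submersions to $x_0=x$, the plot $P=Q_j$, and $r_0=0$ (here $Q_j(0)=y=f(x)$), I obtain a local lift plot $L_j:V_j\rightarrow X$, defined on an open neighborhood $V_j\subseteq\mathrm{dom}(Q_j)$ of $0$, satisfying $f\circ L_j=Q_j|_{V_j}$ and $L_j(0)=x$. Each $L_j$ is then a plot in $X$ centered at $x$, hence carries its own structure map $dL_{j,0}:T_0V_j\rightarrow T_xX$; and because $V_j$ is an open neighborhood of $0$ in $\mathrm{dom}(Q_j)$, we have $T_0V_j=T_0\,\mathrm{dom}(Q_j)$, so $v_j$ lies in the domain of $dL_{j,0}$. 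Next I would compute using the defining property of the internal tangent map, which gives $df_x\circ dL_{j,0}=d(f\circ L_j)_0=d(Q_j|_{V_j})_0$. Since $Q_j$ and its restriction $Q_j|_{V_j}$ share the same germ at $0$, their differentials coincide, so $d(Q_j|_{V_j})_0=dQ_{j,0}$; therefore $df_x\big(dL_{j,0}(v_j)\big)=dQ_{j,0}(v_j)$. Putting $u=\sum_{j=1}^{n} dL_{j,0}(v_j)\in T_xX$ and invoking linearity of $df_x$ yields $df_x(u)=\sum_{j=1}^{n} dQ_{j,0}(v_j)=w$, so $df_x$ is surjective, i.e., an epimorphism in $\mathsf{Vect}$.

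The argument is essentially formal once the finite-sum colimit representation is in hand, so there is no serious obstacle; the only point requiring care is the deliberate reduction to one summand at a time. One should \emph{not} expect $w$ to be realized by a single plot centered at $y$, because the germ category need not be filtered and the colimit need not be taken over a directed system. This is harmless here: the submersion hypothesis supplies a lift for each plot $Q_j$ individually, and linearity of $df_x$ recombines the lifts. The corresponding germ-invariance of the differential (the fact that plots with the same germ at $0$ have equal differentials) is exactly what makes the restriction to $V_j$ invisible at the level of tangent vectors.
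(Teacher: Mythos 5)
Your proof is correct and follows essentially the same route as the paper's: write an arbitrary tangent vector at $f(x)$ as a finite sum $\sum_j dQ_{j,0}(v_j)$, use the local-lift characterization of diffeological submersions to lift each $Q_j$ to a plot $L_j$ centered at $x$, and apply $df_x\circ d(L_j)_0 = d(f\circ L_j)_0 = d(Q_j)_0$ together with linearity. Your explicit attention to germ-invariance (so that restricting $Q_j$ to $V_j$ does not change its differential) is a point the paper uses silently, but it is the same argument.
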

	\begin{proof}
	We check that $ df_x:T_{x}X\rightarrow T_{f(x)}Y $ is surjective.
	Let $ u=\sum_{i=1}^{k} d(P_i)_0(u_i)$ be an element of $ T_{f(x)}Y $, where $ P_i $ are plots in $ Y $ centered at $ f(x) $, for $ i=1,\cdots,k $,
and $ u_i\in T_0\mathrm{dom}(P_i) $.
Since $ f $ is a diffeological submersion and $ f(x)=P_i(0) $, one can find a
local lift plot $ L_i $ of $ P_i $ along $ f $ with $ L_i(0)=x $, for each $ i=1,\cdots,k $.
Then $ df_x(\sum_{i=1}^{k} d(L_i)_0(u_i))=\sum_{i=1}^{k} d(f\circ L_i)_0(u_i)=\sum_{i=1}^{k} d(P_i)_0(u_i)=u $.
Hence $ df_x $ is an epimorphism.
	\end{proof}

	\begin{example}
		Suppose that $ G $ is a diffeological group and  $ X $ be a diffeological space, and
		let $ \alpha:G \rightarrow\mathrm{Diff}(X) $ be a smooth action of $ G $ on $ X $.
			\begin{enumerate}
			\item[ (a) ] 	
	The adjoint  map $ \overline{\alpha}:G\times X\rightarrow X, \overline{\alpha}(g,x)=\alpha(g)(x) $ is a strong subduction.
	In particular, 	the operation of a diffeological group is a strong subduction.
			\item[ (b) ] 
		 The quotient map $ \pi:X\rightarrow X/G $ is a surjective diffeological submersion or a local subduction. 	In fact, let $ P:U\rightarrow X/G $ be any plot, $ x_0\in X $ and $ r_0\in U $ such that $ P(r_0)=\pi(x_0)  $.
		 Because $ \pi $ is a subduction, there exist an open neighborhood $ V $ of $ r_0 $ and a plot $ Q:V\rightarrow X $ such that $ \pi\circ Q=P|_V $.
		 In particular, $ \pi (Q(r_0))=\pi(x_0) $
		 and there is an element $ g $ of $ G $ for which $ \alpha(g)(Q~~(r_0))=x_0 $.
		 Now it is sufficient to set $ L=\alpha(g)\circ Q $, which is a local lift plot of $ P|_V $ along $ \pi $ with $ L(r_0)=x_0 $.
		\end{enumerate} 
	\end{example}

  	The following is an example of a  diffeological submersion, which may not be local subduction.
  \begin{example} 
  	Let $ X $ be a (not necessarily connected) diffeological space.
  	The map $ ends:\mathsf{Paths}_{st}(X)\rightarrow X\times X $ taking any stationary path $ \gamma $ in $ X $ to the pair $ (\gamma(0),\gamma(1))  $ is a diffeological submersion.
  	To see this, assume that  $ \gamma $ is a stationary path in $ X $, $ (P,P'):U\rightarrow X\times X $ is a plot, and let $ r_0\in U $ be such that $ (P(r_0),P'(r_0))=(\gamma(0),\gamma(1))  $.
  	Then there exists some $ \epsilon>0 $ such that $ \gamma(t)=P(r_0) $ for all $ t<\epsilon $, and $ \gamma(t)=P'(r_0) $ for all $ 1-\epsilon<t $.
  	Let $ B_{r_0} $ be an open ball in $ U $ centered at $ r_0 $. Then $ P(B_{r_0})$ and $P'(B_{r_0}) $ are both connected and lie in the same connected component of $ X $. Construct  the plot $ \phi $ in $ \mathsf{Paths}_{st}(X) $ from $ B_{r_0} $ via
  	\begin{align*}
  		\phi(r)(t)=\left \{
  		\begin{array}{lr}
  			P\big(\lambda(\frac{t}{\epsilon})r_0~-\lambda(\frac{t}{\epsilon})r+~r\big),\quad &  t < \epsilon\\
  			\gamma(t), \quad &  \epsilon\leq t \leq 1-\epsilon\\
  			P'\big(\lambda(\frac{t+\epsilon-1}{\epsilon})r~-\lambda(\frac{t+\epsilon-1}{\epsilon})r_0+~r_0\big),\quad &  1-\epsilon < t\\
  		\end{array} \right.
  	\end{align*}
  	for all $ r\in B_{r_0}  $, $ t\in\mathbb{R} $,
  	where
  	$ \lambda $
  	is the smashing function
  	given in \cite[\S 5.5]{PIZ}.
  	Thus,
  	$ \mathsf{ends}\circ \phi=(P,P')|_{B_{r_0}} $ and $ \phi(r_0)=\gamma $.
  \end{example}

	It is easy to see that diffeological submersions are closed under their compositions and pulling back by smooth maps.
	By Proposition \ref{pro-comp-pb-sub}, any submersion is a diffeological submersion.
	However, a diffeological submersion is not necessarily a  submersion.
	\begin{example}
		The diffeological principal bundle  $  \mathbb{T}^2 \rightarrow \mathbb{T}_{\alpha},~~~ \alpha\notin\mathbb{Q} $, is obviously a surjective diffeological submersion (see \cite[\S 8.15]{PIZ}). But it  is not a submersion;
		for otherwise,  it has a smooth global section and hence it must be trivial by \cite[\S 8.12, Note 2]{PIZ}.
		This follows that $ \mathbb{R}\times\mathbb{T}_{\alpha}\cong \mathbb{T}^2  $ is D-compact and so $ \mathbb{R} $ is compact, a contradiction!
	\end{example}
	
	On manifolds, however, the situation is different.
	
	\begin{proposition} 
		If $ f:X\rightarrow M $ is a diffeological submersion into a manifold $ M $, then it is a submersion.
	\end{proposition}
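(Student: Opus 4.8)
The plan is to reduce the construction of a smooth local section to the lift characterization of diffeological submersions established above, exploiting the fact that a manifold admits \emph{invertible} plots, namely its chart parametrizations. Fix $x_0 \in X$ and set $m_0 = f(x_0)$. Since $M$ is a manifold, I would choose a chart parametrization $\varphi \colon U \to M$, with $U \subseteq \mathbb{R}^n$ open, that is a diffeomorphism onto an open neighborhood $\varphi(U)$ of $m_0$; such a $\varphi$ is a plot of $M$ for its standard diffeology. Let $r_0 = \varphi^{-1}(m_0) \in U$, so that $\varphi(r_0) = m_0 = f(x_0)$.

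Next I would apply the lift characterization of diffeological submersions (the if-and-only-if criterion proved just above) to the plot $\varphi$, the point $x_0$, and the parameter $r_0$: this yields an open neighborhood $V \subseteq U$ of $r_0$ and a local lift plot $L \colon V \to X$ with $f \circ L = \varphi|_V$ and $L(r_0) = x_0$. The crucial observation is that $\varphi|_V \colon V \to \varphi(V)$ is again a diffeomorphism onto the open set $O := \varphi(V) \subseteq M$, so it has a smooth inverse. I would then define
\[
\sigma := L \circ (\varphi|_V)^{-1} \colon O \longrightarrow X,
\]
which is smooth as a composite of smooth maps.

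It remains to check that $\sigma$ is a local section through $x_0$ in the sense of Definition \ref{def-sub}. For every $y \in O$ one has $f(\sigma(y)) = f\bigl(L((\varphi|_V)^{-1}(y))\bigr) = \varphi\bigl((\varphi|_V)^{-1}(y)\bigr) = y$, so $f \circ \sigma = \mathrm{id}_O$; moreover $\sigma(m_0) = L(r_0) = x_0$, whence $x_0 \in \sigma(O)$ and $f(x_0) = m_0 \in O$. Finally $O = \varphi(V)$ is open in $M$, and since the D-topology of a manifold coincides with its underlying topology (by the proposition recalled earlier), $O$ is D-open in $M$. As $x_0$ was arbitrary, $f$ is a submersion.

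The only real subtlety --- and the reason the statement singles out manifold targets --- is the passage from a lift \emph{along a plot} to a genuine section \emph{on a D-open subset} of the target. For a general diffeological $Y$ the lifting plot $L$ need not descend to a map defined on an open piece of $Y$, since an arbitrary plot $P$ is far from invertible. The manifold hypothesis removes exactly this obstacle: chart parametrizations are invertible plots, so $(\varphi|_V)^{-1}$ exists and converts the abstract lift into an honest local section. Beyond this point I expect nothing more than routine bookkeeping.
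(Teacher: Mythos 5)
Your proposal is correct and follows essentially the same argument as the paper: take a chart $\varphi$ around $f(x_0)$, use the local-lift characterization of diffeological submersions to produce $L$ with $f\circ L=\varphi|_V$ and $L(r_0)=x_0$, and then define the local section $\sigma = L\circ(\varphi|_V)^{-1}$ on the open set $\varphi(V)$. Your additional checks (that $f\circ\sigma=\mathrm{id}$, that $\varphi(V)$ is D-open, and the closing remark on why invertibility of chart plots is the crux) are exactly the bookkeeping the paper leaves implicit.
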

	\begin{proof}
		Let $ x_0\in X $ and $ \varphi:U\rightarrow M  $ be a chart\footnote{By a chart, we mean a diffeomorphism from a domain onto an open subset of a manifold.} around $ f(x_0) $, i.e., $ f(x_0)=\varphi(r_0) $ for some  $ r_0\in U $.
		So we obtain a local lift plot $ L:V\rightarrow X $ with $ r_0\in V\subseteq U $ such that $ f\circ L=\varphi|_V $ and $ L(r_0)=x_0 $.
		Obviously, $ \sigma:=L\circ(\varphi|_V)^{-1} $ is a smooth local section of $ f $ defined on the open subset
		$ \varphi(V)\subseteq M $
		with $ \sigma(\varphi(r_0))=x_0 $, and hence  $ f $ is a submersion. 
	\end{proof}
	
	\begin{corollary}\label{cor-dsub-mfd}
		A map between manifolds is a diffeological submersion if and only if it is a submersion of manifolds.
	\end{corollary}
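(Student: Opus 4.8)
The plan is to obtain the corollary as a formal consequence of the two propositions immediately preceding it, combined with the pullback-stability of submersions recorded in Proposition \ref{pro-comp-pb-sub}. No new geometry is needed; the work amounts to chaining the available equivalences in the correct direction.

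For the forward implication, I would start from a diffeological submersion $f\colon M\to N$ between manifolds. Because the target $N$ is a manifold, the preceding proposition (every diffeological submersion into a manifold is already a submersion) applies verbatim and shows that $f$ is a submersion of diffeological spaces. I would then invoke the earlier proposition identifying submersions of diffeological spaces between manifolds with submersions of manifolds to conclude that $f$ is a submersion in the classical sense.

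For the converse, I would take a submersion $f\colon M\to N$ of manifolds. The same manifold/diffeological-space equivalence first upgrades this to a submersion of diffeological spaces. To promote it to a diffeological submersion, I would verify the defining condition directly: given any plot $P\colon U\to N$, the pullback $P^{*}f$ is again a submersion by the pullback-stability part of Proposition \ref{pro-comp-pb-sub}. Since $P$ was an arbitrary plot in $N$, this is precisely the statement that $f$ is a diffeological submersion.

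I do not expect a genuine obstacle here, since all of the substantive content already lives in the two preceding propositions. The only point requiring care is bookkeeping: making sure that each of the three notions in play---submersion of manifolds, submersion of diffeological spaces, and diffeological submersion---is invoked under the hypothesis that actually holds, and in particular that it is the manifold hypothesis on $N$ which licenses passing from a diffeological submersion back to an honest submersion in the forward direction.
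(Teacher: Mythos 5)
Your proposal is correct and matches the paper's intended derivation: the forward direction chains the proposition that a diffeological submersion into a manifold is a submersion with the equivalence between submersions of diffeological spaces and of manifolds, while the converse uses that same equivalence together with the pullback-stability in Proposition \ref{pro-comp-pb-sub} (which the paper itself cites as showing every submersion is a diffeological submersion). Nothing is missing; the bookkeeping you describe is exactly the content of the corollary.
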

	
	\begin{corollary}\label{cor-dsub-mfd2} 
		A smooth map $ f:X\rightarrow Y $ between  diffeological spaces is a diffeological submersion if and only if for given $ x\in X $, any smooth map  $ g:M\rightarrow Y $ from a manifold $ M $, and $ m\in M $ with $ g(m)=f(x)  $, there exists at least one smooth local lift  $ h:O\rightarrow X $ defined on a D-open
		neighborhood $ O\subseteq M $ of $ m $ such that $ f\circ h=g|_O $ and $ h(m)=x $.
	\end{corollary}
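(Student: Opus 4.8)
The statement to prove is Corollary \ref{cor-dsub-mfd2}, which upgrades the lifting criterion for diffeological submersions (already proved in the previous proposition, in terms of plots) to a criterion phrased in terms of arbitrary smooth maps from manifolds. The natural strategy is to reduce the manifold-source statement to the plot-source statement and back, since a diffeological submersion was already characterized by the existence of local lifts of plots. The forward direction should be the easy one, and the converse should follow by specializing the manifold hypothesis to charts.

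\emph{Forward direction.} Suppose $f:X\to Y$ is a diffeological submersion, and we are given $x\in X$, a smooth map $g:M\to Y$ from a manifold $M$, and $m\in M$ with $g(m)=f(x)$. First I would choose a chart $\varphi:U\to M$ around $m$, say with $\varphi(r_0)=m$ for some $r_0\in U$; composing, $P:=g\circ\varphi:U\to Y$ is a plot in $Y$ with $P(r_0)=g(m)=f(x)$. By the preceding proposition (the plot-lifting characterization of diffeological submersions), there is a local lift plot $L:V\to X$ on an open neighborhood $V\subseteq U$ of $r_0$ with $f\circ L=P|_V$ and $L(r_0)=x$. Then I would push this lift back up to $M$ by setting $h:=L\circ(\varphi|_V)^{-1}$, defined on the D-open neighborhood $O:=\varphi(V)\subseteq M$ of $m$ (D-open because on a manifold the D-topology is the usual topology, and $\varphi$ is an open map). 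One checks directly that $f\circ h=g|_O$ and $h(m)=x$, giving the desired smooth local lift.

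\emph{Converse direction.} Suppose the manifold-lifting condition holds. To show $f$ is a diffeological submersion, by the preceding proposition it suffices to produce, for each $x\in X$, each plot $P:U\to Y$, and each $r_0\in U$ with $P(r_0)=f(x)$, a local lift plot through $x$. The key observation is that a plot is itself a smooth map from a manifold: the domain $U\subseteq\mathbb{R}^n$ is a manifold, and $P:U\to Y$ is smooth. So I would simply apply the hypothesis with $M=U$, $g=P$, and $m=r_0$, obtaining a smooth local lift $h:O\to X$ on a D-open neighborhood $O\subseteq U$ of $r_0$ with $P|_O=f\circ h$ and $h(r_0)=x$. Since on a domain D-open means open, $h$ is a local lift plot of $P$ through $x$, which is exactly what the plot-criterion requires.

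\emph{Anticipated obstacle.} There is no substantial obstacle here; the content is entirely in the already-proved plot-lifting proposition, and this corollary is a translation exercise. The only points requiring a word of care are the identifications of the D-topology of a manifold (and of a domain) with its ordinary topology, which guarantee that the phrase ``D-open neighborhood $O\subseteq M$'' coincides with ``open neighborhood'' so that the charts and their inverses are genuinely smooth, and the openness of charts used in the forward direction to ensure $\varphi(V)$ is open. Both are standard and were noted earlier in the excerpt.
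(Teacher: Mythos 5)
Your proof is correct and follows exactly the route the paper intends for this corollary (which it states without an explicit proof): reduce the manifold statement to the plot-lifting characterization of diffeological submersions via charts in one direction, and specialize the manifold hypothesis to plots (domains being manifolds) in the other. The two points of care you flag---that the D-topology of a manifold is its usual topology, and that charts are open maps---are precisely the facts established earlier in the paper that make this translation work.
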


	\section{Immersions in diffeology}\label{S4}
	
In this section, we will  introduce diffeological immersions, explore their key properties, and show that they behave like immersions of manifolds. Accordingly, we  consider  notions for  embeddings of diffeological spaces. But first of all, we need some auxiliary concepts.
	\subsection{Diffeological injections}
	
	\begin{definition}
		A  (not necessarily injective) smooth map $ f:X\rightarrow Y $  is a \textbf{diffeological injection}
		if the following implication holds:
		
		\begin{enumerate}
			\item[$ \textbf{Inj.} $] 
			For any two $ n $-plots $ P$ and $Q $ in $ X $, and every $ r_0\in \mathrm{dom}(P)\cap\mathrm{dom}(Q) $, whenever $ P(r_0)=Q(r_0) $ and there exists an open neighborhood $ U $ of $ r_0 $ in $\mathrm{dom}(P)\cap\mathrm{dom}(Q) $ such that
			$ f\circ P|_U=f\circ Q|_U $,
			then $ P|_V=Q|_V $ for some open neighborhood $ V $ of $ r_0 $ in $U $.
			
		\end{enumerate}
	\end{definition}

	A diffeological injection need not be injective, although any smooth injective map, especially an induction, is a diffeological injection.
	Obviously, any map defined on a discrete space is a  diffeological injection.
	The pullback of a diffeological injection by a diffeological injection remains a diffeological injection.
	Also, the composition of two diffeological injections is again a diffeological injection.
	
	\begin{proposition}\label{pro-dinj-mfd}
		A  smooth map $ f:X\rightarrow Y $ is a diffeological injection if and only if for given $ x\in X $, any smooth map  $ g:M\rightarrow Y $ from a manifold $ M $, and $ m\in M $ with $ g(m)=f(x)  $, there exists at most one  smooth local lift  $ h:O\rightarrow X $ defined on a D-open
		neighborhood $ O\subseteq M $ of $ m $ such that $ f\circ h=g|_O $ and $ h(m)=x $.
	\end{proposition}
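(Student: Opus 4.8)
The plan is to translate the plot-level condition $\textbf{Inj}$ into the manifold-level lifting statement, exploiting two elementary facts: every plot $P:U\to X$ is itself a smooth map from the domain $U$ regarded as a manifold with its standard diffeology, and conversely every smooth map $g:M\to Y$ from a manifold restricts, after precomposition with a chart, to a plot. Since the D-topology of a manifold coincides with its usual topology, the D-open neighborhoods in the lifting condition are ordinary open sets, so charts may be composed freely.

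For the implication that the lifting condition implies $\textbf{Inj}$, I would argue directly. Given $n$-plots $P$ and $Q$ with $P(r_0)=Q(r_0)=:x$ and $f\circ P|_U=f\circ Q|_U$ on a neighborhood $U$ of $r_0$, I would take the manifold to be $U$ itself, set $m=r_0$, and let $g:=f\circ P|_U=f\circ Q|_U:U\to Y$, so that $g(r_0)=f(x)$. Then $P|_U$ and $Q|_U$ are two smooth local lifts of $g$ through $f$ sending $r_0$ to $x$; by hypothesis there is at most one such lift, so $P$ and $Q$ agree on some open neighborhood $V\subseteq U$ of $r_0$, which is precisely $\textbf{Inj}$.

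For the reverse implication, I would start from a diffeological injection $f$ together with a smooth map $g:M\to Y$, a point $m$ with $g(m)=f(x)$, and two smooth local lifts $h_1:O_1\to X$ and $h_2:O_2\to X$ satisfying $f\circ h_i=g|_{O_i}$ and $h_i(m)=x$. Choosing a chart $\varphi:W\to M$ with $\varphi(s_0)=m$ and $\varphi(W)\subseteq O_1\cap O_2$, I would form $P:=h_1\circ\varphi$ and $Q:=h_2\circ\varphi$, which are plots in $X$ sharing the domain $W$, hence $n$-plots for a common $n$, and which satisfy $P(s_0)=Q(s_0)=x$ and $f\circ P=g\circ\varphi=f\circ Q$. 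Applying $\textbf{Inj}$ produces a neighborhood $V$ of $s_0$ with $P|_V=Q|_V$; since $\varphi$ is a diffeomorphism onto an open set, this transports back to $h_1=h_2$ on the open neighborhood $\varphi(V)$ of $m$, showing there is at most one lift.

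No step poses a genuine obstacle, as the argument is essentially a dictionary between the two formulations and parallels the submersion characterization in Corollary \ref{cor-dsub-mfd2}. The only point requiring care is the bookkeeping in the reverse direction: arranging the chart $\varphi$ so its image lands in $O_1\cap O_2$ (possible since this is an open neighborhood of $m$), ensuring $P$ and $Q$ share the domain $W$ so that $\textbf{Inj}$ applies, and using that $\varphi$ is a diffeomorphism to pass from equality of the plots back to equality of the lifts near $m$.
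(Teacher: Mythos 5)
Your proof is correct and is precisely the ``straightforward'' dictionary argument the paper has in mind (the paper's own proof is omitted as such): plots are smooth maps from domains viewed as manifolds, charts transport manifold lifts back to plots, and the D-topology of a manifold is the usual one. Your consistent reading of ``at most one lift'' as uniqueness of germs (any two lifts agree near $m$) is the right one, since literal uniqueness of the pair $(O,h)$ could never hold once a single lift exists.
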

	\begin{proof}
		The proof is straightforward.	
	\end{proof}

	\begin{corollary}\label{cor-inj} 
	Suppose that $ M $ is a manifold and $ f:M\rightarrow X $ is a diffeological injection.
	If $ m_0\in M $ and $ g:O\rightarrow M $ is a smooth map defined on a D-open
	neighborhood $ O\subseteq M $ of $ m_0 $ such that $ g(m_0)=m_0 $ and $ f\circ g(m)=f(m) $ for all $ m\in O $,
	then there exists a D-open
	neighborhood $ O'\subseteq O $ of $ m_0 $ such that $ g(m)=m $ for all $ m\in O' $.
\end{corollary}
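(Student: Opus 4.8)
The plan is to apply Proposition \ref{pro-dinj-mfd}, which characterizes diffeological injections through the uniqueness of smooth local lifts along smooth maps from manifolds. The entire argument reduces to exhibiting two such lifts that are already in hand---the inclusion and the map $g$---and then letting the ``at most one'' clause force them to agree near $m_0$.

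First I would record that, since $M$ is a manifold, its D-topology coincides with its underlying topology, so the D-open set $O$ is an open submanifold of $M$ and the restriction $f|_O:O\rightarrow X$ is a genuine smooth map from a manifold into $X$. This $f|_O$ is the smooth test map to which Proposition \ref{pro-dinj-mfd} will be applied, taking $m_0\in O$ as the base point in the test manifold and $m_0\in M$ as the point on the source side; the required compatibility $f|_O(m_0)=f(m_0)$ holds trivially.

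Next I would produce the two competing lifts of $f|_O$ through $f$, both sending $m_0$ to $m_0$. The first is the canonical inclusion $\iota:O\hookrightarrow M$, for which $f\circ\iota=f|_O$ and $\iota(m_0)=m_0$. The second is $g$ itself: by hypothesis $f\circ g(m)=f(m)$ for every $m\in O$, that is, $f\circ g=f|_O$, while $g(m_0)=m_0$. Hence both $\iota$ and $g$ are smooth local lifts of $f|_O$ along $f$ attaining the prescribed value $m_0$ at $m_0$. Since $f$ is a diffeological injection, Proposition \ref{pro-dinj-mfd} guarantees that such a lift is unique up to its germ at $m_0$; therefore $\iota$ and $g$ coincide on some D-open neighborhood $O'\subseteq O$ of $m_0$, which is exactly the assertion $g(m)=m$ for all $m\in O'$.

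I do not expect any genuine obstacle here. The single point that requires care is the bookkeeping of roles in Proposition \ref{pro-dinj-mfd}: in the corollary both the source of $f$ and the ``test manifold'' are (open pieces of) the same space $M$, so one must keep straight that the lifts land in $M$ as the source of $f$ while $f|_O$ is viewed as a smooth map out of $M$ into $X$. Once this identification is made explicit, the two lifts fit the hypotheses verbatim and the conclusion is immediate.
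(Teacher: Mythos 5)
Your proof is correct and is essentially the argument the paper intends: the corollary is stated as an immediate consequence of Proposition \ref{pro-dinj-mfd}, obtained exactly by viewing $f|_O$ as the test map from a manifold and comparing the two lifts $\iota$ and $g$ through $m_0$, whose germs at $m_0$ must coincide. Your reading of the "at most one lift" clause as uniqueness up to germ at the base point is the intended one, and the bookkeeping of roles (with $M$ serving both as source of $f$ and, through $O$, as the test manifold) is handled correctly.
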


	\begin{proposition}
		If $ f:X\rightarrow Y $ is a diffeological injection, then every fiber  of  $ f $ is a discrete subspace of $ X $.
	\end{proposition}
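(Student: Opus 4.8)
The plan is to unwind the definitions and recognize that the statement is essentially a direct application of the defining property \textbf{Inj.}\ with the second plot chosen to be constant. Recall that a subspace $X'\subseteq X$ carries the subspace diffeology, whose plots are precisely the plots $P$ in $X$ taking values in $X'$, and that $X'$ is a discrete subspace exactly when every such plot is locally constant. So fix $y\in Y$, set $X'=f^{-1}(y)$, and let $P:U\rightarrow X$ be an arbitrary plot of the subspace diffeology on $X'$; concretely, $P$ is a plot in $X$ with $P(U)\subseteq f^{-1}(y)$, which means that $f\circ P$ is the constant map with value $y$. The goal is to show that $P$ is locally constant.

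To do this I would work pointwise. Fix $r_0\in U$, put $x=P(r_0)$, and let $Q:U\rightarrow X$ be the constant parametrization with value $x$; this is a plot (locally constant parametrizations are plots), and it is an $n$-plot on the same domain $U$ as $P$, so $P$ and $Q$ are two $n$-plots with $n=\dim(U)$. They satisfy $P(r_0)=x=Q(r_0)$. Moreover $f\circ Q$ is the constant map with value $f(x)=y$, since $x\in f^{-1}(y)$, and $f\circ P$ is also the constant map with value $y$; hence $f\circ P|_U=f\circ Q|_U$ on all of $U$.

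With these data in place, the hypothesis that $f$ is a diffeological injection applies verbatim: property \textbf{Inj.}\ yields an open neighborhood $V\subseteq U$ of $r_0$ such that $P|_V=Q|_V$. Since $Q$ is constant with value $x$, this says $P$ is constant equal to $x=P(r_0)$ on $V$. As $r_0\in U$ was arbitrary, $P$ is locally constant, and therefore every plot of the subspace diffeology on $f^{-1}(y)$ is locally constant. This is exactly the statement that $f^{-1}(y)$ is a discrete subspace of $X$.

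I do not anticipate a genuine obstacle here; the only point requiring a moment's care is the bookkeeping in \textbf{Inj.}, namely that the two parametrizations being compared must be $n$-plots for the same $n$ and must agree at the chosen basepoint. Both are arranged by taking $Q$ to be the constant plot on the common domain $U$, so the argument goes through cleanly, and in particular no injectivity of $f$ is needed.
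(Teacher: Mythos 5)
Your proof is correct and follows essentially the same route as the paper's: both fix a plot $P$ with values in the fiber, compare it at an arbitrary point with the constant plot at $P(r_0)$ defined on the same domain, and invoke property \textbf{Inj.} to conclude that $P$ is locally constant. The bookkeeping points you flag (same domain, agreement at the basepoint, no injectivity of $f$ needed) are exactly the ones implicit in the paper's argument, so there is nothing to add.
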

	\begin{proof}
		Assume that $ y\in Y $ and the fiber $  X_y $ over $ y $ is nonempty, $ X_y=f^{-1}(y)\neq\varnothing $. Let $ P:U\rightarrow X $ be a plot with values in the fiber $ X_y $. Fix any $ r\in U $, set $ x:=P(r) $. Take the constant plot $ \widetilde{x} $ with the value $ x $ defined on $ U $.
		Then $ P(r)=\widetilde{x}(r) $ and $ f\circ P=f\circ \widetilde{x} $, which implies that
		$ P|_V=\widetilde{x}|_V $ on some open neighborhood $ V $ of $ r $ in $U $.
		Thus, $ P $ is locally constant and $ X_y $ is a discrete subspace.
	\end{proof}
	\begin{example}\label{exa-inj-dis}
		The quotient map  $ q:\mathbb{R}\rightarrow \mathbb{R}/O(1),~~~ x\mapsto\{-x,x\} $ of the natural action of the group $ O(1) $ on $ \mathbb{R} $ has discrete fibers by \cite[Exercise 8]{PIZ}, but it is not a diffeological injection. Because we have $ \mathrm{id}_{\mathbb{R}}(0)=-\mathrm{id}_{\mathbb{R}}(0) $ and $ q\circ \mathrm{id}_{\mathbb{R}}=q\circ (-\mathrm{id}_{\mathbb{R}}) $, but $ \mathrm{id}_{\mathbb{R}} $ and $ -\mathrm{id}_{\mathbb{R}} $ are not equal on any open neighborhood of $ 0 $.
	\end{example}

	\begin{proposition}\label{pro-inj-lift}
		Suppose that $ f:X\rightarrow Y $ is a diffeological injection and $ P:U\rightarrow Y $ is any plot in $ Y $. If $ L:U\rightarrow X $ is a lift plot of $ P $ along $ f $, then $\mathsf{Graph}(L)$ is a D-open subset of $ P^*X $.
	\end{proposition}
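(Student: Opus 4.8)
The plan is to verify D-openness directly from the definition: a subset of $P^*X$ is D-open precisely when its preimage under every plot of $P^*X$ is open. So I would take an arbitrary plot $\rho:W\rightarrow P^*X$ and show that $\rho^{-1}(\mathsf{Graph}(L))$ is open in $W$. To set this up, I would first unpack the plots of $P^*X$. Since $P^*X\subseteq U\times X$ carries the subspace diffeology, any plot $\rho$ has the form $\rho=(\alpha,\beta)$, where $\alpha:W\rightarrow U$ is a smooth map between domains and $\beta:W\rightarrow X$ is a plot, subject to the pullback constraint $P\circ\alpha=f\circ\beta$. Under this description the set to analyze is
\[
\rho^{-1}(\mathsf{Graph}(L))=\{\,w\in W\mid \beta(w)=L(\alpha(w))\,\}.
\]

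The key observation is that $L\circ\alpha$ is again a plot in $X$ (by D2, since $L$ is a plot and $\alpha$ is smooth), and that it always agrees with $\beta$ after applying $f$: indeed $f\circ(L\circ\alpha)=P\circ\alpha=f\circ\beta$ on all of $W$, the first equality because $f\circ L=P$ and the second by the pullback constraint. Thus $\beta$ and $L\circ\alpha$ are two plots in $X$ with $f\circ\beta=f\circ(L\circ\alpha)$ everywhere on $W$. Now I would fix any $w_0\in\rho^{-1}(\mathsf{Graph}(L))$, so that $\beta(w_0)=(L\circ\alpha)(w_0)$, and apply the diffeological injection property \textbf{Inj} to the pair of plots $\beta$ and $L\circ\alpha$ at $w_0$. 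This yields an open neighborhood $V$ of $w_0$ on which $\beta=L\circ\alpha$, hence $V\subseteq\rho^{-1}(\mathsf{Graph}(L))$. Since $w_0$ was arbitrary, the preimage is open, and as $\rho$ was an arbitrary plot, $\mathsf{Graph}(L)$ is D-open in $P^*X$.

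The only subtle point, and what makes the argument work so cleanly, is that the pullback constraint forces $f\circ\beta=f\circ(L\circ\alpha)$ to hold globally on $W$, so the hypothesis of \textbf{Inj} requiring $f$-agreement near $w_0$ is satisfied automatically. Consequently, the full strength of the diffeological injection hypothesis is needed only to upgrade pointwise agreement at $w_0$ to agreement on a neighborhood, which is exactly what \textbf{Inj} provides. I do not anticipate any genuine obstacle beyond correctly matching the arities: one should note that $\beta$ and $L\circ\alpha$ share the same domain $W$, so they are $n$-plots with $n=\dim W$, and $w_0$ plays the role of $r_0$ in \textbf{Inj}.
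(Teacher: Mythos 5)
Your proof is correct and follows essentially the same route as the paper: write an arbitrary plot of $P^*X$ in components $(\alpha,\beta)$, observe that $f\circ\beta=f\circ(L\circ\alpha)$ holds globally via the pullback constraint and $f\circ L=P$, and apply property \textbf{Inj} at a point of the preimage of $\mathsf{Graph}(L)$ to upgrade pointwise agreement to agreement on a neighborhood. The paper's argument is identical up to notation (it writes the plot as $(F,Q)$), so there is nothing to add.
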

	\begin{proof}
		Assume that $ L:U\rightarrow X $ is a lift plot of $ P $ along $ f $, i.e., $ f\circ L=P $.
		Let $ (F,Q):V\rightarrow P^*X $ be any plot in $ P^*X $ so that $ P\circ F=f\circ Q $ and $ f\circ L\circ F=f\circ Q $. For $ r_0\in(F,Q)^{-1}(\mathsf{Graph}(L))  $, we get $ (F(r_0),Q(r_0))\in\mathsf{Graph}(L) $ or  $ L\circ F(r_0)= Q(r_0) $.
		%and %$ W:=F^{-1}(U)\subseteq V $ is an open neighborhood of $ r_0 $.
		%	So we have 
		%$ f\circ L\circ F=P\circ F =f\circ Q $.
		As $ f $ is a diffeological injection, the equality $ L\circ F|_{W}=Q|_{W} $ is obtained on some open neighborhood $ W\subseteq V $ of $ r_0 $.
		Obviously, $  r_0\in W\subseteq (F,Q)^{-1}(\mathsf{Graph}(L)) $, and hence $ \mathsf{Graph}(L) $ is a D-open subset of $ P^*X $.
	\end{proof}
	
	\subsection{Diffeological immersions}
	
	\begin{definition}
		Let $ X\subseteq X' $ be a subspace of a diffeological space $ X' $.
		A smooth map $ f:X\rightarrow Y $ between diffeological spaces is a \textbf{strong induction} if there is a smooth map
		$ \rho:Y\rightarrow X' $ such that $ \rho\circ f(x)=x $ for all $ x\in X $.
		%	Whenever an inclusion $ X\hookrightarrow X' $ is a strong induction, it is called a \textbf{smooth retraction}.
	\end{definition}

	Any strong induction is an induction and a diffeomorphism onto its image.
	
	\begin{example} 
		Let $ \lbrace X_i\rbrace_{i\in J} $ be a family of nonempty diffeological spaces. Let $ A_i $ be a subspace of $ X_i $. The injection $ A_i\rightarrow\prod_{i\in J} X_i $ is a strong inductions.
	\end{example}

	\begin{definition}\label{def-imm}
		A smooth map $ f:X\rightarrow Y $ between diffeological spaces is an \textbf{immersion} if for each $ x_0 $ in $ X $, there exist a D-open neighborhood $ O\subseteq X $ of the point $ x_0 $, a D-open neighborhood $ O'\subseteq Y $ of the set $ f(O) $, and a smooth map $ \varrho:O'\rightarrow X $ such that 
		the following diagram commutes;
	\begin{displaymath}
		\xymatrix{
			  & \ar@{-->}[ld]_{\varrho} O'\\
			X & O\ar[l]_{\iota}\ar[u]_{f|_O}  \\
		}
	\end{displaymath} 
		where $ \iota:O\hookrightarrow X $ indicates the canonical inclusion.
	\end{definition}
	An immersion of diffeological spaces is, locally, a strong induction into a D-open subspace.
	In particular, it is a local induction.
%	\begin{remark}
%		According to Definition \ref{def-imm}, for a smooth map $ f:X\rightarrow Y $ if there exists a D-open cover $ \{O_i\}_{i\in J} $ of $ f(X) $ in $ Y $ such that 	$ \iota_i^*f:f^{-1}(O_i)\rightarrow O_i $ is a strong induction, for all $ i\in J $, then $ f $ is an immersion.
%	\end{remark} 

	\begin{proposition} 
		A map  $ f:M\rightarrow N $ between manifolds is an immersion of diffeological spaces if and only if it is an immersion  of manifolds.
	\end{proposition}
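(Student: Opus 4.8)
The plan is to reduce both implications to classical facts about manifolds, using two background results already established: for a manifold the D-topology agrees with the usual topology, and a map between manifolds is smooth in the diffeological sense if and only if it is smooth in the ordinary sense. With these in hand, ``D-open'' means ``open'' and every smooth map appearing below is a genuine smooth map of manifolds. Definition \ref{def-imm} then says precisely that $f$ is, locally near each point, a smooth map admitting a smooth retraction defined on an open neighborhood of its image, and I would match this against the classical criterion that $df_p$ be injective at every $p$.

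For the forward direction, suppose $f$ is an immersion of diffeological spaces and fix $p\in M$. Definition \ref{def-imm} provides an open $O\ni p$ in $M$, an open $O'\supseteq f(O)$ in $N$, and a smooth $\varrho:O'\to M$ with $\varrho\circ f|_O=\iota$, the inclusion $O\hookrightarrow M$. Since all three maps are ordinary smooth maps of manifolds (the domains $O$, $O'$ being open subspaces), I would differentiate at $p$ and use the chain rule together with the canonical identifications $T_pO\cong T_pM$ and $T_{f(p)}O'\cong T_{f(p)}N$ to obtain $d\varrho_{f(p)}\circ df_p=\mathrm{id}_{T_pM}$. A linear map with a left inverse is injective, so $df_p$ is injective; as $p$ was arbitrary, $f$ is an immersion of manifolds.

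For the converse, suppose $f$ is an immersion of manifolds and fix $p$. Here I would invoke the local immersion theorem, i.e. the canonical form of an immersion \cite{Lee}: there are charts $\phi:O\to\mathbb{R}^m$ about $p$ and $\psi:V\to\mathbb{R}^n$ about $f(p)$ with $f(O)\subseteq V$ in which $f$ becomes the standard inclusion $x\mapsto(x,0)$. Writing $\pi:\mathbb{R}^n\to\mathbb{R}^m$ for the projection onto the first $m$ coordinates, I would set $O'=\psi^{-1}\big(\pi^{-1}(\phi(O))\cap\psi(V)\big)$, which is open and contains $f(O)$, and define $\varrho=\phi^{-1}\circ\pi\circ\psi$ on $O'$. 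A direct check in coordinates gives $\varrho\circ f|_O=\iota$, and $\varrho$ is smooth as a composite of chart maps and a linear projection, hence smooth in the diffeological sense. This exhibits the data of Definition \ref{def-imm}, so $f$ is a diffeological immersion.

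The routine parts are the chain-rule computation and the coordinate verification; the only step requiring genuine input is the converse, where the existence of the local retraction rests on the local immersion theorem. The mild technical point to get right there is shrinking $V$ to the open set $O'$ so that $\pi\circ\psi$ actually lands in the chart image $\phi(O)$ where $\phi^{-1}$ is defined, while still keeping $f(O)\subseteq O'$; the computation above shows this choice works.
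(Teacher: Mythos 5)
Your proof is correct and follows essentially the same route as the paper: the forward direction differentiates the retraction identity $\varrho\circ f|_O=\iota$ and uses the chain rule to conclude $df_p$ has a left inverse, and the converse invokes the rank theorem. The paper simply states ``the converse follows from the rank theorem,'' whereas you explicitly construct the retraction $\varrho=\phi^{-1}\circ\pi\circ\psi$ from the canonical-form charts; this is the same argument with the details filled in, and your construction checks out.
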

	\begin{proof}
		First assume that $ f:M\rightarrow N $ be an immersion of diffeological spaces. For each $ m \in M $, by definition, there exist a D-open neighborhood $ O\subseteq M $ of $ m $, a D-open neighborhood $ O'\subseteq N $ of $ f(O) $ and a smooth map $ \varrho:O'\rightarrow M $ such that $ \rho\circ f|_{O}=\imath  $, where $ \iota:O\hookrightarrow M $ is the canonical inclusion. Thus, $ d\rho_{f(m)}\circ df_m=d\iota_m  $ is an isomorphism by \cite[Proposition 3.9]{Lee}
		and consequently, $ df_m $ is injective.
		Hence $ f $ is an immersion of manifolds.
		The
		converse follows from the rank theorem.
		%local embedding theorem \cite[Theorem 4.25]{Lee} and \cite[Corollary 1.16 (2)]{Mic}.
	\end{proof} 
	
	\begin{definition}\label{def-dimm}	 
		A smooth map $ f:X\rightarrow Y $  is a \textbf{diffeological immersion} if
		for any plot $ P:U\rightarrow Y $ in $ Y $,
		the pullback of $ f $ by $ P $ is an immersion, in the sense that for each $ (r_0,x_0) $ in $ P^*X $, there exist a D-open neighborhood $ O $ of $ (r_0,x_0) $ in $ P^*X $, an open neighborhood $ V\subseteq U $ of $ P^*f(O) $ and a smooth map $ \varrho:V\rightarrow U\times X $ such that $ \varrho\circ P^*f(r,x)=(r,x) $ for all $ (r,x)\in O $.
		Notice that 
		we do not require that $ \rho $ be a map into $ P^*X $.
		\begin{displaymath}
		\xymatrix{
			U\times X\ar@/^0.4cm/[drr]^{\Pr_2}& & \\
		&	P^*X \ar[ul]\ar[r]^{P_{\#}}\ar[d]^{P^*f} & X\ar[d]^{f} \\
		&	V\ar@/^0.6cm/@{-->}[uul]^{\varrho}\ar[r] \subseteq U \ar[r]^{P} & Y  }
	\end{displaymath}
	\end{definition}
	
	\begin{remark}\label{rem-dimm}	
		The condition of Definition \ref{def-dimm} implies that	$ O\subseteq\mathsf{Graph}(\Pr_2\circ\rho) $. Thus, \textit{if} $ L $ is any local lift plot of $ P $ along $ f $ and $ r_0\in \mathrm{dom}(L) $ with $ L(r_0)=x_0 $, then we can find an open neighborhood $ W\subseteq \mathrm{dom}(L) $ of $ r_0 $ such that 
		$ L|_W=\Pr_2\circ\rho~~|_W $.
	\end{remark}

\begin{example}(\cite{AM}).
	Let $ X $ be a diffeological space.
	For the weak power set diffeology on  the power set $ \mathfrak{P}(X) $, the map $ \imath:X\rightarrow\mathfrak{P}(X) $ defined by $ \imath(x)=\{x\} $ is a diffeological immersion.
\end{example}

	\begin{proposition}\label{pro-imm-inj}
		Any diffeological immersion is a diffeological injection.
	\end{proposition}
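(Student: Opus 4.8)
The plan is to verify the implication \textbf{Inj.} directly, using the pullback formulation of Definition \ref{def-dimm} together with the lifting consequence recorded in Remark \ref{rem-dimm}. So fix two $n$-plots $P$ and $Q$ in $X$, a point $r_0\in\mathrm{dom}(P)\cap\mathrm{dom}(Q)$ with $P(r_0)=Q(r_0)=:x_0$, and an open neighborhood $U$ of $r_0$ on which $f\circ P|_U=f\circ Q|_U$. The first step is to package this common composite as a single plot in the target: set $R:=f\circ P|_U=f\circ Q|_U:U\rightarrow Y$, which is a plot in $Y$ since $f$ is smooth and $P$ is a plot. By construction both $P|_U$ and $Q|_U$ are lift plots of $R$ along $f$, and the associated parametrizations $r\mapsto(r,P(r))$ and $r\mapsto(r,Q(r))$ are plots into the pullback $R^*X$ (with its subspace diffeology) that both pass through the point $(r_0,x_0)\in R^*X$; indeed $R(r_0)=f(x_0)$.

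Next I would feed this common point into the immersion hypothesis. Since $f$ is a diffeological immersion, the pullback $R^*f:R^*X\rightarrow U$ is an immersion, so Definition \ref{def-dimm} applied at $(r_0,x_0)$ furnishes a single D-open neighborhood $O$ of $(r_0,x_0)$ in $R^*X$, an open neighborhood $V'\subseteq U$, and a smooth map $\varrho:V'\rightarrow U\times X$ with $\varrho\circ R^*f(r,x)=(r,x)$ for all $(r,x)\in O$. The crucial observation is that this one choice of $\varrho$ controls both lifts simultaneously: Remark \ref{rem-dimm}, applied first to $L=P|_U$ and then to $L=Q|_U$ (both of which send $r_0$ to $x_0$), produces open neighborhoods $W_P,W_Q\subseteq U$ of $r_0$ on which $P=\Pr_2\circ\varrho$ and $Q=\Pr_2\circ\varrho$, respectively. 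Setting $V:=W_P\cap W_Q$, an open neighborhood of $r_0$ contained in $U$, gives $P|_V=(\Pr_2\circ\varrho)|_V=Q|_V$, which is exactly the conclusion required by \textbf{Inj.}

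The step I expect to carry the weight is the second one, namely recognizing that a \emph{single} application of the immersion condition at the shared point $(r_0,x_0)$ suffices for \emph{both} plots. This is what rules out the potentially awkward scenario of two genuinely distinct local lifts of the same plot $R$ that agree after composition with $f$ but not before: the graph factorization $O\subseteq\mathsf{Graph}(\Pr_2\circ\varrho)$ underlying Remark \ref{rem-dimm} forces every lift of $R$ through $(r_0,x_0)$ to coincide with the fixed local section $\Pr_2\circ\varrho$ near $r_0$, and hence to coincide with one another. Everything else is routine bookkeeping: checking that $R$ is a plot, that the graph maps land in $R^*X$, and that the intersection of two open neighborhoods remains an open neighborhood of $r_0$.
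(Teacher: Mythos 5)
Your proof is correct and follows essentially the same route as the paper's: form the common plot $R=f\circ P|_U=f\circc Q|_U$, apply the immersion condition once at the shared point $(r_0,P(r_0))\in R^*X$, and then invoke Remark \ref{rem-dimm} for both lifts $P|_U$ and $Q|_U$ to conclude each agrees locally with $\Pr_2\circ\varrho$, hence with the other. The only difference is cosmetic bookkeeping (your explicit intersection $W_P\cap W_Q$ versus the paper's single neighborhood $W$).

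Correction to the above: the first sentence should read $R=f\circ P|_U=f\circ Q|_U$.
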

	\begin{proof}
		%The proof is straightforward.
		Suppose that $ f:X\rightarrow Y $ is a diffeological immersion.
		Let $ P$ and $Q $ be two $ n $-plots  in $ X $ and $ r_0\in \mathrm{dom}(P)\cap\mathrm{dom}(Q) $ such that $ P(r_0)=Q(r_0) $ and
		$ f\circ P|_U=f\circ Q|_U $, on some open neighborhood $ U \subseteq\mathrm{dom}(P)\cap\mathrm{dom}(Q) $ of $ r_0 $.
		Then $ R:=f\circ P|_U $ is a plot in $ Y $, so % the pullback of $ f $ by $ R $  is an immersion.
		for $ (r_0,P(r_0)) \in  R^*X $, 
		there exist a D-open neighborhood $ O\subseteq R^*X $ of $ (r_0,P(r_0)) $, a D-open neighborhood $ V\subseteq U $ of $ R^*f(O) $ and a smooth map $ \varrho:V\rightarrow U\times X $ such that $ \varrho~~\circ~~ R^*f~~(r,x)=(r,x), $ for all $ (r,x)\in O $.  
		But $  P|_U $ and $Q|_U $ both are local lift plots of $ R $ along $ f $ taking $ r_0 $ to $ P(r_0) $. By Remark \ref{rem-dimm}, we conclude that
		$ P|_W=\Pr_2\circ\rho~~|_W=Q|_W $ for some open neighborhood $ W\subseteq U $ of $ r_0 $.
		%Thus, $ f $ is a diffeological injection.
	\end{proof}
		\begin{corollary}\label{cor-dimm-mfd2} 
		If  $ f:X\rightarrow Y $ is a diffeological immersion, then for given $ x\in X $, any smooth map  $ g:M\rightarrow Y $ from a manifold $ M $, and $ m\in M $ with $ g(m)=f(x)  $, there exists at most one local smooth lift  $ h:O\rightarrow X $ defined on a D-open
		neighborhood $ O\subseteq M $ of $ m $ such that $ f\circ h=g|_O $ and $ h(m)=x $.
	\end{corollary}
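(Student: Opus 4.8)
The plan is to deduce the statement immediately by chaining the two preceding results. By Proposition \ref{pro-imm-inj}, every diffeological immersion is a diffeological injection, so the hypothesis that $f$ is a diffeological immersion upgrades $f$ to a diffeological injection at no cost. Proposition \ref{pro-dinj-mfd} then supplies exactly the conclusion sought: for a diffeological injection, given $x\in X$, a smooth map $g:M\to Y$ from a manifold $M$, and $m\in M$ with $g(m)=f(x)$, there is at most one smooth local lift $h$ defined on a D-open neighborhood $O\subseteq M$ of $m$ with $f\circ h=g|_O$ and $h(m)=x$. Thus the corollary is the composite of ``diffeological immersion $\Rightarrow$ diffeological injection'' and ``diffeological injection $\Rightarrow$ at most one local lift.''

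Concretely, I would argue as follows: since $f$ is a diffeological immersion, it is a diffeological injection by Proposition \ref{pro-imm-inj}; applying the ``only if'' direction of the characterization in Proposition \ref{pro-dinj-mfd}, with the given manifold $M$ playing the role of the test manifold there, yields the uniqueness of the local lift. There is essentially no obstacle: the only thing to verify is that the data $(x,g,m)$ of the corollary are precisely the data to which Proposition \ref{pro-dinj-mfd} applies, and that the phrase ``local smooth lift $h:O\to X$ on a D-open neighborhood $O\subseteq M$ with $f\circ h=g|_O$ and $h(m)=x$'' coincides verbatim with the one appearing in that proposition (the only change is the harmless reordering ``local smooth'' versus ``smooth local''). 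Both checks are routine.

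If one preferred a self-contained argument that sidesteps Proposition \ref{pro-dinj-mfd}, I would instead take two such lifts $h_1,h_2$ with $h_i(m)=x$, restrict along a chart $\varphi$ of $M$ centered at $m$ to obtain two plots $P=h_1\circ\varphi$ and $Q=h_2\circ\varphi$ in $X$ agreeing at the center and satisfying $f\circ P=f\circ Q=g\circ\varphi$ on a neighborhood of $0$, and then invoke the diffeological injection property \textbf{Inj} to conclude $P=Q$ near $0$; transporting this back through $\varphi$ gives agreement of $h_1$ and $h_2$ on a D-open neighborhood of $m$. Since this route merely re-proves Proposition \ref{pro-dinj-mfd} in the case at hand, the first approach is preferable, and I would present the corollary as a one-line consequence of Propositions \ref{pro-imm-inj} and \ref{pro-dinj-mfd}.
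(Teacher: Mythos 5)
Your proposal is correct and matches the paper's intended derivation exactly: the corollary is stated immediately after Proposition \ref{pro-imm-inj} precisely so that it follows by combining that proposition with the characterization of diffeological injections in Proposition \ref{pro-dinj-mfd}. Your verification that the data and the lift condition coincide verbatim is the only checking needed, and your alternative self-contained argument is just an unwinding of Proposition \ref{pro-dinj-mfd}, as you yourself note.
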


%	One can also observe that diffeological immersions are closed under their compositions and pulling back by smooth maps.
%	It is an immediate consequence of Proposition \ref{pro-pb-imm} that any immersion is a diffeological immersion.

		\begin{proposition}\label{pro-pb-imm}
	Any immersion is a diffeological immersion.
\end{proposition}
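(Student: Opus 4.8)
The plan is to unwind the two definitions and show that the local retraction witnessing that $f$ is an immersion can be transported to each pullback simply by precomposing with the plot. So fix an immersion $f:X\rightarrow Y$, an arbitrary plot $P:U\rightarrow Y$, and a point $(r_0,x_0)\in P^*X$ (so $P(r_0)=f(x_0)$); I must produce the data of Definition \ref{def-dimm} for $P^*f$ at $(r_0,x_0)$. Since $f$ is an immersion, at $x_0$ there are a D-open neighborhood $W\subseteq X$ of $x_0$, a D-open neighborhood $W'\subseteq Y$ of $f(W)$, and a smooth map $\varrho_0:W'\rightarrow X$ with $\varrho_0\circ f|_W=\iota$, i.e. $\varrho_0(f(x))=x$ for all $x\in W$. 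The idea is that $\varrho_0$ already undoes $f$ over $W$; pulling it back along $P$ will undo $P^*f$ over the part of $P^*X$ lying above $W$.

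Concretely, I would set $V:=P^{-1}(W')$, which is open in $U$ because $P$ is D-continuous and $W'$ is D-open, and which contains $r_0$ since $P(r_0)=f(x_0)\in f(W)\subseteq W'$. I then define $\varrho:V\rightarrow U\times X$ by $\varrho(r)=(r,\varrho_0(P(r)))$; this is smooth because $P(V)\subseteq W'$, so the second coordinate is the composite $\varrho_0\circ(P|_V)$ of smooth maps, while the first is the inclusion $V\hookrightarrow U$. For the source neighborhood I take $O:=(P_{\#})^{-1}(W)=\{(r,x)\in P^*X:x\in W\}$, which is D-open in $P^*X$ because $P_{\#}$ is smooth, hence D-continuous, and $W$ is D-open; clearly $(r_0,x_0)\in O$. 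The remaining checks are routine: for $(r,x)\in O$ one has $x\in W$ and $P(r)=f(x)\in f(W)\subseteq W'$, so $r\in V$, giving $P^*f(O)\subseteq V$; and then $\varrho\bigl(P^*f(r,x)\bigr)=\varrho(r)=(r,\varrho_0(f(x)))=(r,x)$, using $\varrho_0\circ f|_W=\iota$. Since $P$ and $(r_0,x_0)$ were arbitrary, this exhibits $P^*f$ as an immersion and hence $f$ as a diffeological immersion.

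The step that looks like it could be an obstacle, but dissolves once the definitions are read carefully, is the interplay between the two retractions and the fact (stressed in Remark \ref{rem-dimm}) that $\varrho$ need \emph{not} land in $P^*X$: indeed my $\varrho$ maps into $U\times X$, and the value $(r,\varrho_0(P(r)))$ need not satisfy the pullback constraint off of $O$. The real content is therefore only (i) arranging that $\varrho$ is a genuine retraction precisely over $W$, which forces the choice $O=(P_{\#})^{-1}(W)$ rather than some neighborhood built from $V$, and (ii) verifying D-openness of $O$ and the inclusion $P^*f(O)\subseteq V$. I would emphasize that both openness facts follow directly from the D-continuity of the structure maps $P_{\#}$ and $P^*f$, so one does \emph{not} need the product D-topology description of Proposition \ref{lem-prd-top} here; this keeps the argument self-contained and short.
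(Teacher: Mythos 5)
Your proof is correct and is essentially the same argument as the paper's: you transport the local retraction $\varrho_0$ witnessing the immersion to the pullback by setting $\varrho=(\iota,\varrho_0\circ P|_V)$ on $V=P^{-1}(W')$, and take $O=(P_{\#})^{-1}(W)$ as the D-open neighborhood, exactly as in the paper (only the names of $O$, $W$, $V$ are permuted). The only difference is that you spell out the openness checks and the inclusion $P^*f(O)\subseteq V$ slightly more explicitly, which the paper leaves implicit.
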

\begin{proof}
	Suppose that $ f:X\rightarrow Y $ is an immersion of diffeological spaces and $ P:U\rightarrow Y $ is a plot.  
	Let $ (r_0,x_0) \in P^*X $ or $ P(r_0)=f(x_0) $. Recall that there exist a D-open neighborhood $ O\subseteq X $ of $ x_0 $, a D-open neighborhood $ O'\subseteq Y $ of $ f(O) $ and a smooth map $ \varrho:O'\rightarrow X $ such that $ \varrho\circ f(x)=x $ for all $ x\in O $.  
	Set $ W=(P_{\#})^{-1}(O) $ and $ V=P^{-1}(O') $, which are D-open neighborhoods of the point $ (r_0,x_0) \in P^*X $ and the subset $ P^*f~~(W) $ in $ U $, respectively.
	Define the smooth map
	$ \overline{\rho}:V\rightarrow U\times X $  by $ \overline{\rho}=(\imath,\rho\circ P|_V) $ in which $ \imath $ denotes the inclusion $ V\hookrightarrow U $.
	Then for all $ (r,x) \in W $,
	\begin{center}
		$ \overline{\rho}\circ (P^*f)~~~(r,x)=\overline{\rho}(r)=(r,\rho\circ P(r))=(r,\rho\circ f(x))=(r,x), $ 
	\end{center}
	 where we used the fact that
	$ f\circ~~ P_{\#}~~|_{W}=P\circ~~(P^*f)~~|_{W} $.
	Therefore,  $ f $ is a diffeological immersion.
\end{proof}

	We here show that the converse of the above proposition is true for maps into manifolds.
	
	\begin{proposition} 
		If $ f:X\rightarrow M $ is a diffeological immersion into a manifold $ M $, then it is an immersion.
	\end{proposition}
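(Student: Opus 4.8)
The plan is to reduce everything to the pullback of $f$ along a single chart of $M$ at $f(x_0)$, and then transport the local retraction supplied by the diffeological immersion hypothesis back to $X$. Fix $x_0\in X$ and choose a chart $\varphi:U\rightarrow M$ with $f(x_0)=\varphi(r_0)$ for some $r_0\in U$. Since $M$ carries the standard diffeology, $\varphi$ is a plot in $M$, so the pullback $\varphi^*f:\varphi^*X\rightarrow U$ is defined, and because $f$ is a diffeological immersion, this pullback is an immersion in the sense of Definition \ref{def-dimm}. Applying that at the point $(r_0,x_0)\in\varphi^*X$ produces a D-open neighborhood $O\subseteq\varphi^*X$ of $(r_0,x_0)$, an open neighborhood $V\subseteq U$ of $\varphi^*f(O)$, and a smooth map $\varrho:V\rightarrow U\times X$ with $\varrho\circ\varphi^*f(r,x)=(r,x)$ for all $(r,x)\in O$.

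The structural observation that drives the reduction is that $\varphi$, being a chart, is a diffeomorphism onto an open subset of $M$: for $(r,x)\in\varphi^*X$ the relation $\varphi(r)=f(x)$ forces $r=\varphi^{-1}(f(x))$, so the first coordinate is determined by $x$. Hence $\varphi_{\#}=\Pr_2|_{\varphi^*X}$ is a diffeomorphism from $\varphi^*X$ onto the subspace $f^{-1}(\varphi(U))$, with inverse $x\mapsto(\varphi^{-1}(f(x)),x)$. Moreover $f^{-1}(\varphi(U))$ is D-open in $X$, since $\varphi(U)$ is open (hence D-open) in the manifold $M$ and $f$ is D-continuous. Setting $\tilde O=\varphi_{\#}(O)$ then yields a D-open neighborhood of $x_0$ in $X$.

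Next I would assemble the data required by Definition \ref{def-imm}. Take $O'=\varphi(V)$, which is open (hence D-open) in $M$ and contains $f(\tilde O)$: for $x\in\tilde O$ there is $(r,x)\in O$ with $\varphi(r)=f(x)$ and $r=\varphi^*f(r,x)\in V$, so $f(x)\in\varphi(V)$. Define $\tilde\varrho:=\Pr_2\circ\varrho\circ\varphi^{-1}|_{O'}:O'\rightarrow X$, smooth as a composite of smooth maps. For $x\in\tilde O$ with associated $(r,x)\in O$ as above, one has $\varphi^{-1}(f(x))=r$ and therefore $\tilde\varrho(f(x))=\Pr_2\bigl(\varrho(r)\bigr)=\Pr_2\bigl(\varrho(\varphi^*f(r,x))\bigr)=\Pr_2(r,x)=x$, so $\tilde\varrho\circ f|_{\tilde O}$ is the inclusion $\tilde O\hookrightarrow X$. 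This is precisely the local retraction demanded by Definition \ref{def-imm}, whence $f$ is an immersion.

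The main obstacle, and the single place where the manifold hypothesis is indispensable, is the step identifying $\varphi^*X$ with the D-open subset $f^{-1}(\varphi(U))$ through $\varphi_{\#}$. It is exactly the injectivity of the chart $\varphi$ that collapses the fiber direction of the pullback and lets the retraction $\varrho$, originally living on the parameter domain $V\subseteq U$, be converted via $\varphi^{-1}$ into a genuine smooth retraction on a D-open neighborhood $O'$ of $f(\tilde O)$ in $M$. For a general diffeological target this identification breaks down, which is why the converse of Proposition \ref{pro-pb-imm} is asserted only when the target is a manifold.
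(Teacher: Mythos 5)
Your proof is correct and follows essentially the same route as the paper's: apply the diffeological-immersion hypothesis to the chart $\varphi$ viewed as a plot, then use injectivity of the chart to transport the retraction $\varrho$ into a smooth left inverse $\Pr_2\circ\varrho\circ(\varphi|_V)^{-1}$ defined on the open set $\varphi(V)\subseteq M$. The only cosmetic difference is that you establish D-openness of $\varphi_{\#}(O)$ by observing that $\varphi_{\#}$ is a diffeomorphism onto the D-open subspace $f^{-1}(\varphi(U))$, whereas the paper verifies the same fact by a direct plot computation; the two arguments are interchangeable.
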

	\begin{proof}
		Let $ x_0\in X $ and $ \varphi:U\rightarrow M  $ be a chart around $ f(x_0) $, i.e., $ f(x_0)=\varphi(r_0) $ for some  $ r_0\in U $.
		Then for $ (r_0,x_0)\in \varphi^*X $, there is a D-open neighborhood $ O $ of $ (r_0,x_0) $ in $ \varphi^*X $, an open neighborhood $ V\subseteq U $ of $ \varphi^*f(O) $ and a smooth map $ \varrho:V\rightarrow U\times X $ such that $ \varrho\circ \varphi^*f(r,x)=(r,x) $ for all $ (r,x)\in O $.
%		Since $ \varphi $ is a diffeomorphism, so is $ \varphi_{\#}:\varphi^*X\rightarrow X$ with the inverse $ \varphi_{\#}^{-1}: X\rightarrow\varphi^*X,~~x\mapsto (\varphi^{-1}\circ f(x),x) $.
		But $ \varphi_{\#}(O) $ is a D-open subset of $ X $. 
		In fact, let $ P:\mathrm{dom}(P)\rightarrow X $ be a plot and $ s\in \mathrm{dom}(P) $ with $ P(s)\in \varphi_{\#}(O) $. Consider the plot $ Q:(f\circ P)^{-1}(\varphi(U))\rightarrow\varphi^*X $ given by
		$ Q(r)=(\varphi^{-1}\circ f\circ P(r),P(r)) $.
		By definition, $ Q^{-1}(O) $ is open and  we have $ s\in Q^{-1}(O)\subseteq P^{-1}(\varphi_{\#}(O)) $.
		
		Thus, $ O'=\varphi_{\#}(O)\cap f^{-1}(\varphi(V)) $ is a D-open neighborhood of $ x_0 $ in $ X $ and $ \varphi(V)\subseteq M $ is a D-open neighborhood of $ f(O') $.
		We claim that the composition
		$ \Pr_2\circ\rho\circ(\varphi|_V)^{-1}:\varphi(V)\rightarrow X $ is a smooth  left inverse for $ f|_{O'} $. 
		For every $ x \in O' $, we have $ ((\varphi|_V)^{-1}\circ f(x),x)\in O $ and this yields
		\begin{center}
			$ ((\varphi|_V)^{-1}\circ f(x),x)=\rho\circ\varphi^*f~~((\varphi|_V)^{-1}\circ f(x),x)=\rho\circ(\varphi|_V)^{-1}~~\circ ~~f(x). $
		\end{center}
		Consequently, 
		\begin{center}
			$ \Pr_2\circ\rho\circ(\varphi|_V)^{-1}~~\circ ~~f(x)=x, $  
		\end{center}
		for all $ x\in O' $. This completes the proof.
	\end{proof}
	
	\begin{corollary}\label{cor-dimm-mfd}
		A map between manifolds is a diffeological immersion if and only if it is an immersion of manifolds.
	\end{corollary}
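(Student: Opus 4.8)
The plan is to assemble this corollary directly from the three results immediately preceding it, observing that for a map $f\colon M\to N$ between manifolds the several notions in play all collapse to one another. First I would lay out the chain of concepts at hand: the classical \emph{immersion of manifolds}, the diffeological notion of \emph{immersion} from Definition \ref{def-imm} (a local strong induction into a D-open subspace), and the notion of \emph{diffeological immersion} from Definition \ref{def-dimm} (pullback along every plot is an immersion). The content of the corollary is precisely that these three agree when source and target are manifolds.

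For the forward implication, suppose $f$ is a diffeological immersion. Since the target $N$ is a manifold, I would invoke the proposition stating that a diffeological immersion into a manifold is an immersion of diffeological spaces. This places us in the hypotheses of the earlier proposition asserting that, for maps between manifolds, being an immersion of diffeological spaces is equivalent to being an immersion of manifolds; hence $f$ is an immersion of manifolds.

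For the converse, suppose $f$ is an immersion of manifolds. The same earlier proposition gives that $f$ is an immersion of diffeological spaces, and then Proposition \ref{pro-pb-imm}, which states that any immersion is a diffeological immersion, yields that $f$ is a diffeological immersion. Chaining these gives the biconditional.

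Since there is no genuine analytic content beyond the cited results, I do not expect a real obstacle; the proof is bookkeeping. The only point requiring care is keeping the two diffeological notions (immersion versus diffeological immersion) distinct and verifying that the hypothesis ``$N$ is a manifold'' is exactly what licenses the step converting a diffeological immersion back into an immersion. One could also phrase the whole argument as a single loop of implications among the three notions to make the equivalence transparent.
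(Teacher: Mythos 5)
Your proposal is correct and follows exactly the route the paper intends: the corollary is stated without proof precisely because it is the concatenation of the proposition that a diffeological immersion into a manifold is an immersion, the proposition that for maps between manifolds the diffeological notion of immersion agrees with the classical one, and Proposition \ref{pro-pb-imm} that any immersion is a diffeological immersion. Your bookkeeping chain in both directions matches this intended argument.
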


	\begin{proposition}\label{pro-dim-dimm} 
		If $ f:X\rightarrow Y $ is a diffeological immersion, then $ \mathrm{dim}(X)\leq\mathrm{dim}(Y) $.
	\end{proposition}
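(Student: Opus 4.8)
The plan is to bound $\mathrm{dim}(X)$ above by the dimensions of the pullbacks of $f$ along a generating family of $Y$, and then to bound each of those by the dimension of the corresponding domain using the local structure of immersions. If $\mathrm{dim}(Y)=\infty$ there is nothing to prove, so I assume $d:=\mathrm{dim}(Y)<\infty$. Since the diffeological dimension is an infimum of nonnegative integers, it is attained, so I choose a covering generating family $\mathcal{G}\in\mathsf{CGF}(Y)$ with $\mathrm{dim}(\mathcal{G})=d$; then $\mathrm{dim}(\mathrm{dom}(Q))\le d$ for every $Q\in\mathcal{G}$.

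Let $N=\mathrm{Nebula}(\mathcal{G})$ and let $ev:N\rightarrow Y$ be the evaluation map, which is a subduction because $\mathcal{G}$ is a covering generating family. I would form the pullback of $f$ along $ev$, obtaining $ev_{\#}:ev^{*}X\rightarrow X$. The first key step is that $ev_{\#}$ is again a subduction: given any plot $R$ in $X$, the composite $f\circ R$ is a plot in $Y$, which lifts locally along the subduction $ev$ to some plot $S$; then $(S,R)$, suitably restricted, is a local plot into $ev^{*}X$ with $ev_{\#}\circ(S,R)=R$, which is exactly the local lifting criterion for $ev_{\#}$ to be a subduction. By Proposition \ref{pro-dim-subd} this yields $\mathrm{dim}(X)\le\mathrm{dim}(ev^{*}X)$. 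Because $N$ is the disjoint union of the domains $\mathrm{dom}(Q)$, the pullback decomposes as $ev^{*}X=\bigsqcup_{Q\in\mathcal{G}}Q^{*}X$, and hence $\mathrm{dim}(ev^{*}X)=\sup_{Q\in\mathcal{G}}\mathrm{dim}(Q^{*}X)$.

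So it remains to show $\mathrm{dim}(Q^{*}X)\le\mathrm{dim}(\mathrm{dom}(Q))$ for each $Q$. Here I use that $f$ is a diffeological immersion (Definition \ref{def-dimm}): the pullback $Q^{*}f:Q^{*}X\rightarrow\mathrm{dom}(Q)$ is an immersion of diffeological spaces. By the remark following Definition \ref{def-imm}, an immersion is locally a strong induction into a D-open subspace, hence an induction onto its image; thus every point of $Q^{*}X$ has a D-open neighborhood $O$ that $Q^{*}f$ carries diffeomorphically onto a subspace of the domain $\mathrm{dom}(Q)\subseteq\mathbb{R}^{k}$.

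The main obstacle is the final bookkeeping step, which rests on two facts: first, that diffeological dimension is local for D-open covers (if $\{O_i\}$ is a D-open cover of $Z$ with $\mathrm{dim}(O_i)\le k$ for all $i$, then $\mathrm{dim}(Z)\le k$, by gluing covering generating families of the pieces through the inductions $O_i\hookrightarrow Z$); and second, that a subspace $S$ of a $k$-domain satisfies $\mathrm{dim}(S)\le k$. Granting these, each neighborhood $O$ above has $\mathrm{dim}(O)\le k=\mathrm{dim}(\mathrm{dom}(Q))$, locality upgrades this to $\mathrm{dim}(Q^{*}X)\le\mathrm{dim}(\mathrm{dom}(Q))\le d$, and the chain $\mathrm{dim}(X)\le\mathrm{dim}(ev^{*}X)=\sup_{Q}\mathrm{dim}(Q^{*}X)\le d=\mathrm{dim}(Y)$ closes the argument. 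The only genuinely analytic point is the inequality $\mathrm{dim}(S)\le k$ for subspaces of $\mathbb{R}^{k}$, whose proof requires reducing an arbitrary plot into $S$ to one of domain dimension $\le k$ via a rank (constant-rank) factorization; I expect this to be the part needing the most care, since the reduction must be handled near points where the rank of the plot drops.
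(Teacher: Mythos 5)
Your reduction through the nebula is sound: $ev_{\#}:ev^{*}X\rightarrow X$ is indeed a subduction (your lifting argument is the right one), Proposition \ref{pro-dim-subd} then gives $\mathrm{dim}(X)\leq\mathrm{dim}(ev^{*}X)$, and the decomposition $ev^{*}X=\bigsqcup_{Q}Q^{*}X$ is correct. The proof breaks at the final step, and it breaks irreparably: the fact you call ``the only genuinely analytic point'' --- that a subspace $S$ of a $k$-domain satisfies $\mathrm{dim}(S)\leq k$ --- is false. The paper itself records the counterexample in Example \ref{exa-ind-dimm}: the half-line $[0,\infty)\subseteq\mathbb{R}$ with the subspace diffeology has $\mathrm{dim}([0,\infty))=\infty$ (see \cite[Exercise 51]{PIZ}); this is precisely why the inclusion $[0,\infty)\hookrightarrow\mathbb{R}$ is an induction but not a diffeological immersion. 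So no constant-rank factorization of plots can deliver your inequality. Nor is there an easy rescue for the particular subspaces $S=Q^{*}f(O)$ you need: Definition \ref{def-dimm} supplies a left inverse $\varrho:V\rightarrow U\times X$ that is explicitly \emph{not} required to take values in $Q^{*}X$, so $\varrho$ does not restrict to a plot in $S$ (or in $O$) on any open set --- $\varrho^{-1}(O)$ need not be open, since $O$ is D-open only in $Q^{*}X$ and not in $U\times X$. (For the same reason your appeal to the remark after Definition \ref{def-imm} is not literally licensed: the pullback is an ``immersion'' only in this weakened sense. Local injectivity and induction-onto-image do still follow from it, but they lead you straight back to the false subspace claim.)

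The paper's proof sidesteps pullback and subspace dimensions entirely, and this is the missing idea: the useful output of the immersion hypothesis is not the image $S$ but the map $\Pr_2\circ\varrho:V\rightarrow X$, which is a plot in $X$ defined on an \emph{open} subset $V$ of $\mathrm{dom}(Q)$, hence of dimension at most $\mathrm{dim}(\mathrm{dom}(Q))$. For each covering generating family $\mathcal{C}$ of $Y$, the paper collects all such plots into a family $f^{\bullet}\mathcal{C}$ and checks it is a covering generating family of $X$: any plot $P$ in $X$ locally satisfies $f\circ P|_{U'}=Q\circ F$ with $Q\in\mathcal{C}$, and applying the left-inverse identity to the map $(F,P|_{U'})$ into $Q^{*}X$ gives $P|_{W}=\Pr_2\circ\varrho\circ F|_{W}$. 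Hence $\mathrm{dim}(X)\leq\mathrm{dim}(f^{\bullet}\mathcal{C})=\mathrm{dim}(\mathcal{C})$ for every $\mathcal{C}$, and taking the infimum over $\mathcal{C}\in\mathsf{CGF}(Y)$ yields $\mathrm{dim}(X)\leq\mathrm{dim}(Y)$. Even if you keep your nebula scaffolding, bounding $\mathrm{dim}(Q^{*}X)$ would require exactly this factorization trick, at which point the detour is superfluous.
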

	\begin{proof}
		Suppose that $ \mathcal{C} $ is a covering generating family of $ Y $.
		For any plot $ Q $ belonging to $ \mathcal{C} $,
		the pullback of $ f $ by $ Q $ is an immersion and hence for each $ (r_0,x_0) \in  Q^*X $, there exist a D-open neighborhood $ O $ of $ (r_0,x_0) $ in $ Q^*X $, a D-open neighborhood $ V\subseteq \mathrm{dom}(Q) $ of $ Q^*f(O) $ and a smooth map $ \varrho_{Q,r_0,x_0}:V\rightarrow \mathrm{dom}(Q)\times X $ such that $ \varrho_{Q,r_0,x_0}\circ Q^*f(r,x)=(r,x) $ for all $ (r,x)\in O $.
		Consider the set $ f^{\bullet}\mathcal{C} $ consisting of the plots $ \Pr_2\circ \rho_{Q,r_0,x_0} $,  for all $ Q\in\mathcal{C} $ and $ (r_0,x_0) \in Q^*X $.
		
		We prove that $ f^{\bullet}\mathcal{C} $ is a covering generating family of $ X $.
		It is not hard to check that $ f^{\bullet}\mathcal{C} $ is a parametrized cover of  $ X $. Now let $ P:U\rightarrow X $ be any plot in $ X $ and $ r_0\in U $. Then 
		$ f\circ P $ is a plot in $ Y $, and we can write  $ f\circ P|_{U'}=Q\circ F $ for some $ Q\in \mathcal{C}  $, an open
		neighborhood $ U'\subseteq U $ of $ r_0 $, and a smooth map $ F $ between domains.
		Notice that
		$ (F,P|_{U'}) $
		is a smooth map in $ Q^*X $.
		For $ (F(r_0),P(r_0))\in Q^*X $, we have 
		a D-open neighborhood $ O $ of $ (F(r_0),P(r_0)) $ in $ Q^*X $ and a smooth map $ \varrho_{Q,F(r_0),P(r_0)}:V\rightarrow \mathrm{dom}(Q)\times X $ as above.
		Set $  W=(F,P)^{-1}(O) $ so that
		\begin{center}
			$ (F,P)|_{W}= \rho_{Q,F(r_0),P(r_0)}~~\circ~~ Q^*f~~\circ~~(F,P)|_{W}=\rho_{Q,F(r_0),P(r_0)} ~~\circ~~ F|_{W} $.
		\end{center}
		Consequently,		$ P|_{W}=\Pr_2 ~~\circ ~~\rho_{Q,F(r_0),P(r_0)}~~\circ~~ F|_{W} $.
		Thus, $ f^{\bullet}\mathcal{C} $ is a covering generating family for $ X $ with
		$ \mathrm{dim}(f^{\bullet}\mathcal{C})=\mathrm{dim}(\mathcal{C}) $.
		Therefore,
		\begin{align*}
			\mathrm{dim}(X)\quad  &= \quad\inf_{\mathcal{G} \in\mathsf{CGF}(X)} \mathrm{dim}(\mathcal{G})\\
			&\leq \inf_{\mathcal{C}\in\mathsf{CGF}(Y)} \mathrm{dim}(f^{\bullet}\mathcal{C})\\
			&= \inf_{\mathcal{C}\in\mathsf{CGF}(Y)} \mathrm{dim}(\mathcal{C})\\
			&= \quad\quad\mathrm{dim}(Y). 
		\end{align*}
	\end{proof}
	
	\begin{proposition}
		If $ f:X\rightarrow Y $ is a diffeological immersion, then the internal tangent map $ df_x:T_xX\rightarrow T_{f(x)}Y $ is a monomorphism at each point $ x\in X $.
	\end{proposition}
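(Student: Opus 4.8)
The goal is to prove that $df_x$ has trivial kernel, which in $\mathsf{Vect}$ is equivalent to being a monomorphism. Recall that every $v\in T_xX$ is a finite sum $v=\sum_{i=1}^{k}d(P_i)_0(u_i)$ with each $P_i$ a plot centered at $x$ and $u_i\in T_0\mathrm{dom}(P_i)$, and that by the defining property of the internal tangent map $df_x(v)=\sum_{i=1}^{k}d(f\circ P_i)_0(u_i)\in T_{f(x)}Y$. So the plan is to assume $df_x(v)=0$ and deduce $v=0$, by manufacturing a linear left inverse for $df_x$ out of the local left inverses that Definition \ref{def-dimm} attaches to a diffeological immersion.

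As a warm-up that locates the difficulty, observe that for an ordinary immersion (Definition \ref{def-imm}) the result is immediate: there is a D-open $O\ni x$ and a smooth $\varrho$ on a D-open set around $f(O)$ with $\varrho\circ f|_O=\mathrm{id}_O$, so $\varrho\circ f$ agrees with $\mathrm{id}_X$ on a D-open neighbourhood of $x$; since the internal tangent map depends only on germs, $d\varrho_{f(x)}\circ df_x=\mathrm{id}_{T_xX}$ and $df_x$ is injective. For a diffeological immersion no such $\varrho$ on $Y$ need exist, as the left inverses live only on pullbacks along plots; the argument must therefore be carried out plotwise and then reassembled.

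Concretely, I would fix a covering generating family $\mathcal{C}$ of $Y$ and reuse the mechanism from the proof of Proposition \ref{pro-dim-dimm}. Writing $f\circ P_i=Q_i\circ F_i$ locally with $Q_i\in\mathcal{C}$ (recentered so that $Q_i(0)=f(x)$ and $F_i(0)=0$), the left inverse supplied by Definition \ref{def-dimm} produces a plot $\ell_i:=\Pr_2\circ\varrho_{Q_i}$ in $X$ that is a local lift of $Q_i$ along $f$, centered at $x$, with $f\circ\ell_i=Q_i$ near $0$ and $d(P_i)_0(u_i)=d(\ell_i)_0\big(dF_{i,0}(u_i)\big)$. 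Differentiating $f\circ\ell_i=Q_i$ at $0$ gives $df_x\circ d(\ell_i)_0=d(Q_i)_0$. Thus $v$ is re-expressed through the lifted plots $\ell_i$, while $df_x(v)=\sum_i d(Q_i)_0\big(dF_{i,0}(u_i)\big)=0$. To finish, I would package the assignment $Q\mapsto d(\text{its lift})_0$ into a cocone on the full subcategory $\mathcal{L}\subseteq\mathcal{G}\mathsf{Plots}_{f(x)}(Y)$ of plots that admit a lift through $f$ based at $x$. Here the diffeological-injection property of $f$ (Proposition \ref{pro-imm-inj}), the uniqueness of local lifts (Corollary \ref{cor-dimm-mfd2}), and the D-openness of lift-graphs (Proposition \ref{pro-inj-lift}) together guarantee that such a lift is unique up to germ, so its differential is well defined and natural in $Q$; this yields a linear $\mu$ with $\mu\circ df_x=\mathrm{id}_{T_xX}$ on the generators $d(P)_0$, forcing $v=\mu\big(df_x(v)\big)=0$.

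The main obstacle is precisely this reassembly. The relation $df_x(v)=0$ is a statement in the colimit $T_{f(x)}Y$, and an a priori witnessing combination of colimit relations may involve plots of $Y$ that do not lift through $f$, i.e. objects outside $\mathcal{L}$, so $\mu$ cannot simply be evaluated on it. The crux is therefore to localize the null relation within $\mathcal{L}$ — equivalently, to show that the canonical map $\mathrm{colim}_{\mathcal{L}}T_0\to T_{f(x)}Y$ does not identify the class of $df_x(v)$ with $0$ unless it is already $0$ in $\mathrm{colim}_{\mathcal{L}}$. I expect to settle this using that $\mathcal{L}$ is closed under precomposition of plots (if $Q\in\mathcal{L}$ and $Q'=Q\circ G$ then $Q'\in\mathcal{L}$) and that, by uniqueness of lifts, the lift assignment is compatible with these reindexings, with the D-openness of lift-graphs from Proposition \ref{pro-inj-lift} ensuring that the locally defined lifts glue coherently. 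Everything else is the routine bookkeeping already rehearsed in Proposition \ref{pro-dim-dimm}.
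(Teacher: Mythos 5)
You set the problem up correctly and you have isolated the right difficulty, but the two steps that would carry your proof both fail. First, $\ell_i:=\Pr_2\circ\varrho_{Q_i}$ is \emph{not} a local lift of $Q_i$ along $f$. Definition \ref{def-dimm} gives $\varrho_{Q_i}\circ Q_i^*f=\mathrm{id}$ only on a D-open subset $O$ of the pullback $Q_i^*X$, so $f\circ\ell_i$ is forced to agree with $Q_i$ only on $Q_i^*f(O)$, which need not be a neighbourhood of $0$ in $\mathrm{dom}(Q_i)$. Concretely, take the diffeological immersion $f:\mathbb{R}\rightarrow\mathbb{R}^2$, $t\mapsto(t,0)$ (Corollary \ref{cor-dimm-mfd}), $x=0$, and the covering generating family $\mathcal{C}=\{Q=\mathrm{id}_{\mathbb{R}^2}\}$: here $f\circ \mathrm{id}_{\mathbb{R}}=Q\circ f$, yet $Q$ admits no local lift along $f$ at all, since $f(\ell(s_1,s_2))=(s_1,s_2)$ is impossible for $s_2\neq 0$. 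So the generating plots $Q_i$ need not belong to $\mathcal{L}$, the identity $f\circ\ell_i=Q_i$ is unavailable, and ``$df_x\circ d(\ell_i)_0=d(Q_i)_0$'' is false (in the example the left-hand side has rank at most $1$ while $d(Q)_0$ is the identity of $\mathbb{R}^2$). What survives is only the germ identity $P_i=\ell_i\circ F_i$, which is exactly what the paper proves and uses.

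Second, your ``crux'' is not a reduction of the problem; it \emph{is} the problem. Since $f$ is a diffeological injection (Proposition \ref{pro-imm-inj}), the functor $P\mapsto f\circ P$ is an equivalence of categories $\mathcal{G}\mathsf{Plots}_x(X)\simeq\mathcal{L}$ (fullness is precisely the injection property, essential surjectivity is the existence of lifts), so $\mathrm{colim}_{\mathcal{L}}\,T_0\cong T_xX$, and under this isomorphism the canonical map $\mathrm{colim}_{\mathcal{L}}\,T_0\rightarrow T_{f(x)}Y$ is $df_x$ itself; asking that this map not kill the class of $df_x(v)$ is word-for-word the monomorphism statement. The tools you offer cannot supply the missing content: closure of $\mathcal{L}$ under precomposition points the wrong way, because the dangerous relations are created by morphisms going \emph{out of} $\mathcal{L}$ --- e.g.\ $Q=P\circ G$ with $Q$ liftable, $P$ not liftable, and $dG_0(w)=0$ forces $d(Q)_0(w)=0$ in $T_{f(x)}Y$ by a witness lying entirely outside $\mathcal{L}$ --- and a sieve inclusion does not induce an injection of colimits in general. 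The paper closes exactly this gap with two ingredients absent from your proposal: the normal form for null relations in the internal tangent space given by \cite[Lemma 3.1]{CW1} (auxiliary plots $P'_i$, decompositions $u_i=\sum_j v_{ij}$, factorizations $f\circ P_i=Q_{ij}\circ F_{ij}$, and cancellation of the pushed-forward vectors at each $Q_{ij}$), together with the observation that Definition \ref{def-dimm} produces a plot $\rho_Q=\Pr_2\circ\varrho_{Q,0,x}$ in $X$ for \emph{every} plot $Q$ centred at $f(x)$, liftable or not, with the property that any plot $L$ in $X$ centred at $x$ with $f\circ L=Q\circ F$ near $0$ must agree with $\rho_Q\circ F$ near $0$. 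It is by pulling the Lemma 3.1 data back along these $\rho_{Q_{ij}}$ that the null relation becomes visible in $T_xX$; without that lemma, or an equivalent substitute for it, your argument does not close.
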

	\begin{proof}
	Let $ u=\sum_{i=1}^{k} d(P_i)_0(u_i)\in~~~ T_xX $, where $ P_i $ are plots in $ X $ centered at $ x $, for $ i=1,\cdots,k $,
	 and $ u_i\in T_0\mathrm{dom}(P_i) $. Let $ df_x(u)=\sum_{i=1}^{k} d(f\circ P_i)_0(u_i)=0 $.
	By \cite[Lemma 3.1]{CW1}, there exist plots
	$ P'_i $ in $ Y $ centered at $ f(x) $, for $ i=k+1,\cdots,n $, distinct from each other and $ f\circ P_i $'s, finitely many vectors $ v_{ij} $ 
	satisfying 
\begin{align*}
	\sum_{j}v_{ij}=\left\{
	\begin{array}{lcl}
		u_i,\quad &  \mathrm{if} &i \leq k\\
		0,\quad &  \mathrm{if} &i\geq k+1\\
	\end{array} \right.
\end{align*}
	along with plots 
	$ Q_{ij} $ in $ Y $ centered at $ f(x) $ and germs of smooth maps $ F_{ij} $ between domains such that
	$ f\circ P_i=Q_{ij}\circ F_{ij} $, for $ i=1,\cdots,k $, 
	and
	 $ P'_i=Q_{ij}\circ F_{ij} $, for $ i=k+1,\cdots,n $,  up to germs at $ 0 $.
	Moreover, for each $ Q\in \{Q_{ij} \} $, one has 
\begin{center}
		$ \displaystyle\sum_{i,j~~~\mid~~~ Q_{ij}=Q} d(F_{ij})_0(v_{ij})=0 $
\end{center}
	 in $ T_0\mathrm{dom}(Q) $.
Since $ f $ is a diffeological immersion, by an argument similar to that in the proof of Proposition \ref{pro-dim-dimm},
we obtain plots $ \rho_{ij}=\Pr_2 ~~\circ ~~\rho_{Q_{ij},0,x} $ in $ X $ centered at $ x $ such that $ P_i=\rho_{ij}\circ F_{ij} $ up to germs at $ 0 $, for $ i=1,\cdots,k $.
Also, if $ Q_{ij}=Q_{i'j'}  $, then  $ \rho_{ij}=\rho_{i'j'}  $.
Thus,
\begin{align*}
u=\displaystyle\sum_{i=1}^{k} d(P_i)_0(u_i)&=\sum_{i=1}^{n} d(\rho_{ij}\circ F_{ij})_0(\sum_{j}v_{ij})\\
&=\sum_{i,j} d(\rho_{ij})_0\circ d(F_{ij})_0(v_{ij}) \\
&=\sum_{Q\in \{Q_{ij}\}}d(\rho_Q)_0\Big{(}\sum_{i,j~~~\mid~~~ Q_{ij}=Q} d(F_{ij})_0(v_{ij})\Big{)}\\
&=\sum_{Q\in \{Q_{ij}\}}d(\rho_Q)_0(0)=0,
\end{align*}
where $ \rho_Q=\Pr_2 ~~\circ ~~\rho_{Q,0,x} $.
%Therefore, $ df_x $	 is a monomorphism.
	\end{proof}
	
	\begin{proposition}\label{prop-immer-cons} 
		Suppose that $ f:X\rightarrow Y $ is a diffeological immersion and $ h:X'\rightarrow X $ is a D-continuous map between diffeological spaces. The map $ h $ is smooth if and only if $ f\circ h $ is smooth.
	\end{proposition}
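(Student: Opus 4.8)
The forward direction is immediate: a diffeological immersion is in particular smooth (Definition \ref{def-dimm}), so if $h$ is smooth then $f\circ h$ is a composite of smooth maps. The plan is to prove the converse, so assume $f\circ h$ is smooth and $h$ is $D$-continuous, and verify that $h\circ P$ is a plot in $X$ for each plot $P\colon U\to X'$. Writing $g=h\circ P$ and $R=(f\circ h)\circ P=f\circ g$, the map $R$ is a plot in $Y$ and $g$ is a ($D$-continuous) lift of $R$ along $f$. I would then work locally: fix $r_0\in U$, set $x_0=g(r_0)$, so $(r_0,x_0)\in R^{*}X$, and show that $g$ agrees with a plot on a neighbourhood of $r_0$; locality (axiom $\mathbf{D3}$) then upgrades this to smoothness of $g$, and since $P$ is arbitrary, to smoothness of $h$.

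Since $f$ is a diffeological immersion, the pullback $R^{*}f\colon R^{*}X\to U$ is an immersion, hence locally a strong induction onto a $D$-open subspace (the remark following Definition \ref{def-imm}). Near $(r_0,x_0)$ this yields a $D$-open neighbourhood $O\subseteq R^{*}X$, the open set $V_O=R^{*}f(O)\subseteq U$, and the retraction $\varrho$ of Definition \ref{def-dimm}; putting $s=\Pr_2\circ\varrho$, the map $s$ is smooth, $R^{*}f|_O\colon O\to V_O$ is a diffeomorphism with inverse $r\mapsto(r,s(r))$, and $f\circ s=R$ on $V_O$. By Remark \ref{rem-dimm} one has $O=\mathsf{Graph}(s|_{V_O})$, and on the set $\Gamma^{-1}(O)$, where $\Gamma=(\mathrm{id}_U,g)\colon U\to R^{*}X$, the lift $g$ coincides with the smooth map $s$. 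Thus the entire argument reduces to showing that $\Gamma^{-1}(O)$ is a neighbourhood of $r_0$.

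This last step is where the $D$-continuity hypothesis enters and is the main obstacle. Because $U$ is a domain, Proposition \ref{lem-prd-top} identifies $D(U\times X)$ with $U\times D(X)$, so $\Gamma=(\mathrm{id}_U,g)$ is continuous for the topology $R^{*}X$ inherits as a subspace of $D(U\times X)$. The difficulty is that the $D$-topology of the pullback $R^{*}X$ may be strictly finer than this subspace topology, whereas $O$ is only $D$-open; hence continuity for the coarser topology does not by itself make $\Gamma^{-1}(O)$ open. To bridge this I would use the strong-induction structure together with the diffeological-injection property: $R^{*}f|_O$ is a diffeomorphism onto $V_O$ and $O=\mathsf{Graph}(s|_{V_O})$ is $D$-open in $R^{*}X$ (consistent with Proposition \ref{pro-inj-lift} applied to the lift plot $s$), while Proposition \ref{pro-imm-inj} guarantees that $f$ is a diffeological injection, so any lift of $R$ that meets $s$ at a point agrees with it nearby. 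Combining this uniqueness of lifts with the $D$-continuity of $g$ to separate the sheet $\mathsf{Graph}(s)$ from the remaining points of the fibres over a small neighbourhood of $r_0$ produces an open set on which $\Gamma$ lands in $O$; there $g=s$ is smooth, and the proof concludes as indicated in the first paragraph.
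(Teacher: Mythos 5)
Your first two paragraphs are exactly the paper's own proof, modulo notation (the paper writes $Q$ for your plot $P$ in $X'$ and $P$ for your $R=f\circ h\circ Q$): one applies Definition \ref{def-dimm} to the plot $R$ at the point $(r_0,x_0)\in R^*X$, obtains the D-open set $O$, the open set $V$ and the smooth map $\varrho$, notes via Remark \ref{rem-dimm} that $O\subseteq\mathsf{Graph}(\Pr_2\circ\varrho)$, and reduces everything to showing that $\Gamma^{-1}(O)$ is a neighbourhood of $r_0$, where $\Gamma=(\mathrm{id}_U,h\circ Q)$; on that neighbourhood $h\circ Q$ equals the smooth map $\Pr_2\circ\varrho$, and axiom D3 concludes. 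Up to this point your write-up is correct and identical in substance to the paper.

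The divergence, and the gap, is in your third paragraph. The paper disposes of the remaining step in one sentence: since $h$ is D-continuous, the map $(\mathrm{id}_U,h\circ Q):U\rightarrow P^*X$ is D-continuous, so $W=\Gamma^{-1}(O)$ is open. You decline to grant this, observing that D-continuity of $h$, via Proposition \ref{lem-prd-top}, only gives continuity of $\Gamma$ into the subspace topology that $R^*X$ inherits from $D(U\times X)$, which may be strictly coarser than the D-topology of $R^*X$ in which $O$ is open; that is a fair point about what the paper's sentence silently requires. But your substitute argument does not close the hole: the two tools you invoke, the uniqueness of lifts (Proposition \ref{pro-imm-inj}) and Proposition \ref{pro-inj-lift}, are statements about \emph{plots} --- diffeological injectivity compares two plots lifting the same plot, and Proposition \ref{pro-inj-lift} asserts that the graph of a \emph{lift plot} is D-open in $R^*X$. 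Your map $g=h\circ Q$ is precisely not known to be a plot (that is the conclusion being proved), so neither statement applies to it; and even granted, a D-open subset of $R^*X$ is exactly the kind of set you have already observed cannot be pulled back along the merely weakly continuous $\Gamma$. Thus your closing sentence, that combining these facts ``produces an open set on which $\Gamma$ lands in $O$,'' asserts the missing step rather than proving it, and the justification offered is circular. To have a complete proof you must either justify the continuity claim the paper takes for granted (that smooth first component plus D-continuous second component yields D-continuity into the pullback, at least for the sets $O$ arising here), or find a genuinely different argument; the proposal as written does neither.
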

	\begin{proof}
		It clear that if $ h $ is smooth, then $ f\circ h $ is smooth. Conversely, assume that $ f\circ h $ is smooth. To prove $ h $ is smooth, 
		let $ Q:U\rightarrow X' $ be a plot in $ X' $ and $ r_0\in U $. Then the composition $ P:=f\circ h\circ Q $ is a plot in $ Y $ and by definition, for 
		$ (r_0,h\circ Q(r_0)) \in P^*X $, there exist a D-open neighborhood $ O $ of the point $ (r_0,h\circ Q(r_0)) $ in $ P^*X $, a D-open neighborhood $ V\subseteq U $ of the set $ P^*f(O) $ and a smooth map $ \varrho:V\rightarrow U\times X $ such that $ \varrho\circ P^*f(r,x)=(r,x) $ for all $ (r,x)\in O $.
		This follows that
		$ O\subseteq\mathsf{Graph}(\Pr_2\circ\rho) $.
		Since $ h $ is D-continuous, so  is $ (\mathrm{id}_U,h\circ Q):U\rightarrow P^*X $.
		Then, $ W=(\mathrm{id}_U,h\circ Q)^{-1}(O) $
		is an open neighborhood $ W\subseteq U $ of $ r_0 $ such that $ \mathsf{Graph}(h\circ Q|_W)\subseteq O \subseteq\mathsf{Graph}(\Pr_2\circ\rho|_W) $.
		Hence $ h\circ Q|_W=\Pr_2\circ\rho~~|_W $, which follows that $ h\circ Q $ is a plot in $ X $ by D3.
		%Therefore $ h $ is smooth.
	\end{proof}
%As a consequence, every diffeological immersion is a pseudo-immersion in the sense of \cite{KMW}.
	Injective diffeological immersions may not be inductions in general. 
	The following examples clear the situation.
	
	\begin{example}\label{exa-ind-dimm}
		The  smooth map 
		\begin{center}
			$ f:(-\pi,\pi)\rightarrow  \mathbb{R}^2$\quad given by\quad $t\mapsto(\sin t, \sin 2t) $
		\end{center} is an injective (diffeological) immersion which is not an induction (see \cite[Exercise 59]{PIZ}).
		On the other hand, the induction $ [0,\infty)\hookrightarrow \mathbb{R} $ is not a diffeological immersion. In fact, by \cite[Exercise 51]{PIZ}, we have
		$ \infty=\mathrm{dim}([0,\infty)) \nleq \mathrm{dim}(\mathbb{R})=1 $.
		As another example, the map $ f: \mathbb{R}\rightarrow \mathbb{R}^2$  given by $ f(t)=(t^2,t^3) $ is an induction (see, \cite[Th\'eor\`eme 1]{Jor}) but not an immersion (see also \cite{KMW} for more details).
	\end{example}
	
	\subsection{Embeddings of diffeological spaces}
	We here discuss some notions for embeddings of diffeological spaces.
Let us first recall the embeddings due to Iglesias-Zemmour.
		\begin{definition}
		(\cite[\S 2.13]{PIZ}).
		An \textbf{embedding} is an induction which is a D-embedding,  i.e., a topological embedding with respect to the D-topology. 
	\end{definition}
Following \cite{KMW}, we can also consider weak embeddings.
\begin{definition}
	A map $ f:X\rightarrow Y $ between diffeological spaces is called a \textbf{weak embedding} if it is both  an induction and a diffeological immersion.
\end{definition}
%Hence an embedding of diffeological spaces is an embedding of the D-topology and  the diffeological structure.

Embeddings and weak embeddings are different concepts, even in the class of manifolds (see \cite{KMW,PIZ2022}).	For instance, the map $ f:\mathbb{R}\rightarrow \mathbb{R}^2 $ with $ f(t)=(t^2,t^3) $ is an embedding, but not a weak embedding. On the other hand, the map $ g:\mathbb{R}\rightarrow \mathbb{T}^2 $ with $ g(t)=(e^{2\pi it},e^{2\pi i\alpha t})$ for some $ \alpha\notin\mathbb{Q}, $ is a weak embedding,  which is not an embedding.

Analogous to those of manifolds, we define a stronger version of embeddings of diffeological spaces.
	\begin{definition}\label{def-demb}
		A map $ f:X\rightarrow Y $ between diffeological spaces is called a \textbf{diffeological embedding} or \textbf{strong embedding} if it is a diffeological immersion which is  a D-embedding.
	\end{definition}

%We can also think of a stronger version of embeddings of diffeological spaces as follows.
 \begin{proposition} 
	Any diffeological embedding $ f:X\rightarrow Y $ is an induction.
	In other words, a diffeological embedding is a diffeomorphism onto its image as a subspace.
\end{proposition}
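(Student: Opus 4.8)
The plan is to verify directly that $f$ is an induction, namely that $f$ is injective and that the pullback diffeology $\{P\in\mathrm{Param}(X)\mid f\circ P\text{ is a plot in }Y\}$ coincides with the diffeology of $X$; the concluding ``diffeomorphism onto its image'' statement is then the standard fact that an induction is a diffeomorphism onto its image carrying the subspace diffeology. Injectivity is immediate, since a D-embedding is in particular a topological embedding and hence injective. Because $f$ is smooth, every plot of $X$ already lies in the pullback diffeology, so the only nontrivial point is the reverse inclusion: given a parametrization $P:U\to X$ such that $f\circ P$ is a plot in $Y$, I must show that $P$ is itself a plot in $X$.

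The key observation is that Proposition \ref{prop-immer-cons} reduces this to checking that $P$ is D-continuous. Indeed, the domain $U$ carries its standard diffeology, whose D-topology is the usual topology, so a map out of $U$ is D-continuous exactly when it is continuous in the ordinary sense. To produce this D-continuity I would exploit the D-embedding hypothesis: the corestriction $f:D(X)\to f(X)$ is a homeomorphism onto the subspace $f(X)\subseteq D(Y)$, so its inverse $f^{-1}:f(X)\to D(X)$ is continuous. Since $f\circ P$ is a plot, it is D-continuous as a map $U\to D(Y)$, and its image lies in $f(X)$; composing with $f^{-1}$ and using injectivity gives $P=f^{-1}\circ(f\circ P)$, exhibiting $P$ as a composite of continuous maps and hence D-continuous.

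With D-continuity in hand, I would apply Proposition \ref{prop-immer-cons} to the D-continuous map $h=P:U\to X$: since $f$ is a diffeological immersion and $f\circ P$ is smooth (being a plot), the proposition yields that $P$ is smooth, i.e.\ a plot in $X$. This establishes the missing inclusion, so $f$ is an induction, and the ``in other words'' assertion follows. The main obstacle is conceptual rather than computational: one must recognize that the two defining features of a diffeological embedding play complementary roles---the D-embedding supplies D-continuity of candidate plots, while the diffeological immersion (through Proposition \ref{prop-immer-cons}) upgrades D-continuity to smoothness---and chain them in the correct order. No delicate calculation is involved.
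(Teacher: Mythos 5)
Your proposal is correct and follows essentially the same route as the paper: injectivity comes from the D-embedding, the D-embedding hypothesis converts D-continuity of $f\circ P$ (corestricted to $f(X)$) into D-continuity of $P$, and Proposition \ref{prop-immer-cons} then upgrades D-continuity to smoothness, establishing the induction property. The only cosmetic difference is that you spell out the factorization $P=f^{-1}\circ(f\circ P)$ explicitly, which the paper leaves implicit.
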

\begin{proof}
%	Indeed, all that needs to be checked is that a diffeological embedding is an induction. 
By definition, $ f $ is a smooth injective map.
	Suppose that $ P:U\rightarrow X $ is a parametrization such that $ f\circ P $ is a plot in $ Y $. Then $ f\circ P:U\rightarrow Y $ is a D-continuous map whose image is contained in $ f(X) $. So $ f\circ P|^{f(X)}:U\rightarrow f(X) $ is D-continuous. As $ f $ is a D-embedding, $ P $ is D-continuous as well. Thus, $ P $ is a plot in $ X $ by Proposition \ref{prop-immer-cons}. 
\end{proof}

 \begin{corollary} \label{coro-embd-st}
	A map $ f:X\rightarrow Y $ between diffeological spaces is a diffeological embedding if and only if it is both an embedding and a weak embedding.
\end{corollary}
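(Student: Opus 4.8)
The plan is to observe that the corollary is a direct consequence of unwinding the three relevant definitions together with the preceding proposition. Recall that an embedding is an induction that is also a D-embedding, a weak embedding is an induction that is also a diffeological immersion, and a diffeological embedding (Definition \ref{def-demb}) is a diffeological immersion that is also a D-embedding. So the assertion amounts to the logical identity that being a diffeological immersion and a D-embedding is equivalent to being, simultaneously, an induction, a D-embedding, and a diffeological immersion.

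For the forward implication, I would start with a diffeological embedding $f$, so that $f$ is by definition a diffeological immersion and a D-embedding. The one nonformal input here is the preceding proposition, which guarantees that such an $f$ is automatically an induction. Granting this, $f$ is an induction together with a D-embedding, hence an embedding, and $f$ is an induction together with a diffeological immersion, hence a weak embedding; thus $f$ is both at once.

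For the converse, I would take $f$ to be both an embedding and a weak embedding. The weak embedding hypothesis supplies that $f$ is a diffeological immersion, while the embedding hypothesis supplies that $f$ is a D-embedding. These are precisely the two conditions in Definition \ref{def-demb}, so $f$ is a diffeological embedding and no further argument is required.

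The main obstacle is not the formal combination of definitions, which is routine bookkeeping, but is entirely concentrated in the forward direction's reliance on the preceding proposition, namely that a diffeological immersion which is a D-embedding is automatically an induction. That proposition in turn rests on Proposition \ref{prop-immer-cons}, the statement that a D-continuous map $h$ with $f\circ h$ smooth is itself smooth when $f$ is a diffeological immersion; this is where the genuine content lives, and once it is available the corollary follows immediately.
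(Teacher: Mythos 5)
Your proposal is correct and follows exactly the paper's route: the corollary is stated as an immediate consequence of the preceding proposition (that any diffeological embedding is an induction, itself proved via Proposition \ref{prop-immer-cons}), with both directions then reducing to unwinding the three definitions. Nothing is missing.
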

 
\begin{proposition} 
	A map between manifolds is a diffeological embedding if and only if it is an embedding of manifolds.
\end{proposition}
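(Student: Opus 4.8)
The plan is to reduce the equivalence to two facts already at hand: the characterization of diffeological immersions between manifolds (Corollary \ref{cor-dimm-mfd}) and the coincidence, on any manifold, of the D-topology with the underlying topology. Recall that by Definition \ref{def-demb} a diffeological embedding is a map that is simultaneously a diffeological immersion and a D-embedding, while an embedding of manifolds is an immersion of manifolds that is a topological embedding for the usual topologies. The whole argument is then just a matter of matching these two notions term by term.

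For the forward direction, I would start from a diffeological embedding $f:M\rightarrow N$. Since $f$ is a diffeological immersion, Corollary \ref{cor-dimm-mfd} yields that $f$ is an immersion of manifolds. Since $f$ is a D-embedding, it is a topological embedding between $D(M)$ and $D(N)$; but these coincide with the manifold topologies on $M$ and $N$, so $f$ is a topological embedding in the ordinary sense. Hence $f$ is an immersion and a topological embedding, i.e., an embedding of manifolds.

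For the converse, I would run the same chain backwards. Given an embedding of manifolds $f:M\rightarrow N$, it is in particular an immersion of manifolds, so Corollary \ref{cor-dimm-mfd} makes it a diffeological immersion; and being a topological embedding for the usual topologies, it is a topological embedding for the D-topologies, i.e., a D-embedding. Therefore $f$ is a diffeological immersion which is a D-embedding, which is exactly a diffeological embedding by Definition \ref{def-demb}.

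I do not expect a genuine obstacle here. The only point demanding care is to invoke the coincidence of the D-topology with the underlying topology on \emph{both} the source and the target manifold, so that the phrases ``D-embedding'' and ``topological embedding of manifolds'' become interchangeable; once this is recorded, the equivalence follows purely by unwinding Definition \ref{def-demb} against the classical definition, using Corollary \ref{cor-dimm-mfd} for the immersion half.
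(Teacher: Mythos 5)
Your proof is correct and follows exactly the paper's own argument: the paper proves this proposition by citing Corollary \ref{cor-dimm-mfd} together with the fact that the D-topology of a manifold agrees with its underlying topology, which is precisely the term-by-term matching you spell out. Your version merely makes the unwinding of Definition \ref{def-demb} explicit in both directions.
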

\begin{proof}
	This is a consequence of Corollary \ref{cor-dimm-mfd}, and the fact that the D-topology of manifolds agrees with the underlying topology.
\end{proof}

%		\begin{lemma} 
%		Any smooth map with a left smooth inverse is a diffeological embedding.
%	\end{lemma}
%	\begin{proof}
%		Assume that $ f:X\rightarrow Y $ is a smooth map with a left smooth inverse $ \rho:Y\rightarrow X $ so that $ \rho\circ f=\mathrm{id}_X $.
%		It clear that $ f $ is both a strong induction and an immersion.
%		So we just need to show that $ f $ is a D-embedding.
%		Assume that $ O\subseteq X $ is a D-open subset. From the equality $ f(O)=\rho^{-1}(O)\cap f(X) $, we conclude that $ f(O) $ is a D-open subset of $ f(X) $. Thus, the injective map $ f $ is a D-open map onto its image and hence a D-embedding. % (\cite[Theorem A.38]{Lee}). 
%	\end{proof}	

		\begin{proposition} 
	Any immersion of diffeological spaces is locally a diffeological embedding.
\end{proposition}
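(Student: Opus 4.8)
The plan is to show that if $f:X\rightarrow Y$ is an immersion and $x_0\in X$, then the restriction $f|_O:O\rightarrow Y$ to a suitable D-open neighborhood $O$ of $x_0$ is a diffeological embedding in the sense of Definition \ref{def-demb}, i.e. a diffeological immersion which is moreover a D-embedding. I would start from the data supplied by Definition \ref{def-imm}: a D-open neighborhood $O\subseteq X$ of $x_0$, a D-open neighborhood $O'\subseteq Y$ of $f(O)$, and a smooth map $\varrho:O'\rightarrow X$ with $\varrho\circ f|_O=\iota$. The point is that $\varrho$ is a local smooth left inverse of $f$, so $f|_O$ is injective with a D-continuous retraction, and both defining properties of a diffeological embedding should follow from this single piece of data.

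First I would arrange that the retraction lands in $O$ by shrinking its domain. Since $\varrho$ is smooth, hence D-continuous, and $O$ is D-open in $X$, the set $O'':=\varrho^{-1}(O)$ is D-open in $O'$, and it contains $f(O)$ because $\varrho(f(x))=x\in O$ for all $x\in O$. Using that a D-open subspace is embedded (so that a D-open subset of the D-open subspace $O'\subseteq Y$ is again D-open in $Y$), $O''$ is a D-open neighborhood of $f(O)$ in $Y$, and $\varrho$ corestricts to a smooth map $\varrho'':O''\rightarrow O$ with $\varrho''\circ f|_O=\mathrm{id}_O$. This exhibits $f|_O:O\rightarrow Y$ as an immersion, since the single chart $(O,O'',\varrho'')$ satisfies Definition \ref{def-imm} at every point of $O$; hence $f|_O$ is a diffeological immersion by Proposition \ref{pro-pb-imm}.

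Next I would prove that $f|_O$ is a D-embedding, that is, that $f|_O:D(O)\rightarrow D(Y)$ is a homeomorphism onto its image $f(O)$. The map $f|_O:O\rightarrow f(O)$ is a continuous bijection (smooth maps are D-continuous, and $\varrho''\circ f|_O=\mathrm{id}_O$ gives injectivity), and its inverse is $\varrho''|_{f(O)}:f(O)\rightarrow O$, which is D-continuous into $D(O)$ as a restriction of the smooth map $\varrho''$. The only thing left is to check that the topology for which this inverse should be continuous, namely the subspace topology of $f(O)$ inherited from $D(Y)$, agrees with the topology for which it is visibly continuous, namely the subspace topology inherited from $D(O'')$. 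Since $O''$ is D-open in $Y$, hence embedded, these two subspace topologies on $f(O)$ coincide, so $\varrho''|_{f(O)}$ is a continuous inverse and $f|_O:D(O)\rightarrow f(O)$ is a homeomorphism. Combined with the previous paragraph, $f|_O$ is a diffeological immersion and a D-embedding, i.e. a diffeological embedding.

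The main obstacle is precisely this topological bookkeeping: everything hinges on the fact that a D-open subspace $W\subseteq Z$ is embedded, meaning its intrinsic D-topology coincides with the subspace topology inherited from $D(Z)$. One inclusion is immediate because the inclusion is D-continuous; for the other I would show that any set $A$ which is D-open in the subspace $W$ is already D-open in $Z$, by taking an arbitrary plot $P:U\rightarrow Z$, restricting it to the open set $P^{-1}(W)$ to obtain a plot of the subspace $W$, and observing that $P^{-1}(A)=(P|_{P^{-1}(W)})^{-1}(A)$ is then open in $U$. Once this lemma is in place, both the shrinking step and the homeomorphism step go through routinely.
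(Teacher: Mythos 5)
Your proof is correct and takes essentially the same approach as the paper, whose entire proof is the one-line observation that an immersion is locally a smooth map with a smooth left inverse. Your write-up simply supplies the details the paper leaves implicit: shrinking $O'$ to $O''=\varrho^{-1}(O)$ so that $\varrho$ becomes a genuine retraction onto $O$, and invoking the fact that D-open subspaces are embedded to conclude that $f|_O$ is both a diffeological immersion and a D-embedding.
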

	\begin{proof}
	In fact, any immersion is locally a smooth map with a smooth left inversion.
% For each $ x_0 \in  X $, recall that there exist a D-open neighborhood $ O\subseteq X $ of  $ x_0 $, a D-open neighborhood $ O'\subseteq Y $ of the set $ f(O) $, and a smooth map $ \varrho:O'\rightarrow X $ such that $ \varrho\circ f|_O(x)=x $ for all $ x\in O $. 
%	Then $ f|_O:O\rightarrow f(O) $ is a smooth map with a smooth left inversion, so a diffeological immersion.
%	So we need to show that $ f|_O $ is a D-embedding as well.
%	Assume that $ U\subseteq O $ is a D-open subset. From the equality $ f|_O(U)=\rho^{-1}(U)\cap f(O) $, we conclude that $ f|_O(U) $ is a D-open subset of $ f(O) $. Thus, the injective map $ f|_O $ is a D-open map onto its image and hence a D-embedding. % (\cite[Theorem A.38]{Lee}). 
\end{proof}	

	\subsubsection{Some examples}
\begin{example}(\cite{AM}).
	Let $ X $ be a diffeological space.
	For either the union power set diffeology or the strong power set diffeology on $ \mathfrak{P}(X) $, the natural map $ \imath:X\rightarrow\mathfrak{P}(X) $ defined by $ \imath(x)=\{x\} $ is a diffeological embedding.
\end{example}
Smooth maps with smooth left inversions, in particular, smooth retractions are among the simplest and most common cases of diffological embeddings.
\begin{example}
	The map $ I:X\rightarrow \mathrm{Paths}(X) $ taking any $ x\in X $ to the constant path $ \widetilde{x}:\mathbb{R}\rightarrow X  $ with the value $ x $ is a diffeological embedding.
	Let $ P:U\rightarrow X $ be a plot and let $ F:V\rightarrow \mathbb{R} $ be a smooth map between domains.
	Since $ \big{(}(I\circ P)\circledcirc F\big{)}(r,s)=I\circ P(r)(F(s))= \widetilde{P(r)}\big{(}F(s)\big{)}=P(r) $  is a plot in $ X $, $ I\circ P $ is a plot in $ \mathrm{Paths}(X)  $ and indeed, $ I $ is smooth.
	Moreover, $ \mathrm{ev}_0:\mathrm{Paths}(X)\rightarrow X, \gamma\mapsto \gamma(0)  $ is a smooth left inverse of $ I $.
	Of course, $ \mathrm{ev}_1:\mathrm{Paths}(X)\rightarrow X, \gamma\mapsto \gamma(1)  $ could be another smooth left inverse for $ I $.
\end{example}

 	\begin{example} 
		Let $ \varrho:X\rightarrow X $ be a smooth retraction of diffeological spaces, i.e., $ \varrho\circ\varrho=\varrho $ (see \cite[\S 5.13]{PIZ}). Then the restriction
		$ \varrho|_{\varrho(X)}:\varrho(X)\rightarrow X $	is 
		a diffeological embedding, while  $ \varrho|^{\varrho(X)}:X\rightarrow \varrho(X) $ is a strong subduction.
	\end{example}

	\begin{example}\label{exa-dim-R} 
	Let $ \lbrace X_i\rbrace_{i=1}^\infty $ be a countably infinite family of nonempty diffeological spaces and fix some $ \lbrace \ast_i\rbrace_{i=1}^\infty\in \prod_{i=1}^\infty X_i $. 
	Let $ k\geq 1 $ be an integer. The map
	$ \rho^k:\prod_{i=1}^\infty X_i\rightarrow\prod_{i=1}^\infty X_i $
	replacing only the $ i $th term with $ \ast_i $, for all $ i>k $,
	is a smooth retraction of diffeological spaces. 
	In particular, if $ X_i $'s are all manifolds of  dimensions greater than $ 0 $, we can say that the diffeological dimension of $ \prod_{i=1}^\infty X_i $ is infinite.
	As a special case, we have $ {\rm dim}(\mathbb{R}^{\omega})=\infty $, where $ \mathbb{R}^{\omega} $ denotes the countably infinite product of copies of $ \mathbb{R} $.
	%	 the diffeological dimension of countably infinite product of $ \mathbb{R} $, i.e. $ \mathbb{R}^{\infty} $, is infinite.
\end{example}

	\begin{example}\label{exa-gln} 
		As shown in \cite[Exercise 57]{PIZ} the inclusion $ \imath:\mathrm{GL}(n,\mathbb{R})\hookrightarrow \mathrm{Diff}(\mathbb{R}^n) $ is 
		an embedding, where $ \mathrm{GL}(n,\mathbb{R})\subseteq \mathbb{R}^{n\times n} $ is equipped with the subspace diffeology inherited from $ \mathbb{R}^{n\times n} $. Furthermore, we check that $ \imath $ is actually a diffeological embedding:
		If $ U $ is an $ n $-domain, the map $ D:\mathrm{C}^{\infty}(U,\mathbb{R}^m)\rightarrow \mathrm{C}^{\infty}(U,\mathbb{R}^{n\times m}) $ taking $ f  $ to its total derivative  $ D(f) $ is  smooth by \cite[Lemma 4.3]{CSW2014}.
		Thus, the map $ D_0:\mathrm{Diff}(\mathbb{R}^n)\rightarrow  \mathrm{GL}(n,\mathbb{R}) $ taking $ f  $ to the total derivative of $ f $ at $ 0 $ is  smooth and we have $ D_0\circ \imath(M)=M $, for all $ M\in\mathrm{GL}(n,\mathbb{R}) $.
		As a result, $ D_0:\mathrm{Diff}(\mathbb{R}^n)\rightarrow \mathrm{Diff}(\mathbb{R}^n) $
		is a smooth retraction of diffeological spaces. 
	\end{example} 

	\begin{example}\label{exa-dim}
		Let $ U $ be an $ n $-domain and fix $ r_0\in U $. For any integer $ k\geq 0 $, consider the map
		$ T^k_{r_0}:\mathrm{C}^{\infty}(U,\mathbb{R}^m)\rightarrow \mathrm{C}^{\infty}(U,\mathbb{R}^m) $
		taking any $ f=(f_1,\cdots,f_m) $ to 
		$ T^k_{r_0}(f)=(P_1,\cdots,P_m) $, where $ P_i $ is
		the $ k $ degree Taylor's polynomial map 	at $ r_0 $ restricted to $ U $,
\begin{center}
			$ P_i: x\mapsto\sum_{|\alpha|\leq k}\dfrac{D^{\alpha}f_i(r_0)}{\alpha !}(x-r_0)^{\alpha} $
\end{center}
which is understood by the multi-index notation.
%, i.e.,
%$ \alpha=(\alpha_1,\cdots,\alpha_l) $,
%$ |\alpha|=\alpha_1+\cdots+\alpha_l $, 
%			$ D^{\alpha}f_i(r_0)=\dfrac{\partial^{|\alpha|}f_i}{ \partial x_1^{\alpha_1}\cdots\partial x_l^{\alpha_l}}\vert_{r=r_0} $,
%  $ \alpha!=(\alpha_1!)\cdots(\alpha_l!) $, and $ (x-r_0)^{\alpha}=(x_1-r_0)^{\alpha_1}\cdots(x_l-r_0)^{\alpha_l} $.
	By \cite[Lemma 4.3]{CSW2014},	$ T^k_{r_0} $
		is smooth. 
		Obviously, the image of $ T^k_{r_0} $ is the space of polynomials in $ \mathbb{R}^m $ of degree $ k $ with $ n $ variables
		restricted to $ U $, denoted by $ \mathrm{Pol}_k(U,\mathbb{R}^m) $ and equipped with the subspace diffeology inherited from $ \mathrm{C}^{\infty}(U,\mathbb{R}^m) $.
		Since the $ k $ degree Taylor's polynomial of a $ k $ degree polynomial is itself, $ T^k_{r_0} $ is a smooth retraction of diffeological spaces.
	Then the inclusion $ \mathrm{Pol}_k(U,\mathbb{R}^m)\hookrightarrow \mathrm{C}^{\infty}(U,\mathbb{R}^m) $ is 
	a diffeological embedding.
	
	But $ \mathrm{Pol}_k(U,\mathbb{R}^m) $ is diffeomorphic to 
	$ \mathbb{R}^{m\times\binom{n+k}{k}} $, under the diffeomorphism
	$ \mathrm{Pol}_k(U,\mathbb{R}^m) \rightarrow \mathbb{R}^{m\times\binom{n+k}{k}} $
	taking
	$ (P_1,\cdots,P_m)\in\mathrm{Pol}_k(U,\mathbb{R}^m) $ to
	\begin{center}
$ \begin{pmatrix} 
(\dfrac{D^{\alpha}P_1(r_0)}{\alpha !})_{|\alpha|\leq k}\\
\vdots\\
(\dfrac{D^{\alpha}P_m(r_0)}{\alpha !})_{|\alpha|\leq k}\\
\end{pmatrix} $
	\end{center}
	Therefore, for every $ k\geq 0 $, we get
\begin{center}
		 $ m\times\binom{n+k}{k}\leq {\rm dim}(\mathrm{C}^{\infty}(U,\mathbb{R}^m)) $. 
\end{center}
We also have
%\begin{center}
%	$ p_k:=\binom{n+k}{k}=\prod_{i=1}^k(1+\frac{n}{i})\geq 1+ns_k, $
%\end{center}
\begin{center}
	$ c_k:=\binom{n+k}{k}=(n+1)(\frac{n}{2}+1)\cdots(\frac{n}{k}+1)\geq n+\frac{n}{2}+\cdots+\frac{n}{k}+1=ns_k+1, $
\end{center}
where
$ s_k=\sum_{i=1}^k\frac{1}{i} $.
Since the sequence	$ \{s_k\} $ is  unbounded, 
so is the sequence $ \{c_k\} $.
In conclusion, the diffeological dimension of $ \mathrm{C}^{\infty}(U,\mathbb{R}^m) $ is infinite.
    \end{example}

%	See also \cite{AM} for further examples of diffeological embeddings involving diffeologies on  power sets.

	\section{\'{E}tale maps in diffeology}\label{S5} 
	In this section, we deal with diffeological \'{e}tale maps and study their properties.
	First we recall \'{e}tale maps of diffeological spaces.
	\begin{definition}\label{def-eta}
		(\cite[\S 2.5]{PIZ}).
		A map $ f:X\rightarrow Y $ between diffeological spaces is \textbf{\'{e}tale} if for every $ x_0 $ in $ X $, there are D-open neighborhoods $ O\subseteq X $ and $ V\subseteq Y $ of $ x_0 $ and $ f(x_0) $, respectively, such that $ f|_O:O\rightarrow O' $ is a diffeomorphism.  
	\end{definition}

	It is trivial that	a map between manifolds is \'{e}tale if and only if it is a local diffeomorphism.
	
	\begin{example}\label{exa-op-et}	
		\begin{enumerate}
			\item[$ \blacktriangleright  $] 
			Let $ X $ be a diffeological space.
			The inclusion $ O\hookrightarrow X $ of a D-open subset $ O $ into $ X $ is a simple example of \'{e}tale maps.	
			
			\item[$ \blacktriangleright $] 
			The canonical injection $ X_i\rightarrow\bigsqcup_{i\in J} X_i $ of each component $ X_i $ of the sum space $ \bigsqcup_{i\in J} X_i $ of a family of diffeological spaces is an \'{e}tale map. 
		\end{enumerate}
	\end{example}
	
	\begin{theorem}\label{the-che} 
		(Characterizations of \'{e}tale maps).
		Let $ f:X\rightarrow Y $	be a map between diffeological spaces. The following statements are equivalent:
		\begin{enumerate}
			\item[$ (1) $] 
			$ f $ is an \'{e}tale map.
			\item[$ (2) $] 	
			$ f $ is both a  submersion and an immersion. 
			\item[$ (3) $] 
			$ f $ is a D-open immersion.
		\end{enumerate}
	\end{theorem}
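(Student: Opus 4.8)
The plan is to establish the cyclic chain $(1)\Rightarrow(2)\Rightarrow(3)\Rightarrow(1)$, using throughout two standing facts: smooth maps are D-continuous, and the restriction of a smooth map to a D-open subspace, together with its corestriction onto any D-open subspace containing its image, is again smooth (subspace diffeology). The entire argument is then local bookkeeping: from each hypothesis I extract a local section or a local left inverse, and I shrink neighbourhoods so that these become genuine two-sided inverses.

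For $(1)\Rightarrow(2)$ I would fix $x_0\in X$ and choose, by Definition \ref{def-eta}, D-open sets $O\ni x_0$ in $X$ and $O'\ni f(x_0)$ in $Y$ with $f|_O:O\to O'$ a diffeomorphism. Its inverse $(f|_O)^{-1}:O'\to X$ does double duty: it is a smooth local section through $x_0$ (since $f\circ(f|_O)^{-1}=\mathrm{id}_{O'}$ and $(f|_O)^{-1}(f(x_0))=x_0$), witnessing the submersion property of Definition \ref{def-sub}; and, read as a map $\varrho=(f|_O)^{-1}:O'\to X$ with $\varrho\circ f|_O=\iota$ and $O'\supseteq f(O)$, it witnesses the immersion property of Definition \ref{def-imm}.

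For $(2)\Rightarrow(3)$, since $f$ is already an immersion, it suffices to show that a submersion is a D-open map. Given a D-open $A\subseteq X$ and a point $y_0=f(x_0)\in f(A)$, I would pick by Definition \ref{def-sub} a smooth local section $\sigma:O\to X$ on a D-open $O\ni y_0$ with $\sigma(y_0)=x_0$ and $f\circ\sigma=\mathrm{id}_O$. Then $\sigma^{-1}(A)$ is a D-open neighbourhood of $y_0$ (by D-continuity of $\sigma$ and $x_0\in A$), and every $y\in\sigma^{-1}(A)$ satisfies $y=f(\sigma(y))$ with $\sigma(y)\in A$, so $\sigma^{-1}(A)\subseteq f(A)$. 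Hence $f(A)$ is D-open, and $f$ is a D-open immersion.

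For $(3)\Rightarrow(1)$ I would take the immersion data at $x_0$: a D-open $O\ni x_0$, a D-open $O'\supseteq f(O)$, and smooth $\varrho:O'\to X$ with $\varrho\circ f|_O=\iota$. The relation $\varrho\circ f=\mathrm{id}$ on $O$ forces $f|_O$ to be injective, and the D-openness of $f$ makes $f(O)$ a D-open subset of $Y$ (and of $O'$); thus $\varrho$ corestricts to a smooth $\varrho|_{f(O)}:f(O)\to O$ which is a two-sided inverse of the bijection $f|_O:O\to f(O)$, so $f|_O$ is a diffeomorphism onto a D-open set and $f$ is \'etale. The only genuinely substantive step is the implication that a submersion is automatically a D-open map (feeding $(2)\Rightarrow(3)$); the rest is routine, the main care being to verify that the relevant corestrictions onto D-open subspaces are smooth, which is immediate from the subspace diffeology.
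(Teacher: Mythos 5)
Your proposal is correct and follows essentially the same route as the paper: the same cyclic chain $(1)\Rightarrow(2)\Rightarrow(3)\Rightarrow(1)$, with the substantive implication $(3)\Rightarrow(1)$ argued exactly as in the paper (injectivity from the left inverse, D-openness of $f(O)$, and the corestriction $\varrho|_{f(O)}$ as a two-sided smooth inverse). The only difference is that you write out the steps the paper declares obvious --- in particular the D-openness of submersions via $\sigma^{-1}(A)\subseteq f(A)$, a fact the paper instead invokes from its earlier propositions --- which is sound and changes nothing structurally.
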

	\begin{proof}
		$ (1)\Rightarrow(2)$ and $(2)\Rightarrow(3) $ are obvious. To complete the proof, we need to prove $ (3)\Rightarrow (1) $. For each $ x_0 $ in $ X $, one has a D-open neighborhood $ O\subseteq X $ of $ x_0 $, a D-open neighborhood $ O'\subseteq Y $ of $ f(O) $ and a smooth map $ \varrho:O'\rightarrow X $ such that $ \rho\circ f(x)=x $  for all $ x\in O $.
		Since $ f $ is a D-open map, $ f(O)\subseteq Y $ is D-open as well. Now the restriction $ \rho|_{f(O)} $ from the D-open subset  $ f(O)\subseteq Y $ to the D-open subset $ O\subseteq X $, is the desired inverse for $ f|_O:O\rightarrow f(O) $ and $ f|_O $ is actually a diffeomorphism.
	\end{proof}
	
	\begin{definition}\label{def-deta}
		A smooth map $ \pi:\mathcal{E}\rightarrow X $ is a \textbf{diffeological \'{e}tale map} if the pullback $ P^{*}\pi $ by every plot $ P $ in $ X $ is  \'{e}tale. In this situation, the total space $ \mathcal{E} $ is called a \textbf{diffeological \'{e}tale space} over the \textbf{base space} $X$, and
		the fiber $ \mathcal{E}_x:=\pi^{-1}(x) $ is called the \textbf{stalk} of $ \mathcal{E} $ over $ x $, for all $ x\in X $.
	\end{definition}
	
	Remark that we do not require that diffeological \'{e}tale spaces to be either surjective or injective.
	It is immediate that every \'{e}tale map is a diffeological \'{e}tale map but not conversely (see Example \ref{exa-irt} below).

	\begin{theorem}\label{the-et-chr} 
		(Characterizations of diffeological \'{e}tale maps).
		Let $ \pi:\mathcal{E}\rightarrow X $	be a map between diffeological spaces. The following statements are equivalent:
		\begin{enumerate}
			\item[ (a) ] 
			$ \pi $ is a diffeological \'{e}tale map.
			\item[ (b)] 
			$ \pi $ is a smooth map and there exists a covering generating family  $ \mathcal{C} $ of the diffeological space $ X $ such that the pullback of $ \pi $ by every plot $ P $ belonging to $ \mathcal{C} $ is \'{e}tale.
			\item[(c)] 	
			$ \pi $ is both a diffeological immersion and a diffeological submersion. 
			\item[(d)] 
			$ \pi $ is a diffeological immersion whose pullbacks by plots in $ X $ are D-open maps.
			\item[(e)] 
			$ \pi $ is both a diffeological injection and a diffeological submersion. 
			\item[(f)] 
			$ \pi $ is a smooth map and for given $ \xi\in \mathcal{E} $, any smooth map  $ f:M\rightarrow X $ from a manifold $ M $, and $ m\in M $ with $ f(m)=\pi(\xi)  $, there exists only one local smooth lift  $ l:O\rightarrow X $ defined on a D-open
			neighborhood $ O\subseteq M $ of $ m $ such that $ \pi\circ l=f|_O $ and $ l(m)=\xi $.
			\item[(g)] 
			$ \pi $ is a smooth map and for any $ \xi\in \mathcal{E} $, any plot $ P:U\rightarrow X $, 
			%with 	$ \pi(e)\in\rm{Im}(P) $ and	$ P(r)=\pi(e)  $ for some $ r\in U $,
			and any $ r_0\in U $ satisfying $ P(r_0)=\pi(\xi)  $,
			there exist an open
			neighborhood $ V\subseteq U $ of $ r_0 $ such that $ \pi $ admits only one  local lift plot $ L:V\rightarrow\mathcal{E} $ of $ P|_V $ along $ \pi $ with $ L(r_0)=\xi $. 
			
		\end{enumerate}
	\end{theorem}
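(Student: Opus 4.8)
The plan is to split the seven conditions into two groups and let them communicate through a single substantive implication. Conditions (a), (b), (c), (d) are \emph{fiberwise}: they speak about the pullbacks $P^{*}\pi\colon P^{*}\mathcal{E}\to U$ along plots $P\colon U\to X$, and they are governed by applying the pointwise trichotomy of Theorem~\ref{the-che} (submersion $+$ immersion $=$ D-open immersion $=$ \'etale) to each $P^{*}\pi$. Conditions (e), (f), (g) are \emph{lift-theoretic}, encoding existence and uniqueness of local lifts. First I would record the directions that are immediate from the definition of an \'etale map: an \'etale pullback is at once a submersion, a (weak) immersion, and a D-open map, which gives $(\mathrm{a})\Rightarrow(\mathrm{c})$, $(\mathrm{a})\Rightarrow(\mathrm{d})$, and (taking $\mathcal{C}$ to be the full diffeology) $(\mathrm{a})\Rightarrow(\mathrm{b})$.

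Next I would dispatch the lift-theoretic block. Condition (g) factors as ``at least one local lift plot'' together with ``at most one local lift plot''. The existence half is exactly the lift characterization of diffeological submersions established in Section~\ref{S3}, while the uniqueness half is precisely the \textbf{Inj} condition: two local lifts $L,L'$ of a plot $P$ with $L(r_{0})=L'(r_{0})$ are plots in $\mathcal{E}$ agreeing after composition with $\pi$, so $\pi$ is a diffeological injection if and only if such lifts always share a germ. This yields $(\mathrm{e})\Leftrightarrow(\mathrm{g})$, and the manifold version $(\mathrm{e})\Leftrightarrow(\mathrm{f})$ is the same bookkeeping with the two halves supplied by Corollary~\ref{cor-dsub-mfd2} and Proposition~\ref{pro-dinj-mfd}. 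To connect the two blocks I would use $(\mathrm{c})\Rightarrow(\mathrm{e})$, which is immediate from Proposition~\ref{pro-imm-inj}: a diffeological immersion is a diffeological injection, so immersion $+$ submersion forces injection $+$ submersion.

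The heart of the proof, and the step I expect to be the main obstacle, is $(\mathrm{e})\Rightarrow(\mathrm{a})$. Fix a plot $P\colon U\to X$ and a point $(r_{0},\xi)\in P^{*}\mathcal{E}$. Since $\pi$ is a diffeological submersion, $P^{*}\pi$ is a submersion, so Definition~\ref{def-sub} furnishes a smooth local section $\sigma\colon V\to P^{*}\mathcal{E}$ of $P^{*}\pi$ with $\sigma(r_{0})=(r_{0},\xi)$ on an open $V\subseteq U$; writing $\sigma(r)=(r,L(r))$ exhibits $L\colon V\to\mathcal{E}$ as a local lift plot of $P|_{V}$ with $L(r_{0})=\xi$. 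Now I would invoke Proposition~\ref{pro-inj-lift}: because $\pi$ is a diffeological injection, $\mathsf{Graph}(L)=\sigma(V)$ is a D-open subset of $(P|_{V})^{*}\mathcal{E}$, hence a D-open neighborhood of $(r_{0},\xi)$ in $P^{*}\mathcal{E}$. On this set $\sigma$ and $P^{*}\pi$ are mutually inverse smooth maps between $\sigma(V)$ and the open set $V\subseteq U$, so $P^{*}\pi$ restricts to a diffeomorphism there; as $(r_{0},\xi)$ and $P$ were arbitrary, every $P^{*}\pi$ is \'etale, i.e.\ (a). The delicate point is that the local inverse must be defined on an honest D-open neighborhood of $(r_{0},\xi)$, and this D-openness of the graph is exactly where the injection hypothesis enters, through Proposition~\ref{pro-inj-lift}.

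Finally I would close the remaining converses. For $(\mathrm{d})\Rightarrow(\mathrm{a})$ I would argue directly rather than through (e): the weak immersion supplies a left inverse $\varrho$ making $P^{*}\pi$ locally injective, and D-openness of $P^{*}\pi$ makes the image of a small neighborhood open in $U$, so $r\mapsto(r,\Pr_{2}\circ\varrho(r))$ is a smooth inverse and $P^{*}\pi$ is \'etale. For $(\mathrm{b})\Rightarrow(\mathrm{a})$ I would use transitivity of pullbacks: any plot $Q$ factors locally as $Q|_{U'}=P\circ F$ with $P\in\mathcal{C}$, whence $(Q|_{U'})^{*}\pi\cong F^{*}(P^{*}\pi)$; since $P^{*}\pi$ is \'etale it is a diffeological \'etale map, so its pullback along the plot $F$ is \'etale, and locality of \'etaleness gives that $Q^{*}\pi$ is \'etale. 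Assembling the cycle $(\mathrm{a})\Rightarrow(\mathrm{c})\Rightarrow(\mathrm{e})\Rightarrow(\mathrm{a})$ with the two-way bridges $(\mathrm{a})\Leftrightarrow(\mathrm{b})$, $(\mathrm{a})\Leftrightarrow(\mathrm{d})$, and $(\mathrm{e})\Leftrightarrow(\mathrm{f})\Leftrightarrow(\mathrm{g})$ then delivers the full equivalence.
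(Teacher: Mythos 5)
Your proof is correct, and its decisive step is the same as the paper's: starting from ``submersion plus injection'' you manufacture a local lift $L$ of a plot $P$ through a prescribed point, invoke Proposition \ref{pro-inj-lift} to see that $\mathsf{Graph}(L)$ is D-open in $P^{*}\mathcal{E}$, and conclude that $P^{*}\pi$ restricts to a diffeomorphism $\mathsf{Graph}(L)\rightarrow V$ with inverse $r\mapsto(r,L(r))$; the paper runs exactly this argument, only packaged as the implication (g)$\Rightarrow$(a). What differs is the decomposition. The paper threads a single linear cycle (a)$\Rightarrow$(b)$\Rightarrow$(c)$\Rightarrow$(d)$\Rightarrow$(e)$\Rightarrow$(f)$\Rightarrow$(g)$\Rightarrow$(a), so the converses (b)$\Rightarrow$(a) and (d)$\Rightarrow$(a) come for free, whereas you close a short cycle (a)$\Rightarrow$(c)$\Rightarrow$(e)$\Rightarrow$(a) and prove two-way bridges to (b), (d), (f), (g) separately. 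This costs you two extra direct arguments ((b)$\Rightarrow$(a) via transitivity of pullbacks together with stability of \'etale maps under pullback along plots, and (d)$\Rightarrow$(a) via the left inverse combined with D-openness), but the latter buys some precision: the paper's step (d)$\Rightarrow$(e) cites Theorem \ref{the-che}, whose clause (3) asks for an immersion in the sense of Definition \ref{def-imm} (with $\varrho$ valued in $P^{*}\mathcal{E}$), while Definition \ref{def-dimm} only supplies the weakened $\varrho$ valued in $U\times\mathcal{E}$; your direct argument, which restricts $\varrho$ to the open image $P^{*}\pi(O)$ where it automatically takes values in $P^{*}\mathcal{E}$, fills exactly this small gap. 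Finally, you state explicitly something both treatments need: the uniqueness in (f) and (g) must be read up to germs (as in the footnote to statement E of the introduction), since germ-uniqueness, not uniqueness on the nose on a fixed $V$, is what is equivalent to diffeological injectivity.
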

	\begin{proof}
		 (a) $ \Rightarrow $ (b)  is easily verified.
		 (b) $ \Rightarrow $ (c)  is a consequence of Theorem \ref{the-che}.
		But (c) $ \Rightarrow $ (d) follows from the fact that submersion are D-open maps.
		By Theorem \ref{the-che} and Proposition \ref{pro-imm-inj},  (d) $ \Rightarrow $ (e)   is obtained.
		By Corollary \ref{cor-dsub-mfd2} and Proposition \ref{pro-dinj-mfd},   (e) $ \Rightarrow $ (f)   is immediate.
		%	Obviously, $ \mathcal{C} $ is a parametrized covering. Now let $ P:U\rightarrow\mathcal{E} $ be any plot in $ \mathcal{E} $ and $ r\in U $, and let $ \xi=P(r) $. Then 	there exist an open neighborhood $ V $ of $ r $ and a unique lift plot $ L:V\rightarrow\mathcal{E} $ such that 	$ \pi\circ L=\pi\circ P|_V $ and $ L(r)=\xi $. By uniqueness, we get $ P|_V=L  $. Hence $ \mathcal{C} $ is a  covering generating family for $ \mathcal{E} $.	
		 (f) $ \Rightarrow $ (g)  is trivial.
		
		To prove  (g) $ \Rightarrow $ (a), let $ P:U\rightarrow X $ be any plot and $ (r_0,\xi)\in P^{*} \mathcal{E} $. Then $ P(r_0)=\pi(\xi)  $ and 
		by assumption, there exist an open neighborhood $ V\subseteq U $ of $ r_0 $ such that $ \pi $ admits only one  local lift plot $ L:V\rightarrow\mathcal{E} $ of $ P|_V $ along $ \pi $ with $ L(r_0)=\xi $. 
		In particular, $ \pi $ is a diffeological injection and by Proposition \ref{pro-inj-lift}, $ \mathsf{Graph}(L) $ is a D-open neighborhood of $ (r_0,\xi) $ in $ P^*X $.
		The restriction  $ P^*\pi|_{\mathsf{Graph}(L)}:\mathsf{Graph}(L)\rightarrow V $ is a diffeomorphism with the inverse $ V\rightarrow\mathsf{Graph}(L), r\mapsto(r,L(r)) $.
		Hence  $ P^{*}\pi $ is \'{e}tale.
	\end{proof}
	
	\begin{corollary} \label{cor-tang}
		\begin{enumerate}
			\item[ (i) ]
			The internal tangent map of a diffeological \'etale map at each point is an isomorphism.
			\item[ (ii) ] 
			On a manifold, \'{e}tale spaces and diffeological \'{e}tale spaces are the same.
			\item[ (iii) ] 
			The stalks of a diffeological \'{e}tale space are all discrete subspaces of the total space.
			\item[ (iv) ] 
			Any diffeological \'{e}tale map is a D-open map. 
			\item[ (v) ] 
			Any injective diffeological \'{e}tale map is a diffeological embedding. 
		\end{enumerate}
	\end{corollary}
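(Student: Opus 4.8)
The plan is to read off all five statements from the characterizations collected in Theorem~\ref{the-et-chr}, together with the properties already established for diffeological submersions, immersions, and injections. The crucial point is that the equivalences (a)$\Leftrightarrow$(c)$\Leftrightarrow$(e) there exhibit a diffeological \'etale map $\pi:\mathcal{E}\rightarrow X$ as simultaneously a diffeological submersion, a diffeological immersion, and a diffeological injection; each assertion then follows from the corresponding feature of one of these three classes, so the whole corollary is essentially a bookkeeping exercise over the already-proved results.

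I would dispatch (i), (iii), and (iv) immediately. For (i), characterization (c) makes $\pi$ both a diffeological immersion and a diffeological submersion, so its internal tangent map $d\pi_\xi$ is at once a monomorphism (immersion) and an epimorphism (submersion), hence an isomorphism. For (iii), characterization (e) makes $\pi$ a diffeological injection, and the proposition that every fiber of a diffeological injection is a discrete subspace shows each stalk $\mathcal{E}_x=\pi^{-1}(x)$ is discrete. For (iv), characterization (c) makes $\pi$ a diffeological submersion, and diffeological submersions are D-open maps.

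For (ii), I would again use (c): over a manifold base $X$, the map $\pi$ is a diffeological submersion and a diffeological immersion into a manifold, hence an honest submersion and immersion by the two propositions comparing the diffeological and classical notions over manifolds, and therefore \'etale by Theorem~\ref{the-che}. Since the converse---that every \'etale map is diffeologically \'etale---was recorded just after Definition~\ref{def-deta}, the two classes coincide when the base is a manifold.

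The one part requiring a genuine, if short, argument is (v), which I expect to be the main obstacle. Here $\pi$ is injective, and by (c) it is a diffeological immersion; by Definition~\ref{def-demb} it remains only to verify that $\pi$ is a D-embedding. Since $\pi$ is smooth it is D-continuous, and by (iv) it is D-open; hence for every D-open $U$ in $\mathcal{E}$ the image $\pi(U)$ is D-open in $X$ and so D-open in $\pi(\mathcal{E})$ for the subspace topology. Thus the injection $\pi$ is an open continuous bijection onto its image, hence a homeomorphism onto it, i.e. a D-embedding. Combining this with the diffeological immersion property yields the diffeological embedding.
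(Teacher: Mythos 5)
Your proposal is correct and is exactly the derivation the paper intends: the corollary is stated without proof precisely because each part follows by combining the characterizations in Theorem~\ref{the-et-chr} (notably (c) and (e)) with the previously proved propositions that diffeological submersions have epimorphic tangent maps and are D-open, diffeological immersions have monomorphic tangent maps, diffeological injections have discrete fibers, and diffeological submersions/immersions into manifolds are classical ones. Your argument for (v) — injective, D-continuous, and D-open implies D-embedding, combined with the diffeological immersion property from (c) — is the intended short argument and is sound.
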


	\begin{remark}\label{rem-tng}
		The converse to Corollary \ref{cor-tang}(i) dose not hold.
		By \cite[Proposition 3.4]{CW}, the internal tangent map of smooth map $ f:\mathbb{R}_2^3\rightarrow\mathbb{R}^3, ~~x\mapsto x $  is an isomorphism, but 
		$ f $ is not a diffeological \'etale map (see Example \ref{exa-lasa}).
	\end{remark}
	\begin{remark}
		Part (ii) of  Corollary \ref{cor-tang} still holds for the non-manifold $ \mathbb{R}\bigsqcup\mathbb{R}^2 $.
		But Example \ref{exa-irt} shows this  is not true in general.
	\end{remark}
	\begin{proposition}\label{pro-etal-dim}
		If $ \pi:\mathcal{E}\rightarrow X $ is a diffeological \'{e}tale map, then 
		\begin{center}
			$ \mathrm{dim}(\mathcal{E})=\mathrm{dim}(\pi(\mathcal{E}))\leq\mathrm{dim}(X) $.
		\end{center}
	\end{proposition}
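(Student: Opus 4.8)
The plan is to exploit characterization (c) of Theorem \ref{the-et-chr}, which says that a diffeological \'etale map $ \pi:\mathcal{E}\rightarrow X $ is \emph{simultaneously} a diffeological submersion and a diffeological immersion, and then to feed these two facts into the dimension comparisons already established in Corollary \ref{coro-dim-dsub} and Proposition \ref{pro-dim-dimm}. The target $ \pi(\mathcal{E}) $ is understood to carry the subspace diffeology inherited from $ X $.

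First I would record the two inequalities that come for free. Regarding $ \pi $ as a diffeological submersion, Corollary \ref{coro-dim-dsub} immediately yields $ \mathrm{dim}(\mathcal{E})\geq\mathrm{dim}(\pi(\mathcal{E})) $. Regarding $ \pi $ as a diffeological immersion, Proposition \ref{pro-dim-dimm} gives $ \mathrm{dim}(\mathcal{E})\leq\mathrm{dim}(X) $. These two alone do not force the asserted equality, since a priori $ \mathrm{dim}(\pi(\mathcal{E})) $ could be strictly smaller than $ \mathrm{dim}(X) $; what is still missing is the reverse inequality $ \mathrm{dim}(\mathcal{E})\leq\mathrm{dim}(\pi(\mathcal{E})) $.

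The key step, and the only one requiring a genuine argument, is to show that the corestriction $ \bar{\pi}:\mathcal{E}\rightarrow\pi(\mathcal{E}) $ onto its image is again a diffeological immersion. The point is that the subspace diffeology on $ \pi(\mathcal{E}) $ consists precisely of those plots of $ X $ taking values in $ \pi(\mathcal{E}) $. For any such plot $ P:U\rightarrow\pi(\mathcal{E}) $, the fiber-product set $ \{(r,\xi)\in U\times\mathcal{E}\mid P(r)=\pi(\xi)\} $, together with the projection onto the first factor, is literally the same whether $ P $ is regarded as a plot into $ \pi(\mathcal{E}) $ or into $ X $, and it carries the same subspace diffeology from $ U\times\mathcal{E} $ in both cases; hence $ P^{*}\bar{\pi} $ coincides with $ P^{*}\pi $ as a map of diffeological spaces. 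Since $ \pi $ is a diffeological immersion, each $ P^{*}\pi $ is an immersion, and therefore so is each $ P^{*}\bar{\pi} $, which shows $ \bar{\pi} $ is a diffeological immersion. Applying Proposition \ref{pro-dim-dimm} to $ \bar{\pi} $ now delivers $ \mathrm{dim}(\mathcal{E})\leq\mathrm{dim}(\pi(\mathcal{E})) $.

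Combining this with the submersion inequality produces $ \mathrm{dim}(\mathcal{E})=\mathrm{dim}(\pi(\mathcal{E})) $, and chaining with $ \mathrm{dim}(\mathcal{E})\leq\mathrm{dim}(X) $ gives exactly $ \mathrm{dim}(\mathcal{E})=\mathrm{dim}(\pi(\mathcal{E}))\leq\mathrm{dim}(X) $. I expect the only real obstacle to be the bookkeeping in the corestriction step, namely verifying carefully that passing to the subspace diffeology on the image leaves the relevant pullbacks unchanged both as sets and as diffeological spaces; everything else is a direct invocation of the already-proven dimension comparisons.
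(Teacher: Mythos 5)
Your proof is correct, and it reaches the crucial inequality $\mathrm{dim}(\mathcal{E})\leq\mathrm{dim}(\pi(\mathcal{E}))$ by a genuinely different route than the paper. The two easy inequalities are obtained identically (Corollary \ref{coro-dim-dsub} and Proposition \ref{pro-dim-dimm}), but for the key step the paper argues constructively: given a covering generating family $\mathcal{C}$ of $\pi(\mathcal{E})$, the collection $\mathcal{C}'$ of local lifts of members of $\mathcal{C}$ along $\pi$ is a covering generating family of $\mathcal{E}$ with $\mathrm{dim}(\mathcal{C}')=\mathrm{dim}(\mathcal{C})$; existence of lifts comes from the submersion property, and the fact that an arbitrary plot $P$ of $\mathcal{E}$ locally factors through these lifts comes from $\pi$ being a diffeological injection (from $\pi\circ P|_{V'}=\pi\circ L\circ F|_{V'}$ one concludes $P|_{V'}=L\circ F|_{V'}$). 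Thus the paper's key step uses the characterization of \'etale maps as submersion-plus-injection (Theorem \ref{the-et-chr}(e)), whereas you use submersion-plus-immersion (Theorem \ref{the-et-chr}(c)): you corestrict to the image, observe that for a plot $P$ with values in $\pi(\mathcal{E})$ the pullback $P^{*}\bar{\pi}$ is literally the same diffeological space and map as $P^{*}\pi$ (same subset of $U\times\mathcal{E}$, same subspace diffeology, same target $U\times\mathcal{E}$ for the retraction $\varrho$ in Definition \ref{def-dimm}), and then reuse Proposition \ref{pro-dim-dimm} as a black box. Your corestriction observation is sound, and smoothness of $\bar{\pi}$ is immediate since every plot of $X$ with values in the image is a plot of the subspace. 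What each approach buys: yours is shorter and more modular, and it isolates a reusable sharpening of Proposition \ref{pro-dim-dimm} --- any diffeological immersion $f:X\rightarrow Y$ in fact satisfies $\mathrm{dim}(X)\leq\mathrm{dim}(f(X))$ for the subspace diffeology on the image, which is strictly stronger information than $\mathrm{dim}(X)\leq\mathrm{dim}(Y)$ because a subspace can have larger dimension than its ambient space (e.g.\ $[0,\infty)\subseteq\mathbb{R}$, Example \ref{exa-ind-dimm}); the paper's argument instead exhibits explicitly how covering generating families transport along \'etale maps via unique local lifts, which is more informative about the structure of \'etale maps themselves and avoids invoking the immersion retraction data altogether.
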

	\begin{proof}
		By virtue of Corollary \ref{coro-dim-dsub} and Proposition \ref{pro-dim-dimm}, we have		$ \mathrm{dim}(\pi(\mathcal{E}))\leq\mathrm{dim}(\mathcal{E})\leq\mathrm{dim}(X) $.
		Let $ \mathcal{C} $ be a  covering generating family of $ \pi(\mathcal{E})  $.
		Then the collection $ \mathcal{C}' $ of local lift plots of elements of $ \mathcal{C} $ along $ \pi $ constitutes a covering generating family for $ \mathcal{E} $ with $ \mathrm{dim}(\mathcal{C}')=\mathrm{dim}(\mathcal{C}) $ and consequently, $ \mathrm{dim}(\pi(\mathcal{E}))\geq \mathrm{dim}(\mathcal{E}) $.
		To prove this assertion, let $ P:U\rightarrow X $ be any plot and $ r_0\in U $. Then $ \pi\circ P|_V=Q\circ F $  for some $ Q\in \mathcal{C} $ and a smooth map $ F $ defined on an open neighborhood $ V\subseteq U $ of $ r_0 $.
		On the other hand, since $ \pi $ is a diffeological \'{e}tale map, there is a local lift plot $ L:W\rightarrow \mathcal{E} $ defined on an open neighborhood $ W\subseteq \mathrm{dom}(Q) $ of $ F(r_0) $ such that $ \pi\circ L=Q|_W $ and $ L(F(r_0))=P(r_0) $.
		This implies that $ \pi\circ L\circ F|_{V'}=Q\circ F|_{V'} $ on an open neighborhood $ V'\subseteq V $ of $ r_0 $.
		Thus, $ \pi\circ P|_{V'}=\pi\circ L\circ F|_{V'} $, which yields $ P|_{V'}=L\circ F|_{V'} $.
	\end{proof}

	\begin{remark}\label{rem-etal-dim}
	 In Proposition \ref{pro-etal-dim}, the right-hand side inequality may be strict, even for \'{e}tale maps.
	 As an example, the canonical injection $ \mathbb{R}\rightarrow\mathbb{R}\bigsqcup\mathbb{R}^2 $ is an \'{e}tale map, however, 
	 	\begin{center}
	 	$ \mathrm{dim}(\mathbb{R})=1 <2=\mathrm{dim}(\mathbb{R}\bigsqcup\mathbb{R}^2) $.
	 \end{center}
	\end{remark}

	\begin{proposition} 
		A map between diffeological spaces is a diffeological covering map if and only if it is both a diffeological \'{e}tale map and a diffeological fiber bundle.
	\end{proposition}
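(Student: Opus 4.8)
The plan is to prove the two implications separately, using the definition of a diffeological covering map as a diffeological fiber bundle with discrete fibers, together with Corollary \ref{cor-tang}(iii) and the D-local nature of \'etale maps.

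For the forward implication, suppose $\pi:\mathcal{E}\rightarrow X$ is a diffeological covering map. By definition it is already a diffeological fiber bundle whose fiber type $T$ is discrete, so the only thing to verify is that $\pi$ is a diffeological \'etale map, i.e. that $P^{*}\pi$ is \'etale for every plot $P:U\rightarrow X$. First I would fix such a $P$ and invoke local triviality along plots: there is an open cover $\{V_i\}$ of $U$ and, over each $V_i$, a diffeomorphism $(P|_{V_i})^{*}\mathcal{E}\cong V_i\times T$ commuting with the projections. Since being \'etale is invariant under composition with diffeomorphisms of the source and of the target, it suffices to check that the projection $\mathrm{pr}:V_i\times T\rightarrow V_i$ is \'etale. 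Because $V_i$ is a domain, so that $D(V_i)$ is locally compact Hausdorff, Proposition \ref{lem-prd-top} identifies $D(V_i\times T)$ with $D(V_i)\times D(T)$; as $T$ is discrete, each slice $V_i\times\{t\}$ is D-open and $\mathrm{pr}$ restricts on it to a diffeomorphism onto $V_i$, whence $\mathrm{pr}$ is \'etale.

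It then remains to patch these local statements into the assertion that $P^{*}\pi$ itself is \'etale. Here I would use transitivity of pullbacks to identify $(P|_{V_i})^{*}\pi$ with the restriction of $P^{*}\pi$ over the D-open set $(P^{*}\pi)^{-1}(V_i)$, observe that these sets form a D-open cover of $P^{*}\mathcal{E}$ (as $P^{*}\pi$ is D-continuous and the $V_i$ cover $U$), and finally exploit that \'etale-ness is a D-local property: a D-open neighborhood realizing a local diffeomorphism inside one chart $(P^{*}\pi)^{-1}(V_i)$ is also D-open in $P^{*}\mathcal{E}$, and its image, being D-open in $V_i$ and hence in $U$, exhibits $P^{*}\pi$ as \'etale at the given point. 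This yields that $\pi$ is a diffeological \'etale map.

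For the converse, suppose $\pi$ is simultaneously a diffeological \'etale map and a diffeological fiber bundle. Since it is already a diffeological fiber bundle, I only need to see that its fibers are discrete; but this is exactly Corollary \ref{cor-tang}(iii), which states that the stalks $\mathcal{E}_x=\pi^{-1}(x)$ of a diffeological \'etale map are discrete subspaces. Hence $\pi$ is a diffeological fiber bundle with discrete fibers, that is, a diffeological covering map. The main obstacle in the whole argument lies in the forward direction, specifically in controlling the D-topology of the product $V_i\times T$—for which Proposition \ref{lem-prd-top} is essential—and in the bookkeeping needed to upgrade the chartwise \'etale property to a genuine \'etale property of the full pullback $P^{*}\pi$ over $U$.
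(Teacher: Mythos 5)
Your proof is correct and takes essentially the same approach as the paper: the paper's entire proof is the one-line remark that locally trivial bundles with discrete fibers are \'etale, which is precisely your forward direction, while the converse rests, as in your argument, on the discreteness of the stalks of a diffeological \'etale map (Corollary \ref{cor-tang}(iii)). You have simply supplied the details (the product D-topology argument and the patching of chartwise \'etale-ness) that the paper leaves to the reader.
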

	\begin{proof}
		Because locally trivial bundles with discrete fibers are \'{e}tale, the proof is easy.
	\end{proof}
	
	\begin{corollary}
		If $ \pi:\mathcal{E} \rightarrow X  $ is a diffeological covering map, then $ \mathrm{dim}(\mathcal{E})=\mathrm{dim}(X) $, and at any point $ \xi\in \mathcal{E} $, $ T_{\xi}\mathcal{E} $ is isomorphic to $ T_{\pi(\xi)}X $.
	\end{corollary}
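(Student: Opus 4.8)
The plan is to reduce everything to results already established for diffeological \'etale maps, exploiting the fact that a diffeological covering map is in particular such a map. First I would invoke the preceding proposition, which asserts that a diffeological covering map $\pi:\mathcal{E}\rightarrow X$ is \emph{both} a diffeological \'etale map and a diffeological fiber bundle. The \'etale half supplies the tangent-space isomorphism and the dimension inequality, while the fiber-bundle half is precisely what upgrades that inequality to an equality.

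For the dimension statement, since $\pi$ is a diffeological \'etale map, Proposition \ref{pro-etal-dim} gives at once
\begin{center}
$\mathrm{dim}(\mathcal{E})=\mathrm{dim}(\pi(\mathcal{E}))\leq\mathrm{dim}(X)$.
\end{center}
The only remaining point is to identify $\pi(\mathcal{E})$ with $X$. Here I would use that $\pi$ is a diffeological fiber bundle, hence by definition a surjective smooth map, so that $\pi(\mathcal{E})=X$ and the displayed chain collapses to $\mathrm{dim}(\mathcal{E})=\mathrm{dim}(X)$.

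For the tangent-space statement, I would appeal to Corollary \ref{cor-tang}(i), according to which the internal tangent map of any diffeological \'etale map is an isomorphism at every point. Applying this to $\pi$ at an arbitrary $\xi\in\mathcal{E}$ shows that $d\pi_{\xi}:T_{\xi}\mathcal{E}\rightarrow T_{\pi(\xi)}X$ is a linear isomorphism, whence $T_{\xi}\mathcal{E}$ is isomorphic to $T_{\pi(\xi)}X$.

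There is essentially no obstacle here; the content lies entirely in the two cited results, and the only thing to verify directly is the surjectivity of $\pi$, which is built into the definition of a diffeological fiber bundle. The one subtlety worth flagging is that the \emph{equality} $\mathrm{dim}(\mathcal{E})=\mathrm{dim}(X)$, as opposed to the mere inequality $\mathrm{dim}(\mathcal{E})\leq\mathrm{dim}(X)$ valid for general diffeological \'etale maps (compare Remark \ref{rem-etal-dim}), genuinely relies on surjectivity, and this is exactly what the covering-map hypothesis provides over and above the \'etale one.
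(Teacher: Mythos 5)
Your proposal is correct and follows exactly the route the paper intends: the preceding proposition identifies a diffeological covering map as a diffeological \'etale map that is also a (necessarily surjective) diffeological fiber bundle, after which Proposition \ref{pro-etal-dim} gives $\mathrm{dim}(\mathcal{E})=\mathrm{dim}(\pi(\mathcal{E}))=\mathrm{dim}(X)$ and Corollary \ref{cor-tang}(i) gives the isomorphism $T_{\xi}\mathcal{E}\cong T_{\pi(\xi)}X$. Your observation that surjectivity is precisely what upgrades the inequality of Remark \ref{rem-etal-dim} to an equality is exactly the point of the corollary.
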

	\begin{example}
		Let $ (X,\mathcal{D}) $ be a diffeological space.
		The map $ f:X \rightarrow \mathcal{D} $ taking every point $ x $ to the $ 0 $-plot $ \textbf{x} $ is a diffeological \'{e}tale map.
		As a result, $ \mathrm{dim}(X)\leq\mathrm{dim}(\mathcal{D}) $.
		
        To observe this,  we only show that $ f $ is a diffeological submersion,
        because it is injective.
		Let $ \rho:U \rightarrow \mathcal{D} $ be a plot and  $r_0\in U, x_0\in X $ be such that $ f(x_0)=\rho(r_0) $.
		By definition, there exist an open neighborhood $  V\subseteq U $ of $ r_0 $
		such that $  \rho(r) $ is a $ 0 $-plot for all $ r\in V $, and $ r\mapsto \rho(r)(0) $ defined on $ V $ is a plot in $ X $.
		Take $ L: V\rightarrow X $ to be $ L(r)=\rho(r)(0) $ so that $ f\circ L=\rho|_V $ and $ L(r_0)=x_0 $.
	\end{example}

		\begin{proposition}\label{p-gitm-etl}
		If $ \pi:\mathcal{E}\rightarrow X $ is a diffeological \'{e}tale map, then 
		$ d\pi:T^H\mathcal{E}\rightarrow T^HX $ is also a diffeological \'{e}tale map.
	\end{proposition}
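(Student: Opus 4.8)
The plan is to verify that $d\pi$ satisfies characterization (b) of Theorem \ref{the-et-chr}, applied to the internal tangent bundles equipped with Hector's diffeology. Smoothness of $d\pi\colon T^H\mathcal E\to T^HX$ is already recorded in Proposition \ref{pro-gtm-sm}(d). Recall that Hector's diffeology on $T^HX$ is generated by the plots $dP\colon TU\to T^HX$ attached to the plots $P\colon U\to X$; together with the constant $0$-plots these form a covering generating family of $T^HX$. So it suffices to show that the pullback of $d\pi$ along each such generator is \'etale. For a constant plot this is immediate once one notes that the fibre $(d\pi)^{-1}(x,v)$ is discrete: the $\mathcal E$-component of any plot into it lands in the stalk $\pi^{-1}(x)$, which is discrete by Corollary \ref{cor-tang}(iii), while Corollary \ref{cor-tang}(i) forces the tangent component to be constant along $\pi^{-1}(x)$. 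The essential case is therefore the pullback $(dP)^{*}(d\pi)$ of $d\pi$ along $dP$.

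Fix a plot $P\colon U\to X$ and a point $((r_0,u_0),(\xi_0,w_0))$ of the total space $(dP)^{*}T^H\mathcal E$, so that $P(r_0)=\pi(\xi_0)$ and $dP_{r_0}(u_0)=d\pi_{\xi_0}(w_0)$. Because $\pi$ is a diffeological \'etale map, the pullback $P^{*}\pi\colon P^{*}\mathcal E\to U$ is \'etale, so I would choose a local section $s$ of $P^{*}\pi$ defined on a D-open neighborhood $V\ni r_0$ with $s(r_0)=(r_0,\xi_0)$ and set $L=P_{\#}\circ s\colon V\to\mathcal E$; this is the local lift of $P$ along $\pi$ with $\pi\circ L=P|_V$ and $L(r_0)=\xi_0$. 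Differentiating $L$ produces $dL\colon TV\to T^H\mathcal E$, and since $d\pi\circ dL=d(\pi\circ L)=dP|_{TV}$ the formula $\sigma(r,u)=\big((r,u),dL(r,u)\big)$ defines a smooth section of $(dP)^{*}(d\pi)$ over $TV$. Evaluating at $(r_0,u_0)$ and using that $d\pi_{\xi_0}$ is an isomorphism (Corollary \ref{cor-tang}(i)) to solve $d\pi_{\xi_0}(w_0)=dP_{r_0}(u_0)=d\pi_{\xi_0}\big(dL_{r_0}(u_0)\big)$ gives $w_0=dL_{r_0}(u_0)$, so $\sigma$ passes through the chosen point.

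It then remains to see that $\sigma(TV)$ is a D-open subset of $(dP)^{*}T^H\mathcal E$ on which $(dP)^{*}(d\pi)$ restricts to a diffeomorphism onto the D-open set $TV\subseteq TU$ (its inverse being $\sigma$), which is precisely the \'etale condition at our point. For this I would use the D-continuous base projection $p\colon (dP)^{*}T^H\mathcal E\to P^{*}\mathcal E$, $((r,u),(\xi,w))\mapsto(r,\xi)$, together with the fact that, since $\pi$ is a diffeological immersion and hence a diffeological injection (Proposition \ref{pro-imm-inj}), the graph $\mathsf{Graph}(L)$ is D-open in $P^{*}\mathcal E$ by Proposition \ref{pro-inj-lift}. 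Using again that $d\pi_{\xi}$ is an isomorphism to recover $w=dL_{r}(u)$ from $\xi=L(r)$, one checks the set-theoretic identity $\sigma(TV)=p^{-1}\big(\mathsf{Graph}(L)\big)$, whence $\sigma(TV)$ is D-open. As the base point was arbitrary, $(dP)^{*}(d\pi)$ is \'etale, and Theorem \ref{the-et-chr}(b) yields that $d\pi$ is a diffeological \'etale map.

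The main obstacle is the passage from openness at the level of base spaces to openness at the level of tangent bundles: the naive temptation is to argue fibrewise, but the clean move is to recognize $\sigma(TV)$ as the preimage $p^{-1}(\mathsf{Graph}(L))$ of a set already known to be D-open by Proposition \ref{pro-inj-lift}, reducing everything to the D-continuity of $p$ and the pointwise invertibility of $d\pi_{\xi}$. The only other point demanding care is the covering technicality --- that the generators $dP$ need not by themselves cover $T^HX$ --- handled above by adjoining the constant plots, whose pullbacks are \'etale precisely because the fibres of $d\pi$ are discrete.
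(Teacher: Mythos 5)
Your proof is correct, but it runs along a different branch of Theorem \ref{the-et-chr} than the paper's own argument. The paper verifies characterization (e): it first shows $d\pi$ is a diffeological submersion --- the same computation you perform, lifting $P$ to $L$, differentiating to get $d\pi\circ dL=dP|_{TV}$, and using injectivity of $d\pi_{\xi_0}$ to see that $dL$ passes through the prescribed point --- and then, in a separate step, shows $d\pi$ is a diffeological injection by a componentwise argument on arbitrary plots $(P_1,P_2)$ and $(Q_1,Q_2)$ of $T^H\mathcal{E}$: diffeological injectivity of $\pi$ forces the base components to agree locally, and injectivity of the maps $d\pi_{\xi}$ then forces the fiber components to agree. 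You instead verify characterization (b) directly: the pullback $(dP)^{*}(d\pi)$ is \'etale because the image of your section $\sigma=(\mathrm{id},dL)$ equals the D-open set $p^{-1}\big(\mathsf{Graph}(L)\big)$, with D-openness of $\mathsf{Graph}(L)$ supplied by Proposition \ref{pro-inj-lift}; this is essentially the mechanism the paper itself uses to prove (g) $\Rightarrow$ (a) inside Theorem \ref{the-et-chr}, so uniqueness of lifts is packaged once as openness of graphs rather than re-verified plot by plot. A genuine dividend of your version is the covering technicality you flag: the generators $dP$ alone need not cover $T^HX$, since an internal tangent vector is a finite sum $\sum_i d(P_i)_0(u_i)$ and need not be realized by a single plot, so checking a lifting criterion only against the plots $dP$ requires adjoining, say, the constant plots --- whose pullbacks you correctly show are \'etale by discreteness of the fibers of $d\pi$. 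The paper's submersion half checks the criterion only against the $dP$ and leaves this (harmless, but real) point untreated; your argument closes it, at the modest cost of the extra graph-openness bookkeeping.
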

	\begin{proof}
		$ d\pi $ is smooth by virtue of Proposition \ref{pro-gtm-sm}(d). To see that $ d\pi $ is a diffeological submersion, assume that $ dP $ is the global tangent map of a plot $ P:U\rightarrow X $, and let $ (\xi_0,v_0)\in T^H\mathcal{E} $ and $ (r_0,w_0)\in TU $ be such that $ d\pi(\xi_0,v_0)=dP(r_0,w_0) $, which means that 
		$ (\pi(\xi_0),d\pi_{\xi_0}(v_0))=(P(r_0),dP_{r_0}(w_0)) $.
		Because $ \pi $ itself is a diffeological submersion, there are an open neighborhood $ V\subseteq U $ of $ r_0 $ and a local lift plot $ L:V\rightarrow \mathcal{E} $ satisfying $ \pi\circ L=P|_V $ and  $ L(r_0)=\xi_0 $. 
		Then 
		$ d\pi\circ dL=dP|_{TV} $. Moreover,  $ d\pi_{\xi_0}\circ dL_{r_0}(w_0)=dP_{r_0}(w_0)=d\pi_{\xi_0}(v_0) $.
		Since $ d\pi_{\xi_0} $ is injective, we get 
		$ dL_{r_0}(w_0)=v_0 $, which follows that
		$ dL(r_0,w_0)=(\xi_0,v_0) $.
		
		Now suppose that $ P=(P_1,P_2) $ and $ Q=(Q_1,Q_2) $ are two $ n $-plots in $ T^H\mathcal{E} $. Let $ r_0\in \mathrm{dom}(P)\cap\mathrm{dom}(Q) $ with $ P(r_0)=Q(r_0) $,
		and there exists an open neighborhood $ U $ of $ r_0 $ in $ \mathrm{dom}(P)\cap\mathrm{dom}(Q) $ such that	$ d\pi\circ P|_U=d\pi\circ Q|_U $.
		By definition, 
		\begin{center}
			$ (\pi\circ P_1(r),d\pi_{P_1(r)} (P_2(r)))=(\pi\circ Q_1(r),d\pi_{Q_1(r)} (Q_2(r))) $
		\end{center}
		for all $ r\in U $.
		Since $ \pi $ is a diffeological injective, $ P_1|_V=Q_1|_V $ for some open neighborhood $ V $ of $ r_0 $ in $U $.
		On the other hand, because $ d\pi_{P_1(r)}=d\pi_{Q_1(r)}  $ is injective, $ P_2(r)=Q_2(r)  $ for all $ r\in V $.
		Therefore, $ P|_V=Q_V $.
		This completes the proof.
	\end{proof}
	\begin{lemma}\label{lem-dinj-dsub}
		Suppose we are given 
		a commutative diagram of smooth maps as the following:
		$$
		\xymatrix{
			X \ar[dr]_{g} \ar[rr]^{f} & &Y \ar[dl]^{h}\\
			& Z  & 
		}
		$$
		\begin{enumerate}
			\item[$ (1)  $] 
			If $ g $ is a diffeological submersion and  $  h $ is a diffeological injection, then  $ f $ is a diffeological submersion.
			%	In other words, a smooth lift of a diffeological submersion along a diffeological injection is again a diffeological submersion.
			\item[$ (2)  $] 
			If $ g $ is a diffeological injection, then  $ f $ is a diffeological injection.
			\item[$ (3)  $] 
			If $ g $ is a diffeological immersion, then  $ f $ is a diffeological immersion.
			%	That is, a smooth lift of a diffeological immersion is again a diffeological immersion.
			\item[$ (4)  $] 
			If $ g $ is a diffeological \'{e}tale map and  $  h $ is a diffeological injection, then  $ f $ is a diffeological \'{e}tale map.
			\item[$ (5)  $] 
			If $ f $ is surjective and  $ g $ is a diffeological submersion, then $ h $ is a diffeological submersion.
			\item[$ (6)  $] 
			If $ f $ is a surjective diffeological submersion and $ g $ is a diffeological injection, then $ h $ is a diffeological injection.
			\item[$ (7)  $] 
			If $ f $ is surjective diffeological submersion and  $ g $ is a diffeological immersion, then $ h $ is a diffeological immersion.
			\item[$ (8)  $] 		If $ f $ is surjective diffeological submersion  and  $ g $ is a diffeological \'{e}tale maps, then $ h $ is a diffeological \'{e}tale map.
		\end{enumerate}	
	\end{lemma}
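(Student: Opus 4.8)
The plan is to organize the eight assertions into three layers and to reduce the two \'etale statements to the others through the decomposition ``\'etale $=$ diffeological submersion $+$ diffeological immersion'' of Theorem~\ref{the-et-chr}(c). Throughout I would read the triangle as $h\circ f=g$ and exploit the germ-of-plots lift characterizations recorded in Proposition~\ref{pi1}. The first layer consists of the left-factor descent statements (1)--(3). Statement (2) is immediate: given two $n$-plots $P,Q$ in $X$ agreeing at $r_0$ with $f\circ P=f\circ Q$ near $r_0$, post-composing with $h$ yields $g\circ P=g\circ Q$, so the diffeological injection $g$ forces $P=Q$ near $r_0$. For (1) I would start from a plot $P\colon U\to Y$ and $r_0$ with $P(r_0)=f(x_0)$; since $h\circ P$ is a plot with $(h\circ P)(r_0)=g(x_0)$, the submersion $g$ produces a local lift $L$ with $g\circ L=h\circ P|_V$ and $L(r_0)=x_0$, and then $f\circ L$ and $P|_V$ are plots in $Y$ that agree at $r_0$ and have equal $h$-composites, so the diffeological injection $h$ makes them agree near $r_0$, giving the desired lift of $P$ through $f$. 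For (3) I would argue from the pullback definition (Definition~\ref{def-dimm}): for any plot $P\colon U\to Y$ one has the inclusion $P^*X\subseteq (h\circ P)^*X$ with the two projections to $U$ agreeing on $P^*X$, so a local smooth left inverse for $(h\circ P)^*g$, furnished by the immersion $g$, restricts over $O=O'\cap P^*X$ (which is D-open since the inclusion is D-continuous) to a left inverse for $P^*f$; notably no hypothesis on $h$ is needed, mirroring the classical implication ``$dg=dh\circ df$ injective $\Rightarrow df$ injective''.

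The second layer consists of the right-factor descent statements (5)--(7), where $f$ is assumed surjective. Statement (5) is easiest: for a plot $P\colon U\to Z$ with $P(r_0)=h(y_0)$, I would choose $x_0$ with $f(x_0)=y_0$ by surjectivity, lift $P$ through the submersion $g$ to a plot $M$, and set $f\circ M$, which lifts $P$ through $h$ with $(f\circ M)(r_0)=y_0$. For (6) I would lift both offending plots $P,Q$ in $Y$ through the surjective submersion $f$ to plots $\tilde P,\tilde Q$ in $X$ with common value $x_0$ at $r_0$; then $h\circ P=h\circ Q$ becomes $g\circ\tilde P=g\circ\tilde Q$, the diffeological injection $g$ gives $\tilde P=\tilde Q$ near $r_0$, and applying $f$ returns $P=Q$ near $r_0$.

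The main obstacle is statement (7), because diffeological immersions admit only the ``at most one lift'' implication rather than a lift \emph{equivalence}, so Proposition~\ref{pi1} cannot be invoked and I must work at the level of pullbacks. Pulling the triangle back by a plot $P\colon U\to Z$ gives a commutative triangle of projections $P^*g=(P^*h)\circ f_*$ over $U$, where $f_*\colon P^*X\to P^*Y$ sends $(r,x)\mapsto(r,f(x))$. The key point I would establish is that $f_*$ is exactly the pullback of $f$ along the projection $P^*Y\to Y$, hence a surjective diffeological submersion (since $f$ is one and these are stable under pullback), and therefore a D-open map. Given $(r_0,y_0)\in P^*Y$, surjectivity of $f$ yields $(r_0,x_0)\in P^*X$ above it; the immersion $P^*g$ supplies a local left inverse $\varrho$ on a D-open set $O_X\ni(r_0,x_0)$, and $(\mathrm{id}_U\times f)\circ\varrho$ is then a smooth left inverse for $P^*h$ on the D-open neighborhood $f_*(O_X)$ of $(r_0,y_0)$, with the target neighborhood in $U$ valid because $P^*h\circ f_*=P^*g$. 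Identifying $f_*$ as a pullback of $f$ (so that $f_*(O_X)$ is genuinely D-open) is the technical heart of the argument. Finally, (4) follows by combining (1) and (3), and (8) by combining (5) and (7), through Theorem~\ref{the-et-chr}(c).
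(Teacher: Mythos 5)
Your proposal is correct and follows essentially the same route as the paper's proof: the same lifting arguments for (1), (2), (5), (6), the same pullback-inclusion argument for (3), and for (7) the same induced map $P^*X\to P^*Y$ (the paper's $\theta$), which the paper verifies directly to be a diffeological submersion while you justify it by identifying it as a pullback of $f$ along $P_{\#}:P^*Y\to Y$ --- an equivalent and arguably cleaner observation. The only cosmetic difference is that the paper deduces (4) from (1)+(2) and (8) from (5)+(6), whereas you use (1)+(3) and (5)+(7); both are valid by Theorem \ref{the-et-chr}.
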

	\begin{proof}
		To prove $ (1) $, suppose that $ P:U\rightarrow  Y $ is a plot and let $ x_0 \in X $ and $ r_0\in U $ be such that $ f(x_0)=P(r_0) $.
		Then $ h\circ P $ is a plot in $ Z $ and $ g(x_0)=h\circ P(r_0) $. Since $ g $ is a diffeological submersion, there are 
		an open neighborhood $ V\subseteq U $ of $ r_0 $ and a local lift plot $ L:V\rightarrow X $ satisfying $ g\circ L=h\circ P|_V $ and  $ L(r_0)=x_0 $. Then,
		$ h\circ f\circ L=h\circ P|_V $ and $ f\circ L(r_0)=f(x_0)=P|_V(r_0) $.
		But $ h $ is a diffeological injection, so $ f\circ L|_W=P|_W $ on some open neighborhood $ W\subseteq V $ of $ r_0 $, which means that $ f $ is a diffeological submersion.
		
		Part $ (2) $ is straightforward.

		To verify $ (3) $, let $ P:U\rightarrow  Y $ is a plot and $ (r_0,x_0) \in P^*X $. Since $ g $ is a diffeological immersion, 
		there exist a D-open neighborhood $ O $ of $ (r_0,x_0) $ in $ (h\circ P)^*X $, an open neighborhood $ V\subseteq U $ of $ [(h\circ P)^*g]~~~(O) $ and a smooth map $ \varrho:V\rightarrow U\times X $ such that $ \varrho\circ~~~ [(h\circ P)^*g]~~~(r,x)=(r,x) $ for all $ (r,x)\in O $.
		But we have a natural inclusion $ \imath:P^*X\hookrightarrow (h\circ P)^*X $ for which	$ P^*f=[(h\circ P)^*g]\circ\imath $. 
		So $ O':=\imath^{-1}(O)=(P^*X)\cap O $ is a D-open neighborhood of $ (r_0,x_0) $ in $ P^*X $ and for every $ (r,x)\in O' $,
		\begin{center}
			$ \rho\circ P^*f(r,x)=\rho~~~\circ ~~~[(h\circ P)^*g]~~~\circ~~~\imath~~(r,x)=(r,x) $.
		\end{center}
		%	Therefore, $ f $ is a diffeological immersion.
		
		Part $ (4) $ is a consequence of $ (1) $ and $ (2) $.
		
		To establish $ (5) $, let $ P:U\rightarrow Z $ be a plot and let $ y_0\in Y $ and $ r_0\in U $ be such that $ h(y_0)=P(r_0) $.
		By assumption, there exists an element $ x_0\in X $ with $ f(x_0)=y_0 $, and consequently, we get
		$ g(x_0)=P(r_0) $. So there are 
		an open neighborhood $ V\subseteq U $ of $ r_0 $ and a local lift plot $ L:V\rightarrow X $ satisfying $ g\circ L=P|_V $ and  $ L(r_0)=x_0 $.  Then $ h\circ (f \circ L)=P|_V $ and $ f \circ L(r_0)=y_0 $. 
			%Therefore, $ h $ is a diffeological submersion.
		
		To show $ (6) $, assume that $ P $ and $ Q $ are two $ n $-plots in $ Y $, $ r_0\in \mathrm{dom}(P)\cap\mathrm{dom}(Q) $, let $ P(r_0)=Q(r_0) $
		and there exists an open neighborhood $ U $ of $ r_0 $ in $\mathrm{dom}(P)\cap\mathrm{dom}(Q) $ such that	$ h\circ P|_U=h\circ Q|_U $.
		By hypothesis, there exists an element $ x_0\in X $ with $ f(x_0)=P(r_0)=Q(r_0) $. Also, we can find local lift plots $ L:V\rightarrow X $ and $ L':V\rightarrow X $ of 
		$ P $ and $ Q $ along $ f $, respectively, defined on an open neighborhood $ V\subseteq U $ of $ r_0 $ such that $ L(r_0)=x_0=L'(r_0) $. Then, 
		\begin{center}
			$ g\circ L=h\circ f\circ L =h\circ P|_V=h\circ Q|_V=h\circ f\circ L'=g\circ L' $.
		\end{center}
		Since $ g $ is a diffeological injection, this follows that $ L|_W=L'|_W $ and so $ P|_W=Q|_W $ on some open neighborhood $ W\subseteq V $ of $ r_0 $.
		
		To prove $ (7) $, let $ P:U\rightarrow Z $ be a plot and $ (r_0,y_0) \in P^*Y $.
		Since $ f $ is surjective, there exists an element $ x_0\in X $ with $ f(x_0)=y_0 $ so that
		$ g(x_0)=P(r_0) $ or $ (r_0,x_0)\in P^*X $.
		Because $ g $ is a diffeological immersion, 
		there exist a D-open neighborhood $ O $ of $ (r_0,x_0) $ in $ P^*X $, an open neighborhood $ V\subseteq U $ of $ P^*g(O) $ and a smooth map $ \varrho:V\rightarrow U\times X $ such that $ \varrho\circ P^*g(r,x)=(r,x) $ for all $ (r,x)\in O $.
		Now consider $ \theta:P^*X\rightarrow P^*Y, ~~~(r,x)\mapsto(r,f(x)) $, which is a well-defined smooth map
		and  $ P^*g=P^*h\circ\theta $.
		It is not so hard to check that $ \theta $ is a diffeological submersion and so a D-open map.
		Thus, $ \theta(O) $ is a D-open neighborhood of $ (r_0,y_0) $ in $ P^*Y $.
		Consider also smooth map $ \overline{\theta}:U\times X\rightarrow U\times Y, ~~~ \overline{\theta}(r,x)\mapsto(r,f(x)) $, which is an extension of $ \theta $.
		Now if $ (r,y)\in \theta(O) $ with $ \theta(r,x)=(r,y) $, for some $ (r,x)\in O $, then
		\begin{center}
			$ (\overline{\theta}\circ\rho)\circ P^*h(r,y)=(\overline{\theta}\circ\rho)\circ P^*h\circ\theta(r,x)=(\overline{\theta}\circ\rho)\circ P^*g(r,x)=\overline{\theta}(r,x)=(r,y) $.
		\end{center}
	%	Hence $ h $ is a diffeological immersion.
		
		Part $ (8) $ is a result of $ (5) $ and $ (6) $.
	\end{proof}
	
	\begin{corollary}
		If $ f:X\rightarrow (Y,\mathcal{D}) $ is a diffeological submersion (respectively, diffeological injection, diffeological immersion, diffeological \'{e}tale map) and 
$ id:(Y,\mathcal{D}')\rightarrow (Y,\mathcal{D}) $	is smooth, then $ f:X\rightarrow (Y,\mathcal{D}') $ is a diffeological submersion (respectively, diffeological injection, diffeological immersion, diffeological \'{e}tale map).
	\end{corollary}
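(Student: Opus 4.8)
The plan is to obtain all four assertions at once as a single instance of Lemma \ref{lem-dinj-dsub}, by factoring $f$ through the identity on the set $Y$. The hypothesis that $\mathrm{id}\colon(Y,\mathcal{D}')\to(Y,\mathcal{D})$ is smooth says exactly that $\mathcal{D}'\subseteq\mathcal{D}$; since this map is also injective, the remark recorded just after the definition of diffeological injection (``any smooth injective map $\ldots$ is a diffeological injection'') shows that $\mathrm{id}$ is a diffeological injection. This single observation is what powers the corollary.

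Concretely, I would assemble the commutative triangle of smooth maps
$$
\xymatrix{
	X \ar[dr]_{f} \ar[rr]^{f} & & (Y,\mathcal{D}') \ar[dl]^{\mathrm{id}}\\
	& (Y,\mathcal{D}) &
}
$$
in which both edges emanating from $X$ are the same underlying set map $f$, the horizontal one regarded as a map into $(Y,\mathcal{D}')$ and the slanted one as its composite with $\mathrm{id}$ into $(Y,\mathcal{D})$. Matching this against the diagram of Lemma \ref{lem-dinj-dsub}, the map $X\to(Y,\mathcal{D}')$ plays the role of ``$f$'', the map $X\to(Y,\mathcal{D})$ plays the role of ``$g$'', and $\mathrm{id}$ plays the role of ``$h$''; commutativity is automatic because $\mathrm{id}$ fixes points.

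Each case is then read off from the corresponding part. If $f\colon X\to(Y,\mathcal{D})$ is a diffeological submersion, part (1) --- which requires $h=\mathrm{id}$ to be a diffeological injection, just verified --- gives that $f\colon X\to(Y,\mathcal{D}')$ is a diffeological submersion. For the diffeological injection and diffeological immersion cases one invokes parts (2) and (3), which impose no hypothesis on $h$ at all. Finally the \'etale case is part (4), again using that $\mathrm{id}$ is a diffeological injection.

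The step that genuinely deserves care --- rather than the diagram-chase, which is purely formal --- is the tacit assumption, built into the very notation ``$f\colon X\to(Y,\mathcal{D}')$'', that $f$ is smooth as a map into the finer space $(Y,\mathcal{D}')$. This is precisely what makes the triangle a diagram of \emph{smooth} maps, and it is really needed: it does not follow from $f\colon X\to(Y,\mathcal{D})$ being a submersion together with $\mathcal{D}'\subseteq\mathcal{D}$ (for example $\mathrm{id}\colon\mathbb{R}\to\mathbb{R}$ is a diffeomorphism onto the standard line yet is not even smooth into $\mathbb{R}$ equipped with the discrete diffeology). Once that smoothness is granted, the remaining content is exactly Lemma \ref{lem-dinj-dsub}.
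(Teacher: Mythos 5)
Your proof is correct and is essentially the paper's own (implicit) argument: the corollary is placed as an immediate consequence of Lemma \ref{lem-dinj-dsub}, applied to the commutative triangle $X\rightarrow (Y,\mathcal{D}')\xrightarrow{\mathrm{id}}(Y,\mathcal{D})$, where $\mathrm{id}$ is a diffeological injection because it is smooth and injective, and the four cases are read off from parts (1)--(4) exactly as you do. Your closing observation---that smoothness of $f$ into the finer diffeology $\mathcal{D}'$ is a tacit hypothesis (needed for the lemma's ``commutative diagram of smooth maps'' and not implied by the other assumptions, as your discrete-diffeology example shows)---is an accurate clarification of the statement rather than a gap in your argument.
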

	
	We denote by $ \mathsf{Etale}(X) $ the collection of diffeological \'{e}tale spaces on a diffeological space $ X $  as a full subcategory of the comma category $ \mathsf{Diff}/ X $, or equivalently, that of comma sheaves (see \cite{DA}).
	That is, a morphism $ \phi $ between diffeological \'{e}tale spaces  $ \mathcal{E} $  and $\mathcal{E}'  $ on  $ X $ is a commutative diagram
	$$
	\xymatrix{
		\mathcal{E} \ar[dr]_{\pi} \ar[rr]^{\phi} & & \mathcal{E}' \ar[dl]^{\pi'}\\
		& X  & 
	}
	$$
	of smooth maps,
	or equivalently, a smooth stalk preserving map $ \phi:\mathcal{E}\rightarrow \mathcal{E}'  $. 
	
	As a consequence of Lemma \ref{lem-dinj-dsub}(4) and the fact that diffeological \'{e}tale maps are D-open, we can now rephrase the following proposition:
	\begin{proposition}
		A morphism $ \phi:\mathcal{E}\rightarrow \mathcal{E}'  $ of diffeological \'{e}tale maps is a diffeological \'{e}tale map itself.
		In particular, for a diffeological \'{e}tale map $ \pi:\mathcal{E}\rightarrow X $ and a subspace $ \mathcal{E}'\subseteq \mathcal{E} $,
		the restriction $ \pi|_{\mathcal{E}'}:\mathcal{E}'\rightarrow X $ is a diffeological \'{e}tale map if and only if $ \mathcal{E}' $ is a D-open subspace of $ \mathcal{E} $.
	\end{proposition}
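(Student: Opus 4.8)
The plan is to read off the first assertion from Lemma \ref{lem-dinj-dsub}(4). A morphism $\phi:\mathcal{E}\to\mathcal{E}'$ of diffeological \'etale maps over $X$ means precisely that $\pi'\circ\phi=\pi$, where $\pi:\mathcal{E}\to X$ and $\pi':\mathcal{E}'\to X$ are the structure maps; I would set this up as the triangle with $\pi$ in the down-left slot, $\pi'$ in the down-right slot, and $\phi$ as the horizontal arrow. Here $\pi$ is a diffeological \'etale map by hypothesis, and $\pi'$, being a diffeological \'etale map, is in particular a diffeological injection (either directly from the characterization in Theorem \ref{the-et-chr}, or because a diffeological \'etale map is a diffeological immersion and hence a diffeological injection by Proposition \ref{pro-imm-inj}). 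Lemma \ref{lem-dinj-dsub}(4) then yields at once that $\phi$ is a diffeological \'etale map.

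For the ``in particular'' clause I would argue the two implications separately. Suppose first that $\mathcal{E}'$ is a D-open subspace of $\mathcal{E}$, and fix an arbitrary plot $P:U\to X$; I must show that $P^{*}(\pi|_{\mathcal{E}'})$ is \'etale. The key observation is that, inside $P^{*}\mathcal{E}$, one has $P^{*}\mathcal{E}'=P_{\#}^{-1}(\mathcal{E}')$, where $P_{\#}:P^{*}\mathcal{E}\to\mathcal{E}$ is the canonical (smooth, hence D-continuous) map; consequently $P^{*}\mathcal{E}'$ is a D-open subspace of $P^{*}\mathcal{E}$ precisely because $\mathcal{E}'$ is D-open in $\mathcal{E}$. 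Since $P^{*}(\pi|_{\mathcal{E}'})$ is literally the restriction of $P^{*}\pi$ to this D-open subspace, and restricting an \'etale map to a D-open subspace preserves \'etaleness (it factors as the D-open inclusion of Example \ref{exa-op-et} followed by $P^{*}\pi$), I conclude that $P^{*}(\pi|_{\mathcal{E}'})$ is \'etale, as required.

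Conversely, suppose that $\pi|_{\mathcal{E}'}:\mathcal{E}'\to X$ is a diffeological \'etale map. Then the inclusion $\iota:\mathcal{E}'\hookrightarrow\mathcal{E}$ satisfies $\pi\circ\iota=\pi|_{\mathcal{E}'}$, so it is a morphism between the diffeological \'etale maps $\pi|_{\mathcal{E}'}$ and $\pi$ over $X$. By the first part of the proposition, already established above, $\iota$ is itself a diffeological \'etale map, and therefore a D-open map by Corollary \ref{cor-tang}(iv). Applying D-openness to the whole space $\mathcal{E}'$ shows that $\iota(\mathcal{E}')=\mathcal{E}'$ is D-open in $\mathcal{E}$, which closes the equivalence.

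The only point requiring care is the forward (D-open $\Rightarrow$ \'etale) implication: one must verify that forming the pullback commutes with passing to a D-open subspace and that D-openness survives the pullback, and this is exactly what the identification $P^{*}\mathcal{E}'=P_{\#}^{-1}(\mathcal{E}')$ secures. Everything else is a formal application of Lemma \ref{lem-dinj-dsub}(4), the fact that diffeological \'etale maps are diffeological injections, and the D-openness of diffeological \'etale maps.
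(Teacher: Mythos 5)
Your proof is correct and follows essentially the same route as the paper, which derives the proposition from Lemma \ref{lem-dinj-dsub}(4) together with the D-openness of diffeological \'etale maps (Corollary \ref{cor-tang}(iv)): the morphism claim and the converse direction of the ``in particular'' clause are argued exactly as you do. For the forward direction the paper implicitly relies on the D-open inclusion $\iota:\mathcal{E}'\hookrightarrow\mathcal{E}$ being a diffeological \'etale map composed with $\pi$, whereas you verify it by hand via the identification $P^{*}\mathcal{E}'=P_{\#}^{-1}(\mathcal{E}')$; both are sound, and your explicit check is a legitimate filling-in of the same idea.
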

	
	Diffeological \'{e}tale maps are stable under compositions and pullbacks by smooth maps.
%	\begin{proposition}\label{pro-com-pb-et}
%		Let $ \pi:\mathcal{E}\rightarrow X $ be a diffeological \'{e}tale map. 
%		\begin{enumerate}
%			\item[$ (i) $] 
%			If  $ f:\mathcal{E}'\rightarrow \mathcal{E} $ be  a diffeological \'{e}tale map, then the composition $ \pi\circ f $
%			is a diffeological \'{e}tale map. 
%			
%			\item[$ (ii) $] 
%			If $ f:X'\rightarrow X $ be a smooth map, then the pullback $ f^*\pi $
%			is a diffeological \'{e}tale map.
%			
%		\end{enumerate}
%		
%	\end{proposition}
	\begin{proposition}\label{pro-com-pb-et}

	\begin{enumerate}
		\item[ (i) ] A map  $ \pi:\mathcal{E}\rightarrow X $ is a diffeological \'{e}tale map if and only if
		 the composition $ \pi\circ f $
		is a diffeological \'{e}tale map, for all diffeological \'{e}tale maps $ f $ into $\mathcal{E} $. 
		
		\item[ (ii) ] A map  $ \pi:\mathcal{E}\rightarrow X $ is a diffeological \'{e}tale map if and only if
		the pullback $ f^*\pi $
		is a diffeological \'{e}tale map, for all smooth maps $ f $ into $ X $.

	\end{enumerate}
	
\end{proposition}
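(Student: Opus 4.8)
The plan is to treat each biconditional as a pair consisting of a forward ``stability'' implication and a backward implication obtained merely by specializing $f$ to an identity map; the two converses are essentially free, so the content lies entirely in the forward directions, both of which reduce to tools already in hand.

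For the forward direction of (i), I would invoke the characterization in Theorem \ref{the-et-chr}(e): a diffeological \'etale map is precisely a map that is simultaneously a diffeological injection and a diffeological submersion. Assuming $\pi:\mathcal{E}\to X$ and $f$ (a diffeological \'etale map into $\mathcal{E}$) are both of this form, I would use that diffeological submersions are closed under composition and that diffeological injections are closed under composition (both recorded earlier) to conclude that $\pi\circ f$ is again both a diffeological submersion and a diffeological injection, hence a diffeological \'etale map by Theorem \ref{the-et-chr}(e). For the forward direction of (ii), I would argue directly from Definition \ref{def-deta} using the transitivity of pullbacks: for any plot $P:U\to Z$ in the source $Z$ of a smooth map $f:Z\to X$, the pullback $P^*(f^*\pi)$ is equivalent in $\mathrm{Mor}(\mathsf{Diff})$ to $(f\circ P)^*\pi$. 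Since $f\circ P$ is a plot in $X$ and $\pi$ is a diffeological \'etale map, $(f\circ P)^*\pi$ is \'etale; as \'etaleness is preserved under the diffeomorphisms witnessing this equivalence, $P^*(f^*\pi)$ is \'etale for every plot $P$, which is exactly the definition of $f^*\pi$ being a diffeological \'etale map.

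For the converses, the key observation is that identities are diffeological \'etale. In (i), taking $f=\mathrm{id}_{\mathcal{E}}$ --- a diffeomorphism, hence \'etale, hence diffeological \'etale --- the hypothesis yields that $\pi=\pi\circ\mathrm{id}_{\mathcal{E}}$ is a diffeological \'etale map. In (ii), taking $f=\mathrm{id}_X$, the pullback $\mathrm{id}_X^*\pi$ is equivalent to $\pi$ itself in $\mathrm{Mor}(\mathsf{Diff})$; concretely, the map $\alpha:e\mapsto(\pi(e),e)$ is a diffeomorphism from $\mathcal{E}$ onto the graph $\mathrm{id}_X^*\mathcal{E}$ with $\mathrm{id}_X^*\pi\circ\alpha=\pi$, so that $\pi$ is the composite of the diffeological \'etale map $\mathrm{id}_X^*\pi$ with the diffeomorphism $\alpha$ and is therefore diffeological \'etale by part (i).

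I expect no serious obstacle here; the only point requiring care is the bookkeeping in the forward direction of (ii), namely correctly applying the transitivity identity $P^*(f^*\pi)\cong(f\circ P)^*\pi$ and checking that the property ``\'etale'' transfers across the resulting equivalence in $\mathrm{Mor}(\mathsf{Diff})$ --- which is immediate, since \'etale maps are local diffeomorphisms and this notion is visibly invariant under composition with diffeomorphisms of source and target.
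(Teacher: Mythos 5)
Your proof is correct. The paper itself offers no argument for this proposition (its proof reads ``The proof is straightforward.''), and your proposal is exactly the natural filling-in using the paper's own toolkit: for (i), Theorem \ref{the-et-chr}(e) together with the recorded closure of diffeological submersions and of diffeological injections under composition; for (ii), transitivity of pullbacks ($P^*(f^*\pi)\cong(f\circ P)^*\pi$ in $\mathrm{Mor}(\mathsf{Diff})$) plus invariance of \'etaleness under composition with diffeomorphisms; and for both converses, specialization of $f$ to an identity map, with $\mathrm{id}_X^*\pi\cong\pi$ handled correctly via the graph diffeomorphism.
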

	\begin{proof}
		The proof is straightforward.
	\end{proof}
	Since 
	$ \mathsf{Etale}(X) $
	is a category with a terminal object $ \mathrm{id}_X $ and
	with all pullbacks, it has all finite limits.
	In particular,
	the fiber product $ \mathcal{E}\times_X\mathcal{E}'  $ of two diffeological \'{e}tale spaces $ \mathcal{E} $ and $\mathcal{E}'  $ on a diffeological space $ X $ is a diffeological \'{e}tale space on $ X $.
	\begin{proposition}
		Let $ \pi:\mathcal{E}\rightarrow X $ be a smooth map and let $ f:X'\rightarrow X $ be a surjective diffeological submersion.
		Then, $ \pi $ is a diffeological \'{e}tale map if and only if the pullback of $ \pi $ by $ f $ is a diffeological \'{e}tale map.
	\end{proposition}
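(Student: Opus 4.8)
The forward implication needs only that $f$ be smooth: it is precisely Proposition~\ref{pro-com-pb-et}(ii), which states that $\pi$ is a diffeological \'etale map if and only if $f^*\pi$ is one for \emph{every} smooth map $f$ into $X$. So the plan is to devote the work to the converse, where the hypotheses that $f$ be surjective and a diffeological submersion are the ones actually used. For the converse it is worth noting that the naive triangle argument through Lemma~\ref{lem-dinj-dsub} does not close: writing $g=\pi\circ f_{\#}=f\circ f^*\pi$, the map $f_{\#}$ (the pullback of $f$ by $\pi$) is a surjective diffeological submersion and, since $f^*\pi$ is assumed \'etale and hence a submersion, $g$ is a composition of diffeological submersions; thus Lemma~\ref{lem-dinj-dsub}(5) already yields that $\pi$ is a diffeological submersion. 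However $g$ need not be a diffeological injection (as $f$ need not be injective), so Lemma~\ref{lem-dinj-dsub}(6) is unavailable and the uniqueness part must be handled directly.

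For the converse I would therefore verify the lifting characterization~(g) of Theorem~\ref{the-et-chr} directly for $\pi$, using the pullback maps $f_{\#}:f^*\mathcal{E}\to\mathcal{E}$ and $f^*\pi:f^*\mathcal{E}\to X'$ together with the identity $\pi\circ f_{\#}=f\circ f^*\pi$. Fix $\xi\in\mathcal{E}$, a plot $P:U\to X$, and $r_0\in U$ with $P(r_0)=\pi(\xi)$. Since $f$ is surjective, pick $x_0'\in X'$ with $f(x_0')=\pi(\xi)$; since $f$ is a diffeological submersion, its lifting property furnishes a local lift $\tilde P:V\to X'$ of $P$ along $f$ with $\tilde P(r_0)=x_0'$. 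Then $(x_0',\xi)\in f^*\mathcal{E}$ and $f^*\pi(x_0',\xi)=x_0'=\tilde P(r_0)$, so the assumption that $f^*\pi$ is \'etale, read through~(g), supplies a \emph{unique} local lift $\tilde L:V'\to f^*\mathcal{E}$ of $\tilde P$ along $f^*\pi$ with $\tilde L(r_0)=(x_0',\xi)$. Setting $L:=f_{\#}\circ\tilde L$ gives $\pi\circ L=f\circ f^*\pi\circ\tilde L=f\circ\tilde P=P|_{V'}$ and $L(r_0)=f_{\#}(x_0',\xi)=\xi$, which is the required existence.

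The step I expect to be the crux is \emph{uniqueness} of the lift for $\pi$, and the device is to send any competing lift back up along the \emph{same} auxiliary $\tilde P$. Given another local lift $L':V''\to\mathcal{E}$ with $\pi\circ L'=P$ and $L'(r_0)=\xi$, the parametrization $r\mapsto(\tilde P(r),L'(r))$ takes values in $f^*\mathcal{E}$ (because $f\circ\tilde P=P=\pi\circ L'$ on the overlap), hence is a plot there; it is a local lift of $\tilde P$ along $f^*\pi$ and sends $r_0$ to $(x_0',\xi)$. The uniqueness clause of~(g) for $f^*\pi$ then forces $(\tilde P,L')=\tilde L$ on a neighborhood of $r_0$, so that $L'=f_{\#}\circ(\tilde P,L')=f_{\#}\circ\tilde L=L$ there. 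Thus $\pi$ admits exactly one local lift with the prescribed value, i.e.\ $\pi$ satisfies~(g) and is a diffeological \'etale map. The only points requiring care are fixing a single $\tilde P$ for both halves of the argument and checking the compatibility $f\circ\tilde P=\pi\circ L'$ that lands $(\tilde P,L')$ in $f^*\mathcal{E}$; the rest is bookkeeping with $\pi\circ f_{\#}=f\circ f^*\pi$ and the fact that $f^*\pi$ is the first-factor projection.
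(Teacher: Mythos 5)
Your proposal is correct, and its existence half is the paper's argument almost verbatim: surjectivity of $f$ provides a basepoint $x_0'$ over $\pi(\xi)$, the submersion property of $f$ lifts $P$ to a plot $\tilde P$ in $X'$ with $\tilde P(r_0)=x_0'$, the \'etale property of $f^*\pi$ lifts $\tilde P$ into $f^*\mathcal{E}$, and composing with $f_{\#}$ produces the required lift of $P$ along $\pi$. Where you diverge is the second half. You target characterization (g) of Theorem \ref{the-et-chr} and prove uniqueness of lifts by pairing a competing lift $L'$ with the \emph{same} auxiliary $\tilde P$ to obtain a plot $(\tilde P,L')$ in $f^*\mathcal{E}$, then invoking uniqueness of lifts for $f^*\pi$. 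The paper instead targets characterization (e) and proves that $\pi$ is a diffeological injection: given any two plots $P,Q$ in $\mathcal{E}$ agreeing at $r_0$ with $\pi\circ P=\pi\circ Q$ near $r_0$, it lifts $\pi\circ P$ along $f$ (only the subduction property is used here, with no basepoint control), forms the pair plots $(L,P|_V)$ and $(L,Q|_V)$ in $f^*\mathcal{E}$, and uses that $f^*\pi$ is a diffeological injection. Both devices are the same trick --- pair plots in $\mathcal{E}$ with a lift along $f$ and exploit injectivity upstairs --- but the paper's version establishes the stronger diffeological injection property (against arbitrary pairs of plots of $\mathcal{E}$, not merely lifts of a common plot in $X$), while yours is slightly leaner because $\tilde P$ is already in hand from the existence step. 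Your preliminary observation about the triangle route through Lemma \ref{lem-dinj-dsub} is also accurate and is a point the paper does not make: part (5) applied to $\pi\circ f_{\#}=f\circ f^*\pi$ does recover the submersion half, but part (6) is unavailable because $f\circ f^*\pi$ need not be a diffeological injection when $f$ is not injective, which is precisely why the uniqueness half must be argued by hand.
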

	\begin{proof}
		Suppose that $ f^*\pi $ is a diffeological \'{e}tale map. To show that $ \pi $ is a diffeological \'{e}tale map, 
		let $ P:U\rightarrow X $ be a plot in $ X $, $ \xi\in \mathcal{E}, r_0\in U $ be such that $ \pi(\xi)=P(r_0) $.
		But $ f $ is surjective, so we can find $ x_0\in X' $ with $ f(x_0)=P(r_0) $, and since $ f $ is a diffeological submersion, there exists a 
		local lift plot $ L:V\rightarrow X' $ satisfying $ f\circ L=P|_V $ and  $ L(r_0)=x_0 $, where $ V\subseteq U $ is an open neighborhood of $ r_0 $. But $ (x_0,\xi)\in f^*\mathcal{E} $ and $ f^*\pi(x_0,\xi)=L(r_0) $.
		Thus, there are an open neighborhood $ W\subseteq V $ of $ r_0 $ and a
		local lift plot $ L':W\rightarrow f^*\mathcal{E} $ such that $ f^*\pi\circ L'=L|_W $ and  $ L'(r_0)=(x_0,\xi) $.
		Then $ f_{\#}\circ L' $ is a local lift plot $ P $ along $ \pi $ with $ f_{\#}\circ L'(r_0)=\xi $.
		To complete the proof, we need to verify that $ \pi $ is a diffeological injection.
		Assume that $ P $ and $ Q $ are two $ n $-plots in $ \mathcal{E} $, $ r_0\in \mathrm{dom}(P)\cap\mathrm{dom}(Q) $, $ P(r_0)=Q(r_0) $
		and there exists an open neighborhood $ U $ of $ r_0 $ in $\mathrm{dom}(P)\cap\mathrm{dom}(Q) $ such that	$ \pi\circ P|_U=\pi\circ Q|_U $.
		Because  $ f $ is subduction, there is a plot $ L $ defined on an open neighborhood $ V\subseteq U $ of $ r_0 $ such that locally lifts $ \pi\circ P|_U=\pi\circ Q|_U $ along $ f $. Now $ (L,P|_V) $ and $ (L,Q|_V) $ are 
		two $ n $-plots in $ f^*\mathcal{E} $ with $ (L,P|_V)(r_0)=(L,Q|_V)(r_0) $ and $ f^*\pi\circ(L,P|_V)=f^*\pi\circ(L,Q|_V) $.
		Then $ (L_W,P|_W)=(L|_W,Q|_W) $ and so $ P|_W=Q|_W $ on some open neighborhood $ W\subseteq V $ of $ r_0 $.
		Therefore, $ \pi $ is a diffeological \'{e}tale map. The converse follows from Proposition \ref{pro-com-pb-et}$ (ii) $.
	\end{proof}
	
		\subsection{Diffeological \'{e}tale spaces on an orbifold}
		
	The aim of this section is to investigate diffeological \'{e}tale spaces on a diffeological orbifold. 
	Let us begin by recalling the definition of diffeological orbifolds from \cite{IKZ}.
	\begin{definition}\label{def-ord}  
		A \textbf{diffeological $ n $-orbifold} is a diffeological space $ \mathcal{O} $ such that each point $ x\in \mathcal{O} $ has a D-open neighborhood $ x\in \widetilde{U}\subseteq \mathcal{O} $ for which there is a diffeomorphism $ \widetilde{\varphi}: U/G\rightarrow \widetilde{U} $, where  $ U/G $ is the quotient
		of some action of a finite subgroup $ G\subseteq\mathrm{GL}(n,\mathbb{R}) $
		on a connected $ G $-invariant $ n $-domain $ U $.
		%In this situation, $\mathrm{dim}(\mathcal{O})=n$.
	\end{definition} 
	
	In practice,  we can work with $ G $-invariant diffeological submersions $ \varphi:U\rightarrow \mathcal{O} $ defined on connected $ G $-invariant $ n $-domains, meaning that, for all $ x,y\in U $,
	\begin{center}
		$ \varphi(x)=\varphi(y)\qquad$ if and only if $ \qquad Gx=Gy $.
	\end{center}
	In fact,  the composition $ \varphi=\widetilde{\varphi}\circ \pi $ is a $ G $-invariant diffeological submersion from a connected domain onto a D-open set, where $ \pi:U\rightarrow U/G $ denotes the quotient map.

	On the other hand, by Proposition \ref{pro-subd-smd}, a $ G $-invariant diffeological submersion $ \varphi:U\rightarrow \mathcal{O} $ from a connected domain induces  a diffeomorphism $ \widetilde{\varphi}: U/G\rightarrow \varphi(U) $
	onto a D-open subset $ \varphi(U) $ of $ \mathcal{O} $,  making the following diagram commutative:
	\begin{displaymath}
		\xymatrix{
			& \mathcal{O} \\
			U \ar[ur]^{\varphi}\ar[r]_{\pi} & U/G\ar[u]_{\widetilde{\varphi}}  \\
		}
	\end{displaymath}
	We call $ (U, G, \varphi) $ an \textbf{orbifold chart}. 
	The collection  of these orbifold charts constitutes a covering generating family of $ \mathcal{O} $.
	In this situation, there is no need to verify the compatibility of orbifold charts. Indeed, if $ (U, G, \varphi) $ and $ (V, H, \psi) $ are two orbifold charts at $ x\in \mathcal{O} $ with $ \varphi(r)=x=\psi(s) $, then by \cite[Lemmas 17 and 23]{IKZ}, we can find an equivariant \'etale map $ h:U'\rightarrow V $ between domains such that $ h(r)=s $ and the following diagram commutes:
	\begin{displaymath}
		\xymatrix{
			& \mathcal{O} & \\
			U' \ar[ur]^{\varphi|_{U'}}\ar[rr]_{h} & & V\ar[ul]_{\psi}  \\
		}
	\end{displaymath}
	In other words, a diffeological $ n $-orbifold is constructed by gluing together $ n $-domains along \textit{equivariant} \'etale maps, unique up to an action of the finite linear groups.
	
		Notice that the internal tangent map of an orbifold chart is not an isomorphism in general. 
		For instance, consider the quotient map  $ q:\mathbb{R}\rightarrow \mathbb{R}/O(1) $ discussed in Example \ref{exa-inj-dis} as an orbifold chart.
		In fact by the computation of \cite[Example 3.24]{CW}, we get
			\begin{center}
					$ 1={\rm{dim}}(T_0\mathbb{R})\neq {\rm{dim}}(T_{[0]}\mathbb{R}/O(1))=0 $.
				\end{center}
		As a corollary, an orbifold chart is not necessarily a diffeological \'{e}tale map.
	%	an orbifold chart is not necessarily a diffeological \'{e}tale map. Furthermore, this example shows that 
	%As example,
	%	the internal tangent map of the quotient map  $ q:\mathbb{R}\rightarrow \mathbb{R}/O(1),~~~ x\mapsto\{-x,x\} $ is not an isomorphism.
	
	Now we state  our main theorem: 
	
	\begin{theorem}\label{the-orb}
		A diffeological \'{e}tale space on a diffeological orbifold  $ \mathcal{O} $ is a  diffeological orbifold.
	\end{theorem}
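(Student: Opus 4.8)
The plan is to produce, around an arbitrary point $\xi \in \mathcal{E}$, an orbifold chart of the form required by Definition \ref{def-ord}: a $G$-invariant diffeological submersion from a connected $G$-invariant $n$-domain onto a D-open neighborhood of $\xi$. Set $x = \pi(\xi)$. First I would invoke the local structure of diffeological orbifolds (the linearization of charts by the isotropy group, \cite{IKZ}) to choose an orbifold chart $(U, G, \varphi)$ at $x$ whose structure group $G \subseteq \mathrm{GL}(n,\mathbb{R})$ is the finite isotropy group of a preimage of $x$, taken to be the origin, so that $G$ acts linearly with $\varphi(0) = x$ and $g\cdot 0 = 0$ for all $g \in G$. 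Since $\varphi$ is a plot in $\mathcal{O}$, Definition \ref{def-deta} gives that the pullback $\varphi^{*}\pi : \varphi^{*}\mathcal{E} \to U$ is \'etale; as $U$ is a domain, this means $\varphi^{*}\pi$ is a local diffeomorphism onto open subsets of $U$. Choosing a D-open neighborhood $O$ of $(0,\xi) \in \varphi^{*}\mathcal{E}$ on which $\varphi^{*}\pi$ restricts to a diffeomorphism onto an open ball $V \ni 0$, I let $s : V \to O \subseteq \varphi^{*}\mathcal{E}$ be the inverse section and define $\psi := \varphi_{\#}\circ s : V \to \mathcal{E}$. Then $\psi(0) = \xi$ and $\pi \circ \psi = \varphi\circ \varphi^{*}\pi \circ s = \varphi|_{V}$, so $\psi$ is a local lift of $\varphi|_{V}$ along $\pi$.

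Next I would check that $\psi$ is a $G$-invariant diffeological submersion onto a D-open set. For the submersion part: $s$ is a diffeomorphism onto the D-open subspace $O$ of $\varphi^{*}\mathcal{E}$, hence \'etale, hence a submersion by Theorem \ref{the-che} and in particular a diffeological submersion; and $\varphi_{\#}$, being the pullback of the diffeological submersion $\varphi$ along $\pi$, is a diffeological submersion. As diffeological submersions are closed under composition, $\psi$ is a diffeological submersion, hence a D-open map, so $\psi(V)$ is D-open in $\mathcal{E}$. The $G$-invariance is the heart of the argument and rests on the uniqueness of \'etale lifts. For each $g \in G$, the map $\psi\circ g$ is defined near $0$ and satisfies $\pi\circ(\psi\circ g) = (\pi\circ\psi)\circ g = \varphi\circ g = \varphi$ (by $G$-invariance of $\varphi$) together with $(\psi\circ g)(0) = \psi(0) = \xi$; thus $\psi$ and $\psi\circ g$ are two local lifts of $\varphi$ along $\pi$ agreeing at $0$, and Theorem \ref{the-et-chr}(g) forces $\psi\circ g = \psi$ on a neighborhood of $0$. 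Since $G$ is finite and acts linearly fixing $0$, I can shrink $V$ to a connected $G$-invariant ball $W$ on which $\psi\circ g = \psi$ for all $g \in G$; restricting to the D-open $W$ keeps $\psi$ a diffeological submersion onto a D-open set. Finally, if $\psi(u) = \psi(u')$ then $\varphi(u) = \pi(\psi(u)) = \pi(\psi(u')) = \varphi(u')$, whence $Gu = Gu'$ because $\varphi$ is $G$-invariant; combined with the previous step this yields $\psi(u) = \psi(u')$ if and only if $Gu = Gu'$.

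It then follows that $(W, G, \psi)$ is an orbifold chart at $\xi$: $\psi$ is a $G$-invariant diffeological submersion from a connected $G$-invariant $n$-domain onto the D-open neighborhood $\psi(W)$ of $\xi$, and by Proposition \ref{pro-subd-smd} it induces a diffeomorphism $W/G \to \psi(W)$. As $\xi \in \mathcal{E}$ was arbitrary, $\mathcal{E}$ is a diffeological $n$-orbifold. I expect the main obstacle to be the very first step: justifying the reduction to a linearized chart by the isotropy group so that the center $0$ is a common fixed point of $G$. This is exactly what makes the uniqueness-of-lifts argument yield invariance under the \emph{full} group $G$, since for an element $g$ not fixing the center the lifts $\psi\circ g$ and $\psi$ would take different values there and Theorem \ref{the-et-chr}(g) would not apply. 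By comparison, the verification that $\psi$ is a diffeological submersion and the bookkeeping of D-open neighborhoods inside the pullback are routine once the \'etale local-diffeomorphism picture for $\varphi^{*}\pi$ is in place.
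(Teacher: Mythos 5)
Your proof is correct, and its engine is the same as the paper's: produce a local lift of an orbifold chart along $\pi$ using the submersion half of the diffeological \'etale property, force invariance of that lift under the structure group via uniqueness of lifts (Theorem \ref{the-et-chr}(g), equivalently diffeological injectivity), and then recognize the lift as a $G$-invariant diffeological submersion from a connected invariant domain onto a D-open set, which is an orbifold chart by the discussion around Proposition \ref{pro-subd-smd}. The one genuine difference is how the group-theoretic reduction is handled. You linearize at the outset, citing \cite{IKZ} to replace the given chart by one centered at a point fixed by the whole structure group $G$ (the isotropy group acting linearly); uniqueness of lifts then applies to every $g\in G$, and your chart $(W,G,\psi)$ for $\mathcal{E}$ carries the full group. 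The paper instead keeps an arbitrary chart point $r_0$ and shrinks to a $G$-stable neighborhood $W$ with $G_W=G_{r_0}$ (Schmeding's Lemma B.1.3, \cite{Sch}); injectivity then yields invariance only under the stabilizer, which becomes the structure group of the chart $(W,G_{r_0},L|_W)$. These two reductions encode the same fact about finite linear actions (indeed, since $G\subseteq\mathrm{GL}(n,\mathbb{R})$ acts linearly and $G_{r_0}$ fixes $r_0$, a translation already linearizes the stabilizer action, so your "main obstacle" is no harder than the paper's cited lemma), but the paper's version is slightly lighter in that it never needs the chart recentered, while yours buys equivariance under the whole group in one stroke. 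Your remaining variations --- building the lift as $\varphi_{\#}\circ s$ from a local section $s$ of the \'etale pullback $\varphi^{*}\pi$ rather than quoting the unique local lift plot directly, and verifying the submersion property by stability of diffeological submersions under pullback and composition rather than by Lemma \ref{lem-dinj-dsub}(1) --- are equivalent bookkeeping.
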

	\begin{proof}
		Let $ \mathcal{E} $ be a diffeological \'{e}tale space on $ \mathcal{O} $ through a diffeological \'{e}tale map $ \pi:\mathcal{E}\rightarrow \mathcal{O}$.
		%
	%	Assume that $ \mathcal{O} $ is a diffeological orbifold and $ \pi:\mathcal{E}\rightarrow \mathcal{O}$ is diffeological \'{e}tale map.   
		Let $ \xi\in\mathcal{E} $ and $ (U,G,\varphi) $ be an orbifold chart around $ \pi(\xi) $ with $ \varphi(r_0)= \pi(\xi) $, for some $ r_0\in U $.
		So there exists an open neighborhood $ V\subseteq U $ of $ r_0 $ and a  unique local lift plot $ L:V\rightarrow\mathcal{E} $  such that $ \pi\circ L=\varphi|_V $ and $ L(r_0)=\xi $. 
		Now we construct an orbifold chart for  $ \mathcal{E} $ around $ \xi $ from $ L $.
		According to \cite[Lemma B.1.3]{Sch}, one can find an open $ G $-stable
		neighborhood $ W\subseteq V $ of $ r_0 $ such that $ G_W=G_{r_0} $. Because $ \pi $ is diffeological injection, we have $ L\circ g|_{W}=L|_{W} $, for all $ g\in G_W $.
		Thus, for $ x,y\in W $ if $ G_Wx=G_Wy $, then $ L(x)=L(y) $.
		Conversely, for $ x,y\in W $ if $ L(x)=L(y) $, then $ \varphi(x)=\varphi(y) $ and so there is an element $ g\in G $ satisfying $ g(y)=x $. 
		But $ W $ is $ G $-stable and $ x\in g(W)\cap W\neq\varnothing $, so $ g(W)=W $ and  $ g\in G_W $.
		Hence $ L|_{W} $ is $ G_W $-invariant.
		In addition, by Lemma \ref{lem-dinj-dsub}(1), $ L|_{W}:W\rightarrow \mathcal{E} $ is a diffeological submersion.
		%, let $ P $ be a plot in $ \mathcal{E} $
		%and $ (s',s)\in P^*W $.
		%Since $ \varphi $ is a diffeological submersion, for $ (s',s)\in (\pi\circ P)^*\mathcal{E} $, there is a smooth map $ F:W'\rightarrow W $ defined on an open neighborhood of $ s' $ with $ \varphi\circ F=\pi\circ P|_{W'} $ and $ F(s')=s $.
		%	In other words,
		%	$ \pi\circ L|_{W} \circ F=\pi\circ P|_{W'}$ and $ L|_{W} \circ F(s')=P(s') $.
		%	Consequently, $  L|_{W} \circ F|_{W''}= P|_{W''}$ for some open neighborhood of $ s'\in W''\subseteq W' $.
		Actually, $ L|_{W}:W\rightarrow L(W) $ is a $ G $-invariant diffeological submersion from a connected domain onto a D-open set and
		$ (W,G_W,L|_{W}) $ is an orbifold chart for  $ \mathcal{E} $ at $ \xi $.
		%	But one  	can choose a $ G_W $-invariant open subset $ W $ such that $ r\in W\subseteq V $ and $ (W,G_W,\varphi|_{W}) $ is a	orbifold chart around $ \pi(\xi) $. In this situation, $ (W,G_W,L|_{W}) $ is a orbifold chart for  $ \mathcal{E} $ at $ \xi $.	Notice that the image of $ L|_{W} $ is a D-open subset of $ \mathcal{E} $.
		%This completes the proof.
	\end{proof}	
	
	\begin{corollary} 
		A diffeological covering space of a diffeological orbifold is a diffeological orbifold.
	\end{corollary}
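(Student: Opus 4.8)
The plan is to reduce the statement directly to Theorem \ref{the-orb}. By definition, a diffeological covering space of $\mathcal{O}$ is the total space $\mathcal{E}$ of a diffeological covering map $\pi:\mathcal{E}\rightarrow\mathcal{O}$, that is, a diffeological fiber bundle with discrete fibers. First I would recall the characterization established just above, namely that a map between diffeological spaces is a diffeological covering map if and only if it is both a diffeological \'{e}tale map and a diffeological fiber bundle. In particular, the covering map $\pi$ is a diffeological \'{e}tale map.

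Consequently, $\mathcal{E}$ together with $\pi$ is precisely a diffeological \'{e}tale space over the orbifold $\mathcal{O}$ in the sense of Definition \ref{def-deta}. Applying Theorem \ref{the-orb}, which asserts that any diffeological \'{e}tale space on a diffeological orbifold is itself a diffeological orbifold, yields at once that $\mathcal{E}$ is a diffeological orbifold. This completes the argument.

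I do not expect any genuine obstacle here: all the substantive work has already been carried out in the proof of Theorem \ref{the-orb}, where the orbifold chart $(W,G_W,L|_W)$ for $\mathcal{E}$ is constructed from the unique local lift $L$ of an orbifold chart of $\mathcal{O}$. The only point requiring care is the invocation of the proposition identifying diffeological covering maps as a subclass of diffeological \'{e}tale maps, so that the hypotheses of Theorem \ref{the-orb} are met; once this is in place the corollary is immediate. It is worth noting that the fiber-bundle structure (local triviality along plots) and the discreteness of fibers play no further role beyond guaranteeing that $\pi$ is \'{e}tale, so the covering hypothesis is in fact stronger than what the conclusion needs.
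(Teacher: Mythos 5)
Your proof is correct and is exactly the paper's intended argument: the corollary follows immediately from the proposition identifying diffeological covering maps as diffeological \'{e}tale maps that are also fiber bundles, combined with Theorem \ref{the-orb}. Your closing observation that the bundle structure and fiber discreteness are used only to guarantee \'{e}taleness is also accurate.
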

	
	\begin{remark} 
		Sheaves on an orbifold $ \mathcal{O} $ (in the sense of Moerdijk and Pronk \cite{MP}) are just topological version of diffeological \'{e}tale spaces on $ \mathcal{O} $.
		In fact, from a sheaf $ \mathcal{S} $ on $ \mathcal{O} $, we can patch together the sheaves $ \pi_{\widetilde{U}}:\mathcal{S}_{\widetilde{U}}\rightarrow\widetilde{U} $ given on the domains of orbifold charts along embeddings of orbifold charts (see \cite[p. 5]{MP}) so that we obtain a natural continuous map $ \pi:\bigsqcup_{(\widetilde{U},G,\varphi)}\mathcal{S}_{\widetilde{U}}/\varrho\longrightarrow \mathcal{O} $ taking a class $ [(\widetilde{U},G,\varphi,s)] $ to $ \varphi(\pi_{\widetilde{U}}(s)) $,
		where $ (\widetilde{U},G,\varphi,s)~~~ \varrho ~~~(\widetilde{V},H,\psi,s') $ if and only if there are
		embeddings of orbifold charts
		\begin{displaymath}
			\xymatrix{
				(\widetilde{U},G,\varphi)& (\tilde{W},K,\chi)\ar[l]_{\lambda}\ar[r]^{\mu}	& (\widetilde{V},H,\psi ) \\
				\mathcal{S}_{\widetilde{U}}\ar[u]^{\pi_{\widetilde{U}}} & \mathcal{S}_{\widetilde{W}}\ar[u]_{\pi_{\widetilde{W}}}\ar[l]^{\lambda_{\#}}\ar[r]_{\mu_{\#}} & \mathcal{S}_{\widetilde{V}}\ar[u]_{\pi_{\widetilde{V}}}  \\
			}
		\end{displaymath}
		and $ s''\in\mathcal{S}_{\widetilde{W}} $ such that $ \lambda_{\#}(s'')=s $ and $ \mu_{\#}(s'')=s' $.
		In this situation, the pullback of $ \pi $ by any orbifold chart is a usual sheaf. 
	\end{remark}
	
	\section{Diffeological \'{e}tale manifolds}\label{S6}
	This section is devoted to an investigation of diffeological \'{e}tale manifolds. After exploring their basic properties, we prove versions of the rank and implicit function theorems and also, the fundamental theorem on flows for diffeological \'{e}tale manifolds. 
	
	First of all, we give our motivating example.

	\begin{example}\label{exa-irt}
		The diffeological covering map $ \pi:\mathbb{R} \rightarrow \mathbb{T}_{\alpha} $, $ \alpha\notin\mathbb{Q}  $,   is a diffeological \'{e}tale map. However, it is not an \'{e}tale map;
		for otherwise, there would be an open subspace of $ \mathbb{R} $ with a trivial subspace topology which is impossible.
		Consequently, $ T_x\mathbb{T}_{\alpha}$ is isomorphic to $\mathbb{R} $ at any point $ x\in \mathbb{T}_{\alpha} $, a similar result to \cite[Example 3.23]{CW}.
		More generally, let $ \Gamma $ be a discrete subgroup generating $ \mathbb{R}^n $ and $ \mathbb{T}^n_{\Gamma}=\mathbb{R}^n/\Gamma $ be its quotient space. The quotient map $ \mathbb{R}^n\rightarrow\mathbb{T}^n_{\Gamma} $ is a  diffeological \'{e}tale map by \cite[Exercise 133]{PIZ}.
		As a result, $ \mathrm{dim}(\mathbb{T}^n_{\Gamma})=n $
		and at any point $ x\in \mathbb{T}^n_{\Gamma} $, we have $ T_x\mathbb{T}^n_{\Gamma}\cong \mathbb{R}^n $.
	\end{example}
We can generalize this situation and introduce diffeological \'{e}tale $ n $-manifolds.
	\begin{definition}\label{def-ord}  
		A diffeological space $ \mathcal{M} $ is said to be a \textbf{diffeological \'{e}tale $ n $-manifold} if there exists a parametrized cover $ \mathfrak{A} $, called an \textbf{atlas}, for $ \mathcal{M} $ consisting of diffeological \'{e}tale maps 
		from $ n $-domains into $ \mathcal{M} $.
		We call the elements of $ \mathfrak{A} $ \textbf{diffeological \'{e}tale charts}.
		In this situation, $ \mathfrak{A} $ is actually a covering generating family for $ \mathcal{M} $ and $ \mathrm{dim}(\mathcal{M})=n $.
	\end{definition} 

	\begin{example}
	Diffeological manifolds are diffeological \'{e}tale manifolds but not conversely.
	The irrational torus $ \mathbb{T}_{\alpha} $, where $ \alpha\notin\mathbb{Q}  $,  is a diffeological \'{e}tale $ 1 $-manifold which is not a diffeological manifold.
		More generally, $ \mathbb{T}^n_{\Gamma}=\mathbb{R}^n/\Gamma $ is a diffeological \'{e}tale $ n $-manifold.
	\end{example}

	\begin{example}
Any open subset of a diffeological \'{e}tale $ n $-manifold itself is  a diffeological \'{e}tale $ n $-manifold
in a natural way. 
	\end{example}

	\begin{example}(Product \'{e}tale manifolds).
	Suppose $ \mathcal{M}_1,\dots,\mathcal{M}_k $ are diffeological \'{e}tale manifolds of dimensions $ n_1,\dots,n_k $, respectively. The product space $ \mathcal{M}_1\times\cdots\times\mathcal{M}_k $
	is a diffeological \'{e}tale manifold of dimension $ n_1+\cdots+n_k $ with
	diffeological \'{e}tale charts of the form $ \varphi_1\times\cdots\times\varphi_k $, where $ \varphi_i $ is a diffeological \'{e}tale chart of  $ \mathcal{M}_i $, $ i=1,\dots,k $.
\end{example}
	\begin{proposition}
		A diffeological space $ \mathcal{M} $ is a diffeological \'{e}tale manifold if and only if there exists a parametrized cover $ \mathfrak{A}  $ of $ \mathcal{M} $
		such that the evaluation map $ ev:\mathsf{Nebula}(\mathfrak{A})\rightarrow \mathcal{M}, ~~~(P,r)\mapsto P(r) $ is a diffeological \'{e}tale  map.
	\end{proposition}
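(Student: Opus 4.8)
The plan is to run both implications off the single identity $ev\circ\iota_P=P$, where $\iota_P\colon\mathrm{dom}(P)\hookrightarrow\mathsf{Nebula}(\mathfrak{A})$ is the canonical injection of the $P$-summand, together with two facts already in hand: diffeological \'etale maps are stable under composition, and each such $\iota_P$ is an \'etale map (Example \ref{exa-op-et}), hence a diffeological \'etale map. I will also use that the images $\iota_P(\mathrm{dom}(P))$ form a D-open partition of the sum space, and that a map out of a sum space is smooth exactly when its restriction to each summand is.

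For the backward implication, assume $ev$ is a diffeological \'etale map. Then for every $P\in\mathfrak{A}$ the chart $P=ev\circ\iota_P$ is a composition of diffeological \'etale maps, hence a diffeological \'etale map from a domain into $\mathcal{M}$. Since $\mathfrak{A}$ is a parametrized cover, it is therefore an atlas of diffeological \'etale charts, so $\mathcal{M}$ is a diffeological \'etale manifold; the common chart dimension is pinned down by Corollary \ref{cor-tang}(i), which makes $dP_a\colon T_a\mathrm{dom}(P)\to T_{P(a)}\mathcal{M}$ an isomorphism and hence forces $\dim\mathrm{dom}(P)$ to agree with the local dimension of $\mathcal{M}$.

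For the forward implication, let $\mathfrak{A}$ be an atlas, so each $P\in\mathfrak{A}$ is a diffeological \'etale map from an $n$-domain; I would check characterization \ref{the-et-chr}(g) for $ev$. Smoothness of $ev$ is immediate from the summand criterion, since $ev\circ\iota_P=P$ is smooth. For the lifting property, fix $\xi=(P_0,a_0)\in\mathsf{Nebula}(\mathfrak{A})$, a plot $Q\colon U\to\mathcal{M}$, and $r_0\in U$ with $Q(r_0)=ev(\xi)=P_0(a_0)$. Applying \ref{the-et-chr}(g) to the diffeological \'etale chart $P_0$ produces a local lift $\ell\colon V\to\mathrm{dom}(P_0)$ of $Q$ along $P_0$, unique as a germ, with $\ell(r_0)=a_0$; then $L:=\iota_{P_0}\circ\ell$ satisfies $ev\circ L=P_0\circ\ell=Q|_V$ and $L(r_0)=\xi$, giving existence.

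The hard part will be the uniqueness clause, because it must hold for lifts into the \emph{whole} nebula, not merely into the single summand $\iota_{P_0}(\mathrm{dom}(P_0))$. Here I would argue that any competing lift $L'$ is D-continuous with $L'(r_0)=\xi\in\iota_{P_0}(\mathrm{dom}(P_0))$, and since this summand is D-open in the sum space, $L'$ carries a neighborhood of $r_0$ into it; on that neighborhood $\iota_{P_0}^{-1}\circ L'$ is a lift of $Q$ along $P_0$ fixing $a_0$, so the uniqueness in \ref{the-et-chr}(g) for $P_0$ forces it to share the germ of $\ell$, whence $L'$ shares the germ of $L$. This verifies \ref{the-et-chr}(g) for $ev$, completing the proof.
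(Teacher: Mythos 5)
Your argument is correct in all the places where the paper's own proof (recorded only as ``The proof is straightforward'') needs actual content, and it is surely the intended argument: smoothness of $ev$ via the summand criterion; existence of local lifts by applying Theorem \ref{the-et-chr}(g) to the chart $P_0$ and pushing the lift forward along $\iota_{P_0}$; and---the only delicate point---uniqueness of lifts into the \emph{whole} nebula, which you correctly reduce to uniqueness for $P_0$ by using that the summand $\iota_{P_0}(\mathrm{dom}(P_0))$ is D-open and that $\iota_{P_0}$ is an induction, so a competing lift lands in that summand near $r_0$ and descends to a lift along $P_0$. The main step of your backward direction (each $P=ev\circ\iota_P$ is diffeological \'etale, since $\iota_P$ is \'etale by Example \ref{exa-op-et} and diffeological \'etale maps are closed under composition) is also sound.

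There is, however, a genuine gap in the last sentence of your backward implication, where you assert that Corollary \ref{cor-tang}(i) ``pins down the common chart dimension.'' What that corollary yields is only the pointwise equality $\dim\mathrm{dom}(P)=\dim T_{P(a)}\mathcal{M}$, so two elements of $\mathfrak{A}$ are forced to have domains of equal dimension only when their images meet; nothing makes the dimension constant across all of $\mathcal{M}$, and the definition of a diffeological \'etale $n$-manifold requires every chart to be defined on an $n$-domain for one fixed $n$. Indeed, under that definition the backward implication fails as literally stated: take $\mathcal{M}=\mathbb{R}\bigsqcup\mathbb{R}^2$ and let $\mathfrak{A}$ consist of the two canonical injections. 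Then $\mathsf{Nebula}(\mathfrak{A})=\mathbb{R}\bigsqcup\mathbb{R}^2$ and $ev$ is a diffeomorphism, hence a diffeological \'etale map, yet $\mathcal{M}$ is not a diffeological \'etale $n$-manifold for any $n$, because $T_x\mathcal{M}$ has dimension $1$ at points of one component and $2$ at points of the other, and Corollary \ref{cor-tang}(i) then excludes both $n=1$ and $n=2$. So this step cannot be repaired without amending the statement itself: one must either require the elements of $\mathfrak{A}$ to be defined on domains of a single common dimension, or read ``diffeological \'etale manifold'' so as to permit components of different dimensions. With either amendment your proof is complete; the defect lies in the proposition's phrasing rather than in your strategy, but the sentence claiming the dimension is ``forced'' is not a proof of it.
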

	\begin{proof}
		The proof is straightforward.
	\end{proof}
	To see the relationship between diffeological \'{e}tale charts, let $ \varphi:U\rightarrow \mathcal{M} $ and $ \psi:V\rightarrow \mathcal{M} $ be two diffeological \'{e}tale charts in $ \mathcal{M} $ with $ \varphi(r)=\psi(s) $. Then we can find a unique smooth map $ h:U'\rightarrow V $ defined on an open neighborhood $ U'\subseteq U $ of $ r $ such that $ h(r)=s $ and the following diagram commutes:
	\begin{displaymath}
		\xymatrix{
			&  \mathcal{M}  & \\
			U' \ar[ur]^{\varphi|_{U'}}\ar[rr]_{h} & & V\ar[ul]_{\psi}  \\
		}
	\end{displaymath}
	By Lemma \ref{lem-dinj-dsub}(4), $ h $ is  an \'{e}tale map, which is  \textit{uniquely} determined with this property.
	While diffeological \'{e}tale charts might not be locally injective, we can say that a diffeological \'{e}tale $ n $-manifold is obtained by gluing together $ n $-domains along \'{e}tale maps $ h $.
	%by diffeological \'{e}tale charts $ \varphi, \psi $ and points $ r,s $.

	\begin{proposition} 
		A diffeological \'{e}tale space on a diffeological \'{e}tale $ n $-manifold is again a diffeological \'{e}tale $ n $-manifold.
	\end{proposition}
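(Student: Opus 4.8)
The plan is to produce an atlas for $\mathcal{E}$ by lifting the charts of $\mathcal{M}$ through the \'etale map $\pi:\mathcal{E}\rightarrow\mathcal{M}$. Fix a point $\xi\in\mathcal{E}$ and set $x=\pi(\xi)$. Since the atlas $\mathfrak{A}$ of $\mathcal{M}$ is a parametrized cover, I can choose a diffeological \'etale chart $\varphi:U\rightarrow\mathcal{M}$ with $U$ an $n$-domain and $\varphi(r)=x$ for some $r\in U$. Because $\varphi$ is in particular a plot in $\mathcal{M}$ and $\varphi(r)=x=\pi(\xi)$, characterization (g) of Theorem \ref{the-et-chr} applied to $\pi$ furnishes an open neighborhood $V\subseteq U$ of $r$ together with a unique local lift plot $L:V\rightarrow\mathcal{E}$ satisfying $\pi\circ L=\varphi|_V$ and $L(r)=\xi$. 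Note that $V$, being open in the $n$-domain $U$, is itself an $n$-domain, and that $\xi=L(r)$ lies in the image of $L$; hence the collection of all such lifts $L$ (as $\xi$, $\varphi$, and $r$ vary) is a parametrized cover of $\mathcal{E}$.

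It remains to check that each lift $L$ is a diffeological \'etale map; then this collection is the desired atlas and $\mathcal{E}$ is a diffeological \'etale $n$-manifold with $\mathrm{dim}(\mathcal{E})=n$. First I would observe that $\varphi|_V$ is itself a diffeological \'etale map: the inclusion $V\hookrightarrow U$ of the D-open subset $V$ is \'etale by Example \ref{exa-op-et}, and diffeological \'etale maps are stable under composition, so $\varphi|_V=\varphi\circ(V\hookrightarrow U)$ is diffeological \'etale. Next, since $\pi$ is a diffeological \'etale map it is in particular a diffeological injection (Theorem \ref{the-et-chr}). The relation $\pi\circ L=\varphi|_V$ places $L$, $\varphi|_V$, and $\pi$ in exactly the configuration of Lemma \ref{lem-dinj-dsub}(4): with $g=\varphi|_V$ a diffeological \'etale map and $h=\pi$ a diffeological injection, the factoring map $f=L$ is forced to be a diffeological \'etale map. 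This completes the verification.

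The point worth emphasizing is that this argument is substantially lighter than the orbifold counterpart in Theorem \ref{the-orb}. There the base charts were only $G$-invariant submersions, so the lift had to be cut down to a $G$-stable neighborhood and shown to be merely a submersion via Lemma \ref{lem-dinj-dsub}(1); here the charts of $\mathcal{M}$ are genuine diffeological \'etale maps, so no equivariance bookkeeping is needed and Lemma \ref{lem-dinj-dsub}(4) delivers the full \'etale property of the lift in one stroke. The only mild subtleties are confirming that $V$ inherits the structure of an honest $n$-domain (immediate, since the D-topology on a domain is its standard topology) and recognizing that the uniqueness clause in (g) of Theorem \ref{the-et-chr} is what makes the lifts well defined; neither presents a genuine obstacle.
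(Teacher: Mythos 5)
Your proof is correct. The paper itself dismisses this proposition with ``The proof is straightforward,'' and your argument is precisely the intended one: lift each chart $\varphi$ of $\mathcal{M}$ through $\pi$ using characterization (g) of Theorem \ref{the-et-chr}, then conclude that each lift $L$ is a diffeological \'etale map from Lemma \ref{lem-dinj-dsub}(4) applied to $\pi\circ L=\varphi|_V$ --- exactly the simplified, equivariance-free analogue of the proof of Theorem \ref{the-orb} that you describe. (One small remark: the uniqueness clause in (g) is not actually needed anywhere in your argument; existence of the lifts plus Lemma \ref{lem-dinj-dsub}(4) already suffices, so you could run the same proof quoting only the submersion half of the characterization together with diffeological injectivity of $\pi$.)
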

	\begin{proof}
		The proof is straightforward.
	\end{proof}
	\begin{proposition}\label{pro-loc-cont}
		Let $ \mathcal{M} $ and $ \mathcal{N} $ be diffeological \'{e}tale manifolds.
		A smooth map $ f:\mathcal{M}\rightarrow \mathcal{N} $ is locally constant if and only if $ df_x=0 $,
		for all $ x\in\mathcal{M} $.
	\end{proposition}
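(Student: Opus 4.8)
The plan is to prove the two implications separately, the forward one being immediate and the converse requiring the \'etale structure of both manifolds. The forward implication is Proposition \ref{pro-cons}: if $f$ is locally constant, then $df_x=0$ at every $x\in\mathcal{M}$. For the converse, I assume $df_x=0$ for all $x\in\mathcal{M}$ and fix $x_0\in\mathcal{M}$. Since the atlas $\mathfrak{A}$ of $\mathcal{M}$ is a parametrized cover, I may choose a diffeological \'etale chart $\varphi:U\rightarrow\mathcal{M}$ with $\varphi(r_0)=x_0$ for some $r_0\in U$. As a diffeological \'etale map, $\varphi$ is D-open by Corollary \ref{cor-tang}(iv), so it suffices to show that the plot $g:=f\circ\varphi:U\rightarrow\mathcal{N}$ is constant on some open neighborhood $U'\subseteq U$ of $r_0$: then $\varphi(U')$ is a D-open neighborhood of $x_0$, and since every point of $\varphi(U')$ has the form $\varphi(r)$ with $f(\varphi(r))=g(r)$, the map $f$ is constant there, hence locally constant.

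Next I would show that the internal tangent map of $g$ vanishes identically. Because $\varphi$ is a diffeological \'etale map, its internal tangent map $d\varphi_r:T_rU\rightarrow T_{\varphi(r)}\mathcal{M}$ is an isomorphism at each $r\in U$ by Corollary \ref{cor-tang}(i). Applying the chain rule for internal tangent maps to the composite $U\xrightarrow{\varphi}\mathcal{M}\xrightarrow{f}\mathcal{N}$ yields $dg_r=df_{\varphi(r)}\circ d\varphi_r=0$ for every $r\in U$, using the hypothesis $df=0$. Thus $g$ is a plot in $\mathcal{N}$ whose internal tangent map is zero everywhere.

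Finally I would transfer this vanishing through a chart of $\mathcal{N}$ to a genuine map between Euclidean domains. I pick a diffeological \'etale chart $\psi:V\rightarrow\mathcal{N}$ with $\psi(s_0)=g(r_0)$. Since $\psi$ is a diffeological \'etale map, Theorem \ref{the-et-chr}(g) furnishes an open neighborhood $W\subseteq U$ of $r_0$ and a unique smooth lift $\tilde{g}:W\rightarrow V$ between domains with $\psi\circ\tilde{g}=g|_W$ and $\tilde{g}(r_0)=s_0$. Differentiating $\psi\circ\tilde g=g|_W$ and using that $d\psi_{\tilde{g}(w)}$ is an isomorphism (again Corollary \ref{cor-tang}(i)) together with $dg_w=0$ forces $d\tilde{g}_w=0$ for all $w\in W$. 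As $\tilde{g}$ is an ordinary smooth map between open subsets of Euclidean spaces with everywhere-vanishing differential, it is locally constant; hence $\tilde{g}$, and therefore $g=\psi\circ\tilde{g}$, is constant on some smaller neighborhood $U'\subseteq W$ of $r_0$. Combined with the reduction in the first paragraph, this shows $f$ is locally constant.

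The delicate point, and the reason the \'etale-manifold hypothesis is essential, is that diffeological \'etale charts need not be locally injective, so one cannot simply invert $\varphi$ to pull $f$ back to $U$. The argument must instead rely on two \'etale features used on both sides: the isomorphy of the internal tangent maps of \'etale charts (Corollary \ref{cor-tang}(i)) and the local lifting property of diffeological \'etale maps (Theorem \ref{the-et-chr}). Together these let me reduce the statement ``vanishing internal tangent map implies local constancy'', which fails for general diffeological spaces as Remark \ref{rem-tng} illustrates, to the classical fact about smooth maps between Euclidean domains.
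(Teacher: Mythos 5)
Your proof is correct and follows essentially the same route as the paper's: reduce via charts on both sides to a smooth map between Euclidean domains, kill its differential using the chain rule and the isomorphy of chart tangent maps (Corollary \ref{cor-tang}(i)), invoke the classical vanishing-derivative fact, and transfer the conclusion back through the D-openness of the source chart. The only divergence is how you factor the plot $f\circ\varphi$ through a chart of $\mathcal{N}$: the paper obtains $f\circ\varphi|_{U'}=\psi\circ F$ directly from the fact that the atlas is a covering generating family, so every plot locally factors through a chart, whereas you first fix a chart $\psi$ over the image point and invoke the unique local lifting property of diffeological \'etale maps (Theorem \ref{the-et-chr}(g)) to produce the lift $\tilde{g}$. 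Both mechanisms yield the same local factorization and the arguments coincide from that point on; the paper's choice is marginally more economical, since it needs only the definition of the generated diffeology rather than the lifting theorem, but yours has the small conceptual advantage of making explicit that the factorization can be prescribed to pass through any chosen preimage point $s_0$.
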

	\begin{proof}
		In view of Proposition \ref{pro-cons}, we just need to prove that if $ df_x=0 $ for all $ x\in\mathcal{M} $,  then $ f $ is locally constant.
Let $ x\in\mathcal{M} $ and fix any diffeological \'{e}tale chart $ \varphi:U\rightarrow \mathcal{M} $ with $ \varphi(r)=x $  for some $ r\in U $.
The composition $ f\circ\varphi $ is a plot in $ \mathcal{N} $. Hence there are
diffeological \'{e}tale chart $ \psi:V\rightarrow \mathcal{N} $ and a smooth map $ F:U'\rightarrow V $ defined on an open neighborhood of $ r\in U'\subseteq U $ such that $ f\circ\varphi|_{U'}=\psi\circ F $.
Then
$ 0=df_{\varphi(s)}\circ d\varphi_{s}=d\psi_{F(s)}\circ dF_s $ for all $ s\in U' $.
Since $d\psi_{F(s)} $ is an isomorphism, $ dF_s=0 $ for all $ s\in U' $. 
Therefore, $ F $ is constant on an open neighborhood of $ r $, which yields that  $ f $ is also constant on a D-open neighborhood of $ x $.
    \end{proof}

\begin{proposition} 
	Let  $ \mathcal{M} $ be a diffeological \'{e}tale $ n $-manifold. 
	\begin{enumerate}
		\item[ (a)]
		At each point $ x $ of $ \mathcal{M} $, the internal tangent space  $ T_{x}\mathcal{M} $ is isomorphic to $ \mathbb{R}^n $. 
		\item[(b)]
		The internal tangent bundle $ T^H\mathcal{M} $ is a diffeological \'{e}tale $ 2n $-manifold.
	%	\item[(0)]
	%	Hector's diffeology coincides with the dvs diffeology on $ T\mathcal{M} $.
		\item[(c)]
		The smooth projection $ \pi_{\mathcal{M}}:T^H\mathcal{M} \rightarrow\mathcal{M}  $ is a diffeological fiber bundle.
	\end{enumerate}
\end{proposition}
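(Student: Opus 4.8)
The plan is to handle the three parts in sequence, bootstrapping everything from the two facts that a diffeological \'{e}tale map induces a linear isomorphism on internal tangent spaces (Corollary \ref{cor-tang}(i)) and that $d\pi$ is again diffeological \'{e}tale whenever $\pi$ is (Proposition \ref{p-gitm-etl}). For (a), I would fix $x\in\mathcal{M}$ and use that the atlas $\mathfrak{A}$ covers $\mathcal{M}$ to pick a diffeological \'{e}tale chart $\varphi:U\rightarrow\mathcal{M}$ with $\varphi(r)=x$ for some $r\in U$. By Corollary \ref{cor-tang}(i) the internal tangent map $d\varphi_r:T_rU\rightarrow T_x\mathcal{M}$ is an isomorphism; since $U$ is an $n$-domain, $T_rU\cong\mathbb{R}^n$ (the internal tangent space of a domain is the usual one), so $T_x\mathcal{M}\cong\mathbb{R}^n$.

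For (b), I propose the family $\{d\varphi:\varphi\in\mathfrak{A}\}$ as an atlas for $T^H\mathcal{M}$. Each $d\varphi:T^HU\rightarrow T^H\mathcal{M}$ is a diffeological \'{e}tale map by Proposition \ref{p-gitm-etl}, and $T^HU=U\times\mathbb{R}^n$ is a $2n$-domain. These maps cover $T^H\mathcal{M}$: given $(x,v)$ with $v\in T_x\mathcal{M}$, choose $\varphi$ with $\varphi(r)=x$, and by part (a) the isomorphism $d\varphi_r$ produces $w$ with $d\varphi_r(w)=v$, whence $d\varphi(r,w)=(x,v)$. Thus $\{d\varphi\}$ is a parametrized cover by $2n$-dimensional diffeological \'{e}tale charts, so $T^H\mathcal{M}$ is a diffeological \'{e}tale $2n$-manifold with $\mathrm{dim}(T^H\mathcal{M})=2n$.

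For (c), the core step is to trivialize $\varphi^{*}(T^H\mathcal{M})$ over $U$ for each chart $\varphi$. I would define $\Phi:T^HU=U\times\mathbb{R}^n\rightarrow\varphi^{*}(T^H\mathcal{M})$ by $\Phi(r,w)=(r,d\varphi(r,w))$, which commutes with the projections onto $U$ and is a bijection precisely because each $d\varphi_r$ is an isomorphism. To see $\Phi$ is a diffeomorphism, observe that $\varphi_{\#}\circ\Phi=d\varphi$, where $\varphi_{\#}:\varphi^{*}(T^H\mathcal{M})\rightarrow T^H\mathcal{M}$ is the pullback of the \'{e}tale map $\varphi$ by $\pi_{\mathcal{M}}$, hence diffeological \'{e}tale by Proposition \ref{pro-com-pb-et}(ii), and in particular a diffeological injection by Theorem \ref{the-et-chr}. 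Since $d\varphi$ is diffeological \'{e}tale, Lemma \ref{lem-dinj-dsub}(4) applied to this commuting triangle (with $g=d\varphi$, $h=\varphi_{\#}$) shows that $\Phi$ is diffeological \'{e}tale; being bijective, it is then a diffeomorphism, as a bijective diffeological \'{e}tale map is a diffeological embedding (Corollary \ref{cor-tang}(v)), hence an induction, and a bijective induction is a diffeomorphism. This gives $\varphi^{*}(T^H\mathcal{M})\cong U\times\mathbb{R}^n$ over $U$.

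Finally, for an arbitrary plot $P$ and a point $r_0$ in its domain, since $\mathfrak{A}$ generates the diffeology I can write $P=\varphi\circ F$ on a neighborhood $V$ of $r_0$ for some chart $\varphi$ and smooth $F$ between domains; by transitivity of pullbacks, $(P|_V)^{*}(T^H\mathcal{M})=F^{*}\bigl(\varphi^{*}(T^H\mathcal{M})\bigr)\cong V\times\mathbb{R}^n$, which is the required local triviality with fiber $\mathbb{R}^n$. As $\pi_{\mathcal{M}}$ is a smooth surjection (each stalk $T_x\mathcal{M}$ contains $0$), it is a diffeological fiber bundle. The main obstacle is the diffeomorphism claim for $\Phi$, i.e.\ the smoothness of its inverse; I expect to dispose of it not by a direct computation but by the \'{e}tale-cancellation in Lemma \ref{lem-dinj-dsub}(4) together with the bijective-\'{e}tale-implies-diffeomorphism principle, after which the fiber-bundle statement follows formally from the transitivity of pullbacks.
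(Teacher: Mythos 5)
Your proposal is correct, and for parts (a) and (b) it coincides with the paper's proof: a chart through $x$ together with Corollary \ref{cor-tang}(i) gives (a), and the global tangent maps $d\varphi$ of charts, which are diffeological \'etale by Proposition \ref{p-gitm-etl} and are defined on the $2n$-domains $T^HU=U\times\mathbb{R}^n$, form the atlas for (b). In part (c) you construct exactly the paper's trivialization $\Phi(r,w)=(r,d\varphi(r,w))$ (the paper's $\alpha$), but you certify that it is a diffeomorphism by a different mechanism. The paper proves by hand that $\alpha$ is an induction: given a parametrization $(F_1,F_2)$ such that $\alpha\circ(F_1,F_2)$ is a plot, it uses the \'etale property of $d\varphi$ to produce a local lift $(L_1,L_2)$, then diffeological injectivity of $\varphi$ and injectivity of $d\varphi_{F_1(r)}$ to force $(F_1,F_2)=(L_1,L_2)$ locally, so $(F_1,F_2)$ is locally a plot; bijectivity then makes $\alpha$ a diffeomorphism. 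You instead observe the identity $\varphi_{\#}\circ\Phi=d\varphi$, note that $\varphi_{\#}$ is diffeological \'etale (hence a diffeological injection) because pullbacks of diffeological \'etale maps are diffeological \'etale (Proposition \ref{pro-com-pb-et}(ii) with Theorem \ref{the-et-chr}), and then invoke the cancellation Lemma \ref{lem-dinj-dsub}(4) to conclude that $\Phi$ is diffeological \'etale, hence, being bijective, a diffeomorphism via Corollary \ref{cor-tang}(v). This is a legitimate shortcut: it trades the paper's explicit plot-lifting computation for already-established structural results, whose proofs contain essentially that computation; the paper's direct argument is more self-contained, while yours is shorter and shows how the machinery of Section \ref{S5} closes the step formally. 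You also spell out a step the paper leaves implicit: passing from triviality of $\varphi^{*}(T^H\mathcal{M})$ over the charts to local triviality along an arbitrary plot $P$ by factoring $P=\varphi\circ F$ locally and using transitivity of pullbacks. Since the definition of a diffeological fiber bundle demands local triviality along every plot, making this reduction explicit is a genuine improvement in completeness.
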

\begin{proof} (a)
	By virtue of Corollary \ref{cor-tang}(i),  consider a diffeological \'{e}tale chart 
	with $ x $ in its image and take differential.
	
	(b) By Proposition \ref{p-gitm-etl}, 
	the global tangent map $ d\varphi $ of a diffeological \'{e}tale chart $ \varphi $ is a diffeological \'{e}tale map defined on a $ 2n $-domain, so that  $ T^H\mathcal{M}  $ is a diffeological \'{e}tale $ 2n $-manifold. 
	
	(c) Let $ \varphi:U\rightarrow\mathcal{M}  $ be a diffeological \'{e}tale chart.
	Consider the map $ \alpha:U\times\mathbb{R}^n\rightarrow \varphi^*T^H\mathcal{M} $ defined by $ \alpha(r,v)=(r,d\varphi(r,v)) $ that makes the
	following diagram commute:
		\begin{displaymath}
		\xymatrix{
		U\times\mathbb{R}^n	\ar[rd]_{pr_1}\ar[rr]^{\alpha} & & \varphi^*T^H\mathcal{M}\ar[ld]^{\varphi^*\pi_{\mathcal{M}}} \\
		 & U &   \\
		}
	\end{displaymath}
	Obviously, $ \alpha $ is smooth, and one can see that it is bijective as well.
	In fact, $ \beta:\varphi^*T^H\mathcal{M}\rightarrow U\times\mathbb{R}^n $ given by $ \beta(r,(x,v))=(r,d\varphi_r^{-1}(v)) $ is the inverse of $ \alpha $.
	
	Now we show that $ \alpha $ is an induction, which implies that $ \alpha $ is a diffeomorphism and $ \varphi^*T^H\mathcal{M} $ is locally trivial. 
	Let $ (F_1,F_2):V\rightarrow U\times\mathbb{R}^n $ be a parametrization such that $ \alpha\circ(F_1,F_2)=(F_1,d\varphi\circ(F_1,F_2)) $ is a plot in $ \varphi^*T^H\mathcal{M} $. 
	So $ F_1 $ is a plot in $ U $ and $ d\varphi\circ(F_1,F_2) $ is a plot in $ T^H\mathcal{M} $. 
	Fix $ r_0\in V $. Since $ d\varphi $ is a diffeological \'{e}tale map, for $ (F_1,F_2)(r_0)\in U\times\mathbb{R}^n $, we can find a local lift plot $ (L_1,L_2):W\rightarrow U\times\mathbb{R}^n $ defined on some open neighborhood of $ r_0 $ in $ V $ with
	$ (L_1(r_0),L_2(r_0))=(F_1(r_0),F_2(r_0))$ and
	\begin{center}
		$ d\varphi\circ(L_1,L_2)(r)=d\varphi\circ(F_1,F_2)(r) $ 
	\end{center}
	or
	\begin{center}
		$ (\varphi\circ L_1(r), d\varphi_{L_1(r)}(L_2(r)))=(\varphi\circ F_1(r),d\varphi_{F_1(r)}(F_2(r))), $ 
	\end{center}
	for all $ r\in W $.
	From diffeological injectivity of $ \varphi $, we conclude that $ L_1|_{W'}=F_1|_{W'} $.
	Putting these results together, we conclude that
\begin{center}
		$ d\varphi_{F_1(r)}(L_2(r)))=d\varphi_{L_1(r)}(L_2(r)))=d\varphi_{F_1(r)}(F_2(r)), $
\end{center}
	for all $ r\in W' $.
	Because $ d\varphi_{F_1(r)} $ is injective, 
	$ L_2(r)=F_2(r) $ for all $ r\in W' $.
	Indeed, $ F_2|_{W'} $ is a plot.
	Therefore, $ (F_1,F_2)|_{W'} $ is a plot in $ U\times\mathbb{R}^n $. By D3, $ (F_1,F_2) $ itself is  a plot, as desired.	
\end{proof}

%\begin{example} 
%	$ T \mathbb{T}^n_{\Gamma} $ is a diffeomorphic to $  \mathbb{T}^n_{\Gamma}\times\mathbb{R}^n $, by \cite[Theorem 4.15]{CW}.
%\end{example}

	\subsection{The rank theorem}
	
		\begin{definition}
		We say that a smooth map $ f:X\rightarrow Y $ between  diffeological spaces has \textbf{constant internal
			rank} $ k $ if at each point $ x\in X $, the rank of the linear map $ df_x:T_xX\rightarrow T_{f(x)}Y $ is equal to $ k $.
	\end{definition}
	
	\begin{theorem}\label{the-Rank} 
		(Rank theorem for diffeological \'{e}tale manifolds).
		Let $ \mathcal{M} $ and $ \mathcal{N} $ be diffeological \'{e}tale manifolds of dimensions $ m $ and $ n $, respectively, and let
		$ f:\mathcal{M}\rightarrow \mathcal{N} $ be a smooth map with constant internal
		rank $ k $. Then for each $ x\in\mathcal{M} $, there exist diffeological \'{e}tale maps $ \varphi:U\rightarrow \mathcal{M} $ around $ x $ and $ \psi:V\rightarrow \mathcal{N} $ around $ f(x) $ defined on an $ m $-domain and an $ n $-domain, respectively, such that $ f\circ\varphi=\psi\circ \widehat{F} $, in which $ \widehat{F} $ is in the form $ \widehat{F}(x_1,\cdots,x_k,x_{k+1},\cdots,x_m)=(x_1,\cdots,x_k,0,\cdots,0) $.
	\end{theorem}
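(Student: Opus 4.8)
The plan is to reduce the statement to the classical rank theorem for ordinary smooth maps between domains, by lifting $f$ through diffeological \'etale charts. First I would fix a diffeological \'etale chart $\varphi_0:U_0\to\mathcal{M}$ with $\varphi_0(r_0)=x$ for some $r_0\in U_0$, and a diffeological \'etale chart $\psi_0:V_0\to\mathcal{N}$ with $\psi_0(s_0)=f(x)$ for some $s_0\in V_0$, where $U_0$ is an $m$-domain and $V_0$ is an $n$-domain. The composite $f\circ\varphi_0$ is a plot in $\mathcal{N}$, and since $\psi_0$ is a diffeological \'etale map, Theorem \ref{the-et-chr}(g) furnishes a \emph{unique} local lift: a smooth map $F:U\to V_0$ defined on an open neighborhood $U\subseteq U_0$ of $r_0$ with $\psi_0\circ F=f\circ\varphi_0|_U$ and $F(r_0)=s_0$. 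Because $U$ and $V_0$ are domains, $F$ is an ordinary smooth map between open subsets of $\mathbb{R}^m$ and $\mathbb{R}^n$.

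The crucial observation is that $F$ has constant rank $k$. Differentiating $\psi_0\circ F=f\circ\varphi_0|_U$ and using functoriality of the internal tangent map yields
\[
d(\psi_0)_{F(s)}\circ dF_s=df_{\varphi_0(s)}\circ d(\varphi_0)_s
\]
for every $s\in U$, where for a smooth map between domains the internal tangent map coincides with the usual differential. By Corollary \ref{cor-tang}(i) the maps $d(\varphi_0)_s$ and $d(\psi_0)_{F(s)}$ are isomorphisms, so $\mathrm{rank}(dF_s)=\mathrm{rank}(df_{\varphi_0(s)})=k$; hence $F$ has constant rank $k$ throughout $U$.

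Now the classical rank theorem applies to $F$: after shrinking $U$, there exist a diffeomorphism $\alpha$ from an $m$-domain onto an open subset of $U$ with $\alpha(0)=r_0$, and a diffeomorphism $\beta$ from an open subset of $V_0$ onto an $n$-domain, such that $\beta\circ F\circ\alpha=\widehat{F}$ with $\widehat{F}(x_1,\dots,x_m)=(x_1,\dots,x_k,0,\dots,0)$. I would then set $\varphi=\varphi_0\circ\alpha$ and $\psi=\psi_0\circ\beta^{-1}$, defined on domains of dimensions $m$ and $n$. Since $\alpha$ and $\beta^{-1}$ are \'etale maps between domains they are diffeological \'etale maps, so by Proposition \ref{pro-com-pb-et}(i) the composites $\varphi$ and $\psi$ are diffeological \'etale charts around $x$ and $f(x)$, respectively. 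Finally, from $\beta\circ F\circ\alpha=\widehat{F}$ we get $F\circ\alpha=\beta^{-1}\circ\widehat{F}$, whence
\[
f\circ\varphi=f\circ\varphi_0\circ\alpha=\psi_0\circ F\circ\alpha=\psi_0\circ\beta^{-1}\circ\widehat{F}=\psi\circ\widehat{F},
\]
as required.

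The argument is essentially a transport of the classical rank theorem along \'etale charts, so there is no deep obstacle; the step demanding the most care is the rank computation, which hinges on Corollary \ref{cor-tang}(i)---that the internal tangent maps of diffeological \'etale charts are isomorphisms (this is precisely the property that fails for a general orbifold chart)---together with checking that the modified charts $\varphi$ and $\psi$ remain diffeological \'etale maps after composition with the coordinate-changing diffeomorphisms $\alpha$ and $\beta^{-1}$.
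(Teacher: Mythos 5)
Your proof is correct and follows essentially the same route as the paper's: lift $f$ to a smooth map $F$ between domains via \'etale charts, show $F$ has constant rank $k$ using that the internal tangent maps of the charts are isomorphisms, apply the classical rank theorem of domains, and absorb the resulting diffeomorphisms into the charts. The only cosmetic difference is that you fix the target chart $\psi_0$ in advance and obtain $F$ from the unique-lifting characterization in Theorem \ref{the-et-chr}(g), whereas the paper obtains both $\psi$ and $F$ at once from the fact that the atlas is a covering generating family; both steps are valid and lead to the same conclusion.
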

	\begin{proof}
		Take any diffeological \'{e}tale chart $ \varphi:U\rightarrow \mathcal{M} $ with $ \varphi(r)=x $  for some $ r\in U $.
		Then $ f\circ\varphi $ is a plot in $ \mathcal{N} $. So we have 
		$ f\circ\varphi|_{U'}=\psi\circ F $, for some diffeological \'{e}tale chart $ \psi:V\rightarrow \mathcal{N} $ and a smooth map $ F:U'\rightarrow V $ defined on an open neighborhood of $ r\in U'\subseteq U $.
		Taking differentials of both sides at the
		last equality, we find that
		$ df_{\varphi(s)}\circ d\varphi_{s}=d\psi_{F(s)}\circ dF_s $ for all $ s\in U' $.
		Since $  d\varphi_{s}$ and $d\psi_{F(s)} $ are isomorphisms, and $ df_{\varphi(s)} $ has constant internal
		rank $ k $, so is $ dF_s $. That is, $ F $ has constant rank $ k $. By the rank theorem of domains, one can find diffeomorphisms $ h':W'\rightarrow U' $ and $ h:W\rightarrow V $ onto open neighborhoods of $ r $ and $ F(r) $, respectively, such that $ F\circ h'=h\circ \widehat{F} $ in which $ \widehat{F} $ is in the form $ \widehat{F}(x_1,\cdots,x_k,x_{k+1},\cdots,x_m)=(x_1,\cdots,x_k,0,\cdots,0) $.
		Consequently, we have $ f\circ(\varphi\circ h')=(\psi\circ h)\circ \widehat{F} $, so $ \varphi\circ h' $ and $\psi\circ h $ are desired diffeological \'{e}tale maps.
	\end{proof}

	\begin{corollary} 
		Let $ \mathcal{M} $ and $ \mathcal{N} $ be diffeological \'{e}tale manifolds of dimensions $ m $ and $ n $, respectively, and let
		$ f:\mathcal{M}\rightarrow \mathcal{N} $ be a smooth map.
		\begin{enumerate}
			\item[ (a) ]
			If $ df_x:T_x\mathcal{M}\rightarrow T_{f(x)}\mathcal{N} $ is an epimorphism, at each point $ x\in\mathcal{M} $, then there exist diffeological \'{e}tale maps $ \varphi:U\rightarrow \mathcal{M} $ around $ x $ and $ \psi:V\rightarrow \mathcal{N} $ around $ f(x) $ defined on an $ m $-domain and an $ n $-domain, respectively, such that $ f\circ\varphi=\psi\circ \widehat{F} $, in which $ \widehat{F} $ is in the form $ \widehat{F}(x_1,\cdots,x_n,x_{n+1},\cdots,x_m)=(x_1,\cdots,x_n) $.
			\item[(b)] 
			If $ df_x:T_x\mathcal{M}\rightarrow T_{f(x)}\mathcal{N} $ is a monomorphism, at each point $ x\in\mathcal{M} $, there exist diffeological \'{e}tale maps $ \varphi:U\rightarrow \mathcal{M} $ around $ x $ and $ \psi:V\rightarrow \mathcal{N} $ around $ f(x) $ defined on an $ m $-domain and an $ n $-domain, respectively, such that $ f\circ\varphi=\psi\circ \widehat{F} $, in which $ \widehat{F} $ is in the form $ \widehat{F}(x_1,\cdots,x_m)=(x_1,\cdots,x_m,0,\cdots,0) $.
		\end{enumerate}
	\end{corollary}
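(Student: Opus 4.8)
The plan is to deduce both parts directly from the rank theorem (Theorem \ref{the-Rank}) by showing that each hypothesis forces $f$ to have an appropriate constant internal rank. The essential input is the earlier proposition asserting that at each point $x$ of a diffeological \'etale $n$-manifold the internal tangent space is Euclidean of the right dimension; in our situation this gives $T_x\mathcal{M}\cong\mathbb{R}^m$ and $T_{f(x)}\mathcal{N}\cong\mathbb{R}^n$. After these identifications, $df_x$ is simply a linear map $\mathbb{R}^m\to\mathbb{R}^n$, so the hypotheses that $df_x$ be an epimorphism or a monomorphism become statements about its rank, and we are in a position to invoke the normal form.

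For part (a), if $df_x$ is an epimorphism at every $x$, then its rank equals $\dim T_{f(x)}\mathcal{N}=n$ at each point, so $f$ has constant internal rank $k=n$. Applying Theorem \ref{the-Rank} with $k=n$ produces diffeological \'etale maps $\varphi$ and $\psi$ with $f\circ\varphi=\psi\circ\widehat{F}$, where a priori $\widehat{F}(x_1,\cdots,x_n,x_{n+1},\cdots,x_m)=(x_1,\cdots,x_n,0,\cdots,0)$. Since $\psi$ is defined on an $n$-domain and $k=n$, the trailing block of zeros is empty, so $\widehat{F}(x_1,\cdots,x_m)=(x_1,\cdots,x_n)$, which is exactly the asserted form.

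For part (b), if $df_x$ is a monomorphism at every $x$, then its rank equals $\dim T_x\mathcal{M}=m$ at each point, so $f$ has constant internal rank $k=m$. Applying Theorem \ref{the-Rank} with $k=m$ yields charts with $f\circ\varphi=\psi\circ\widehat{F}$, where now $\widehat{F}(x_1,\cdots,x_m)=(x_1,\cdots,x_m,0,\cdots,0)$ into the $n$-domain; the trailing $n-m$ zeros remain, giving precisely the stated normal form.

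Because the whole argument is a translation of the linear-algebraic hypothesis into a constant-rank condition followed by a single appeal to an already-established theorem, there is no genuine obstacle. The only point deserving a moment of care is checking that an epimorphism pins the rank to $n$ and a monomorphism pins it to $m$ \emph{at every point simultaneously}, so that the rank is genuinely constant; this rests entirely on the uniform identifications $T_x\mathcal{M}\cong\mathbb{R}^m$ and $T_{f(x)}\mathcal{N}\cong\mathbb{R}^n$ furnished by the diffeological \'etale manifold structure, and on the observation that the resulting $\widehat{F}$ then specializes to a linear projection in case (a) and a linear inclusion in case (b).
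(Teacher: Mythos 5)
Your proposal is correct and matches the paper's intended argument: the paper states this corollary without proof as an immediate consequence of Theorem \ref{the-Rank}, exactly via the observation that surjectivity (resp.\ injectivity) of $df_x$ pins the internal rank to $n$ (resp.\ $m$) at every point, since $T_x\mathcal{M}\cong\mathbb{R}^m$ and $T_{f(x)}\mathcal{N}\cong\mathbb{R}^n$ by the earlier proposition on tangent spaces of diffeological \'etale manifolds. Your extra care about the rank being constant simultaneously at all points is exactly the right point to check, and it holds for the reason you give.
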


	\begin{theorem}\label{the-IFT} 
		Let $ \mathcal{M} $ and $ \mathcal{N} $ be diffeological \'{e}tale manifolds of dimensions $ m $ and $ n $, respectively, and let
		$ f:\mathcal{M}\rightarrow \mathcal{N} $ be a smooth map.
		\begin{enumerate}
			\item[(i)]
			If $ df_x:T_x\mathcal{M}\rightarrow T_{f(x)}\mathcal{N} $ is an epimorphism, at each point $ x\in\mathcal{M} $, then $ f $ is a diffeological submersion.
			\item[(ii)] 
			If $ df_x:T_x \mathcal{M} \rightarrow T_{f(x)}\mathcal{N} $ is a monomorphism, at each point $ x\in\mathcal{M} $, then $ f $ is a diffeological immersion.
			\item[(iii)] 
			If $ df_x:T_x\mathcal{M}\rightarrow T_{f(x)}\mathcal{N} $ is an isomorphism, at each point $ x\in\mathcal{M} $, then $ f $ is a diffeological \'{e}tale map.
		\end{enumerate}
	\end{theorem}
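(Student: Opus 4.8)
The plan is to reinterpret the three hypotheses as constant internal rank conditions and then import the classical normal forms through the rank theorem. Since $\mathcal{M}$ and $\mathcal{N}$ are diffeological \'{e}tale $m$- and $n$-manifolds, we have $T_x\mathcal{M}\cong\mathbb{R}^m$ and $T_{f(x)}\mathcal{N}\cong\mathbb{R}^n$ at every point, so $df_x$ being an epimorphism (resp. monomorphism, isomorphism) says precisely that $f$ has constant internal rank $n$ (resp. $m$, with $m=n$). Applying Theorem \ref{the-Rank} and its corollary, around each $x$ I obtain diffeological \'{e}tale charts $\varphi:U\to\mathcal{M}$ and $\psi:V\to\mathcal{N}$ with $f\circ\varphi=\psi\circ\widehat{F}$, where $\widehat{F}$ is the standard projection $(x_1,\dots,x_m)\mapsto(x_1,\dots,x_n)$ in case (i), the standard inclusion $(x_1,\dots,x_m)\mapsto(x_1,\dots,x_m,0,\dots,0)$ in case (ii), and the identity (with $m=n$) in case (iii). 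Note that (iii) then follows at once from (i) and (ii): an isomorphism is both epi and mono, so $f$ is simultaneously a diffeological submersion and a diffeological immersion, hence a diffeological \'{e}tale map by Theorem \ref{the-et-chr}(c).

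For case (i) I would argue directly with the lifting characterization of diffeological submersions (Proposition \ref{pi1}, item \textbf{S}). Given $x_0$, a plot $P:W\to\mathcal{N}$ and $r_0$ with $P(r_0)=f(x_0)$, I take the chart $\varphi$ around $x_0$, say $\varphi(a)=x_0$, and the matching $\psi$ with $\psi(b)=f(x_0)$ and $\widehat{F}(a)=b$. Because $\psi$ is a diffeological \'{e}tale map, Theorem \ref{the-et-chr}(g) supplies a local lift $\widetilde{P}:W'\to V$ of $P$ through $\psi$ with $\widetilde{P}(r_0)=b$; since $\widehat{F}$ is the coordinate projection, the constant-fibre section $\widehat{L}(r)=(\widetilde{P}(r),a_{n+1},\dots,a_m)$ is a smooth lift of $\widetilde{P}$ through $\widehat{F}$ with $\widehat{L}(r_0)=a$. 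Then $L:=\varphi\circ\widehat{L}$ satisfies $f\circ L=\psi\circ\widehat{F}\circ\widehat{L}=\psi\circ\widetilde{P}=P|_{W'}$ and $L(r_0)=\varphi(a)=x_0$, so the required lift exists and $f$ is a diffeological submersion. Equivalently, one may package this as: $\widehat{F}$ is a submersion of domains, hence a diffeological submersion by Corollary \ref{cor-dsub-mfd}, and $\psi$ is one too, so the composite $f\circ\varphi=\psi\circ\widehat{F}$ is a diffeological submersion; then cancel the surjection $\varphi:U\to\varphi(U)$ by Lemma \ref{lem-dinj-dsub}(5), using that being a diffeological submersion is local at the source.

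Case (ii) is the heart of the matter. The natural route is to show that $f\circ\varphi=\psi\circ\widehat{F}$ is a diffeological immersion and then cancel the surjective diffeological submersion $\varphi:U\to\varphi(U)$ by Lemma \ref{lem-dinj-dsub}(7), invoking locality of the immersion property on the D-open cover $\{\varphi(U)\}$. The delicate point, and the main obstacle, is that unlike submersions and \'{e}tale maps, diffeological immersions are not packaged in the excerpt as being closed under composition, and one cannot simply exhibit $\psi\circ\widehat{F}$ as an immersion in the strong sense of Definition \ref{def-imm}, since the \'{e}tale chart $\psi$ is in general neither locally injective nor equipped with a smooth left inverse (this already fails for $\mathbb{R}\to\mathbb{T}_{\alpha}$). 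I would therefore first establish, as a lemma, that the composite of two diffeological immersions is again a diffeological immersion: for a plot $P$ one identifies $P^{*}(\psi\circ\widehat{F})$ with an iterated pullback and threads the two local retractions $\varrho$ furnished by Definition \ref{def-dimm} for $\psi$ and for $\widehat{F}$ over domains, producing the retraction for the composite. With this in hand, $\widehat{F}$ is a diffeological immersion by Corollary \ref{cor-dimm-mfd} and $\psi$ is a diffeological immersion by Theorem \ref{the-et-chr}(c), so $f\circ\varphi$ is a diffeological immersion and the cancellation finishes the proof.

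The genuinely routine parts are the rank-theorem reduction, the identification of the model maps $\widehat{F}$ via Corollaries \ref{cor-dsub-mfd} and \ref{cor-dimm-mfd}, and the reduction of (iii) to (i) and (ii). The care concentrates entirely in (ii): the bookkeeping of D-open neighbourhoods when threading the two retractions, compounded by the non-injectivity of the \'{e}tale charts, is where the argument must be carried out carefully; once the composition lemma for diffeological immersions is secured, the rest is formal.
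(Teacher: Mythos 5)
Your proof follows essentially the same route as the paper's: the rank-theorem reduction to a normal form $f\circ\varphi=\psi\circ\widehat{F}$, the observation that this composite is a diffeological submersion (resp.\ immersion), cancellation of the surjective diffeological submersion $\varphi:U\rightarrow\varphi(U)$ via Lemma \ref{lem-dinj-dsub}(5) and (7) together with locality over the D-open cover by chart images, and the deduction of (iii) from (i) and (ii) via Theorem \ref{the-et-chr}(c). The one point where you diverge is instructive: the paper's proof of (ii) simply asserts that $\psi\circ F$ is a diffeological immersion, which, as you correctly observe, rests on closure of diffeological immersions under composition (or at least under postcomposition by a diffeological \'etale map), a fact stated nowhere in the paper; your proposed lemma, threading the two local retractions of Definition \ref{def-dimm} through an iterated pullback, is exactly what is needed, it does go through, and in this respect your write-up is more complete than the paper's own. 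Your direct lifting argument for (i) is also correct (modulo the routine shrinking needed so that the constant-fibre section $\widehat{L}$ lands in the domain of $\varphi$), and your alternative packaging of (i) coincides with the paper's, which is legitimate there because closure of diffeological submersions under composition \emph{is} stated in the paper.
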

	\begin{proof}
		First of all, we notice that in each case, $ f $ has constant internal
		rank so that there exist diffeological \'{e}tale maps $ \varphi:U\rightarrow \mathcal{M} $ around $ x $ and $ \psi:V\rightarrow \mathcal{N} $ around $ f(x) $  defined on an $ m $-domain and $ n $-domain, respectively, with $ \varphi(s)=x $ and $ \psi(s')=f(x) $ such that $ f\circ\varphi=\psi\circ F $ and $ F(s)=s' $, where $ F $ is a smooth map between domains with constant internal
		rank equal to that of $ f $.
		
		If $ df_x $ is an epimorphism at each point, then $ \psi\circ F $ is a diffeological submersion. Hence by Lemma \ref{lem-dinj-dsub}(5),
		$ f $ is locally a diffeological submersion and so $ f $ itself is a diffeological submersion.
		If $ df_x $ is a monomorphism at each point, then $ \psi\circ F $ is a diffeological immersion. Therefore,
		$ f $ is locally a diffeological immersion by Lemma \ref{lem-dinj-dsub}(7), so part  (ii) is obtained.
		Part  (iii)  is a direct consequence of parts (i)  and (ii).
	\end{proof}
	Theorem \ref{the-IFT}(iii) can be understood as a generalized version of the inverse function theorem to diffeological \'{e}tale manifolds.
	\begin{remark}
			Notice that the rank theorem for diffeological \'{e}tale manifolds the does not assert that a diffeological immersion is locally injective. 
%	Notice that the rank theorem for diffeological \'{e}tale manifolds the does not assert that a diffeological submersion (respectively, diffeological immersion) is locally surjective (respectively, injective). 
\end{remark}

\subsection{The implicit function theorem}
Before we state the implicit function theorem formally, we need to prove a lemma.

	\begin{lemma}\label{lem-rank}
	Let $ \mathcal{M},\mathcal{N} $ and $ \mathcal{L} $ be diffeological \'{e}tale manifolds and let $ f:\mathcal{M}\times\mathcal{N}\rightarrow \mathcal{L} $ be a smooth map.
	For each $ x_0\in\mathcal{M} $, define $ f^{x_0}:\mathcal{N}\rightarrow \mathcal{L}  $ by $ f^{x_0}(y)=f(x_0,y) $.
	Then for every $ (x_0,y_0)\in \mathcal{M}\times\mathcal{N}  $, 
	\begin{center}
		$ \mathsf{rank}~d(\Pr_1,f)_{(x_0,y_0)}=\mathrm{dim}(\mathcal{M})+\mathsf{rank}~d\big{(}f^{x_0}\big{)}_{y_0}, $
	\end{center}
	where $ \Pr_1:\mathcal{M}\times\mathcal{N}\rightarrow \mathcal{M} $ is the projection on the first factor.
\end{lemma}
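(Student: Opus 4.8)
The plan is to reduce the whole computation to a Jacobian identity in diffeological \'etale charts, where the statement becomes the familiar block-triangular rank formula for domains. The crucial leverage is Corollary \ref{cor-tang}(i): the internal tangent map of a diffeological \'etale map is an isomorphism at every point, so the rank of a differential is unchanged when we pre- or post-compose with the differential of an \'etale chart. Thus it suffices to transport $d(\Pr_1,f)_{(x_0,y_0)}$ to domains through suitable product charts and compute there.

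First I would fix charts adapted to the point. Choose diffeological \'etale charts $\varphi:U\rightarrow\mathcal{M}$ with $\varphi(a)=x_0$, $\psi:V\rightarrow\mathcal{N}$ with $\psi(b)=y_0$, and $\chi:W\rightarrow\mathcal{L}$ with $\chi(c)=f(x_0,y_0)$. By the product \'etale manifold structure, $\varphi\times\psi:U\times V\rightarrow\mathcal{M}\times\mathcal{N}$ and $\varphi\times\chi:U\times W\rightarrow\mathcal{M}\times\mathcal{L}$ are again diffeological \'etale charts, around $(x_0,y_0)$ and $(x_0,f(x_0,y_0))$ respectively. Since $\varphi\times\psi$ is a plot in $\mathcal{M}\times\mathcal{N}$, the composite $f\circ(\varphi\times\psi)$ is a plot in $\mathcal{L}$, so Theorem \ref{the-et-chr}(g) supplies, on a neighborhood of $(a,b)$, a unique smooth lift $G$ between domains with $\chi\circ G=f\circ(\varphi\times\psi)$ and $G(a,b)=c$.

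Next I would assemble the lift of $(\Pr_1,f)$ through the target chart. Setting $H=(\mathrm{pr}_1,G):U\times V\rightarrow U\times W$, where $\mathrm{pr}_1$ is the domain projection, a direct check gives $(\varphi\times\chi)\circ H=(\Pr_1,f)\circ(\varphi\times\psi)$, using $\chi\circ G=f\circ(\varphi\times\psi)$ and $\Pr_1\circ(\varphi\times\psi)=\varphi\circ\mathrm{pr}_1$. Differentiating at $(a,b)$ and cancelling the isomorphisms $d(\varphi\times\psi)_{(a,b)}$ and $d(\varphi\times\chi)_{(a,c)}$ yields $\mathsf{rank}\,d(\Pr_1,f)_{(x_0,y_0)}=\mathsf{rank}\,dH_{(a,b)}$. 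In the canonical splitting $T_{(a,b)}(U\times V)=T_aU\oplus T_bV$ the Jacobian of $H$ is block-triangular,
\[
dH_{(a,b)}=\begin{pmatrix} I & 0\\ \partial_1G & \partial_2G\end{pmatrix},
\]
whose kernel is $\{0\}\oplus\ker\partial_2G$, so by rank--nullity its rank equals $\dim U+\mathsf{rank}(\partial_2G)=\mathrm{dim}(\mathcal{M})+\mathsf{rank}(\partial_2G)$.

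Finally I would identify $\partial_2 G$. Restricting $\chi\circ G=f\circ(\varphi\times\psi)$ to $\{a\}\times V$ gives $\chi\circ G^{a}=f^{x_0}\circ\psi$ with $G^{a}(y)=G(a,y)$; differentiating at $b$ and again cancelling the isomorphisms $d\chi_c$ and $d\psi_b$ shows $\mathsf{rank}(\partial_2G)=\mathsf{rank}\,dG^{a}_b=\mathsf{rank}\,d(f^{x_0})_{y_0}$, which combined with the previous display is exactly the claim. The only genuinely delicate point is organizational rather than conceptual: one must choose the three charts so that the product charts on source and target are compatible with the block structure of $(\Pr_1,f)$, and must invoke Theorem \ref{the-et-chr}(g) to obtain the lift $G$ on a common neighborhood. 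Once this bookkeeping is arranged, everything reduces to the isomorphism property of \'etale tangent maps together with elementary linear algebra, so I do not expect any substantive difficulty beyond it.
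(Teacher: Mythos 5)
Your proposal is correct and follows essentially the same route as the paper's proof: factor $f\circ(\varphi\times\psi)$ locally through an \'etale chart $\chi$ of $\mathcal{L}$ (your lift $G$ is the paper's $F$, your $H=(\mathrm{pr}_1,G)$ is the paper's $(\pi_1,F)$), cancel the chart differentials using Corollary \ref{cor-tang}(i), and read off the rank from the block-triangular Jacobian, identifying $\partial_2 G$ with $d(f^{x_0})_{y_0}$ by restricting to $\{a\}\times V$. The only cosmetic difference is that you fix $\chi$ in advance and invoke the unique-lifting characterization, Theorem \ref{the-et-chr}(g), whereas the paper obtains the factorization directly from the fact that the atlas generates the diffeology; both yield the same local identity and the same linear algebra.
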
 
\begin{proof}
	Fix any $ (x_0,y_0)\in \mathcal{M}\times\mathcal{N} $, and take diffeological \'{e}tale charts $ \varphi:U\rightarrow \mathcal{M} $ and $ \psi:V\rightarrow \mathcal{N} $ with $ \varphi(r_0)=x_0 $ and $ \psi(s_0)=y_0 $, for some $ r_0\in U $ and $ s_0\in V $.
	Then
	$ f\circ(\varphi\times\psi) $
	is a plot in $ \mathcal{N} $. 
	We can find a diffeological \'{e}tale chart $ \chi:W\rightarrow \mathcal{N} $ and a smooth map $ F: U'\times V'\rightarrow W $  defined on an open neighborhood of $ (r_0,s_0)\in U'\times V'\subseteq U\times V $ such that
$ f\circ(\varphi\times\psi)|_{U'\times V'}=\chi\circ F $.
Thus,
$ f^{x_0}\circ\psi|_{V'}=\chi\circ F^{r_0} $, where $ F^{r_0}:s\mapsto F(r_0,s) $.
Taking differentials, 
\begin{center}
	$ \mathsf{rank}~d\big{(}f^{x_0}\big{)}_{y_0}=\mathsf{rank}~(\dfrac{\partial F}{\partial s})_{(r_0,s_0)} $.
\end{center}
On the other hand,  we have
\begin{center}
	$ (\Pr_1,f)\circ(\varphi\times\psi)|_{U'\times V'}=(\varphi\times\chi)\circ(\pi_1,F) $,
\end{center} 
where $ \pi_1:U'\times V'\rightarrow U' $ is the projection on the first factor.
Taking again differentials of both sides, we find that
\begin{align*}
	\mathsf{rank}~d({\rm{Pr}}_1,f)_{(x_0,y_0)}&=\mathsf{rank}~d(\pi_1,F)_{(r_0,s_0)}\\
	&=\mathsf{rank} 
	\begin{pmatrix} 
		I_{\mathrm{dim}(\mathcal{M})}  & 0 \\
		&\\
		(\frac{\partial F}{\partial r})_{(r_0,s_0)} & (\frac{\partial F}{\partial s})_{(r_0,s_0)}      
	\end{pmatrix}\\
	&=\mathrm{dim}(\mathcal{M})+\mathsf{rank}~(\frac{\partial F}{\partial s})_{(r_0,s_0)}
	=\mathrm{dim}(\mathcal{M})+\mathsf{rank}~d\big{(}f^{x_0}\big{)}_{y_0},
\end{align*}
where $ I_{\mathrm{dim}(\mathcal{M})} $ denotes the identity matrix of dimension equal to $ \mathrm{dim}(\mathcal{M}) $.
\end{proof} 
Now we can state:
	\begin{theorem}\label{the-ImFT} 
		(Implicit function theorem for diffeological \'{e}tale manifolds).
		Suppose that $ \mathcal{M},\mathcal{N} $ and $ \mathcal{L} $ are diffeological \'{e}tale manifolds and let $ f:\mathcal{M}\times\mathcal{N}\rightarrow \mathcal{L} $ be a smooth map.
		For every $ (x,y)\in \mathcal{M}\times\mathcal{N}  $, $ d\big{(}f^{x}\big{)}_{y}  $ is an isomorphism if and only if
		the track map
		\begin{center}
			$ (\Pr_1,f):\mathcal{M}\times\mathcal{N}\rightarrow \mathcal{M}\times\mathcal{L},\quad (x,y)\mapsto (x,f(x,y)) $
		\end{center}
		is a diffeological \'{e}tale map.
	\end{theorem}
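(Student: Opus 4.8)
The plan is to reduce the equivalence to the pointwise isomorphism criterion for diffeological \'etale maps between \'etale manifolds, and then to transport that criterion across the rank identity of Lemma \ref{lem-rank}. First I would set $G := (\Pr_1, f) : \mathcal{M} \times \mathcal{N} \to \mathcal{M} \times \mathcal{L}$, which is smooth because its two components are; and since a product of diffeological \'etale manifolds is again a diffeological \'etale manifold, $G$ is a smooth map between \'etale manifolds. For such a map, combining Corollary \ref{cor-tang}(i) with Theorem \ref{the-IFT}(iii) gives the equivalence that $G$ is a diffeological \'etale map if and only if the internal tangent map $dG_{(x,y)}$ is an isomorphism at every $(x,y) \in \mathcal{M} \times \mathcal{N}$. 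Since the left-hand condition of the statement is, by definition, that $d(f^x)_y$ be an isomorphism for every $(x,y)$, it suffices to prove the pointwise equivalence: for each fixed $(x,y)$, the map $dG_{(x,y)}$ is an isomorphism if and only if $d(f^x)_y$ is.

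To handle this pointwise step I would first record the dimensions involved. As $\mathcal{M} \times \mathcal{N}$ and $\mathcal{M} \times \mathcal{L}$ are \'etale manifolds of dimensions $\mathrm{dim}(\mathcal{M}) + \mathrm{dim}(\mathcal{N})$ and $\mathrm{dim}(\mathcal{M}) + \mathrm{dim}(\mathcal{L})$, their internal tangent spaces at the relevant points have these respective dimensions, and likewise $T_y\mathcal{N}$ and $T_{f(x,y)}\mathcal{L}$ have dimensions $\mathrm{dim}(\mathcal{N})$ and $\mathrm{dim}(\mathcal{L})$. A linear map of finite-dimensional spaces is an isomorphism exactly when its rank equals the common dimension of its source and target. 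Thus $dG_{(x,y)}$ is an isomorphism if and only if $\mathrm{dim}(\mathcal{N}) = \mathrm{dim}(\mathcal{L})$ and $\mathsf{rank}\, dG_{(x,y)} = \mathrm{dim}(\mathcal{M}) + \mathrm{dim}(\mathcal{N})$, whereas $d(f^x)_y$ is an isomorphism if and only if $\mathrm{dim}(\mathcal{N}) = \mathrm{dim}(\mathcal{L})$ and $\mathsf{rank}\, d(f^x)_y = \mathrm{dim}(\mathcal{N})$.

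The bridge between these two characterizations is precisely Lemma \ref{lem-rank}, which gives $\mathsf{rank}\, dG_{(x,y)} = \mathrm{dim}(\mathcal{M}) + \mathsf{rank}\, d(f^x)_y$. Subtracting $\mathrm{dim}(\mathcal{M})$ shows that the condition $\mathsf{rank}\, dG_{(x,y)} = \mathrm{dim}(\mathcal{M}) + \mathrm{dim}(\mathcal{N})$ holds if and only if $\mathsf{rank}\, d(f^x)_y = \mathrm{dim}(\mathcal{N})$. Together with the observation that either isomorphism hypothesis already forces $\mathrm{dim}(\mathcal{N}) = \mathrm{dim}(\mathcal{L})$ (by equality of source and target dimensions), this establishes the pointwise equivalence; quantifying over all $(x,y)$ and chaining the equivalences above finishes the proof.

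I do not expect a genuine obstacle here, since the real analytic content is already packaged in Lemma \ref{lem-rank} and in the tangent-space characterization of \'etale maps. The one point that needs care is the dimension bookkeeping: one must keep the equality $\mathrm{dim}(\mathcal{N}) = \mathrm{dim}(\mathcal{L})$ explicit, because what is required on both sides is that the linear maps be \emph{isomorphisms}, not merely of full rank or surjective; tracking rank alone would not suffice to match source and target.
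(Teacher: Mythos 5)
Your proposal is correct and follows essentially the same route as the paper, whose proof consists precisely of combining Theorem \ref{the-IFT} (together with Corollary \ref{cor-tang}(i) for the converse direction) with the rank identity of Lemma \ref{lem-rank}. Your additional dimension bookkeeping, in particular making explicit that either isomorphism hypothesis forces $\mathrm{dim}(\mathcal{N})=\mathrm{dim}(\mathcal{L})$, is a careful spelling-out of details the paper leaves implicit, not a different argument.
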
 
	\begin{proof}
	This follows directly from Theorem \ref{the-IFT} and Lemma \ref{lem-rank}.
    \end{proof}

%	In other words, for every initial condition $ (x_0,y_0)\in \mathcal{M}\times \mathcal{N} $, any plot $ (P,Q):U\rightarrow \mathcal{M}\times \mathcal{L} $ and $ r_0\in U $ with $ (P,Q)(r_0)=(x_0,y_0) $,  there exist an open neighborhood $ V \subseteq U $ of $ r_0 $ and a unique local implicit plot} $ \sigma : V \rightarrow \mathcal{N} $ such that $ f(P(r),\sigma(r))=Q(r) $,   for every $ r \in V $ and $ \sigma(r_0)=y_0 $.

\begin{remark}
When $ \mathcal{M} $ and $ \mathcal{L} $ are usual manifolds,
under the hypotheses of Theorem \ref{the-ImFT}, 
for every initial condition $ (x_0,y_0)\in \mathcal{M}\times \mathcal{N} $,
we can find a D-open neighborhood  $ O\times O' \subseteq \mathcal{M}\times \mathcal{L} $ of $ (x_0,f(x_0,y_0)) $ and a unique smooth map $ \sigma : O\times O' \rightarrow \mathcal{N} $ such that $ f(x,\sigma(x,z))=z $,  
for every $ (x,z) \in O\times O' $ and $ \sigma(x_0,f(x_0,y_0))=y_0 $.  
In particular, if we take $ z_0=f(x_0,y_0) $,
we can find a \textit{unique} smooth map $ h:O\rightarrow \mathcal{N} $ with the property that
%
%define $ h:O\rightarrow \mathcal{N} $ by $ h(x)=\sigma(x,z_0) $, then 
%we conclude that
$ f(x,h(x))=z_0 $ for all $ x\in O $, and $ h(x_0)=y_0 $, just as in the usual case.
Thus, the classical formulation of the implicit function theorem is recovered.
\end{remark}

	\subsubsection{The generalized implicit function theorem and smooth set-valued maps}
One can easily generalize the implicit function theorem by replacing diffeological submersions and epimorphisms, and relaxing the uniqueness requirement.

\begin{proposition}\label{the-GImFp} 
	Suppose that $ \mathcal{M},\mathcal{N} $ and $ \mathcal{L} $ are diffeological \'{e}tale manifolds and let $ f:\mathcal{M}\times\mathcal{N}\rightarrow \mathcal{L} $ be a smooth map.
	For every $ (x,y)\in \mathcal{M}\times\mathcal{N}  $, $ d\big{(}f^{x}\big{)}_{y}  $ is an epimorphism if and only if
	the track map
	\begin{center}
		$ (\Pr_1,f):\mathcal{M}\times\mathcal{N}\rightarrow \mathcal{M}\times\mathcal{L},\quad (x,y)\mapsto (x,f(x,y)) $
	\end{center}
	is a diffeological submersion.
\end{proposition}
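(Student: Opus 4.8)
The plan is to imitate the proof of Theorem~\ref{the-ImFT}, simply weakening the word ``isomorphism/\'etale'' to ``epimorphism/submersion'' and adjusting the rank bookkeeping. The argument rests on two facts already established for diffeological \'{e}tale manifolds. First, the track map is a map \emph{between} \'etale manifolds, since a product of diffeological \'etale manifolds is again one; hence its source and target have finite-dimensional internal tangent spaces, with $ T_{(x,y)}(\mathcal{M}\times\mathcal{N})\cong\mathbb{R}^{\mathrm{dim}(\mathcal{M})+\mathrm{dim}(\mathcal{N})} $ and $ T_{(x,f(x,y))}(\mathcal{M}\times\mathcal{L})\cong\mathbb{R}^{\mathrm{dim}(\mathcal{M})+\mathrm{dim}(\mathcal{L})} $. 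Second, for smooth maps between \'etale manifolds, being a diffeological submersion is equivalent to having an epimorphic internal tangent map at every point: the ``only if'' is Proposition~\ref{pi2}, part~$\textbf{S}''$, valid for arbitrary diffeological spaces, and the ``if'' is Theorem~\ref{the-IFT}(i), valid on \'etale manifolds.

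First I would record the linear-algebraic translation of surjectivity. Because all of the relevant internal tangent spaces are finite-dimensional, the linear map $ d(\Pr_1,f)_{(x,y)} $ is an epimorphism if and only if its rank equals $ \mathrm{dim}(\mathcal{M})+\mathrm{dim}(\mathcal{L}) $, and likewise $ d\big{(}f^{x}\big{)}_{y} $ is an epimorphism if and only if $ \mathsf{rank}\,d\big{(}f^{x}\big{)}_{y}=\mathrm{dim}(\mathcal{L}) $. I would then feed these into Lemma~\ref{lem-rank}, which supplies
\[
\mathsf{rank}\,d(\Pr_1,f)_{(x,y)}=\mathrm{dim}(\mathcal{M})+\mathsf{rank}\,d\big{(}f^{x}\big{)}_{y}.
\]
Reading this identity at a fixed point $ (x,y) $, the track map has surjective internal tangent map there exactly when $ \mathsf{rank}\,d\big{(}f^{x}\big{)}_{y}=\mathrm{dim}(\mathcal{L}) $, i.e. exactly when $ d\big{(}f^{x}\big{)}_{y} $ is an epimorphism.

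Finally I would assemble the equivalence. The pointwise condition ``$ d\big{(}f^{x}\big{)}_{y} $ is an epimorphism for every $ (x,y) $'' is, by the previous paragraph, equivalent to ``$ d(\Pr_1,f)_{(x,y)} $ is an epimorphism for every $ (x,y) $''. Applying the submersion characterization recalled above to the smooth map $ (\Pr_1,f) $ between \'etale manifolds converts this latter condition into the statement that $ (\Pr_1,f) $ is a diffeological submersion, proving both implications at once.

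I do not expect a serious obstacle, as the statement is a direct analog of Theorem~\ref{the-ImFT}. The only point requiring genuine care is the passage between the categorical notion ``epimorphism'' and the concrete condition ``rank equals the dimension of the target'': this is legitimate only because we remain inside the \'etale-manifold class, where every internal tangent space is finite-dimensional and isomorphic to some $ \mathbb{R}^{n} $. That finiteness is precisely what makes the rank computation of Lemma~\ref{lem-rank} meaningful, and it is the structural feature that the generalization continues to exploit once the uniqueness (isomorphism) hypothesis is dropped.
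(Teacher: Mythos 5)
Your proposal is correct and follows essentially the same route the paper intends: the paper derives this proposition exactly as it derives Theorem~\ref{the-ImFT}, by combining Lemma~\ref{lem-rank} with Theorem~\ref{the-IFT}(i) for one direction and with the epimorphism property of diffeological submersions (Proposition~$\textbf{S}''$) for the other. Your explicit attention to the finite-dimensionality of the internal tangent spaces, which justifies translating ``epimorphism'' into a rank condition, is precisely the point on which the paper's argument silently rests.
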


We mention a connection between  the  generalized version of the implicit function theorem (Proposition \ref{the-GImFp}) and smooth set-valued maps in the context of diffeology.
%, which may be of interest in the analysis of the smooth dependence on initial conditions of weak solutions of functional equations arising from partial differential equations (see especially the treatment by J-P. Magnot \cite{Mag}).
Consider a  smooth two-variable map 
$ f:X\times Y\rightarrow Z $ between diffeological spaces and the equation $ f(x,y)=z $ for given $ (x,z)\in X\times Z $.
The solutions of this equation may not be unique. 
So this defines a set-valued map
	$ \phi:X\times Z\rightarrow\mathfrak{P}_{u}(Y) $  with
	$ \phi(x,z)=\{ y\in Y \mid f(x,y)=z \}, $
where $  \mathfrak{P}_{u}(Y) $ denotes the power set	$  \mathfrak{P}(Y) $ endowed with the union power set diffeology (see \cite{AM}).
The question then naturally arises when the map $ \phi $ is smooth.
It is straightforward to check that
%	\begin{theorem}\label{GIFT}
	%		Let		$ f:X\times Y\rightarrow Z $		be 	a  smooth two-variable map. 
	the set-valued map $ \phi:X\times Z\rightarrow\mathfrak{P}_{u}(Y)  $
	%		\begin{center}
		%			$ \phi:X\times Z\rightarrow\mathfrak{P}_{u}(Y) \qquad $  with
		%			$ \qquad \phi(x,z)=\{ y\in Y \mid f(x,y)=z \} $
		%		\end{center}
	is smooth if and only if 
	the track map
	\begin{center}
		$ (\Pr_1,f):X\times Y\rightarrow X\times Z,\quad (x,y)\mapsto (x,f(x,y)) $
	\end{center}
	is a diffeological submersion.
	%	In other words, for every initial condition $ (x_0,y_0)\in X\times Y $, any plot $ (P,Q):U\rightarrow X\times Z $ and $ r_0\in U $ with $ (P,Q)(r_0)=(x_0,y_0) $,  there exist an open neighborhood $ V \subset U $ of $ r_0 $ and a  \textbf{local implicit plot} $ \sigma : V \rightarrow Y $ such that $ f(P(r),\sigma(r))=Q(r) $,  	for every $ r \in V $ and $ \sigma(r_0)=y_0 $.
	%	\end{theorem}
	Thanks to Proposition \ref{the-GImFp}, in the case of diffeological \'{e}tale manifolds   the next proposition is immediate.
	
	\begin{proposition} 
		Suppose that $ \mathcal{M},\mathcal{N} $ and $ \mathcal{L} $ are diffeological \'{e}tale manifolds and let $ f:\mathcal{M}\times\mathcal{N}\rightarrow \mathcal{L} $ be a smooth map.
		The set-valued map
\begin{center}
			$ \phi:\mathcal{M}\times \mathcal{L}\rightarrow\mathfrak{P}_{u}(\mathcal{N}) \qquad $ given by $ \qquad\phi(x,z)=\{ y\in \mathcal{N} \mid f(x,y)=z \} $
\end{center}
		is smooth
		if and only if
		 $ f^{x}:\mathcal{N}\rightarrow \mathcal{L}   $ is a diffeological submersion, for every $ x\in \mathcal{M}  $.
	\end{proposition}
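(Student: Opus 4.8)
The plan is to obtain the result by concatenating three equivalences, each of which is already available in the excerpt, so the argument is essentially a matter of correctly threading the quantifiers. First I would invoke the characterization of smoothness for the set-valued map recorded just above the statement: specialized to $X=\mathcal{M}$, $Y=\mathcal{N}$, $Z=\mathcal{L}$, it asserts that $\phi$ is smooth if and only if the track map $(\Pr_1,f):\mathcal{M}\times\mathcal{N}\rightarrow\mathcal{M}\times\mathcal{L}$ is a diffeological submersion. This immediately reduces the claim to a condition on the track map, and no feature of the union power set diffeology $\mathfrak{P}_u(\mathcal{N})$ needs to be reopened.

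Next, since $\mathcal{M}$, $\mathcal{N}$, and $\mathcal{L}$ are diffeological \'etale manifolds, Proposition \ref{the-GImFp} applies verbatim and replaces ``$(\Pr_1,f)$ is a diffeological submersion'' by the pointwise linear condition ``$d\big{(}f^{x}\big{)}_{y}$ is an epimorphism for every $(x,y)\in\mathcal{M}\times\mathcal{N}$.'' The only genuine step is then to decouple this quantifier: the condition on all pairs $(x,y)$ is plainly equivalent to requiring, for each fixed $x\in\mathcal{M}$, that $d\big{(}f^{x}\big{)}_{y}$ be an epimorphism for all $y\in\mathcal{N}$. For such a fixed $x$, the slice map $f^{x}:\mathcal{N}\rightarrow\mathcal{L}$ is smooth, being the composite of $f$ with the smooth inclusion $\iota_x:\mathcal{N}\rightarrow\mathcal{M}\times\mathcal{N}$, $y\mapsto(x,y)$, and it is a map between the \'etale manifolds $\mathcal{N}$ and $\mathcal{L}$.

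Because the hypotheses of the \'etale-manifold inverse function theorem are met for each slice, Theorem \ref{the-IFT}(i) supplies the implication ``$d\big{(}f^{x}\big{)}_{y}$ epi for all $y$ $\Rightarrow$ $f^{x}$ is a diffeological submersion,'' while Proposition \ref{pi2}, part $\textbf{S}''$, supplies the converse ``$f^{x}$ a diffeological submersion $\Rightarrow$ $d\big{(}f^{x}\big{)}_{y}$ epi for all $y$.'' Chaining the three equivalences gives ``$\phi$ smooth $\iff$ $(\Pr_1,f)$ a diffeological submersion $\iff$ $d(f^x)_y$ epi for all $(x,y)$ $\iff$ $f^{x}$ a diffeological submersion for every $x\in\mathcal{M}$,'' which is the assertion. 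The main (and only) subtlety is the observation that the slice condition on $d(f^x)_y$ separates over $x$ and that each slice $f^x$ lands between \'etale manifolds, so that Theorem \ref{the-IFT} is legitimately applicable slicewise; everything else is a direct citation of the preceding results.
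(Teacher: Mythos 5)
Your proposal is correct and follows the same route as the paper: the paper likewise combines the smoothness criterion for $\phi$ (equivalence with the track map $(\Pr_1,f)$ being a diffeological submersion) with Proposition \ref{the-GImFp}, and your slicewise application of Theorem \ref{the-IFT}(i) together with $\textbf{S}''$ is exactly the step the paper leaves implicit when it calls the result ``immediate.'' Your write-up simply makes explicit the quantifier decoupling and the fact that each slice $f^{x}$ is a smooth map between \'etale manifolds, which is a faithful elaboration rather than a different argument.
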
 

This reformulation of the implicit function theorem in the setting of diffeology may be useful in the analysis of the smooth dependence on initial conditions of weak solutions of functional equations arising from partial differential equations (see especially the treatment by J-P. Magnot \cite{Mag}).

	\subsection{The fundamental theorem on flows}
	We here discuss vector fields, integral curves, and flows in diffeology and show an application of them.
	
%	Let us first recall the internal tangent bundles of a diffeological space (see \cite{Hec,CW} for the details).

\begin{definition}
	Let $ X $ be a diffeological space.
If $ \gamma:I\rightarrow X $ is a curve defined on an $ 1 $-domain, then we have the commutative diagram
$$
\xymatrix{
	TX\ar[r]^{\pi_X}	&  X \\
	TI\ar[u]^{d\gamma}\ar[r]^{\pi_I} & I \ar[u]_{\gamma}\ar@/^0.3cm/[l]^{\frac{d}{dt}}\\
}
$$
where $ \frac{d}{dt}:I\rightarrow TI $ is the vector field sending $ s\in I $ to the standard coordinate basis vector in $ T_s\mathbb{R} $.
The \textbf{velocity} of $ \gamma $ at $ s $, denoted by $ \gamma'(s) $,
is defined by $ \gamma'(s):=d\gamma(\frac{d}{dt}(s)) $.
	An \textbf{integral curve} of a vector field $ \Lambda:X\rightarrow TX $ on $ X $  is a smooth curve $ \gamma:I\rightarrow X $ such that for all $ s\in I $,
	$ \gamma'(s)=\Lambda(\gamma(s)) $.
\end{definition}

\begin{lemma}\label{lem-1}
	The map $ \alpha:1$-$\mathrm{Plots}_0(X)\rightarrow T^HX $, $ \gamma \mapsto \gamma'(0) $, is smooth.
\end{lemma}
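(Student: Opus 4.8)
The plan is to verify smoothness directly from the definition: I would show that for every plot $\rho\colon U\to 1$-$\mathrm{Plots}_0(X)$ of the source space, the composite $\alpha\circ\rho\colon U\to T^HX$ is a plot for Hector's diffeology. Since Hector's diffeology is generated by the maps $dP$ attached to the plots $P$ of $X$, it suffices to exhibit $\alpha\circ\rho$, locally near each point of $U$, as a reparametrization $dP\circ\sigma$ of one of these generating plots by a smooth map $\sigma$ between domains.

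First I would unwind what a plot $\rho$ of $1$-$\mathrm{Plots}_0(X)$ is. By the subspace diffeology inherited from the diffeology of plots, $\rho$ is a smooth family of centered $1$-plots: each $\rho(r)$ is a curve $\gamma_r$ defined on a neighborhood of $0$. Applying the definition of a smooth family at an arbitrary $r_0\in U$ with $s_0=0$, I obtain open neighborhoods $V\subseteq U$ of $r_0$ and $W\subseteq\mathbb{R}$ of $0$ such that $W\subseteq\mathrm{dom}(\rho(r))$ for all $r\in V$ and the two-variable parametrization $P\colon V\times W\to X$, $P(r,s)=\rho(r)(s)$, is a plot in $X$.

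Next I would compute $\alpha\circ\rho$ on $V$ in terms of $dP$. For fixed $r\in V$, the restriction $\gamma_r|_W$ equals $P\circ\iota_r$, where $\iota_r\colon W\to V\times W$, $\iota_r(s)=(r,s)$. Since internal tangent maps depend only on germs and are functorial (Proposition \ref{pro-gtm-sm}(b)), I get $\gamma_r'(0)=d\gamma_r(0,1)=dP\bigl(d\iota_r(0,1)\bigr)=dP\bigl((r,0),(0,1)\bigr)$, using that $d\iota_r(0,1)=((r,0),(0,1))$ under the identification $T(V\times W)=(V\times W)\times(\mathbb{R}^{\dim U}\times\mathbb{R})$. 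Hence, introducing the smooth map $\sigma\colon V\to T(V\times W)$, $\sigma(r)=((r,0),(0,1))$, I obtain the identity $\alpha\circ\rho|_V=dP\circ\sigma$. Because $dP$ is a generating plot of $T^HX$ and $\sigma$ is smooth between domains, $dP\circ\sigma$ is a plot by the smooth compatibility axiom; as this holds near every point of $U$, locality (axiom D3) yields that $\alpha\circ\rho$ is a plot in $T^HX$, so $\alpha$ is smooth.

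The main obstacle I anticipate is the bookkeeping in the third step: correctly identifying the velocity $\gamma_r'(0)$, defined via the standard field $\frac{d}{dt}$ on the $1$-domain, with the evaluation of the generating map $dP$ along the constant $s$-direction. This amounts to tracking the chain rule $d\gamma_r=dP\circ d\iota_r$ carefully through the bundle identifications and extracting precisely the partial-derivative-in-$s$ direction. Everything else is routine once the smooth family condition has been used to produce the joint plot $P(r,s)$.
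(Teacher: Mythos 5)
Your proposal is correct and follows essentially the same route as the paper's proof: unwind the smooth-family condition at $s_0=0$ to produce the joint plot $P(r,s)=\rho(r)(s)$ on $V\times W$, then exhibit $\alpha\circ\rho|_V$ as $dP$ precomposed with the smooth map $r\mapsto\bigl((r,0),(0,1)\bigr)$ (the paper's $(\lambda,\mu)$), using the chain rule and germ-invariance of internal tangent maps exactly as you describe. The bookkeeping step you flag as the main obstacle is precisely the computation the paper carries out, so there is no gap.
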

\begin{proof}
	Let $ \rho:U\rightarrow 1$-$\mathrm{Plots}_0(X) $ be a plot in $ 1$-$\mathrm{Plots}_0(X) $ and $ r_0\in U $.
	By definition, there exist an open neighborhood $  V\subseteq U $ of $ r_0 $
	and a number $ \epsilon>0 $ such that $ (-\epsilon,\epsilon) \subseteq\mathrm{dom}(\rho(r)) $ for all $ r\in V $, and $ \overline{\rho}:(r,s)\mapsto \rho(r)(s)$ defined on $ V\times (-\epsilon,\epsilon) $ is a plot in $ X $.
	One can see that the following diagram commutes:
	$$
	\xymatrix{
		1-\mathrm{Plots}_0(X)\ar[r]^{\alpha} &  T^HX \\
		V\ar[u]^{\rho|_{V}}\ar[r]_{(\lambda,\mu)\qquad} & T(V\times (-\epsilon,\epsilon)) \ar[u]^{d\overline{\rho}}\\
	}
	$$
	where 
	$ \lambda:V\rightarrow V\times (-\epsilon,\epsilon), r\mapsto(r,0) $ and
	$ \mu:V\rightarrow \mathbb{R}^n\times \mathbb{R}, r\mapsto(0,\frac{d}{dt}(0)) $, $ n=\mathrm{dim}(V) $.
	In fact, for all $ r\in V $ we have
	\begin{align*}
		\alpha\circ\rho|{V}(r) &= \rho(r)'(0)\\
		&= \Big{(}\rho(r)(0),d\rho(r)_{0}(\frac{d}{dt}(0))\Big{)}\\
		&= \Big{(}\rho(r)(0),d(\overline{\rho}\circ i_r)_0(\frac{d}{dt}(0))\Big{)}\\
		&= \Big{(}\rho(r)(0),d\overline{\rho}_{(r,0)}\circ d(i_r)_0(\frac{d}{dt}(0))\Big{)}\\
		&= \Big{(}\rho(r)(0),d\overline{\rho}_{(r,0)}(0,\frac{d}{dt}(0))\Big{)}\\
		&= \Big{(}\overline{\rho}\circ\lambda(r),d\overline{\rho}_{\lambda(r)}(\mu(r))\Big{)}\\
		&=	d\overline{\rho}\circ (\lambda,\mu)(r),\\
	\end{align*}
	in which
	$ i_r:(-\epsilon,\epsilon)\rightarrow V\times (-\epsilon,\epsilon), s\mapsto(r,s) $.
	This follows that $ \alpha\circ\rho|_{V} $ is a plot in $ T^HX $.
\end{proof}

\begin{definition}
	A smooth map $ Fl:X\rightarrow 1$-$\mathrm{Plots}_0(X)  $ is said to be a \textbf{flow} on $ X $
	if it satisfies the following \textbf{group laws}: 
	\begin{enumerate}
		\item[$ \bullet  $] 
		For all $ x\in X $, $ Fl(x)(0)=x $,
		\item[$ \bullet  $] 
		If $ s\in\mathrm{dom}(Fl(x)) $ and $ t\in\mathrm{dom}\Big{(}Fl\big{(}Fl(x)(s)\big{)}\Big{)} $ such that $ s+t\in\mathrm{dom}(Fl(x)) $, 
		then 
\begin{center}
			$ Fl\big{(}Fl(x)(s)\big{)}(t)=Fl(x)(s+t) $.
\end{center}
	\end{enumerate}
A flow on $ X $ is called a \textbf{complete}, 
	whenever for all $ x\in X $, $ \mathrm{dom}(Fl(x))=\mathbb{R} $. 
	Denote by  $ \mathsf{Flows}(X) $ the space of flows on $ X $ equipped with the subspace diffeology inherited from $ \mathrm{C}^{\infty}(X,1$-$\mathrm{Plots}_0(X)) $.
\end{definition}

\begin{proposition}
	Let $ Fl:X\rightarrow$  $ 1 $-$\mathrm{Plots}_0(X) $ be a flow.
	Then the map $ \Lambda:X\rightarrow T^HX $, taking $ x\in X $ to $ \big{(}Fl(x)\big{)}'(0) $, is a vector field on $ X $, called the \textbf{infinitesimal generator} of $ Fl $.
	Moreover, for each $ x\in X $, $ Fl(x) $  is an integral curve of  $ \Lambda $.
\end{proposition}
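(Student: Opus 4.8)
The plan is to recognize $\Lambda$ as the composite $\alpha\circ Fl$ and then extract both required properties from the group laws, using Lemma \ref{lem-1} for smoothness. First I would verify that $\Lambda$ is a vector field. Since $Fl:X\rightarrow 1\text{-}\mathrm{Plots}_0(X)$ is smooth by hypothesis and $\alpha:1\text{-}\mathrm{Plots}_0(X)\rightarrow T^HX$, $\gamma\mapsto\gamma'(0)$, is smooth by Lemma \ref{lem-1}, the map $\Lambda=\alpha\circ Fl$ is a smooth map into $T^HX$. It then remains to check $\pi_X\circ\Lambda=\mathrm{id}_X$. From the definition of velocity, the commutative diagram $\pi_X\circ d\gamma=\gamma\circ\pi_I$ gives $\pi_X(\gamma'(s))=\gamma(s)$ for any curve $\gamma$; applying this to $\gamma=Fl(x)$ at $s=0$ and invoking the first group law $Fl(x)(0)=x$ yields $\pi_X(\Lambda(x))=\pi_X\big((Fl(x))'(0)\big)=Fl(x)(0)=x$. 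Hence $\Lambda$ is a vector field on $X$.

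Next I would show that each $Fl(x)$ is an integral curve of $\Lambda$. Fix $x$, write $\gamma=Fl(x)$, and fix $s\in\mathrm{dom}(\gamma)$. Consider the centered $1$-plot $\eta:=Fl(\gamma(s))=Fl\big(Fl(x)(s)\big)$. Because $\mathrm{dom}(\gamma)$ is open and $s\in\mathrm{dom}(\gamma)$, the second group law applies for all $t$ in a neighborhood of $0$ and gives $\eta(t)=Fl(x)(s+t)=\gamma(s+t)$; that is, $\eta$ and $\gamma\circ\tau_s$ share the same germ at $0$, where $\tau_s(t)=s+t$. Since the velocity at $0$ depends only on this germ, the chain rule $d(\gamma\circ\tau_s)=d\gamma\circ d\tau_s$ from Proposition \ref{pro-gtm-sm}(b) lets me compute
\[
\Lambda(\gamma(s))=\eta'(0)=d\gamma\Big(d\tau_s\big(\tfrac{d}{dt}(0)\big)\Big)=d\gamma\big(\tfrac{d}{dt}(s)\big)=\gamma'(s),
\]
where the middle equality uses $d\tau_s\big(\tfrac{d}{dt}(0)\big)=\tfrac{d}{dt}(s)$, valid because the translation $\tau_s$ has unit derivative. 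This is precisely the integral curve equation $\gamma'(s)=\Lambda(\gamma(s))$, so $Fl(x)$ integrates $\Lambda$.

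The first paragraph is routine once $\alpha\circ Fl$ is identified. The hard part will be the reparametrization identity in the second paragraph: the group law holds only on the intersection of several domains, so before invoking locality of the velocity I must confirm that this intersection contains a neighborhood of $t=0$, which follows from $\mathrm{dom}(\gamma)$ being open and containing $s$. Once the germ identity $\eta=\gamma\circ\tau_s$ near $0$ is secured, the chain rule collapses the entire claim to the trivial fact that a translation acts as the identity on velocities, so no further analytic input is needed.
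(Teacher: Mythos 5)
Your proof is correct and follows essentially the same route as the paper's: smoothness of $\Lambda=\alpha\circ Fl$ via Lemma \ref{lem-1}, the projection identity from the first group law, and the integral curve property by comparing $Fl\big(Fl(x)(s)\big)$ with $Fl(x)\circ\tau_s$ near $0$ and applying the chain rule to the translation $\tau_s$. The only cosmetic difference is that you phrase the key step explicitly as a germ-equality argument, while the paper carries out the same computation directly; both rest on the same domain-openness observation.
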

\begin{proof}
	First of all, $ \Lambda $ is smooth by Lemma \ref{lem-1}. Also, we have
	\begin{center}
		$ \pi_X\circ \Lambda(x)=\pi_X \Big{(}\big{(}Fl(x)\big{)}'(0)\Big{)}=\pi_X \Big{(}d(Fl(x))(\frac{d}{dt}(0))\Big{)}= Fl(x)(0)=x.  $
	\end{center}
	Thus, $ \Lambda $ is a vector field on $ X $.

	Next we show that $ Fl(x) $  is an integral curve of  $ \Lambda $.
	Fix any $ s\in\mathrm{dom}(Fl(x)) $.
	Since  $ 0\in\mathrm{dom}\Big{(}Fl\big{(}Fl(x)(s)\big{)}\Big{)} $ and $ \tau_s:\mathbb{R}\rightarrow\mathbb{R}, t\mapsto t+s $ is a continuous  map taking $ 0 $ to $ s $,
	there is a sufficiently small number $ \epsilon>0 $ such that for all $ t\in(-\epsilon,\epsilon) $, we have $ t\in\mathrm{dom}\Big{(}Fl\big{(}Fl(x)(s)\big{)}\Big{)} $ and  $ \tau_s(t)=t+s\in \mathrm{dom}(Fl(x)) $.
	By the group law, for all $ t\in(-\epsilon,\epsilon) $,
\begin{center}
		 $ Fl\big{(}Fl(x)(s)\big{)}(t)=Fl(x)(t+s)=\big{(}Fl(x)\circ\tau_s\big{)}(t) $.
\end{center}
	As $ d\tau_s $ takes the standard coordinate basis vector of $ T_0(\mathbb{R}) $ to that of $ T_s(\mathbb{R}) $, we can write
	\begin{align*}
		\Lambda\big{(}Fl(x)(s)\big{)} &= \big{(}Fl(Fl(x)(s))\big{)}'(0)\\
		&= \big{(}Fl(x)\circ\tau_s\big{)}'(0)\\
		&= d(Fl(x)\circ\tau_s)(\frac{d}{dt}(0))\\
		&= dFl(x)\circ d\tau_s(\frac{d}{dt}(0))\\
		&= dFl(x) (\frac{d}{dt}(s))\\
		&= \big{(}Fl(x)\big{)}'(s)).\\
	\end{align*}
\end{proof}
\begin{proposition}
	The map $ \beta:\mathsf{Flows}(X)\rightarrow \mathfrak{X}(X) $ taking a flow to its infinitesimal generator is smooth.
\end{proposition}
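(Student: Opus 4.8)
The plan is to recognize $\beta$ as nothing but post-composition by the smooth map $\alpha$ of Lemma \ref{lem-1}, restricted to the relevant subspaces. Indeed, for a flow $Fl$ on $X$, its infinitesimal generator sends $x$ to $\big(Fl(x)\big)'(0)=\alpha\big(Fl(x)\big)$, so that $\beta(Fl)=\alpha\circ Fl$. Hence $\beta$ is the restriction of the push-forward map $\alpha_{*}:\mathrm{C}^{\infty}(X,1\text{-}\mathrm{Plots}_0(X))\to\mathrm{C}^{\infty}(X,T^HX)$, $g\mapsto\alpha\circ g$, to the subspaces $\mathsf{Flows}(X)$ and $\mathfrak{X}(X)$.

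First I would establish that $\alpha_{*}$ is smooth, which is the only place the functional diffeology is used. Let $\rho:U\to\mathrm{C}^{\infty}(X,1\text{-}\mathrm{Plots}_0(X))$ be any plot for the functional diffeology and let $P:V\to X$ be an arbitrary plot in $X$. By definition of the functional diffeology, $\rho\circledcirc P:U\times V\to 1\text{-}\mathrm{Plots}_0(X)$, $(r,v)\mapsto\rho(r)(P(v))$, is a plot in $1\text{-}\mathrm{Plots}_0(X)$. Composing with the smooth map $\alpha$ shows that $\alpha\circ(\rho\circledcirc P)=(\alpha_{*}\circ\rho)\circledcirc P$ is a plot in $T^HX$. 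As $P$ was arbitrary, $\alpha_{*}\circ\rho$ is a plot in $\mathrm{C}^{\infty}(X,T^HX)$, so $\alpha_{*}$ is smooth.

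It then remains to pass to the subspaces. Since $\mathsf{Flows}(X)$ carries the subspace diffeology inherited from $\mathrm{C}^{\infty}(X,1\text{-}\mathrm{Plots}_0(X))$, its inclusion is smooth, and so the composite $\mathsf{Flows}(X)\hookrightarrow\mathrm{C}^{\infty}(X,1\text{-}\mathrm{Plots}_0(X))\to\mathrm{C}^{\infty}(X,T^HX)$ with $\alpha_{*}$ is smooth. By the previous proposition, this composite takes values in $\mathfrak{X}(X)$; and because $\mathfrak{X}(X)$ likewise carries the subspace diffeology from $\mathrm{C}^{\infty}(X,T^HX)$, the corestriction to that subspace, namely $\beta:\mathsf{Flows}(X)\to\mathfrak{X}(X)$, is again smooth.

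I expect no genuine obstacle here: the entire analytic content, namely that differentiating a smooth family of centered $1$-plots at $0$ depends smoothly on the family, is already packaged in Lemma \ref{lem-1}. The only care required is the bookkeeping with the exponential law, that is, identifying $(\alpha_{*}\circ\rho)\circledcirc P$ with $\alpha\circ(\rho\circledcirc P)$, and observing that smoothness is inherited by subspaces so that it descends from $\alpha_{*}$ to $\beta$.
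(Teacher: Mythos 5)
Your proof is correct and takes essentially the same route as the paper: both identify $\beta(Fl)=\alpha\circ Fl$ as post-composition by the smooth map $\alpha$ of Lemma \ref{lem-1} and conclude by smoothness of composition for the functional diffeology. The only difference is that the paper simply cites the smoothness of the composition operator (\cite[\S 1.59]{PIZ}) where you verify the smoothness of the push-forward $\alpha_{*}$ directly from the definition and spell out the subspace bookkeeping explicitly.
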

\begin{proof}
	In fact, $ \beta $ maps  a flow
	$ Fl:X\rightarrow$  $ 1 $-$\mathrm{Plots}_0(X) $ to the composition  $ \alpha\circ Fl $, where $ \alpha $ is given in Lemma \ref{lem-1}. Since the composition operator is smooth by \cite[\S 1.59]{PIZ}, $ \beta $ is automatically smooth.
\end{proof}

Now we show that on a diffeological \'{e}tale manifold, every vector field is the infinitesimal generator of a unique flow. 

\begin{theorem}\label{the-flow} (Fundamental theorem on flows for diffeological \'{e}tale manifolds).
	Let $ \mathcal{M} $ be a diffeological \'{e}tale manifold and $ \Lambda:\mathcal{M}\rightarrow T^H\mathcal{M} $ be a vector field on $ \mathcal{M} $.
	\begin{enumerate}
	\item[(a)] 
	For each $ x\in\mathcal{M} $, there exists a unique maximal integral curve $ \gamma:I\rightarrow\mathcal{M}, $ passing through $ x $ at $ 0 $, in the sense that it cannot be extended to an integral curve on any larger open interval.
	\item[(b)] 
	The map
	$ Fl:\mathcal{M}\rightarrow 1$-$\mathrm{Plots}_0(\mathcal{M}) $ taking any $ x\in\mathcal{M} $ to the unique maximal integral curve passing through $ x $ at $ 0 $ is a flow on $ \mathcal{M} $, and $ \bigcup_{x\in\mathcal{M}}\{x\}\times\mathrm{dom}(Fl(x)) $ is a D-open neighborhood  of $ \mathcal{M}\times\{0\} $ in $ \mathcal{M}\times\mathbb{R} $.
	\end{enumerate}
\end{theorem}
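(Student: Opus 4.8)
The plan is to reduce the whole statement to the classical fundamental theorem on flows over Euclidean domains, using diffeological \'etale charts to localize and the unique local lifting property of diffeological \'etale maps (Theorem \ref{the-et-chr}) to glue the local pictures together. Fix a diffeological \'etale chart $\varphi:U\rightarrow\mathcal{M}$. Since $\Lambda\circ\varphi:U\rightarrow T^H\mathcal{M}$ is a plot and $d\varphi:TU\rightarrow T^H\mathcal{M}$ is itself a diffeological \'etale map by Proposition \ref{p-gitm-etl}, I would lift $\Lambda\circ\varphi$ through $d\varphi$: at each $r_0\in U$ the isomorphism $d\varphi_{r_0}$ (Corollary \ref{cor-tang}(i)) determines a unique $v_0$ with $d\varphi(r_0,v_0)=\Lambda(\varphi(r_0))$, and the \'etale lifting yields a unique local lift $\ell$ with $\ell(r_0)=(r_0,v_0)$ and $d\varphi\circ\ell=\Lambda\circ\varphi$. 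Because $\varphi$ is a diffeological injection, Corollary \ref{cor-inj} forces $\pi_U\circ\ell=\mathrm{id}$ near $r_0$, so $\ell(r)=(r,\Lambda_\varphi(r))$ for a smooth field $\Lambda_\varphi$; uniqueness of lifts patches these into a genuine smooth vector field $\Lambda_\varphi$ on $U$ satisfying $d\varphi\circ\Lambda_\varphi=\Lambda\circ\varphi$.

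Next I would transport integral curves. Differentiating $\varphi\circ c$ shows that $\varphi$ carries $\Lambda_\varphi$-integral curves in $U$ to $\Lambda$-integral curves in $\mathcal{M}$, and conversely any $\Lambda$-integral curve $\gamma$ with $\gamma(0)=\varphi(r)$ lifts (\'etale lifting) to $\tilde\gamma$ with $\tilde\gamma(0)=r$, which is forced to be a $\Lambda_\varphi$-integral curve since $d\varphi$ is injective. Moreover, if $\psi\circ h=\varphi$ is the unique \'etale transition between two charts (a local diffeomorphism between domains by Lemma \ref{lem-dinj-dsub}(4)), then the defining identities give $dh\circ\Lambda_\varphi=\Lambda_\psi\circ h$, so $h$ sends $\Lambda_\varphi$-integral curves to $\Lambda_\psi$-integral curves. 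Existence in part (a) then follows by pushing the classical maximal integral curve of $\Lambda_\varphi$ through the point $r$ with $\varphi(r)=x$ forward by $\varphi$, and extending across charts by these transitions; the union of the domains of all integral curves through $x$, glued by uniqueness, is the desired maximal integral curve.

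The main obstacle is uniqueness. Given two integral curves $\gamma_1,\gamma_2$ through $x$, the set $A$ of times at which they agree on a neighborhood is nonempty and open: near an agreement point I lift both curves into one chart with a common basepoint and combine classical ODE uniqueness in the (Hausdorff) domain with uniqueness of \'etale lifts. The genuine difficulty is \emph{closedness} of $A$, because a diffeological \'etale manifold need not be D-Hausdorff (for instance $\mathbb{T}_\alpha$ carries the indiscrete D-topology), so I cannot pass to a limit point directly in $\mathcal{M}$. I would resolve this by lifting $\gamma_1$ into a chart $\varphi$ around $\gamma_1(s_0)$ to get $\tilde\gamma_1$ and $\gamma_2$ into a chart $\psi$ around $\gamma_2(s_0)$ to get $\bar\gamma_2$; for times slightly below a limit point $s_0$ the charts overlap along an \'etale transition $h$, and since the domains $U$ and $V$ \emph{are} Hausdorff, the continuous curves $h\circ\tilde\gamma_1$ and $\bar\gamma_2$ — both $\Lambda_\psi$-integral curves agreeing for nearby times — extend up to $s_0$, forcing $\varphi(\tilde\gamma_1(s_0))=\psi(\bar\gamma_2(s_0))$, that is $\gamma_1(s_0)=\gamma_2(s_0)$ with agreement near $s_0$. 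Thus $A$ is clopen in the connected domain and $\gamma_1=\gamma_2$, which simultaneously legitimizes the definition of $Fl$.

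Finally, for part (b) the group laws are immediate from uniqueness: $Fl(x)(0)=x$ by construction, and $t\mapsto Fl\big(Fl(x)(s)\big)(t)$ and $t\mapsto Fl(x)(s+t)$ are both maximal integral curves through $Fl(x)(s)$ at $t=0$, hence coincide. For smoothness of $Fl:\mathcal{M}\rightarrow 1$-$\mathrm{Plots}_0(\mathcal{M})$ I would test against a plot $\rho:W\rightarrow\mathcal{M}$: lifting $\rho$ locally through a chart to $\tilde\rho$, the map $(w,s)\mapsto\varphi\big(\Phi(\tilde\rho(w),s)\big)$, with $\Phi$ the classical smooth local flow of $\Lambda_\varphi$, is smooth and represents $Fl\circ\rho$ for small times, and covering a prescribed time interval by finitely many charts via the transition maps propagates smoothness. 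The openness of $\bigcup_{x\in\mathcal{M}}\{x\}\times\mathrm{dom}(Fl(x))$ in $\mathcal{M}\times\mathbb{R}$ follows by pulling back through $\varphi\times\mathrm{id}_{\mathbb{R}}$: the preimage is exactly the open flow domain of $\Lambda_\varphi$ in $U\times\mathbb{R}$, and since the charts $\varphi$ form a covering generating family, D-openness of the flow domain results.
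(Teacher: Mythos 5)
Your overall strategy is the same as the paper's: fix a diffeological \'{e}tale chart $\varphi:U\rightarrow\mathcal{M}$, lift $\Lambda\circ\varphi$ through the diffeological \'{e}tale map $d\varphi$ (Proposition \ref{p-gitm-etl}), use diffeological injectivity of $\varphi$ (Corollary \ref{cor-inj}) to see the lift is a genuine vector field on a domain, invoke the classical ODE theorems there, and push everything back through $\varphi$; your treatment of existence, the group laws, smoothness of $Fl$, and D-openness of the flow domain runs parallel to the paper's. Where you genuinely go beyond the paper is in uniqueness: the paper proves only \emph{local} uniqueness at $0$ and then asserts that the family of all integral curves through $x$ at $0$ is compatible, whereas you correctly observe that compatibility on whole common domains requires a clopen argument whose closedness half is the real danger, precisely because $D(\mathcal{M})$ need not be Hausdorff. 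Identifying that as the crux is right, and it is more than the paper's own ``Maximality'' step makes explicit.

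However, your resolution of the closedness step does not work, and in fact cannot work. The failure is at ``forcing $\varphi(\tilde\gamma_1(s_0))=\psi(\bar\gamma_2(s_0))$'': the transition $h$ with $\psi\circ h=\varphi$ is defined, and satisfies that identity, only near the points $\tilde\gamma_1(t)$ with $t<s_0$; nothing guarantees that the germ of $h$ propagated along the curve extends to $\tilde\gamma_1(s_0)$ compatibly with $\psi\circ h=\varphi$. Inside the Hausdorff domain $V$ you do get $\lim_{t\rightarrow s_0^-}h(\tilde\gamma_1(t))=\bar\gamma_2(s_0)$, but the identity $\varphi(\tilde\gamma_1(t))=\psi(\bar\gamma_2(t))$ for $t<s_0$ does not pass to the limit, since limits in $D(\mathcal{M})$ are not unique. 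A concrete counterexample shows the statement itself needs an extra separation hypothesis: let $\mathcal{M}$ be the line with two origins, the quotient of $\mathbb{R}\sqcup\mathbb{R}$ obtained by gluing with the identity along $\mathbb{R}\setminus\{0\}$. The two injections $\varphi_1,\varphi_2:\mathbb{R}\rightarrow\mathcal{M}$ are diffeological \'{e}tale maps (lifts exist because a plot hitting an origin locally factors through the corresponding copy, and are unique by injectivity), so $\mathcal{M}$ is a diffeological \'{e}tale $1$-manifold in the sense of the paper. The vector field $\Lambda$ determined by $\Lambda(\varphi_i(t))=d(\varphi_i)_t\big(\frac{d}{dt}(t)\big)$ is well defined and smooth, yet $\gamma_1(t)=\varphi_1(t-1)$ and $\gamma_2(t)=\varphi_2(t-1)$ are two \emph{distinct} maximal integral curves passing through the common point $\varphi_1(-1)=\varphi_2(-1)$ at $t=0$: they agree for $t<1$ and branch at $t=1$. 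So the agreement set is open but not closed, uniqueness in part (a) fails, and $Fl$ in part (b) is not well defined. The gap you flagged is therefore not repairable by chart bookkeeping alone; it is the same missing ingredient that the paper's compatibility assertion would need, and any correct version of the theorem must add a hypothesis ruling out such non-Hausdorff branching.
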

\begin{proof}
The proof is inspired by that of usual manifolds  (see, e.g., \cite[Theorem 9.12]{Lee} or \cite{Mic}), but there are technical issues that must be treated.
	Throughout this proof, fix $ x\in\mathcal{M} $ and take any diffeological \'{e}tale chart $ \varphi:U\rightarrow \mathcal{M} $ with $ \varphi(r)=x $  for some $ r\in U $,
	so that $ \Lambda\circ\varphi $ is a plot in $ T^H\mathcal{M} $.
	By Proposition \ref{p-gitm-etl}, the tangent map $ d\varphi $ is a diffeological \'{e}tale map. So for $ (r,d\varphi_r^{-1}(\Lambda(x))) $, there is a local
	lift plot $ \widetilde{\Lambda}:V\rightarrow TU $ defined on an open neighborhood $ V\subseteq U $ of $ r $ such that
	$ d\varphi\circ \widetilde{\Lambda}=\Lambda\circ\varphi|_V $ and $ \widetilde{\Lambda}(r)=(r,d\varphi_r^{-1}(\Lambda(x))) $.
	On the other hand, 	$ \pi_{\mathcal{M}}\circ d\varphi =\varphi\circ\pi_U $.
	Putting these together, we get 
	\begin{center}
		$ \pi_{\mathcal{M}}\circ d\varphi\circ \widetilde{\Lambda}=\varphi\circ\pi_U\circ \widetilde{\Lambda}\qquad $
		or 
		$ \qquad\varphi|_V=\pi_{\mathcal{M}}\circ (\Lambda\circ\varphi|_V)=\varphi\circ\pi_U\circ \widetilde{\Lambda} $,
	\end{center}
    which by Corollary \ref{cor-inj} implies that
	$ \pi_U\circ \widetilde{\Lambda}|_W=\mathrm{id}_W $ for some open neighborhood $ W\subseteq V $ of $ r $, because $ \varphi $ a diffeological injective and $ \pi_U\circ \widetilde{\Lambda}(r)=r $.
	Therefore, $ \widetilde{\Lambda}|_W $ is a vector field on $ W $.
	\\
	\noindent
	\textit{Proof of part} (a).
	\textsc{(Existence)} By existence of ODE solutions, one can find an
	integral curve $ \gamma:(-\epsilon,\epsilon)\rightarrow W, \epsilon>0,  $ for $ \widetilde{\Lambda}|_W $ with $ \gamma(0)=r $. Hence $ \varphi\circ\gamma:(-\epsilon,\epsilon)\rightarrow \mathcal{M}, \epsilon>0,  $ is an
	integral curve of $ \Lambda $ with $ \gamma(0)=x $.
	
	\textsc{(Uniqueness)} Assume that $ \gamma_i:(-\epsilon_i,\epsilon_i)\rightarrow \mathcal{M}, \epsilon_i>0, i=1,2  $ are integral curves of $ \Lambda $ with $ \gamma_i(0)=x $.
	Then for each $ i=1,2 $,
	there is a curve $ \widetilde{\gamma_i}:(-\delta_i,\delta_i)\rightarrow U, 0<\delta_i\leq\epsilon_i $ such that
	$ \varphi\circ \widetilde{\gamma_i}=\gamma_i $ and $ \widetilde{\gamma_i}(0)=r $.
	Taking differential, we get
	\begin{center}
		$ d\varphi \circ \widetilde{\gamma_i}'(t)=\gamma_i'(t)=\Lambda(\gamma_i(t))=\Lambda(\varphi\circ \widetilde{\gamma_i}(t))=d\varphi\circ \widetilde{\Lambda}(\widetilde{\gamma_i}(t)).  $
	\end{center}
	Since $ d\varphi_{\widetilde{\gamma_i}(t)} $ is injective, 
	$ \widetilde{\gamma_i}'(t)= \widetilde{\Lambda}(\widetilde{\gamma_i}(t)) $. That is, 
	$ \widetilde{\gamma_1} $ and $ \widetilde{\gamma_2} $ are integral curves of the vector field
	$ \widetilde{\Lambda}|_W $ on some open neighborhood of $ r $.
	Now by uniqueness of ODE solutions, $ \widetilde{\gamma_1}|_{(-\delta,\delta)} = \widetilde{\gamma_2}|_{(-\delta,\delta)} $ and consequently $ \gamma_1|_{(-\delta,\delta)} = \gamma_2|_{(-\delta,\delta)} $, where $ \delta=\mathrm{min}\{\delta_1,\delta_2\} $.
	
	\textsc{(Maximality)}
	Consider the family of all integral curves of $ \Lambda $ passing through $ x $ at $ 0 $, which is compatible by the uniqueness property above.
	By D3, the supremum of this family is obviously the unique maximal integral
	curve passing through $ x $ at $ 0 $. 
	
%%%%%%%%	\textit{	(Maximality).}
%%%%%%%%	For each $ x\in\mathcal{M} $, let $ I $ be the union of all open intervals in $ \mathbb{R} $ containing	$ 0 $ on which an integral curve passing through $ x $ at $ 0 $ is defined.	Consider 1-plot $ Fl(x):I\rightarrow \mathcal{M} $ with $ Fl(x)(t)=\gamma(t) $,	where $ \gamma $ is some integral curve passing through $ x $ at $ 0 $,	defined on an open interval %%%%%%%%containing $ t $. 	Then $ Fl(x) $ is the unique maximal integral curve passing through $ x $ at $ 0 $.
%\end{proof}
%\begin{theorem} 
%	Let $ \mathcal{M} $ be a diffeological \'{e}tale manifold and $ \Lambda:\mathcal{M}\rightarrow T\mathcal{M} $ be a vector field on $ \mathcal{M} $.
%	Then the map
%	$ Fl:\mathcal{M}\rightarrow 1$-$\mathrm{Plots}_0(\mathcal{M}) $ taking any $ x\in\mathcal{M} $ to the unique maximal integral curve passing through $ x $ at $ 0 $, is a flow on $ \mathcal{M} $, where $ \bigcup_{x\in\mathcal{M}}\{x\}\times\mathrm{dom}(Fl(x)) $ is a D-open neighborhood  of $ \mathcal{M}\times\{0\} $ in $ \mathcal{M}\times\mathbb{R} $.
%\end{theorem}
%\begin{proof}
 
\noindent
	\textit{Proof of part} (b).
		\textsc{(Group laws)}
	Analogous to the usual case, 
	one can  check the group laws for $ Fl $.
	
	\textsc{(Smoothness)}
%	Let $ x_0\in \mathcal{M} $ and let $ \varphi:U\rightarrow \mathcal{M} $ be a diffeological \'{e}tale chart with $ x_0=\varphi(r_0) $ for some $ r_0\in U $.
%	Notice that $ \Lambda $ induces a vector field $ \widetilde{\Lambda} $ on some open neighborhood $ V\subseteq U $ of $ r_0 $  such that
%	$ d\varphi\circ \widetilde{\Lambda}=\Lambda\circ\varphi|_V $.
	By smoothness part of the ODE theorem (see, e.g., \cite[Theorem D.1]{Lee}), 
	for the vector field $ \widetilde{\Lambda}|_W $ on $ W $,
	we get a smooth map
	$ \theta:W'\rightarrow\mathrm{C}^{\infty}((-\epsilon,\epsilon), W)$ with $\epsilon>0 $ defined on an open neighborhood $ W'\subseteq W $ of $ r $  
	such that
	$ \theta(w)(0)=w $ and $ \widetilde{\Lambda}(\theta(w)(t))=(\theta(w))'(t) $, 
	for all $ w\in W' $ and $ t\in (-\epsilon,\epsilon) $.
	Define
	\begin{center}
		$ \Theta:W'\rightarrow\mathrm{C}^{\infty}((-\epsilon,\epsilon), \mathcal{M})\qquad $ by
		$ \qquad\Theta(w)(t)=\varphi\circ\theta(w)(t) $, 
	\end{center}
	which is smooth by \cite[\S 1.59]{PIZ}.
	We observe that 
	$ \Theta(w)(0)=\varphi\circ\theta(w)(0)=\varphi(w) $ and that
	\begin{center}
		$ \Theta(w)'(t)=d\varphi\circ(\theta(w))'(t)=d\varphi\circ \widetilde{\Lambda}(\theta(w)(t))=\Lambda\circ\varphi(\theta(w)(t))=\Lambda(\Theta(w)(t)). $
	\end{center}
	By uniqueness, $ \Theta(w) $ 
	is equal to $ Fl\circ\varphi(w)|_{(-\epsilon,\epsilon)} $ and so
	$ W'\times(-\epsilon,\epsilon)\rightarrow \mathcal{M}, (w,t)\mapsto Fl\circ \varphi(w)(t) $ is 
	a plot in $ \mathcal{M} $.
	%defined and smooth on the open neighborhood $ V'\times(-\epsilon,\epsilon) $ of $ \{r_0\}\times[0,0] $ in $ U\times\mathbb{R} $. Thus, $ 0\in I_{r_0} $.
	Set $ O:=\varphi(W') $, which is a D-open neighborhood of $ x $ in $ \mathcal{M} $. 
	Because $ \{\varphi|_{W'}\} $ is a covering generating family for $ O $, the map $  O\times(-\epsilon,\epsilon)\rightarrow \mathcal{M} $ given by
	$(z,t)\mapsto Fl (z)(t) $ is smooth, too.
	
	Let $ I_{x} $ be the set of all $ s\in \mathbb{R} $
	such that the map $(z,t)\mapsto Fl(z)(t) $ is defined and smooth on a D-open neighborhood of $ \{x\}\times[\min\{0,s\}, \max\{0,s\}] $ in $ \mathcal{M}\times\mathbb{R} $. The theorem will be proved if we can show that $ I_{x}=\mathrm{dom}(Fl(x)) $.
	By the argument above, $ 0\in I_{x}\neq\varnothing $. 
	Moreover, by definition, $ I_{x}\subseteq \mathrm{dom}(Fl(x)) $ is open.
	Now we prove that $ I_{x} $ is closed in $ \mathrm{dom}(Fl(x)) $ so that $ I_{x}=\mathrm{dom}(Fl(x)) $.
	Suppose on the contrary that there exists an element $ s_0 $ in $ \mathrm{dom}(Fl(x))\cap(\overline{I_{x}}\setminus I_{x})\neq\varnothing $.
	We can assume that $ s_0>0 $, say.
	Let $ y:=Fl(x)(s_0) $. Again by an argument similar to above, 
	we can find a D-open neighborhood $ O_{y} $ of $ y $ in $ \mathcal{M} $ and a sufficiently small number $ 0<\epsilon<s_0 $ such that the map $  O_{y}\times(-\epsilon,\epsilon)\rightarrow \mathcal{M} $ given by
	$(z,t)\mapsto Fl (z)(t) $ is smooth.
	Since $ s_0\in \overline{I_{x}}\setminus I_{x} $, there exists an $ s_1\in I_{x}~~~\cap~~~(s_0-\epsilon,s_0+\epsilon)\cap(Fl (x)^{-1}(O_{y})) $, which means that
	 $(z,t)\mapsto Fl(z)(t) $ is defined and smooth on a D-open neighborhood $  O_{x}\times(-\delta,s_1+\delta) $ of $ \{x\}\times[0,s_1] $ in $ \mathcal{M}\times\mathbb{R} $, and $ Fl(x)(s_1)\in O_{y} $.
	 As the map $ z\mapsto Fl(z)(s_1) $ is smooth, we can choose $ O_{x} $ small enough that $ z\mapsto Fl(z)(s_1) $ maps $ O_{x} $ into $ O_{y} $.
	 Now define $ O_{x}\times(-\delta,s_1+\epsilon)\rightarrow\mathcal{M} $ by
	 	\begin{align*}
	 	(z,t)\longmapsto\left \{
	 	\begin{array}{lr}
	 		Fl(z)(t),\quad &  -\delta <t < s_1\\
	 		Fl(Fl(z)(s_1))(t-s_1), \quad & s_1-\epsilon< t<s_1+\epsilon\\
	 	\end{array} \right.
	 \end{align*}
 which is well-defined by the group laws property and smooth from the fact that $(z,t)\mapsto Fl(z)(t) $ is defined and smooth on both $  O_{x}\times(-\delta,s_1+\delta) $ and $  O_{y}\times(-\epsilon,\epsilon) $.
But $ O_{x}\times(-\delta,s_1+\epsilon) $ is a D-open neighborhood of $ \{x\}\times[0, s_0] $ so that  $ s_0\in I_{x} $, a contradiction! 
	\end{proof}

\begin{definition} 
	We say that a vector field $ \Lambda:\mathcal{M}\rightarrow T^H\mathcal{M} $ on a  diffeological \'{e}tale manifold $ \mathcal{M} $  is \textbf{complete} 
	if
	each of its maximal integral curves is defined for all $ t\in\mathbb{R} $, or equivalently 
	if it generates a complete flow.
%	Denote the space of complete vector fields on $ \mathcal{M} $ by  
%	$ \mathfrak{X}_c(\mathcal{M}) $, equipped with the subspace diffeology inherited from $ \mathrm{C}^{\infty}(\mathcal{M},T^H\mathcal{M}) $.
\end{definition}
\begin{proposition}\label{pro-comp}
	Any vector field $ \Lambda:\mathcal{M}\rightarrow T\mathcal{M} $ on a D-compact diffeological \'{e}tale manifold $ \mathcal{M} $  is complete.
\end{proposition}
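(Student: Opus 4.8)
The plan is to derive completeness from the fundamental theorem on flows (Theorem \ref{the-flow}) together with a uniform existence time extracted from D-compactness. First I would apply Theorem \ref{the-flow}(b) to obtain the flow $Fl$ of $\Lambda$ and the fact that $\Omega:=\bigcup_{x\in\mathcal{M}}\{x\}\times\mathrm{dom}(Fl(x))$ is a D-open neighborhood of $\mathcal{M}\times\{0\}$ in $\mathcal{M}\times\mathbb{R}$. Since $\mathbb{R}$ is a domain, Proposition \ref{lem-prd-top} (in its ``in particular'' form, applied with the factor $\mathbb{R}$) yields a homeomorphism $D(\mathcal{M}\times\mathbb{R})\cong D(\mathcal{M})\times\mathbb{R}$. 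Under this identification $\Omega$ becomes an open subset of $D(\mathcal{M})\times\mathbb{R}$ containing the slice $D(\mathcal{M})\times\{0\}$, and the latter is compact because $\mathcal{M}$ is D-compact.

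Next I would invoke the tube lemma: around the compact slice $D(\mathcal{M})\times\{0\}$ inside the open set $\Omega\subseteq D(\mathcal{M})\times\mathbb{R}$, there exists a single $\epsilon>0$ with $D(\mathcal{M})\times(-\epsilon,\epsilon)\subseteq\Omega$. Translating this back through the homeomorphism gives $(-\epsilon,\epsilon)\subseteq\mathrm{dom}(Fl(x))$ for \emph{every} $x\in\mathcal{M}$; that is, each maximal integral curve of $\Lambda$ is defined at least on $(-\epsilon,\epsilon)$, with one and the same $\epsilon$ for all starting points. This uniform-time estimate is the only place where D-compactness is used.

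Finally I would run the standard escape-to-the-boundary extension. Suppose toward a contradiction that the maximal integral curve through some $x$ has domain $(a,b)$ with $b<\infty$. I would pick $t_0\in(b-\epsilon,b)$, set $y:=Fl(x)(t_0)$, and consider the reparametrized curve $t\mapsto Fl(y)(t-t_0)$, which is an integral curve defined on $(t_0-\epsilon,t_0+\epsilon)$ and passes through $y$ at time $t_0$, exactly as $Fl(x)$ does. After shifting time by $t_0$, the uniqueness clause of Theorem \ref{the-flow}(a) forces the two curves to coincide on their overlap, so their supremum is an integral curve prolonging $Fl(x)$ to an interval reaching past $b$, contradicting maximality. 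The symmetric argument excludes $a>-\infty$, whence $\mathrm{dom}(Fl(x))=\mathbb{R}$ for every $x$, the flow is complete, and therefore so is $\Lambda$.

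The only genuinely delicate point is the passage from the D-topology of the product $\mathcal{M}\times\mathbb{R}$ to the product of D-topologies, for which Proposition \ref{lem-prd-top} is precisely tailored; applying it with the domain factor $\mathbb{R}$ means no separation hypothesis on $\mathcal{M}$ is required. Once the tube lemma supplies the uniform $\epsilon$, the extension step is formally identical to the manifold case, since the group laws, smoothness, and uniqueness of maximal integral curves have already been established in Theorem \ref{the-flow}.
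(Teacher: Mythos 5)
Your proof is correct and is essentially the argument the paper intends: the paper's ``proof'' is only a citation to the classical compactness argument (\cite[Lemma 3.8]{Mic}), and your scheme---uniform existence time $\epsilon$ from compactness, then the escape-to-the-boundary extension using the uniqueness clause of Theorem~\ref{the-flow}(a)---is exactly that argument transplanted to the diffeological setting. If anything, your write-up is more careful than the paper's, since you explicitly justify the one genuinely diffeological step, namely identifying $D(\mathcal{M}\times\mathbb{R})$ with $D(\mathcal{M})\times\mathbb{R}$ via Proposition~\ref{lem-prd-top} (with the locally compact Hausdorff hypothesis carried by the factor $\mathbb{R}$, not $\mathcal{M}$) so that D-compactness and the tube lemma apply to the D-open set $\Omega$ from Theorem~\ref{the-flow}(b); the bare reference to the manifold case leaves this implicit.
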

\begin{proof}
	The proof is similar to the classical one (see, e.g., \cite[Lemma \S 3.8]{Mic}).
\end{proof}
As an application of this discussion, we are able to compute the internal tangent spaces of the diffeomorphism group of a D-compact diffeological \'etale manifold.
\begin{proposition}
	Suppose that $ \mathcal{M} $ is a diffeological \'{e}tale manifold. Then the subspace diffeology  inherited from $ \mathrm{C}^{\infty}(\mathcal{M},\mathcal{M}) $ and the standard diffeology on $ \mathrm{Diff}(\mathcal{M}) $ (see Example \ref{exa-diff}) coincide.
	In other words, $ \mathrm{Diff}(\mathcal{M}) $ with the subspace diffeology is a diffeological group.
\end{proposition}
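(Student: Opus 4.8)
The plan is to unwind both diffeologies and reduce the nontrivial inclusion to the implicit function theorem for diffeological \'{e}tale manifolds (Theorem \ref{the-ImFT}). Recall that a parametrization $P:U\rightarrow\mathrm{Diff}(\mathcal{M})$ is a plot for the subspace diffeology inherited from $\mathrm{C}^{\infty}(\mathcal{M},\mathcal{M})$ precisely when its adjoint $\overline{P}:U\times\mathcal{M}\rightarrow\mathcal{M}$, $\overline{P}(r,x)=P(r)(x)$, is smooth, whereas it is a plot for the standard diffeology when, \emph{in addition}, the parametrization $r\mapsto P(r)^{-1}$ is a plot for the functional diffeology. Hence the standard diffeology is automatically contained in the subspace diffeology, and the whole content is the reverse inclusion: if $\overline{P}$ is smooth, then $r\mapsto P(r)^{-1}$ is a plot for the functional diffeology as well.

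Given such a $P$, I would first form the track map
\[ \Phi:=(\Pr_1,\overline{P}):U\times\mathcal{M}\rightarrow U\times\mathcal{M},\qquad (r,x)\mapsto(r,P(r)(x)), \]
which is smooth because $\overline{P}$ is. Since $U$ is a domain, hence a diffeological \'{e}tale manifold, and products of diffeological \'{e}tale manifolds are again diffeological \'{e}tale manifolds, the map $\overline{P}$ fits the hypotheses of Theorem \ref{the-ImFT} in the roles $(\mathcal{M},\mathcal{N},\mathcal{L})=(U,\mathcal{M},\mathcal{M})$. For fixed $r\in U$ the partial map $\overline{P}^{\,r}=P(r)$ is a diffeomorphism of $\mathcal{M}$, so its internal tangent map $d(P(r))_x$ is an isomorphism at every $x$ (by the property that diffeomorphisms induce isomorphisms of internal tangent spaces). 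Thus $d(\overline{P}^{\,r})_x$ is an isomorphism for all $(r,x)$, and Theorem \ref{the-ImFT} yields that $\Phi$ is a diffeological \'{e}tale map.

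Next I would exploit that $\Phi$ is a bijection, with set-theoretic inverse $\Phi^{-1}(r,y)=(r,P(r)^{-1}(y))$. Being a diffeological \'{e}tale map, $\Phi$ is in particular a diffeological submersion; being injective, it is an induction and hence a diffeomorphism onto its image; and being surjective, it is therefore a diffeomorphism of $U\times\mathcal{M}$. Consequently $\Phi^{-1}$ is smooth, so the map $\Psi:U\times\mathcal{M}\rightarrow\mathcal{M}$ given by $\Psi(r,y)=P(r)^{-1}(y)=\Pr_2\circ\Phi^{-1}(r,y)$ is smooth. Finally, for any plot $Q:V\rightarrow\mathcal{M}$ the composite $(r,v)\mapsto\Psi(r,Q(v))=P(r)^{-1}(Q(v))$ is smooth, i.e.\ a plot in $\mathcal{M}$; this is exactly the condition that $r\mapsto P(r)^{-1}$ be a plot for the functional diffeology. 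Hence $P$ is a plot for the standard diffeology, the two diffeologies coincide, and $\mathrm{Diff}(\mathcal{M})$ with the subspace diffeology is a diffeological group.

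I expect the only real subtlety to be bookkeeping rather than a genuine obstacle: one must confirm that $U\times\mathcal{M}$ is a diffeological \'{e}tale manifold so that Theorem \ref{the-ImFT} applies verbatim, and that bijectivity upgrades the \'{e}tale map $\Phi$ to a diffeomorphism. Both are immediate from the machinery already developed. The essential mechanism---that the inverse of a smoothly parametrized family of \'{e}tale diffeomorphisms is again smooth---is precisely what the parametric implicit function theorem was built to deliver, so no new analytic input is required.
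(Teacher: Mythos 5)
Your proposal is correct and follows essentially the same route as the paper's own proof: apply the implicit function theorem (Theorem \ref{the-ImFT}) with $(U,\mathcal{M},\mathcal{M})$ to conclude the track map $(r,x)\mapsto(r,P(r)(x))$ is a diffeological \'{e}tale map, observe it is bijective with inverse $(r,y)\mapsto(r,P(r)^{-1}(y))$, hence a diffeomorphism, and read off smoothness of $r\mapsto P(r)^{-1}$. The only difference is that you spell out the step the paper leaves implicit (bijective \'{e}tale $\Rightarrow$ diffeomorphism, via injective diffeological submersions being inductions), which is a welcome clarification rather than a deviation.
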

\begin{proof}
	We  must show that any plot in $ \mathrm{Diff}(\mathcal{M}) $ for the subspace diffeology is a plot for the standard diffeology.
	Let $ P : U \rightarrow\mathrm{Diff}(X) $ be a plot for the subspace diffeology.
	The map 
\begin{center}
		$ \tilde{P}: U\times\mathcal{M}\rightarrow \mathcal{M},\quad (r,x)\mapsto P(r)(x) $
\end{center}
	is smooth and $ d\big{(}\tilde{P}^{r}\big{)}_{x}= dP(r)_{x} $ is an isomorphism, for all $ (r,x)\in U\times\mathcal{M} $. By Theorem \ref{the-ImFT}, then 
	\begin{center}
		$ (\Pr_1,\tilde{P}):U\times\mathcal{M}\rightarrow U\times\mathcal{M},\quad (r,x)\mapsto (r,P(r)(x)) $
	\end{center}
	is a diffeological \'{e}tale map.
	Moreover, it is bijective with the inverse 
	\begin{center}
		$  U\times\mathcal{M}\rightarrow U\times\mathcal{M},\quad (r,x)\mapsto (r,P(r)^{-1}(x)) $.
	\end{center}
	and consequently, a diffeomorphism. Therefore, the map
	$  U\times\mathcal{M}\rightarrow \mathcal{M}, (r,x)\mapsto P(r)^{-1}(x) $ is smooth and
	the parametrization $ r\mapsto P(r)^{-1} $ is a plot as desired.
\end{proof}

\begin{proposition}
	For a D-compact diffeological \'{e}tale manifold $ \mathcal{M} $, 
	the map $ \Upsilon:T_{\mathrm{id}_{\mathcal{M}}}\mathrm{Diff}(\mathcal{M})\rightarrow\mathfrak{X}(\mathcal{M}) $ taking $ dP_0(\frac{d}{dt}(0)) $ to $ \alpha\circ\widehat{P} $ is an isomorphism, where $ \alpha $ is given in Lemma \ref{lem-1} and $ \widehat{P}:\mathcal{M}\rightarrow \mathrm{C}^{\infty}(\mathbb{R},\mathcal{M})$ is defined by $ \widehat{P}(x)(s)=P(s)(x) $, for all $ x\in \mathcal{M}, s\in \mathbb{R} $.
\end{proposition}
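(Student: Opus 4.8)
The plan is to produce a two-sided linear inverse $\Psi\colon\mathfrak{X}(\mathcal{M})\to T_{\mathrm{id}_{\mathcal{M}}}\mathrm{Diff}(\mathcal{M})$ from the flow theorem and to reduce the remaining injectivity of $\Upsilon$ to a statement about paths with vanishing velocity field. First I would record a cleaner pointwise description of $\Upsilon$ that makes well-definedness and linearity transparent. For each $x\in\mathcal{M}$ the evaluation $\mathrm{ev}_x\colon\mathrm{Diff}(\mathcal{M})\to\mathcal{M}$, $f\mapsto f(x)$, is smooth (evaluation is smooth for the functional diffeology, and by the preceding proposition the standard and subspace diffeologies on $\mathrm{Diff}(\mathcal{M})$ agree). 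Since $\mathrm{ev}_x\circ P=\widehat{P}(x)$ for a path $P$ through $\mathrm{id}_{\mathcal{M}}$, the chain rule gives $\Upsilon(\tau)(x)=d(\mathrm{ev}_x)_{\mathrm{id}}(\tau)$ for $\tau=dP_0(\frac{d}{dt}(0))$. This formula involves only $\tau$ and the fixed maps $\mathrm{ev}_x$, so $\Upsilon$ does not depend on the chosen representative path and is linear because each $d(\mathrm{ev}_x)_{\mathrm{id}}$ is. That $\Upsilon(\tau)=\alpha\circ\widehat{P}$ genuinely lands in $\mathfrak{X}(\mathcal{M})$ follows because $\widehat{P}$ is smooth (Cartesian closedness together with the preceding proposition) and $\alpha$ is smooth by Lemma \ref{lem-1}, while $\pi_{\mathcal{M}}\circ(\alpha\circ\widehat{P})=\mathrm{id}_{\mathcal{M}}$ makes it a section. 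I would also note, using the group multiplication to add path-germs, that every element of $T_{\mathrm{id}}\mathrm{Diff}(\mathcal{M})$ has the single-path form $dP_0(\frac{d}{dt}(0))$, so $\Upsilon$ is indeed defined on the whole domain.

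Next I would construct $\Psi$. As $\mathcal{M}$ is D-compact, every vector field is complete (Proposition \ref{pro-comp}), so the flow $Fl$ of $\Lambda\in\mathfrak{X}(\mathcal{M})$ furnished by Theorem \ref{the-flow} satisfies $\mathrm{dom}(Fl(x))=\mathbb{R}$. Setting $F^{\Lambda}(s)(x):=Fl(x)(s)$, the group laws give that each $F^{\Lambda}(s)$ is a bijection with inverse $F^{\Lambda}(-s)$, the smoothness clause of Theorem \ref{the-flow} makes $(s,x)\mapsto Fl(x)(\pm s)$ smooth, and the preceding proposition then shows $s\mapsto F^{\Lambda}(s)$ is a path in $\mathrm{Diff}(\mathcal{M})$ with $F^{\Lambda}(0)=\mathrm{id}_{\mathcal{M}}$. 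I define $\Psi(\Lambda):=dF^{\Lambda}_0(\frac{d}{dt}(0))$. Since $Fl(x)$ is by construction the integral curve of $\Lambda$ through $x$, one gets $\Upsilon(\Psi(\Lambda))(x)=\big(s\mapsto Fl(x)(s)\big)'(0)=\Lambda(x)$, that is $\Upsilon\circ\Psi=\mathrm{id}_{\mathfrak{X}(\mathcal{M})}$. This already yields surjectivity of $\Upsilon$ and injectivity of $\Psi$.

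It remains to prove $\Upsilon$ injective, equivalently $\Psi\circ\Upsilon=\mathrm{id}$, and here the difficulty concentrates: I must show that a path $P$ through $\mathrm{id}_{\mathcal{M}}$ whose orbit velocities $\widehat{P}(x)'(0)$ all vanish already represents the zero internal tangent vector, i.e. that first-order agreement of paths forces equality in the colimit $T_{\mathrm{id}}\mathrm{Diff}(\mathcal{M})$. My approach is local-to-global through étale charts. Fixing a diffeological étale chart $\varphi\colon U\to\mathcal{M}$, I would use the unique lifting property of Theorem \ref{the-et-chr}(g) to lift $(s,r)\mapsto P(s)(\varphi(r))$ to a smooth $\rho$ with $\varphi\circ\rho(s,r)=P(s)(\varphi(r))$ and $\rho(0,\cdot)=\mathrm{id}$; injectivity of $d\varphi_r$ (étale maps are immersions) converts the vanishing of the orbit velocities into $\partial_s\rho(0,\cdot)=0$, so Hadamard's lemma on the domain gives $\rho(s,r)=r+s^{2}C(s,r)$. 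The intended conclusion is that $P$ factors, up to germ, as $P=S\circ h$ with $h(s)=(s^{2},s)$ and a two-parameter plot $S$ which is constant ($=\mathrm{id}_{\mathcal{M}}$) along the locus where the first coordinate vanishes, whence $dP_0(\frac{d}{dt}(0))=dS_0(0,1)=0$ by the colimit relations of \cite[Lemma 3.1]{CW1} together with the vanishing differential of a constant plot (cf. Proposition \ref{pro-cons}).

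The hard part is precisely manufacturing $S$ as a genuine plot in $\mathrm{Diff}(\mathcal{M})$: the chart $\varphi$ is not injective, so the local formula $r\mapsto r+aC(s,r)$ must be shown to descend to a well-defined global diffeomorphism for the relevant parameters, and this global assembly across the non-injective charts is the crux. I expect to control it using the track-map description of plots in $\mathrm{Diff}(\mathcal{M})$ supplied by the preceding proposition (each such plot induces a diffeological étale, indeed bijective, track map) and the uniqueness clause of Theorem \ref{the-flow}, which together pin down the candidate family $S$ and force the required well-definedness. Once $\Psi\circ\Upsilon=\mathrm{id}$ is secured, combined with $\Upsilon\circ\Psi=\mathrm{id}$ this gives that $\Upsilon$ is a linear isomorphism with inverse $\Psi$.
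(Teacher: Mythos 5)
Your treatment of everything except injectivity is sound and in places sharper than what the paper records: the pointwise formula $\Upsilon(\tau)(x)=d(\mathrm{ev}_x)_{\mathrm{id}}(\tau)$ settles well-definedness and linearity cleanly, the single-path representation of elements of $T_{\mathrm{id}_{\mathcal{M}}}\mathrm{Diff}(\mathcal{M})$ via the group multiplication is the standard fact, and your surjectivity argument (D-compactness gives completeness by Proposition \ref{pro-comp}, the complete flow of Theorem \ref{the-flow} is a path in $\mathrm{Diff}(\mathcal{M})$ through the identity, and $\Upsilon$ sends its tangent vector to $\Lambda$) is exactly the route the paper intends. The problem is that injectivity of $\Upsilon$ --- equivalently $\Psi\circ\Upsilon=\mathrm{id}$ --- is never actually proved. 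You correctly isolate it as the crux: from $\widehat{P}(x)'(0)=0$ for all $x$ one must conclude $dP_0(\frac{d}{dt}(0))=0$ in the colimit defining $T_{\mathrm{id}_{\mathcal{M}}}\mathrm{Diff}(\mathcal{M})$, and this is not a formal consequence of anything established earlier. Your proposed factorization $P=S\circ h$ with $h(s)=(s^{2},s)$ hinges on manufacturing the two-parameter family $S$ as a genuine plot in $\mathrm{Diff}(\mathcal{M})$, and you explicitly defer this (``I expect to control it using\dots''). That deferral is the gap: the local chart formula $\rho(s,r)=r+s^{2}C(s,r)$ produces maps $r\mapsto r+aC(s,r)$ that need not be injective, surjective, or diffeomorphisms of the chart domain for $a\neq s^{2}$, and that depend on the chart and on the chosen lift $\rho$; since the \'etale charts of $\mathcal{M}$ are not injective, there is no given compatibility letting these local maps descend to a single global diffeomorphism. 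Neither the uniqueness of lifts in Theorem \ref{the-et-chr}(g) nor the uniqueness clause of Theorem \ref{the-flow} supplies this, because both concern lifts or integral curves of data you would already need $S$ in order to write down.

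For comparison, the paper does not attempt this step either: its proof consists of the single statement that the argument is ``entirely analogous'' to \cite[Proposition 6.3]{HM-V} and \cite[\S 4.5]{Hec}, i.e., it outsources precisely the point at which your proposal stalls (those references exploit the group structure of $\mathrm{Diff}$ and the flow correspondence to identify path germs at the identity up to first order). So your proposal, read as a self-contained proof, is incomplete: what it establishes is that $\Upsilon$ is a well-defined split linear surjection with right inverse $\Psi$, not that it is an isomorphism. To close the gap you would either have to carry out the construction of $S$ in full (including the descent across non-injective charts), or follow the paper and invoke the cited group-theoretic argument for injectivity.
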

\begin{proof}
	The proof is entirely analogous to that of \cite[Proposition 6.3]{HM-V} and \cite[\S 4.5]{Hec}.
\end{proof}

\begin{corollary}\label{coro-diff-comp}
	For a D-compact diffeological \'{e}tale manifold $ \mathcal{M} $, 
	$ T_{f}\mathrm{Diff}(\mathcal{M}) $	is isomorphic to $ \mathfrak{X}(\mathcal{M}) $, for every $ f\in \mathrm{Diff}(\mathcal{M}) $.
\end{corollary}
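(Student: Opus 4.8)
The plan is to reduce the computation at an arbitrary diffeomorphism $f$ to the case $f=\mathrm{id}_{\mathcal{M}}$, which is exactly the content of the preceding proposition, where the isomorphism $\Upsilon:T_{\mathrm{id}_{\mathcal{M}}}\mathrm{Diff}(\mathcal{M})\rightarrow\mathfrak{X}(\mathcal{M})$ is constructed. The mechanism for transporting this from the identity to a general point is left translation in the group, and the essential ingredient is the fact, established just above, that for a diffeological \'{e}tale manifold $\mathcal{M}$ the subspace diffeology makes $\mathrm{Diff}(\mathcal{M})$ a diffeological group; in particular, its composition and inversion are smooth.

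First I would introduce the left translation $L_f:\mathrm{Diff}(\mathcal{M})\rightarrow\mathrm{Diff}(\mathcal{M})$ given by $L_f(g)=f\circ g$. Since the group multiplication is smooth, $L_f$ is smooth (it is the multiplication restricted along the constant map with value $f$), and it admits the smooth inverse $L_{f^{-1}}(g)=f^{-1}\circ g$; hence $L_f$ is a diffeomorphism of $\mathrm{Diff}(\mathcal{M})$ carrying $\mathrm{id}_{\mathcal{M}}$ to $f$. Next, invoking the functoriality of the internal tangent space (specifically, the property that a diffeomorphism induces an isomorphism on internal tangent spaces), the internal tangent map
\[
d(L_f)_{\mathrm{id}_{\mathcal{M}}}:T_{\mathrm{id}_{\mathcal{M}}}\mathrm{Diff}(\mathcal{M})\longrightarrow T_f\mathrm{Diff}(\mathcal{M})
\]
is a linear isomorphism. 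Composing its inverse with $\Upsilon$ then produces the desired isomorphism
\[
\Upsilon\circ\big(d(L_f)_{\mathrm{id}_{\mathcal{M}}}\big)^{-1}:T_f\mathrm{Diff}(\mathcal{M})\longrightarrow\mathfrak{X}(\mathcal{M}),
\]
which establishes $T_f\mathrm{Diff}(\mathcal{M})\cong\mathfrak{X}(\mathcal{M})$ for every $f\in\mathrm{Diff}(\mathcal{M})$.

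I do not expect any serious obstacle here, since all of the hard analytic work (completeness of vector fields on a D-compact manifold, the implicit function theorem, and the identification at the identity) has already been carried out in the preceding results. The only point requiring minor care is to verify that the group structure and its smoothness are genuinely at our disposal, and this is precisely guaranteed by the proposition identifying the subspace and standard diffeologies on $\mathrm{Diff}(\mathcal{M})$; D-compactness enters through that identification and through the construction of $\Upsilon$, not through this transport step itself.
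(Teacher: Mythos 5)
Your proof is correct and is precisely the argument the paper intends: the corollary is stated without proof immediately after the proposition on $\Upsilon$, and the evident route is left translation $L_f$ (a diffeomorphism of $\mathrm{Diff}(\mathcal{M})$ by smoothness of composition and inversion, carrying $\mathrm{id}_{\mathcal{M}}$ to $f$) combined with the functorial fact that diffeomorphisms induce isomorphisms on internal tangent spaces. One trivial correction: D-compactness is needed only for the construction of $\Upsilon$ (via completeness of vector fields), not for the identification of the subspace and standard diffeologies on $\mathrm{Diff}(\mathcal{M})$, which holds for any diffeological \'etale manifold.
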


\begin{example}
	$ T_{f}\mathrm{Diff}(\mathbb{T}_{\alpha}) $	is isomorphic to $ \mathfrak{X}(\mathbb{T}_{\alpha}) $, for every $ f\in \mathrm{Diff}(\mathbb{T}_{\alpha}) $.
\end{example}

%	\section*{Acknowledgements} 
%	The author would like to thank \textit{Jean-Pierre Magnot} for helpful comments on the first version of the paper.

\end{document}